\documentclass{amsart}

\usepackage[T1]{fontenc}
\usepackage[utf8]{inputenc}
\usepackage[english]{babel}
\usepackage{amsfonts}
\usepackage{amsmath}

\usepackage{amsthm}
\usepackage{paralist}
\usepackage{amssymb}
\usepackage{mathrsfs}
\usepackage{mathabx} 
\usepackage{comment} 
\usepackage{color} 
\usepackage{xcolor}
\usepackage[a4paper,top=3cm,bottom=3cm]{geometry}

\usepackage[colorlinks=true, urlcolor=blue, linkcolor=blue, citecolor=blue]{hyperref} 

\definecolor{darkgreen}{rgb}{0.0, 0.5, 0.0}

\numberwithin{equation}{section} 

\newcommand{\Chi}{\mathcal{X}} 


\newcommand{\Supp}{\operatorname{spt}}

\newcommand{\Range}{\mathscr{R}}
\newcommand{\Kernel}{\mathscr{N}}
\newcommand{\Def}{\mathscr{D}}

\newcommand{\BFq}{\operatorname{BF}_q(\partial \Omega)}

\newcommand{\BF}{\operatorname{BF}}
\newcommand{\rot}{\mathrm{curl} \thinspace}
\newcommand{\LaplacePS}{\Delta_\mathrm{PS}}
\newcommand{\LaplacePSq}{\Delta_{\mathrm{PS},q}}

\newcommand{\LaplaceN}[1]{\Delta_{\mathrm{N},{#1}}}

\newcommand{\Stokesqpm}{A_{\mathrm{S},\alpha,q}^\pm}
\newcommand{\Stokesppm}{A_{\mathrm{S},\alpha,p}^\pm}

\newcommand{\Stokespm}{A_{\mathrm{S},\alpha}^\pm}
\newcommand{\Stokes}[1]{A_{\mathrm{S},\alpha,{#1}}^\pm}
\newcommand{\RN}[1]{\uppercase\expandafter{\romannumeral#1}}
\newcommand{\Div}{\operatorname{div}}
\newcommand{\dual}[2]{\langle #1 , #2 \rangle} 
\newcommand{\dualq}[2]{\langle #1 , #2 \rangle_{q,q'}}
\newcommand{\dualb}[2]{\langle #1 , #2 \rangle_{\partial \Omega}}

\newcommand{\dualGb}[2]{\big\langle #1 , #2 \big\rangle_{\partial \Omega}}
\newcommand{\Gradq}{G_q(\Omega)} 
\newcommand{\Grad}[1]{G_{#1}(\Omega)}
\newcommand{\modGradq}{\mathcal{G}_q(\Omega)}
\newcommand{\Tr}{\operatorname{Tr}}
\newcommand{\R}{\operatorname{R}_{\partial \Omega}}

\newcommand{\Helmholtz}{\mathbb{P}}

\newcommand{\dist}{\mathrm{dist}}
\newcommand{\D}{\thinspace d} 
\newcommand{\Dm}{\mathrm{D}_-}
\newcommand{\Dp}{\mathrm{D}_+}
\newcommand{\Dpm}{\mathrm{D}_\pm}
\newcommand{\ti}[1]{\widetilde{#1}}
\newcommand{\e}{\nu_+}
\newcommand{\codim}{\operatorname{codim}}

\newcommand{\Proj}{\widetilde{\mathbb{P}}}
\newcommand{\QProj}{\mathbb{Q}}

\newcommand{\thcite}[1]{{\upshape\cite{#1}}} 
\newcommand{\thref}[1]{{\upshape\ref{#1}}}

\newcommand{\transA}[1]{{#1}^T}
\newcommand{\transB}[1]{#1}

\excludecomment{AUSBLENDEN}

\newtheorem{proposition}{Proposition}[section]
\newtheorem{theorem}[proposition]{Theorem}
\newtheorem{lemma}[proposition]{Lemma}

\theoremstyle{definition}
\newtheorem{definition}[proposition]{Definition}

\newtheorem{remark}[proposition]{Remark}

\newtheorem{assumption}[proposition]{Assumption}

\hyphenation{Lem-ma
             lem-ma
             pa-ram-e-triza-tion}

\thispagestyle{empty}

\begin{document}

\title[Stokes and Navier-Stokes subject to partial slip 
on uniform $C^{2,1}$-domains]{Stokes and Navier-Stokes equations subject to partial slip 
on uniform $C^{2,1}$-domains in $L_q$-spaces}

\author{Pascal Hobus and J\"urgen Saal}
		  



\begin{abstract}
This note concerns well-posedness of the Stokes and Navier-Stokes 
equations on uniform $C^{2,1}$-domains on $L_q$. 
In particular, classes of non-Helmholtz
domains, i.e., domains for which
the Helmholtz decomposition does not exist, are adressed.
On the one hand, 
it is proved that the Stokes equations subject to partial slip
in general are not well-posed in the standard setting that 
usually applies for Helmholtz domains.
On the other hand, it is proved that under certain reasonable 
assumptions the Stokes and Navier-Stokes 
equations subject to partial slip are well-posed in a generalized
setting. This setting relies on a generalized 
version of the Helmholtz decomposition which exists under suitable 
conditions on the intersection and the sum of 
gradient and solenoidal fields in $L_q$.
The proved well-posedness of the Stokes resolvent problem turns even 
out to be equivalent to the existence of the generalized Helmholtz
decomposition.
The presented approach, for instance, includes the sector-like
non-Helmholtz domains introduced by Bogovski\u{\i} and Maslennikova
as well as further wide classes of uniform $C^{2,1}$-domains.
\end{abstract}

\maketitle



\section{Introduction}

The question of well-posedness of Stokes and Navier-Stokes equations
on general classes of unbounded domains on $L_q$ has been an open 
problem for a couple of decades now. The aim of this note 
is to tackle this problem
for a wide class of uniform $C^{2,1}$-domains and of 
partial slip type boundary conditions.
To be precise, we consider the system
\begin{equation}
\left\{
\begin{array}{rll}
\partial_t u - \Delta u + \transB{\nabla p} + (u \cdot \nabla)u & = 0 & \text{in } (0,T) \times \Omega \\
\Div u & = 0 & \text{in } (0,T) \times \Omega \\
\Pi_\tau( \alpha u + \Dpm(u)\nu) & = 0 & \text{on } (0,T) \times \partial \Omega \\
\nu \cdot u & = 0 & \text{on } (0,T) \times \partial \Omega \\
u|_{t=0} & = u_0 & \text{in } \Omega
\end{array}
\right.
\end{equation}
as well as its linearized version (by skipping $(u\cdot \nabla)u$) 
known as the Stokes system.
Here $\Omega$ is a uniform $C^{2,1}$-domain and $u_0\in L_q(\Omega)^n$.
We set $\Dpm(u) := \transA{\nabla u} \pm \transB{\nabla u}$ 
and $\Pi_\tau$ denotes the projection onto the tangent space of
$\partial\Omega$. 
For the parameter $\alpha$ related to the 
slip length we assume $\alpha\in\mathbb R$, but we remark that 
a class of matrix-valued $\alpha$ is admitted too, see
Remark~\ref{matrixalpha}. So, our approach yields well-posedness
for Stokes and Navier-Stokes equations for a wide class
of partial slip type boundary conditions that includes, e.g.,
\begin{align}
	\Pi_\tau( \alpha u + \Dp(u)\nu)=0,\quad \nu\cdot u = 0
	&\quad\text{(Navier slip)},\nonumber\\
	\Pi_\tau \Dp(u)\nu=0,\quad \nu\cdot u = 0
	&\quad\text{(no stress)},\nonumber\\
	\Dm(u)\nu  = 0,\quad \nu \cdot u = 0
	&\quad\text{(perfect slip)},\label{eq:perfect slip}\\
	\alpha \Pi_\tau u + \partial_\nu \Pi_\tau u=0,
	\quad \nu\cdot u = 0
	&\quad\text{(Robin type)}.\nonumber
\end{align}
Note that in dimension $n=3$ we have 
$\Dm(u)\nu = - \nu \times \rot u$. Thus, in this case the perfect slip boundary 
conditions \eqref{eq:perfect slip} equal the vorticity condition
\[
	\nu \times \rot u = 0,\quad \nu \cdot u = 0. 
\]

By choosing $\alpha>0$ large, we
can come arbitrarily close to Dirichlet conditions, too. However,
pure Dirichlet conditions (formally the case $\alpha=\infty$) is
not covered, hence that case remains an open problem.   

There is a long history for Stokes and Navier-Stokes 
equations on standard domains
such as the whole space $\mathbb R^n$, half-space $\mathbb R^n_+$, 
perturbed half-spaces, and domains with compact boundary. 
We refrain from giving a long list of references and refer to the 
pertinent monographs \cite{Gal11,Soh01} and to the survey 
\cite{HS18}
instead. Standard domains $\Omega\subset\mathbb R^n$ of certain
regularity (e.g.\ $C^2$), as listed above, 
are known to be Helmholtz
domains, i.e., domains for which the Helmholtz decomposition
\begin{equation}\label{HHD}
	L_q(\Omega)^n = L_{q,\sigma}(\Omega)\oplus G_q(\Omega)
\end{equation}
into solenoidal fields $L_{q,\sigma}(\Omega)$ and gradient 
fields $G_q(\Omega)=\bigl\{\nabla p : p\in \widehat{W}^{1}_q(\Omega)\bigr\}$
exists (see, e.g., \cite{HS18}, Sec.~2.2).
The resulting Helmholtz projection $\Helmholtz$ onto $L_{q,\sigma}(\Omega)$ 
then serves as an important tool to define the Stokes operator
as
\[
 	A_S:= \Helmholtz \Delta\quad\text{in}\quad L_{q,\sigma}(\Omega). 	
\]

In the paper \cite{GHHS12} of Geissert, Heck, Hieber and Sawada
it is even proved that the validity of \eqref{HHD} is sufficient
for the well-posedness of the Stokes equations in $L_q$ subject to Dirichlet
boundary conditions, as long as $\Omega$ is a uniform $C^3$-Helmholtz domain. This triggers the question, whether the validity
of \eqref{HHD} is also necessary for well-posedness of the Stokes
equations. A negative answer to that question was given by Bolkart, 
Giga, Miura, Suzuki, and Tsutsui in \cite{BGMST17}. In that paper
well-posedness of the Stokes equations subject to Dirichlet 
conditions on $L_q(\Omega)^n$ for domains of the form
\[
	\Omega=\{x=(x',x_n)\in\mathbb R^n;\ x_n>h(x')\}
\]
with a $C^3$-function $h:\mathbb R^{n-1}\to \mathbb R$ is proved.
This includes sector-like domains in $\mathbb R^2$. In 
\cite{MB86}
Bogovski\u{\i} and Maslennikova proved those domains to 
be non-Helmholtz domains, i.e., \eqref{HHD} is false for
$q$ outside a certain interval about $2$. Consequently, 
\eqref{HHD} is not necessary for the well-posedness of the Stokes
equations. 

Another remarkable result in this context is given by Farwig, 
Kozono and Sohr in \cite{FKS07}. There it is proved that
the Stokes equations subject to Dirichlet conditions are well-posed on 
\[
	\widetilde L_q(\Omega)
	:=\left\{
	\begin{array}{lr}
	L_q(\Omega)\cap L_2(\Omega),& q\ge 2,\\
	L_q(\Omega)+ L_2(\Omega),& q< 2
	\end{array}
	\right.
\]
for general uniform $C^{1,1}$-domains $\Omega\subset\mathbb R^n$.  
The approach in \cite{FKS07} makes use of the fact that 
the Helmholtz decomposition exists
on $\widetilde L_q(\Omega)$ for all $q\in(1,\infty)$
and arbitrary uniform $C^{1,1}$-domains. The latter result is obtained
in the preceding paper \cite{FSSV16}. 
By Rosteck in \cite{farros2016} the results obtained in \cite{FKS07} 
are extended to Navier boundary conditions.
Note that the approach performed in \cite{FKS07,FSSV16,farros2016} 
utilizes in an essential way the fact that Helmholtz decomposition 
and well-posedness of the
Stokes equations are available on $L_2(\Omega)$ for 
arbitrary domains $\Omega$. 
This fact, however, does not help for an approach 
in $L_q(\Omega)$ with $q\neq 2$. As a consequence, the strategy 
performed in this note is in large part different from 
\cite{FKS07} (and also from \cite{BGMST17}).

According to \cite{GHHS12} on $C^3$-Helmholtz domains $\Omega$ 
the (Dirichlet) Stokes resolvent problem is well-posed on $L_q(\Omega)$
and the solution belongs to the class
\begin{equation}\label{rpwpc}
	(u,\nabla p)
	\in \left[W^2_q(\Omega)^n\cap L_{q,\sigma}(\Omega)\right]
	\times G_q(\Omega).
\end{equation}
The aim of this note is to clarify well-posedness on $L_q(\Omega)$
in the same regularity class for the solution, 
but for the general class of uniform $C^{2,1}$-domains  
(and for the class of partial slip type boundary conditions introduced
above).

Let us outline the outcome of our main results. 
The first parts state that the Stokes resolvent problem in general 
is not well-posed in the class given by \eqref{rpwpc}.
Here Theorem~\ref{thm:Stokes perfect slip}\eqref{thm:Stokes perfect slip 1},\eqref{thm:Stokes perfect slip 2} includes the case
of perfect slip
and Theorem~\ref{thm:Stokes partial slip}\eqref{thm:Stokes partial slip 2} in combination with
Remark~\ref{noninjectps} the case of partial slip type boundary conditions.
In fact, the results show that existence of a solution fails 
if $L_{q,\sigma}(\Omega)+G_q(\Omega) \ne L_q(\Omega)^n$ and uniqueness fails if 
$L_{q,\sigma}(\Omega)\cap G_q(\Omega)$ is nontrivial, in general.
As a consequence, for Bogovski\u{\i} and Maslennikova type 
sector-like domains in $\mathbb R^2$, e.g., existence 
fails for $1<q<2$ small enough 
and uniqueness for $2<q<\infty$ large enough.

In spite of this fact, the Stokes resolvent problem can be proved to be 
well-posed in $L_q$ 
in a certain generalized setting under the assumptions
that 
\begin{equation}\label{cruassumpdom}
	L_{q,\sigma}(\Omega)\cap G_q(\Omega)\text{ is complemented 
	in $L_q(\Omega)^n$ and } 
	L_{q,\sigma}(\Omega)+ G_q(\Omega)=L_q(\Omega)^n
\end{equation}
(see Assumptions~\ref{thm:Assumption A}\eqref{thm:Assumption A 1} and \ref{thm:Assumption B}). 
These assumptions imply that there exists a generalized
Helmholtz decomposition of the form
\begin{equation}\label{genHHD}
	L_q(\Omega)^n=L_{q,\sigma}(\Omega)\oplus \mathcal G_q(\Omega)
\end{equation}
(see Lemma~\ref{thm:complemented subspaces})
with $\mathcal G_q(\Omega):= (I-\mathbb Q_q)G_q(\Omega)$, where $\mathbb Q_q$
is the projection onto $L_{q,\sigma}(\Omega)\cap G_q(\Omega)$.
Theorem~\ref{thm:Stokes perfect slip}\eqref{thm:Stokes perfect slip 4} for perfect slip
and Theorem~\ref{thm:Stokes partial slip}\eqref{thm:Stokes partial slip 2} for partial slip
then yield well-posedness of the Stokes resolvent problem, 
provided the class in \eqref{rpwpc} is replaced by 
\begin{equation}\label{rpwpcgen}
	(u,\nabla p)
	\in \left[ W^2_q(\Omega)^n\cap L_{q,\sigma}(\Omega) \right]
	\times \mathcal G_q(\Omega).
\end{equation}
In fact, in combination with Theorem~\ref{thm:Stokes implies generalized HH}, these results
show that well-posedness of the Stokes resolvent problem as in
\eqref{rpwpcgen} is even equivalent to decomposition \eqref{genHHD}.
E.g., for Bogovski\u{\i} and Maslennikova type 
sector-like domains and $q$ large enough \eqref{genHHD} holds
(see Remark~\ref{remmainassum}\eqref{remmainassum f}).
Hence, the Stokes resolvent problem is well-posed for those domains
in the meaning of \eqref{rpwpcgen}. This in particular extends the result
in \cite{BGMST17} to partial slip type conditions.

\begin{remark}
Note that a fundamental assumption for the entire approach presented
here is the fact that
$C_c^\infty(\overline{\Omega}) \subset \widehat{W}^1_{q'}(\Omega)$ is dense
(Assumption~\ref{thm:Assumption C}). This, for instance, is not
fulfilled for aperture domains. As a consequence such type of domains
are not included in the presented approach. If, however, the difference
of the closure of $C_c^\infty(\overline{\Omega})$ in 
$\widehat{W}^1_{q'}(\Omega)$ and $\widehat{W}^1_{q'}(\Omega)$ itself is not 
too big (e.g.\ one dimensional as for aperture domains), there might 
be ways to generalize the approach in order to include such classes
(see Remark~\ref{remmainassum}\eqref{remmainassum c}).
\end{remark}

The well-posedness of the Stokes resolvent problem given by
Theorem~\ref{thm:Stokes partial slip}\eqref{thm:Stokes partial slip 2}
and 
Theorem~\ref{thm:Stokes perfect slip}\eqref{thm:Stokes perfect slip 3},\eqref{thm:Stokes perfect slip 4} also implies well-posedness
of the related instationary Stokes system. This is the content
of Theorem~\ref{thm:Stokes semigroup}. In particular, the corresponding
generalized Stokes operator is sectorial and the generated 
strongly continuous analytic semigroup satisfies the typical $L_p$-$L_q$-estimates.
Having these tools at hand, Theorem~\ref{thm:NSE} establishes
existence of a local-in-time mild solution of the corresponding
Navier-Stokes equations.

Next, we sketch the strategy for the proofs and 
the organization of this note.
The initiating point is to establish decomposition \eqref{genHHD}
under assumptions \eqref{cruassumpdom}.  
This is given by Lemma~\ref{thm:complemented subspaces},
which is not profound but crucial, since it opens the door for
the treatment of the Stokes equations in subsequent sections.
In Sections~\ref{sec:Main results} (resolvent problem) 
and \ref{sec:instationary} 
(instationary Stokes and Navier-Stokes systems)
we give precise statements of the 
main results of this note. 
After collecting some preliminary tools on trace operators,
solenoidal fields, and coverings of $\partial\Omega$
in Section~\ref{sec:preliminaryproofs}, 
we prove well-posedness of the (vector-valued) heat equation subject
to perfect slip in $L_q$ in Section~\ref{sec:heat}.
This technical part is performed by utilizing a suitable 
localization procedure. A substantial difficulty here is given by the fact
that due to the boundary conditions one has to deal with a system.
In fact, a cautious handling of tangent and normal
trace parts is required. This is different from previous 
literature in which
the applied localization procedure is predominantly applied to
scalar equations, see \cite{Kun03}.

Another crucial step is represented by Section~\ref{invariance}.
There we establish that  
$L_{q,\sigma}(\Omega)$ is an invariant space for the resolvent 
of the Laplace operator subject to perfect slip.
For Helmholtz domains this is (formally) equivalent to the 
fact that Helmholtz projection and Laplace operator commute. 
In the latter form this specific feature of perfect slip 
boundary conditions is already utilized in a number of 
former papers, such as \cite{mitmon2009,wesa2018,baam2019}. 
Note that the fact that
here the Helmholtz decomposition in general does not exist makes
the proof of the invariance a bit more delicate.
Based on the invariance, the main results on the Stokes resolvent
problem subject to perfect slip are then given in
Section~\ref{sec:proofsofstokes}. 
The generalization of this result to partial slip type conditions
relies on a perturbation argument.
For this purpose, the well-posedness of the Stokes resolvent problem
with tangential inhomogeneous perfect slip conditions is required.
Note, that in that case 
the space $L_{q,\sigma}(\Omega)$ is no longer invariant for the 
solution operator to the corresponding inhomogeneous heat equation.
The idea is to compensate this discrepancy by constructing
and adding a suitable pressure gradient depending only on the data, 
see the proof of Theorem~\ref{thm:Stokes perfect slip inhom} in 
Section~\ref{sec:proofsofstokes}. 
Based on  Theorem~\ref{thm:Stokes perfect slip inhom} and a 
perturbation argument, the proof of the well-posedness of the 
Stokes resolvent problem for a large class of  
partial slip type conditions is then given in Section~\ref{sec:proofpartial}. 

The proof of the main result on the instationary Stokes system 
is performed in Section~\ref{sec:instat} and of the local-in-time 
well-posedness of the Navier-Stokes equations in 
Section~\ref{sec:ns}.
Finally, the Appendix represents a collection of basic facts 
that are hard to find in the existing literature. This concerns 
certain trace operators and the Gau{\ss} theorem on uniform
$C^{2,1}$-domains, the density of 
$C_c^\infty(\overline{\Omega})$ in $\widehat{W}^1_{q'}(\Omega)$
for specific classes of domains, etc.

\section{Basic assumptions and notation}\label{sec:basics}

For parameters $a,b,c,\dots$ we write $C = C(a,b,c,\dots)$ to express 
that $C$ is a constant depending on (and only on) these parameters.
In general, $C,C',C'',\dots$ are positive constants that may change from line to line.
(We primarily denote constants by $C$ and make use of $C',C'',\dots$
when it is relevant to indicate that the constant has changed.)
For any normed space $X$ the related dual space is denoted by $X'$ and
the duality pairing is denoted by $\dual{\cdot}{\cdot}_{X,X'}$.
For a linear continuous operator $T: X \rightarrow Y$ and two normed spaces $X,Y$ we write $\Range(T)$ for its range and $\Kernel(T)$ for its kernel as well as $\| T \|_{X \rightarrow Y}$ for the operator norm.
The Lebesgue measure in $\mathbb{R}^n$ is $\lambda_n$ and $\sigma$ denotes the related surface measure.
The natural numbers $\mathbb{N}$ do not contain zero and we put $\mathbb{N}_0 := \mathbb{N} \cup \{ 0 \}$.
We denote the Euclidean norm on $\mathbb{R}^n$ or $\mathbb{R}^{n \times n}$ by $| \cdot |$.
The ball in $\mathbb{R}^n$ with respect to the Euclidean norm with radius $r > 0$ and center $a \in \mathbb{R}^n$ is denoted by $B_r(a)$.
The sector in the complex plane with opening angle $0 < \theta < \pi$ is
$\Sigma_\theta := \{ \lambda \in \mathbb{C} : \lambda \ne 0,~ |\arg(\lambda)| < \theta \}$.

For $x \in \mathbb{R}^n$ we denote the components by $x_j$, $j = 1,\dots,n$ and we write $x'$ for the vector of the first $n-1$ components.
We denote the components of a vector field $u$ in $\mathbb{R}^n$ by $u^{j}$, so $u = (u^{1},\dots,u^{n})^T$.
The identity matrix is $I := (\delta_{ij})_{i,j=1,\dots,n} \in \mathbb{R}^{n \times n}$.
We also denote the identity map between normed vector spaces by $I$.
The transposed of some vector or matrix $v$ is $v^T$.

By the gradient of a function $u: \Omega \rightarrow \mathbb{R}$ we mean the column vector $\nabla u = (\partial_1 u, \dots, \partial_n u)^T$ and by the gradient $\nabla u$ of a vector field $u: \Omega \rightarrow \mathbb{R}^m$ we mean the matrix with columns $\nabla u^{j}$ for $j=1,\dots,m$, i.e., $\nabla u^T$ is the Jacobian matrix of $u$.
The vector containing all partial derivatives of order $k \ge 2$ of a real-valued function $u$ is $\nabla^k u$ (with $n^k$ entries) and similarly we define $\nabla^k u$ (with $m n^k$ entries) if $u$ is a vector field with values in $\mathbb{R}^m$.

For the representation of boundary conditions we make use of the two operators
$\Dpm(u) := \transA{\nabla u} \pm \transB{\nabla u}$ when $u: \Omega \rightarrow \mathbb{R}^n$ is a vector field, as well as of the
normal and tangential projections of $u$ on $\partial \Omega$, given by
$\Pi_\nu u = (\nu \nu^T)u$ and
$\Pi_\tau u = (I - \nu \nu^T)u$
respectively.
Here $\nu: \partial \Omega \rightarrow \mathbb{R}^n$ denotes the outward unit normal vector at $\partial \Omega$ if the boundary is sufficiently regular.
Writing the normal projection in the scalar product form, i.e.,  $\Pi_\nu u = (\nu \cdot u) \nu$, we see that
\begin{equation*}
\Pi_\nu u = 0 \text{ on } \partial \Omega
\quad \Leftrightarrow \quad
\nu \cdot u = 0 \text{ on } \partial \Omega
\end{equation*}
and in dimension $n = 3$ we can use the vector product to write
$\Pi_\tau u = - \nu \times (\nu \times u)$
and consequently
\begin{equation*}
\Pi_\tau u = 0 \text{ on } \partial \Omega
\quad \Leftrightarrow \quad
\nu \times u = 0 \text{ on } \partial \Omega.
\end{equation*}
Also note that
\begin{equation} \label{eq:Dm invariant under tangential projection}
\Pi_\tau \Dm(u) \nu = \Dm(u) \nu \quad \text{on } \partial \Omega.
\end{equation}

For a function $\omega$ on $\mathbb{R}^{n-1}$ its gradient with respect
to the $n-1$ components is $\nabla' \omega$ and similarly we use the
notation $\nabla'^k \omega$ for higher derivatives and $k \in \mathbb{N}$. 
Analogously we write
$\Delta'$ for the Laplace operator with respect to the first $n-1$ components.

As usual, $C^k(\Omega)$ is the space of $k$-times continuously differentiable functions on $\Omega \subset \mathbb{R}^n$ for $k \in \mathbb{N}_0$ and $C^{k,1}(\Omega)$ is the subspace of functions with a Lipschitz continuous $k$-th derivative.

All along the paper, we assume $\Omega \subset \mathbb{R}^n$ to be a domain with uniform $C^{2,1}$-boundary, $n \ge 2$ and $1 < q < \infty$ if nothing else is declared. The dual exponent is $q'$, i.e., $1 < q' < \infty$ with $\frac{1}{q} + \frac{1}{q'} = 1$.
By a $C^{2,1}$-boundary we mean, that
we can cover $\overline{\Omega}$ with open balls $B_l$, $l \in \Gamma$ and a countable index set $\Gamma$ such that, writing $\Gamma_0 := \{ l \in \Gamma : B_l \subset \Omega \}$ and $\Gamma_1 := \{ l \in \Gamma : B_l \cap \partial \Omega \ne \emptyset \}$, for each $l \in \Gamma_1$ we can find a compactly supported function $\omega_l \in C^{2,1}(\mathbb{R}^{n-1})$ which describes the boundary locally in $B_l$ after rotating and shifting the coordinates.
The latter precisely means that for $l \in \Gamma_1$ we can find a rotation matrix $Q_l \in \mathbb{R}^{n \times n}$ and a translation vector $\tau_l \in \mathbb{R}^n$ so that
\begin{equation*}
\Omega \cap B_l = H_l \cap B_l \quad \text{and} \quad \partial \Omega \cap B_l = \partial H_l \cap B_l,
\end{equation*}
where $H_l := Q_l^T H_{\omega_l} + \tau_l$ is the rotation and translation of the bent half space
\begin{equation*}
H_{\omega_l} = \{ x \in \mathbb{R}^n : x_n > \omega_l(x_1,\dots,x_{n-1}) \}.
\end{equation*}

\begin{definition} \label{thm:uniform C^2,1 domain}
A domain $\Omega \subset \mathbb{R}^n$ has uniform $C^{2,1}$-boundary (or $\Omega$ is a uniform $C^{2,1}$-domain) if we can choose the cover $B_l$, $l \in \Gamma$ in such a way that the radii
are all bigger or equal to some fixed $\rho > 0$ and if there is a constant $M \ge 1$ such that
\begin{equation} \label{eq:uniformity}
\| \nabla' \omega_l \|_{\infty}, \| \nabla'^2 \omega_l \|_{\infty}, \| \nabla'^3 \omega_l \|_{\infty} \le M
\end{equation}
for all $l \in \Gamma_1$.
\end{definition}

In the situation of Definition~\ref{thm:uniform C^2,1 domain}, without loss of generality, we can assume that all of the balls $B_l$, $l \in \Gamma$ have the same radius $\rho > 0$
and that there is $\bar{N} \in \mathbb{N}$ so that at most $\bar{N}$ of the balls $B_l$ have nonempty intersection.
Moreover, for arbitrary $\kappa > 0$ we can assume that
\begin{equation} \label{eq:littleness gradient}
\| \nabla' \omega_l \|_{\infty} \le \kappa
\end{equation}
holds for all $l \in \Gamma_1$.
This can be achieved by choosing the radius $\rho$ small enough and the
rotations $Q_l$ in such a way that the hyper plane $\{ x_n = 0 \}$ is
rotated into the tangent hyper plane of some 
point on $\partial \Omega \cap B_l$.

For two indices $l,m \in \Gamma$ we write $m \sim l$ if $B_m \cap B_l \ne \emptyset$ and we write $m \approx l$ if $m \sim l$ and $l,m \in \Gamma_1$.
Note that for any $l \in \Gamma$ we have $\# \{ m \sim l \} \le \bar{N}$.

The Lebesgue space is denoted by $L_q(\Omega)$, 
the Sobolev space for some $k \in \mathbb{N}_0$ is $W^k_q(\Omega)$
and the Lebesgue space on the boundary is $L_q(\partial \Omega)$.
We write $\| \cdot \|_{q,\Omega}$ resp.\ $\| \cdot \|_{q,k,\Omega}$
for  the corresponding norms. Frequently we also write
$\| \cdot \|_q$ for the Lebesgue norm and $\| \cdot \|_{k,q}$ 
for the Sobolev norm, in case the underlying domain $\Omega$ 
is clear from the context.

We use the notation
$\dualq{f}{g} := \int_\Omega f g ~d\lambda_n$ for $f \in L_q(\Omega)$, $g \in L_{q'}(\Omega)$ resp.\
$\dualq{f}{g} := \int_\Omega f \cdot g ~d\lambda_n$ for $f \in L_q(\Omega)^n$, $g \in L_{q'}(\Omega)^n$.
Here the standard scalar products in $\mathbb{R}^n$ and $\mathbb{R}^{n \times n}$ are denoted by $x \cdot y := \sum_{i=1}^n x_i y_i$ and $A : B := \sum_{i,j=1}^n A_{ij} B_{ij}$, respectively.
For the application of a distribution $f \in \mathscr{D}'(\Omega)$ to a test function $\varphi \in C_c^\infty(\Omega)$ we write $\dual{f}{\varphi}$,
in particular
$\dual{f}{\varphi} = \int_\Omega f \varphi \D \lambda_n$ in case $f \in
L_{1,\mathrm{loc}}(\Omega)$ (similarly for $f \in \mathscr{D}'(\Omega)^n$ and $\varphi \in C_c^\infty(\Omega)^n$).

The space of smooth functions with compact support in $\Omega$ is
$C_c^\infty(\Omega)$ and $C_{c,\sigma}^\infty(\Omega)$ is the subspace
of vector fields $u\in C_c^\infty(\Omega)^n$ with vanishing divergence,
i.e., $\Div u=0$.

The Sobolev-Slobodecki\u{\i} space $W^s_q(\Omega)$ for $s = k + \lambda$, $k \in \mathbb{N}_0$, $0 < \lambda < 1$ can be defined as the space of functions $u \in W^k_q(\Omega)$ such that
\begin{equation*}
\| u \|_{W^s_q(\Omega)}
:= \| u \|_{W^k_q(\Omega)}
+ \sum_{|\alpha| = k} \left( \int_\Omega \int_\Omega \frac{|\partial_\alpha u(y) - \partial_\alpha u(x)|^q}{|y - x|^{n + \lambda q}} \D y \D x \right)^\frac{1}{q}
\end{equation*}
is finite (cf.~\cite{Mar87}).
We will further need Sobolev-Slobodecki\u{\i} spaces on the boundary
$W^s_q(\partial \Omega)$ for $s = 1 - \frac{1}{q}$, constituted by the image of the trace operator
\begin{equation*}
\Tr: W^1_q(\Omega) \rightarrow L_q(\partial \Omega), \quad
\Tr u = u|_{\partial \Omega} ~\forall u \in C_c^\infty(\overline{\Omega}).
\end{equation*}
For a treatment of the trace operator and a concrete definition of Sobolev-Slobodecki\u{\i} spaces on the boundary we refer to~\cite{Mar87} (note that the Besov scale $B^s_q(\partial \Omega)$ from~\cite{Mar87} coincides with the Sobolev-Slobodecki\u{\i} scale, since in our considerations $s$ never is an integer, except $s = 0$).
See also~\cite{Tri78}, Thm.\ 4.7.1 for the special case of bounded smooth domains.

The most important subspaces of $L_q(\Omega)^n$ for a treatment of the Stokes equations are
the Lebesgue space of solenoidal functions, defined as 
\[ 
	L_{q,\sigma}(\Omega)
	:=\overline{C_{c,\sigma}^\infty(\Omega)}^{L_q(\Omega)^n}
\]
and the space of gradient fields
\[
	\Gradq := \{ \transB{\nabla p} : p \in \widehat{W}^1_q(\Omega) \},
\]
where
$\widehat{W}^1_q(\Omega) = \{ p \in L_{q,\mathrm{loc}}(\Omega) : \transB{\nabla p} \in L_q(\Omega)^n \}$
is the homogeneous Sobolev space, endowed with the seminorm
$|p|_{\widehat{W}^1_q(\Omega)} = \| \nabla p \|_q$.
As usual, for some domain $\Omega$ and some $1 < q < \infty$, we say that the Helmholtz decomposition exists if the direct decomposition
\begin{equation}\label{standardhhd}
L_q(\Omega)^n = L_{q,\sigma}(\Omega) \oplus \Gradq
\end{equation}
holds.

As explained in the introduction, a specific feature of our approach is
that it covers classes of non-Helmholtz domains, i.e., domains $\Omega$
(and values of $q$) for which decomposition (\ref{standardhhd}) is false. 
Instead of (\ref{standardhhd}) we consider the following 
weaker assumptions on
the domain $\Omega$ and the parameter $q$.
Here 
\begin{equation}\label{defuintsp}
	U_q(\Omega) := L_{q,\sigma}(\Omega) \cap \Gradq
\end{equation}
denotes the intersection. Note that $U_q(\Omega)$ is closed in
$L_q(\Omega)^n$ by the closedness of $L_{q,\sigma}(\Omega)$ and $\Gradq$
and that $U_q(\Omega)=\{0\}$ for Helmholtz domains $\Omega$.

\begin{assumption} \label{thm:Assumption A}
~
\begin{enumerate}[\upshape (i)]
\item \label{thm:Assumption A 1} $U_q(\Omega)$ is a complemented subspace of $L_q(\Omega)^n$.
\item \label{thm:Assumption A 2} $L_{q,\sigma}(\Omega) + \Gradq$ is a closed subspace of $L_q(\Omega)^n$.
\end{enumerate}
\end{assumption}

\begin{assumption} \label{thm:Assumption B}
$L_q(\Omega)^n = L_{q,\sigma}(\Omega) + \Gradq$.
\end{assumption}

\begin{assumption} \label{thm:Assumption C}
$C_c^\infty(\overline{\Omega}) \subset \widehat{W}^1_{q'}(\Omega)$ is dense.
\end{assumption}

 We will see that
the strength of the results derived for the Stokes equations depends
on how many of the above assumptions are fulfilled.
In case Assumption~\ref{thm:Assumption A}\eqref{thm:Assumption A 1} is valid, we denote the continuous linear projection onto $U_q(\Omega)$ by
\begin{equation}\label{projontoinsec}
	\QProj_q: L_q(\Omega)^n \rightarrow L_q(\Omega)^n.
\end{equation}
The significant assertions on topological decompositions 
in the following lemma 
are more or less direct consequences of 
Assumption~\ref{thm:Assumption A}\eqref{thm:Assumption A 1}. However,
as we will see, in a certain sense it represents the key to the rigorous 
treatment of the Stokes equations on uniform $C^{2,1}$-domains
presented in this note.

\begin{lemma} \label{thm:complemented subspaces}
For a Banach space $E$ and subspaces $E_1,E_2 \subset E$ we set 
$U:= E_1 \cap E_2$.
If $Q:E\to E$ denotes a (algebraic and hence not necessarily bounded) 
projection onto $U$ we set
$\widetilde{E}_1 := (I-Q)E_1$ and $\widetilde{E}_2 := (I-Q)E_2$.
Then the following holds true:
\begin{enumerate}[\upshape (i)]
\item \label{thm:complemented subspaces 2} 
We have 
\[
	E_j = \widetilde{E}_j \oplus U,\quad j=1,2, 
\]
as algebraic decompositions. 
Furthermore, if $Q:E\to E$ is bounded and $E_j$ is closed in $E$, then 
$\widetilde{E}_j$ is closed in $E$ as well and the two
decompositions are topological ones.
\item \label{thm:complemented subspaces 3} 
We have 
\[
	E_1+E_2 = \widetilde{E}_1 \oplus E_2 
	= E_1 \oplus \widetilde{E}_2 =
	\widetilde{E}_1 \oplus \widetilde{E}_2 \oplus U
\]
as algebraic decompositions.
In addition, if $Q:E \to E$ is bounded and 
$E_1$, $E_2$ and $E_1+E_2$ are closed in $E$ then 
there exist bounded projections $\mathcal Q_j:E_1+E_2\to E_1+E_2$ 
such that $\mathcal Q_j(E_1+E_2)=\widetilde{E}_j$ for $j=1,2$. 
Hence, in that case all the decompositions above are topological ones.
\end{enumerate}
\end{lemma}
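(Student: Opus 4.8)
The plan is to use throughout that $Q$ is an algebraic projection onto $U$, so that $Q^2 = Q$, $\Range(Q) = U$ and $Qu = u$ for every $u \in U$; the single identity doing most of the work is $Q(I-Q) = 0$. The two algebraic statements are then elementary, while the genuine content lies in the topological assertion of part \eqref{thm:complemented subspaces 3}. For part \eqref{thm:complemented subspaces 2}, I would first record the inclusion $\widetilde{E}_j \subset E_j$: for $x \in E_j$ one has $Qx \in U \subset E_j$, hence $(I-Q)x = x - Qx \in E_j$. The splitting $E_j = \widetilde{E}_j + U$ is then immediate from $x = (I-Q)x + Qx$, and directness follows because any $y \in \widetilde{E}_j \cap U$ satisfies $Qy = 0$ (write $y=(I-Q)z$) and $Qy = y$ (as $y \in U$), forcing $y = 0$.

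For the topological part of \eqref{thm:complemented subspaces 2} the crucial observation is the identity $\widetilde{E}_j = \Kernel(Q) \cap E_j$: the inclusion $\subset$ is clear from $Q(I-Q)=0$, and conversely every $x \in \Kernel(Q) \cap E_j$ equals $(I-Q)x \in \widetilde{E}_j$. When $Q$ is bounded, $\Kernel(Q)$ is closed, so $\widetilde{E}_j$ is an intersection of two closed subspaces and hence closed; moreover $Q|_{E_j}$ is then a bounded projection of $E_j$ onto $U$ with kernel $\widetilde{E}_j$, which is exactly what makes the decomposition topological.

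For the algebraic identities in part \eqref{thm:complemented subspaces 3} I would use a short transport computation: given $x \in E_1$, write $x = (I-Q)x + Qx$ with $(I-Q)x \in \widetilde{E}_1$ and $Qx \in U \subset E_2$, which yields $E_1 + E_2 = \widetilde{E}_1 + E_2$. Directness holds since any $z \in \widetilde{E}_1 \cap E_2$ lies in $E_1 \cap E_2 = U$, so $Qz = z$, while $z \in \widetilde{E}_1$ gives $Qz = 0$, whence $z = 0$. The statement $E_1 + E_2 = E_1 \oplus \widetilde{E}_2$ is symmetric, and the triple decomposition follows by substituting $E_2 = \widetilde{E}_2 \oplus U$ from part \eqref{thm:complemented subspaces 2} into $E_1 + E_2 = \widetilde{E}_1 \oplus E_2$.

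The main work, and the step I expect to be the real obstacle, is the topological assertion in \eqref{thm:complemented subspaces 3}. Here I would argue as follows: by part \eqref{thm:complemented subspaces 2} the spaces $\widetilde{E}_1,\widetilde{E}_2$ are closed, and $U = E_1 \cap E_2$ is closed, while by hypothesis $F := E_1 + E_2$ is closed and hence a Banach space in which, algebraically, $F = \widetilde{E}_1 \oplus \widetilde{E}_2 \oplus U$. I would then invoke the open mapping (bounded inverse) theorem: the addition map $S \colon \widetilde{E}_1 \times \widetilde{E}_2 \times U \to F$ on the product Banach space is a continuous bijection, hence a topological isomorphism, so each coordinate projection is bounded. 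Taking $\mathcal{Q}_j$ to be the $j$-th such projection, viewed as a map $F \to F$ with range $\widetilde{E}_j$ (note $\mathcal{Q}_1 + \mathcal{Q}_2 + Q|_F = I$ on $F$), produces the desired bounded idempotents. The only points requiring care are verifying that all three summands are genuinely closed — this is precisely where closedness of $F$ enters, to license the open mapping theorem — and confirming that the resulting $\mathcal{Q}_j$ are idempotent with the stated ranges.
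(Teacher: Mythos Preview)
Your proof is correct and follows essentially the same route as the paper. The only notable difference is cosmetic: the paper obtains closedness of $\widetilde{E}_j$ by observing that $(I-Q)|_{E_j}$ is a bounded projection of $E_j$ onto $\widetilde{E}_j$, whereas you use the equivalent characterization $\widetilde{E}_j = \Kernel(Q)\cap E_j$; and for the topological assertion in \eqref{thm:complemented subspaces 3} the paper simply records that all summands are closed and says ``this yields the assertion,'' while you spell out the underlying open mapping argument explicitly.
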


\begin{proof}

Note that an algebraic projection onto $U$ always exists by the basis
extension theorem.

In order to see \eqref{thm:complemented subspaces 2}, note that the
definition of $Q$ yields $U = Q E_1 = Q E_2$. Therefore, $E_j = Q E_j
\oplus (I - Q) E_j = U \oplus \widetilde{E}_j$ holds for $j = 1,2$. 
If $Q:E\to E$ additionally is bounded, from this we infer 
that $I-Q:E_j\to E_j$ is a bounded projection 
onto $\widetilde{E}_j$ (with respect to the norm in $E$). 
By this fact, assuming $E_j$, $j=1,2$, to be closed in $E$ yields
that $\widetilde{E}_j$ is closed in $E$, too.

For \eqref{thm:complemented subspaces 3} observe that 
$\widetilde{E}_1 \cap \widetilde{E}_2 \subset E_1 \cap E_2 = \Range(Q)$
and, on the other hand, that $\widetilde{E}_1 \cap \widetilde{E}_2 \subset
\widetilde{E}_1 = (I-Q)E_1 \subset (I-Q)E = \Kernel(Q)$. Thus
$\widetilde{E}_1 \cap \widetilde{E}_2 = \{ 0 \}$. Now, for
$x = x_1 + x_2$ with $x_1 \in E_1$ and $x_2 \in E_2$ we obtain
\[
	x = (I-Q)x_1 + (I-Q)x_2 + Q(x_1 + x_2) 
	\in \widetilde{E}_1 + \widetilde{E}_2 + U.
\]
Consequently, $E_1+E_2 = \widetilde{E}_1 \oplus \widetilde{E}_2 \oplus U$. The
remaining equalities in \eqref{thm:complemented subspaces 3} are
consequences of \eqref{thm:complemented subspaces 2}. If in addition 
$Q:E\to E$ is bounded and 
$E_1$, $E_2$, $E_1+E_2$ are closed in $E$, then due to 
\eqref{thm:complemented subspaces 2} the spaces $\widetilde{E}_2$,
$j=1,2$, are closed in $E$ as well. This yields the assertion.
%
%
\end{proof}

Utilizing the projection $\QProj_q$ from \eqref{projontoinsec}, we can define a smaller space of gradient fields as the subspace $\modGradq := (I - \QProj_q) \Gradq$.
If Assumptions~\ref{thm:Assumption A}\eqref{thm:Assumption A 1} and
\ref{thm:Assumption B} are both valid, thanks to 
Lemma~\ref{thm:complemented subspaces}\eqref{thm:complemented subspaces 3} we have the decomposition
\begin{equation} \label{eq:decomposition modGrad a}
L_q(\Omega)^n = L_{q,\sigma}(\Omega) ~ \oplus ~ \modGradq.
\end{equation}
We denote the related continuous linear projection onto $L_{q,\sigma}(\Omega)$ by $\Proj = \Proj_q$.
If only Assumption~\ref{thm:Assumption A}\eqref{thm:Assumption A 1} is valid, 
Lemma~\ref{thm:complemented subspaces}\eqref{thm:complemented subspaces 3} still yields
\begin{equation} \label{eq:decomposition modGrad b}
L_{q,\sigma}(\Omega) + \Gradq = L_{q,\sigma}(\Omega) ~\oplus~ \modGradq.
\end{equation}
Note, however, that the direct decomposition \eqref{eq:decomposition
modGrad b} may not be a topological one by the fact that 
$L_{q,\sigma}(\Omega) + \Gradq$ might not be closed in $L_q(\Omega)^n$, in general. 
This can only be guaranteed if additionally Assumption
\ref{thm:Assumption A}\eqref{thm:Assumption A 2} is fulfilled.

Decomposition \eqref{eq:decomposition modGrad a} may be regarded 
as a {\em generalized Helmholtz decomposition}. 
We gather some remarks on our main Assumptions:
\begin{remark} \label{remmainassum}
\begin{enumerate}[\upshape (a)]
\item \label{remmainassum a} Obviously any Helmholtz domain in the classical sense fulfills
Assumptions~\ref{thm:Assumption A} and~\ref{thm:Assumption B} with
$U_q(\Omega) = \{ 0 \}$.
\item \label{remmainassum b} For domains $\Omega \subset \mathbb{R}^n$ with uniform $C^{2,1}$-boundary, Assumption~\ref{thm:Assumption C} is known to be valid for
\begin{itemize}
\item $\Omega = \mathbb{R}^n$, $\Omega = \mathbb{R}^n_+$ and perturbed half spaces, i.e., there exists some $R > 0$ such that $\Omega \setminus B_R(0) = \mathbb{R}^n_+ \setminus B_R(0)$ (Lemma~\ref{thm:perturbed cones}; cf.~\cite{Gal11}, Thm.\ II.7.8 for the half space),
\item bent half spaces $\Omega = H_\omega$ (see~\cite{FS94}, Lem.\ 5.1; alternatively one could check that bent half spaces are $(\epsilon,\infty)$-domains, see the definition in Lemma~\ref{thm:(epsilon,infty)-domains}),
\item bounded domains (Lemma~\ref{thm:perturbed cones}: choose $R > 0$ such that $\Omega \subset B_R(0)$; cf.~\cite{Gal11}, Thm.\ II.7.2, Def.\ II.1.1),
\item exterior domains, i.e., $\Omega$ is the complement of some compact set in $\mathbb{R}^n$ (Lemma~\ref{thm:perturbed cones}: choose $R > 0$ such that $\Omega \setminus B_R(0) = \mathbb{R}^n \setminus B_R(0)$; cf.~\cite{FS94}, Lem.\ 5.1 and~\cite{Gal11}, Thm.\ II.7.2, Def.\ II.1.1),
\item asymptotically flat domains, i.e., $\Omega$ is a layer-like domain
$\Omega = \{ x \in \mathbb{R}^n : \gamma_-(x') < x_n < \gamma_+(x') \}$
which is delimited by two functions $\gamma_+, \gamma_- \in C^{2,1}(\mathbb{R}^{n-1})$ satisfying the asymptotic behavior
$\gamma_{\pm}(x') \to c_\pm$ for $|x'| \to \infty$, where $c_- < c_+$ and $\nabla \gamma_{\pm}(x') \to 0$ for $|x'| \to \infty$ (see~\cite{DissA03}, Lem.\ 2.6, Cor.\ 6.4),
\item $(\epsilon,\infty)$-domains, as considered in~\cite{Chu92} and~\cite{Jon81} (Lemma~\ref{thm:(epsilon,infty)-domains}), and
\item perturbed cones 
(see Definition~\ref{perturbed cones} and 
Lemma~\ref{thm:perturbed cones}).
\end{itemize}
\item \label{remmainassum c} Assumption~\ref{thm:Assumption C} is crucial
for all the results on the Stokes equations derived in this paper.
In fact, already for the key statement concerning our main results, Lemma~\ref{thm:resolvent},  a proof without this condition seems hopeless.
Note that the identity
\begin{equation} \label{eq:characterization L_q,sigma 0}
L_{q,\sigma}(\Omega)
= \{ f \in L_q(\Omega)^n : \Div f = 0, ~\nu \cdot f = 0 \text{ on } \partial \Omega \}
\end{equation}
is a consequence of Assumption~\ref{thm:Assumption C} (see Lemma~\ref{thm:characterization L_q,sigma}).
An aperture domain, as considered in~\cite{FS96} and~\cite{Far96}, is an example of a domain for which Assumption~\ref{thm:Assumption C} does not hold for all $1 < q < \infty$. The identity \eqref{eq:characterization L_q,sigma 0} is not satisfied in this case as well (see Remark~\ref{thm:aperture}).
An approach to circumvent this problem and to include also domains not
satisfying Assumption~\ref{thm:Assumption C} might be to define the
space $\Gradq$ by replacing $\widehat{W}^1_{q'}(\Omega)$ by the closure
of $C_c^\infty(\overline{\Omega})^n$ in $\widehat{W}^1_{q'}(\Omega)$ in
its definition.
Then Lemma~\ref{thm:resolvent} had to be proved for a larger space
$\widebar{L}_{q,\sigma}(\Omega)$ which seems to be possible for 
aperture domains. However, without Assumption~\ref{thm:Assumption C}
it seems unclear if Lemma~\ref{thm:resolvent} holds in general.
For this reason we stick to Assumption~\ref{thm:Assumption C} in this
paper.
\item \label{remmainassum d} Observe that for $1 < q \le 2$ we have $U_q(\Omega) =
L_{q,\sigma}(\Omega) \cap \Gradq = \{ 0 \}$ for any uniform
$C^{2,1}$-domain $\Omega$. Consequently, Assumption~\ref{thm:Assumption
A}\eqref{thm:Assumption A 1} is fulfilled and we have $\modGradq = \Gradq$.
This is due to~\cite{FKS07}, Thm.~1.2 (see also~\cite{FKS05}, Thm.~2.1
for the 3-dimensional case), from which we obtain the direct decomposition
\begin{equation*}
L_q(\Omega)^n + L_2(\Omega)^n
= [L_{q,\sigma}(\Omega) + L_{2,\sigma}(\Omega)] \oplus [\Gradq + \Grad{2}]
\end{equation*}
and therefore $L_{q,\sigma}(\Omega) \cap \Gradq
\subset [L_{q,\sigma}(\Omega) + L_{2,\sigma}(\Omega)] \cap [\Gradq + \Grad{2}] = \{ 0 \}$.
\item \label{remmainassum e} Obviously, in case $U_q(\Omega)$ has finite dimension, Assumption~\ref{thm:Assumption A}\eqref{thm:Assumption A 1} is valid and, in case $L_{q,\sigma}(\Omega) + G_q(\Omega)$ has finite codimension, Assumption~\ref{thm:Assumption A}\eqref{thm:Assumption A 2} is valid.
In this regard, we refer to~\cite{FSSV16} for an 
approach to generalized Helmholtz
decompositions of that type.
\item \label{remmainassum f} A sector-like domain with opening angle $\beta > \pi$ and a
smoothed vertex, as considered by Bogovski\u{\i} and Maslennikova
(see~\cite{MB86}), is an example of a non-Helmholtz domain (for $q$
either small or large enough). To that sort of domains our main theorems (in the subsequent Section~\ref{sec:Main results}) apply:
Lemma~\ref{thm:perturbed cones} gives that Assumption~\ref{thm:Assumption C} is valid for sector-like domains.
For these domains Assumptions~\ref{thm:Assumption A} and~\ref{thm:Assumption B} are valid if $q > \frac{2}{1 - \pi / \beta}$. We have $\dim U_q(\Omega) = 1$ in this case.
If $\frac{2}{1 + \pi / \beta} < q < \frac{2}{1 - \pi / \beta}$, Assumptions~\ref{thm:Assumption A} and~\ref{thm:Assumption B} hold and we have $\modGradq = \Gradq$.
If $q < \frac{2}{1 + \pi / \beta}$, Assumption~\ref{thm:Assumption A} holds, but~\ref{thm:Assumption B} does not. We have $\codim (L_{q,\sigma}(\Omega) + \Gradq) = 1$ in this case.
In the special cases $q = \frac{2}{1 \pm \pi / \beta}$, Assumption~\ref{thm:Assumption A}\eqref{thm:Assumption A 1} is still valid, but~\ref{thm:Assumption A}\eqref{thm:Assumption A 2} is not.
Hence, Theorem~\ref{thm:Stokes perfect slip} is applicable to
sector-like domains for any $q \in (1,\infty) \setminus \{ \frac{2}{1
\pm \pi / \beta} \}$ (merely the assertion \eqref{thm:Stokes perfect
slip 4} in Theorem~\ref{thm:Stokes perfect slip} does not apply to
the cases $q = \frac{2}{1 \pm \pi / \beta}$).
Theorems~\ref{thm:Stokes perfect slip inhom} and~\ref{thm:Stokes partial slip} are applicable for
$q \in (\frac{2}{1 + \pi / \beta},\infty) \setminus \{ \frac{2}{1 - \pi / \beta} \}$ and
Theorem~\ref{thm:NSE} is applicable for
$q \in (\frac{4}{1 + \pi / \beta},\infty) \setminus \{ \frac{2}{1 - \pi / \beta},\frac{4}{1 - \pi / \beta} \}$.
\item \label{remmainassum g}
The sector-like domains discussed in \eqref{remmainassum f} are examples of non-Helmholtz domains covered by
the approach presented in this note. A more general class of non-Helmholtz domains that is covered too,
are perturbed sector-like domains, or even more general perturbed cones, see Definition~\ref{perturbed cones} and Lemma~\ref{thm:perturbed cones}.
To the best of the authors knowledge these are up to now the only non-Helmholtz domains known. If $G$ denotes a perturbed sector-like non-Helmholtz domain, we think that
$G \times \mathbb{R}^k$ remains a ($(k+2)$-dimensional) non-Helmholtz domain. Assuming periodicity in the $\mathbb{R}^k$ directions could even lead to the case that the intersection of solenoidal and gradient fields has infinite dimension.
To include such cases the approach given here had to be extended to domains of the form $\Omega \times \mathbb{R}^k$ with periodicity in $\mathbb{R}^k$ direction. This, however, is not subject of the underlying note. 
\end{enumerate}
\end{remark}

\section{Main results for the resolvent problems} \label{sec:Main results}

Let $\Omega \subset \mathbb{R}^n$ be a domain with uniform $C^{2,1}$-boundary, $n \ge 2$ and $1 < q < \infty$.
Our first main result concerns perfect slip boundary conditions.

\begin{theorem} \label{thm:Stokes perfect slip}
Let $0 < \theta < \pi$, $U_q$ be given by \eqref{defuintsp}
and let Assumption~\thref{thm:Assumption C} be valid.
Then there exist $\lambda_0 = \lambda_0(n,q,\theta,\Omega) > 0$ and $C = C(n,q,\theta,\Omega) > 0$ such that for $\lambda \in \Sigma_\theta$, $|\lambda| \ge \lambda_0$ we have the following, concerning
\begin{equation} \label{eq:Stokes perfect slip}
\left\{
\begin{array}{rll}
\lambda u - \Delta u + \transB{\nabla p} & = f & \text{in } \Omega \\
\Div u & = 0 & \text{in } \Omega \\
\Dm(u)\nu & = 0 & \text{on } \partial \Omega \\
\nu \cdot u & = 0 & \text{on } \partial \Omega.
\end{array}
\right.
\end{equation}
\begin{enumerate}[\upshape (i)]
\item \label{thm:Stokes perfect slip 1}
Provided that $f \in L_q(\Omega)^n$,
problem \eqref{eq:Stokes perfect slip} has a solution
\begin{equation*}
(u,\transB{\nabla p}) \in [W^2_q(\Omega)^n \cap L_{q,\sigma}(\Omega)] \times \Gradq
\end{equation*}
if and only if $f \in L_{q,\sigma}(\Omega) + \Gradq$.
In particular, there exists a solution of \eqref{eq:Stokes perfect slip} for any $f \in L_q(\Omega)^n$ in case Assumption~\thref{thm:Assumption B} is valid.
\item \label{thm:Stokes perfect slip 2} The solution space $S_\mathrm{hom} \subset [W^2_q(\Omega)^n \cap L_{q,\sigma}(\Omega)] \times \Gradq$ of the homogeneous problem \eqref{eq:Stokes perfect slip} (i.e., $f = 0$) is
\begin{equation*}
S_\mathrm{hom} = \Big\{ \big( (\lambda - \LaplacePSq)^{-1} \transB{\nabla \pi}, -\transB{\nabla \pi} \big) ~|~ \transB{\nabla \pi} \in U_q(\Omega) \Big\},
\end{equation*}
where
\begin{equation*}
\LaplacePS = \LaplacePSq: \Def(\LaplacePSq) \subset L_q(\Omega)^n \rightarrow L_q(\Omega)^n, \quad u \mapsto \Delta u
\end{equation*}
on $\Def(\LaplacePSq) := \{ u \in W^2_q(\Omega)^n : \Dm(u)\nu = 0 \text{ and } \nu \cdot u = 0 \text{ on } \partial \Omega \}$
is the Laplace operator subject to perfect slip boundary conditions. In
particular, its resolvent $(\lambda - \LaplacePSq)^{-1}$ exists.
Furthermore, $\dim S_\mathrm{hom} = \dim U_q(\Omega)$ holds in the algebraic sense.
\item \label{thm:Stokes perfect slip 3} In case Assumption~\thref{thm:Assumption A}\eqref{thm:Assumption A 1} is valid, we obtain:
For $f \in L_q(\Omega)^n$ there exists a unique solution
$(u,\transB{\nabla p}) \in [W^2_q(\Omega)^n \cap L_{q,\sigma}(\Omega)] \times \modGradq$
of \eqref{eq:Stokes perfect slip}
if and only if $f \in L_{q,\sigma}(\Omega) + \Gradq$.
In particular, in case Assumption~\thref{thm:Assumption B} is valid as well, there exists a unique solution of \eqref{eq:Stokes perfect slip} in $[W^2_q(\Omega)^n \cap L_{q,\sigma}(\Omega)] \times \modGradq$ for any $f \in L_q(\Omega)^n$.
\item \label{thm:Stokes perfect slip 4} In case Assumption~\thref{thm:Assumption A} (i.e.,~\thref{thm:Assumption A}\eqref{thm:Assumption A 1} and~\thref{thm:Assumption A}\eqref{thm:Assumption A 2}) is valid, the solution in \eqref{thm:Stokes perfect slip 3} fulfills the resolvent estimate
\begin{equation} \label{eq:resolvent estimate perfect slip}
\| (\lambda u, \sqrt{\lambda} \nabla u, \nabla^2 u, \nabla p) \|_q
\le C \| f \|_q
\end{equation}
for any $f \in L_{q,\sigma}(\Omega) + \Gradq$.
\end{enumerate}
\end{theorem}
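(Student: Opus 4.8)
The plan is to reduce the whole theorem to two facts established beforehand. First, the perfect-slip Laplacian $\LaplacePSq$ generates an analytic semigroup, so its resolvent $(\lambda-\LaplacePSq)^{-1}$ exists for $\lambda\in\Sigma_\theta$, $|\lambda|\ge\lambda_0$, and satisfies the second-order estimate
\[
\| \lambda v \|_q + \| \sqrt{\lambda}\, \nabla v \|_q + \| \nabla^2 v \|_q \le C \| (\lambda - \LaplacePSq) v \|_q, \qquad v \in \Def(\LaplacePSq),
\]
which comes from the well-posedness of the heat equation subject to perfect slip (Section~\ref{sec:heat}). Second, $L_{q,\sigma}(\Omega)$ is invariant under this resolvent (Section~\ref{invariance}), which I use in the form $(\lambda-\LaplacePSq)^{-1}L_{q,\sigma}(\Omega)\subseteq\Def(\LaplacePSq)\cap L_{q,\sigma}(\Omega)$ and, equivalently, $\Delta u\in L_{q,\sigma}(\Omega)$ whenever $u\in\Def(\LaplacePSq)\cap L_{q,\sigma}(\Omega)$. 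Assumption~\thref{thm:Assumption C} enters only through these two prerequisites and through the characterization \eqref{eq:characterization L_q,sigma 0} of $L_{q,\sigma}(\Omega)$.

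For \eqref{thm:Stokes perfect slip 1} I would argue both implications via invariance. If $f=f_\sigma+\nabla\phi$ with $f_\sigma\in L_{q,\sigma}(\Omega)$, $\nabla\phi\in\Gradq$, then $u:=(\lambda-\LaplacePSq)^{-1}f_\sigma$ lies in $\Def(\LaplacePSq)\cap L_{q,\sigma}(\Omega)$, hence $u\in W^2_q(\Omega)^n\cap L_{q,\sigma}(\Omega)$ satisfies $\Div u=0$ and the perfect-slip conditions, and $\nabla p:=\nabla\phi$ gives $\lambda u-\Delta u+\nabla p=f$. Conversely, a solution $(u,\nabla p)$ in the stated class has $u\in\Def(\LaplacePSq)\cap L_{q,\sigma}(\Omega)$, so $\lambda u-\Delta u=(\lambda-\LaplacePSq)u\in L_{q,\sigma}(\Omega)$ by invariance and therefore $f=(\lambda-\LaplacePSq)u+\nabla p\in L_{q,\sigma}(\Omega)+\Gradq$; the ``in particular'' statement is then immediate from Assumption~\thref{thm:Assumption B}. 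For \eqref{thm:Stokes perfect slip 2}, putting $f=0$ turns the equation into $(\lambda-\LaplacePSq)u=-\nabla p$; since $u\in L_{q,\sigma}(\Omega)$ the left side is solenoidal, forcing $\nabla p\in L_{q,\sigma}(\Omega)\cap\Gradq=U_q(\Omega)$, while conversely every $\nabla\pi\in U_q(\Omega)\subseteq L_{q,\sigma}(\Omega)$ yields the homogeneous solution $\big((\lambda-\LaplacePSq)^{-1}\nabla\pi,-\nabla\pi\big)$. This gives the claimed parametrization of $S_{\mathrm{hom}}$, and injectivity of $\nabla\pi\mapsto\big((\lambda-\LaplacePSq)^{-1}\nabla\pi,-\nabla\pi\big)$ yields $\dim S_{\mathrm{hom}}=\dim U_q(\Omega)$.

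For \eqref{thm:Stokes perfect slip 3} and \eqref{thm:Stokes perfect slip 4} I would invoke Lemma~\ref{thm:complemented subspaces}. Under Assumption~\thref{thm:Assumption A}\eqref{thm:Assumption A 1}, decomposition \eqref{eq:decomposition modGrad b} writes any admissible $f$ uniquely as $f=\widetilde f_\sigma+\nabla\widetilde\phi$ with $\widetilde f_\sigma\in L_{q,\sigma}(\Omega)$, $\nabla\widetilde\phi\in\modGradq$; taking $u=(\lambda-\LaplacePSq)^{-1}\widetilde f_\sigma$ and $\nabla p=\nabla\widetilde\phi$ produces a solution with pressure in $\modGradq$. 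Uniqueness follows since the difference of two such solutions is homogeneous, so by \eqref{thm:Stokes perfect slip 2} its pressure lies in $U_q(\Omega)=\Range(\QProj_q)$, while also lying in $\modGradq=(I-\QProj_q)\Gradq\subseteq\Kernel(\QProj_q)$; the intersection is $\{0\}$, forcing the pressure and then $u$ to vanish. For the estimate \eqref{eq:resolvent estimate perfect slip}, Assumption~\thref{thm:Assumption A}\eqref{thm:Assumption A 2} makes $L_{q,\sigma}(\Omega)+\Gradq$ closed, so by Lemma~\ref{thm:complemented subspaces}\eqref{thm:complemented subspaces 3} the projections onto $L_{q,\sigma}(\Omega)$ and $\modGradq$ are bounded, giving $\|\widetilde f_\sigma\|_q+\|\nabla\widetilde\phi\|_q\le C\|f\|_q$; combining this with the resolvent estimate from Section~\ref{sec:heat} applied to $u=(\lambda-\LaplacePSq)^{-1}\widetilde f_\sigma$ and with $\nabla p=\nabla\widetilde\phi$ yields \eqref{eq:resolvent estimate perfect slip}.

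The main obstacle lies not in this bookkeeping but in the invariance of $L_{q,\sigma}(\Omega)$ under $(\lambda-\LaplacePSq)^{-1}$, on which every step rests. Because the Helmholtz decomposition need not exist here, one cannot argue that a Helmholtz projection commutes with $\LaplacePSq$, and the solenoidality of $\Delta u$ for $u\in\Def(\LaplacePSq)\cap L_{q,\sigma}(\Omega)$ --- equivalently $\nu\cdot\Delta u=0$ on $\partial\Omega$ --- must be extracted directly from the perfect-slip condition $\Dm(u)\nu=0$. That delicate point is precisely where the perfect-slip structure (as opposed to a generic boundary condition) is essential, and I would defer it to Section~\ref{invariance}.
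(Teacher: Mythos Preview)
Your proposal is correct and follows essentially the same route as the paper's proof: both reduce everything to the resolvent estimate for $\LaplacePSq$ (Theorem~\ref{thm:HE with PS}) and the invariance result (Lemma~\ref{thm:resolvent}), then handle (i)--(iv) by exactly the bookkeeping you describe, invoking Lemma~\ref{thm:complemented subspaces} for the decomposition and its boundedness under Assumption~\ref{thm:Assumption A}. Your identification of the invariance of $L_{q,\sigma}(\Omega)$ under $(\lambda-\LaplacePSq)^{-1}$ as the crux, and your deferral of it to Section~\ref{invariance}, matches the paper's organization precisely.
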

The above result can be interpreted as follows:
\begin{remark}
For Helmholtz domains $\Omega$ the Stokes resolvent problem subject
to perfect slip with right-hand side $f \in L_q(\Omega)^n$
is known to be well-posed on $[W^2_q(\Omega)^n \cap
L_{q,\sigma}(\Omega)] \times \Gradq$, see e.g.\ \cite{HS18}. 
Theorem~\ref{thm:Stokes perfect
slip}\eqref{thm:Stokes perfect slip 1},\eqref{thm:Stokes perfect slip 2} shows that in general this is no longer true for non-Helmholtz
domains, that is, if $\dim(L_{q,\sigma}(\Omega) \cap \Gradq) \ge 1$
or if $\codim(L_{q,\sigma}(\Omega) + \Gradq) \ge 1$. This, e.g., is 
the case for (perturbed) sector-like domains, 
see Remark~\ref{remmainassum}\eqref{remmainassum f},\eqref{remmainassum g}.
On the other hand, if the existence of the Helmholtz decomposition
is replaced by the generalized Assumptions~\ref{thm:Assumption A}
and~\ref{thm:Assumption B}, then the Stokes resolvent problem
subject to perfect slip with right-hand side $f \in L_q(\Omega)^n$
is well-posed in 
$[W^2_q(\Omega)^n \cap L_{q,\sigma}(\Omega)] \times \modGradq$,
i.e., if we replace $\Gradq$ by the smaller space $\modGradq$.
Again this is the case for (perturbed) sector-like domains
and $q>\frac{2}{1-\pi/\beta}$, 
see Remark~\ref{remmainassum}\eqref{remmainassum f},\eqref{remmainassum g}.
\end{remark}

The next result concerns inhomogeneous tangential boundary
conditions.

\begin{theorem} \label{thm:Stokes perfect slip inhom}
Let $0 < \theta < \pi$ and let Assumption~\thref{thm:Assumption C} be valid.
Then there exist $\lambda_0 = \lambda_0(n,q,\theta,\Omega) > 0$ and $C = C(n,q,\theta,\Omega) > 0$ such that for $\lambda \in \Sigma_\theta$, $|\lambda| \ge \lambda_0$ we have the following, concerning
\begin{equation} \label{eq:Stokes perfect slip inhom}
\left\{
\begin{array}{rll}
\lambda u - \Delta u + \transB{\nabla p} & = f & \text{in } \Omega \\
\Div u & = 0 & \text{in } \Omega \\
\Dm(u)\nu & = \Pi_\tau g & \text{on } \partial \Omega \\
\nu \cdot u & = 0 & \text{on } \partial \Omega.
\end{array}
\right.
\end{equation}
\begin{enumerate}[\upshape (i)]
\item \label{thm:Stokes perfect slip inhom 1}
If Assumption~\thref{thm:Assumption B} is valid, then for all $f \in L_q(\Omega)^n$ and $g \in W^1_q(\Omega)^n$ there exists a solution
\begin{equation*}
(u,\transB{\nabla p}) \in [W^2_q(\Omega)^n \cap L_{q,\sigma}(\Omega)] \times \Gradq
\end{equation*}
of \eqref{eq:Stokes perfect slip inhom} (which is not unique, in general; 
see Theorem~\thref{thm:Stokes perfect slip}\eqref{thm:Stokes perfect slip 2}).
\item \label{thm:Stokes perfect slip inhom 2} If Assumptions~\thref{thm:Assumption A} and~\thref{thm:Assumption B} are valid, then
for all $f \in L_q(\Omega)^n$ and $g \in W^1_q(\Omega)^n$
there exists a unique solution
$(u,\transB{\nabla p}) \in [W^2_q(\Omega)^n \cap L_{q,\sigma}(\Omega)] \times \modGradq$
of \eqref{eq:Stokes perfect slip inhom} such that
\begin{equation} \label{eq:resolvent estimate perfect slip inhom}
\| (\lambda u, \sqrt{\lambda} \nabla u, \nabla^2 u, \nabla p) \|_q
\le C \| (f,\sqrt{\lambda} g,\nabla g) \|_q.
\end{equation}
\end{enumerate}
\end{theorem}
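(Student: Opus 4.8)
The plan is to reduce \eqref{eq:Stokes perfect slip inhom} to the homogeneous Stokes resolvent problem of Theorem~\ref{thm:Stokes perfect slip} by first absorbing the interior datum $f$ and the tangential boundary datum $\Pi_\tau g$ into a non-solenoidal auxiliary field, and then restoring the divergence constraint by a pure gradient that simultaneously produces the pressure. First I would solve the inhomogeneous vector-valued heat equation subject to perfect slip,
\[
	\lambda w - \Delta w = f \text{ in } \Omega, \qquad \Dm(w)\nu = \Pi_\tau g, \quad \nu\cdot w = 0 \text{ on } \partial\Omega,
\]
which, by the well-posedness established in Section~\ref{sec:heat}, admits for $|\lambda|\ge\lambda_0$ a unique $w\in W^2_q(\Omega)^n$ obeying $\|(\lambda w,\sqrt{\lambda}\nabla w,\nabla^2 w)\|_q\le C\|(f,\sqrt{\lambda} g,\nabla g)\|_q$. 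This $w$ realizes the correct right-hand side and boundary conditions but is in general not solenoidal, reflecting the failure of invariance of $L_{q,\sigma}(\Omega)$ for the inhomogeneous problem noted in the introduction.

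The decisive algebraic observation is that every gradient field lies in the kernel of $\Dm$: the Jacobian of $\nabla\psi$ is the symmetric Hessian, so $\Dm(\nabla\psi)=0$, and subtracting a gradient does not disturb the datum $\Dm(w)\nu=\Pi_\tau g$. I would therefore seek $\psi$ solving the weak Neumann problem $\Delta\psi=\Div w$ in $\Omega$, $\partial_\nu\psi=0$ on $\partial\Omega$, and set $u:=w-\nabla\psi$. Under Assumption~\ref{thm:Assumption C} the characterization \eqref{eq:characterization L_q,sigma 0} then gives $u\in L_{q,\sigma}(\Omega)$, while $\nu\cdot u=0$ and $\Dm(u)\nu=\Pi_\tau g$ are retained. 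Solvability of this Neumann problem with $\nabla\psi\in\Gradq$ is exactly the membership $w\in L_{q,\sigma}(\Omega)+\Gradq$, guaranteed for every $w\in L_q(\Omega)^n$ by Assumption~\ref{thm:Assumption B}; elliptic div-curl regularity, using $\Div w\in W^1_q(\Omega)$, upgrades $\nabla\psi$ to $W^2_q(\Omega)^n$, whence $u\in W^2_q(\Omega)^n$. A short computation using $\lambda w-\Delta w=f$ and $\Delta\nabla\psi=\nabla\Div w$ shows that $(u,\nabla p)$ with $\nabla p:=\lambda\nabla\psi-\nabla\Div w=\nabla(\lambda\psi-\Div w)\in\Gradq$ solves \eqref{eq:Stokes perfect slip inhom}; this is the ``pressure gradient depending only on the data'' of the introduction and establishes part~\eqref{thm:Stokes perfect slip inhom 1}, non-uniqueness being due to the freedom in the decomposition.

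For part~\eqref{thm:Stokes perfect slip inhom 2}, under Assumptions~\ref{thm:Assumption A} and~\ref{thm:Assumption B} I would fix the corrector canonically as $\nabla\psi:=(I-\Proj)w\in\modGradq$, so that the bounded projection yields $\|\nabla\psi\|_q\le C\|w\|_q$. The resulting pressure $\nabla p\in\Gradq$ need not lie in $\modGradq$; to correct this I would add the homogeneous solution $\bigl((\lambda-\LaplacePSq)^{-1}\nabla\pi,-\nabla\pi\bigr)$ from Theorem~\ref{thm:Stokes perfect slip}\eqref{thm:Stokes perfect slip 2} with $\nabla\pi:=\QProj_q\nabla p\in U_q(\Omega)$, which replaces the pressure by $(I-\QProj_q)\nabla p\in\modGradq$ while keeping the velocity in $W^2_q(\Omega)^n\cap L_{q,\sigma}(\Omega)$. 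Uniqueness in $[W^2_q(\Omega)^n\cap L_{q,\sigma}(\Omega)]\times\modGradq$ is then inherited from Theorem~\ref{thm:Stokes perfect slip}\eqref{thm:Stokes perfect slip 3}, since the difference of two solutions solves the homogeneous problem in that class.

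The resolvent estimate \eqref{eq:resolvent estimate perfect slip inhom} is where the $\lambda$-scaling must be tracked carefully, and I expect this to be the main obstacle. The dangerous term is $\lambda\nabla\psi$ in the pressure, which naively carries a full power of $\lambda$; however, combining $\|\nabla\psi\|_q\le C\|w\|_q$ with the heat estimate $\|\lambda w\|_q\le C\|(f,\sqrt{\lambda} g,\nabla g)\|_q$ gives $\|\lambda\nabla\psi\|_q\le C\|(f,\sqrt{\lambda} g,\nabla g)\|_q$, so the $L_q$-decay of $w$ precisely absorbs the extra power of $\lambda$. The remaining weighted norms of $u$ and $\nabla p$ follow by estimating $\nabla^2\psi$ and $\nabla^3\psi$ through elliptic regularity for $\Delta\psi=\Div w$, distributing the weights $\sqrt{\lambda}$ and $1$ as in the heat estimate, together with the sectoriality of $\LaplacePSq$ for the correction term. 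The crux is therefore to secure the $W^2_q$-regularity of the Neumann corrector with $\lambda$-uniform bounds on a domain for which the classical Helmholtz decomposition may fail; this is exactly the gap closed by Assumptions~\ref{thm:Assumption A} and~\ref{thm:Assumption B}.
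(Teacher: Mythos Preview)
Your algebraic observation that $\Dm(\nabla\psi)=0$ and hence subtracting a gradient preserves the tangential boundary datum is correct and elegant, but the scheme breaks at the regularity step. The weak Neumann problem you invoke (equivalently, the decomposition $w=w_0+\nabla\psi$ with $w_0\in L_{q,\sigma}(\Omega)$ supplied by Assumption~\ref{thm:Assumption B}) yields only $\nabla\psi\in L_q(\Omega)^n$. Your claimed upgrade ``elliptic div-curl regularity, using $\Div w\in W^1_q(\Omega)$, upgrades $\nabla\psi$ to $W^2_q(\Omega)^n$'' would require $W^3_q$-regularity for the steady Neumann problem $\Delta\psi=\Div w$, $\partial_\nu\psi=0$, on a general uniform $C^{2,1}$-domain; this is precisely what is \emph{not} available here---zero lies in the spectrum of the Neumann Laplacian on unbounded domains, and no such elliptic estimate is established (Lemma~\ref{thm:Neumann Laplace} concerns only the resolvent $\lambda-\Delta$ with $|\lambda|\ge\lambda_0$). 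Without $\nabla\psi\in W^2_q$ your velocity $u=w-\nabla\psi$ is merely in $L_q$, so neither $u\in W^2_q(\Omega)^n$ nor the trace condition $\nu\cdot u=0$ in the strong sense can be concluded, and the estimate on $\nabla^2 u$ collapses.

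The paper circumvents exactly this obstacle by reversing the order: it does \emph{not} correct the velocity by a gradient, but instead inserts the pressure gradient $\nabla p$ (determined only in $L_q$, via the decomposition of $\nabla\Div\tilde u-\Delta\tilde u$) as a right-hand side and then solves the heat resolvent $\lambda u-\Delta u=f-\nabla p$ with the inhomogeneous perfect-slip conditions. Thus $u$ comes directly out of Theorem~\ref{thm:HE with PS} and is automatically $W^2_q$; the work is then to show $\Div u=0$ a posteriori, which is done by a weak Neumann argument (Lemma~\ref{thm:Neumann Laplace weak}) exploiting the identity $\nabla\Div u-\Delta u=\Dm(u)$-structure and the specific choice of $\nabla p$. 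No regularity on the pressure beyond $\nabla p\in L_q$ is ever needed. Your approach can be repaired only if you supply an independent proof of $W^3_q$-Neumann regularity on uniform $C^{2,1}$-domains, which is a separate (and open, in this generality) problem.
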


Finally, we state the corresponding result concerning partial slip type conditions.

\begin{theorem} \label{thm:Stokes partial slip}
Let Assumptions~\thref{thm:Assumption B} and~\thref{thm:Assumption C} be valid, $0 < \theta < \pi$ and $\alpha \in \mathbb{R}$.
Then there exist $\lambda_0 = \lambda_0(n,q,\theta,\Omega,\alpha) > 0$ and $C = C(n,q,\theta,\Omega) > 0$ such that for $\lambda \in \Sigma_\theta$, $|\lambda| \ge \lambda_0$ we have the following with regard to
\begin{equation} \label{eq:Stokes partial slip Dpm}
\left\{
\begin{array}{rll}
\lambda u - \Delta u + \transB{\nabla p} & = f & \text{in } \Omega \\
\Div u & = 0 & \text{in } \Omega \\
\Pi_\tau (\alpha u + \Dpm(u)\nu) & = \Pi_\tau g & \text{on } \partial \Omega \\
\nu \cdot u & = 0 & \text{on } \partial \Omega.
\end{array}
\right.
\end{equation}
Here we denote $\eqref{eq:Stokes partial slip Dpm}_+$ and $\eqref{eq:Stokes partial slip Dpm}_-$ for the respective boundary terms $\Dpm$ again.
\begin{enumerate}[\upshape (i)]
\item \label{thm:Stokes partial slip 1} There exists $\epsilon = \epsilon(n,q,\Omega,\lambda) > 0$ so that in case $|\alpha| < \epsilon$ for any $f \in L_q(\Omega)^n$ and $g \in W^1_q(\Omega)^n$ there exists a solution
\begin{equation*}
(u,\transB{\nabla p}) \in [W^2_q(\Omega)^n \cap L_{q,\sigma}(\Omega)] \times \Gradq
\end{equation*}
of $\eqref{eq:Stokes partial slip Dpm}_-$.
\item \label{thm:Stokes partial slip 2} If Assumption~\thref{thm:Assumption A} is valid, then for any $f \in L_q(\Omega)^n$ and $g \in W^1_q(\Omega)^n$ there exists a unique solution
\begin{equation*}
(u,\transB{\nabla p}) \in [W^2_q(\Omega)^n \cap L_{q,\sigma}(\Omega)] \times \modGradq
\end{equation*}
of $\eqref{eq:Stokes partial slip Dpm}_+$ resp.\ of $\eqref{eq:Stokes partial slip Dpm}_-$
and the estimate
\begin{equation} \label{eq:resolvent estimate partial slip}
\| (\lambda u, \sqrt{\lambda} \nabla u, \nabla^2 u, \nabla p) \|_q
\le C \| (f,\sqrt{\lambda}g,\nabla g) \|_q
\end{equation}
holds in each case.
\end{enumerate}
\end{theorem}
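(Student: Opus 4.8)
The plan is to deduce both assertions from the inhomogeneous perfect slip result, Theorem~\ref{thm:Stokes perfect slip inhom}, by viewing the zeroth–order term $\alpha u$ and the discrepancy between $\Dp$ and $\Dm$ as perturbations that become negligible once $|\lambda|$ is large. First I would rewrite the tangential boundary condition as an inhomogeneous perfect slip condition. Using $\Pi_\tau \Dm(u)\nu = \Dm(u)\nu$ from \eqref{eq:Dm invariant under tangential projection}, the $-$ case condition $\Pi_\tau(\alpha u + \Dm(u)\nu) = \Pi_\tau g$ is equivalent to $\Dm(u)\nu = \Pi_\tau(g - \alpha u)$, i.e.\ system \eqref{eq:Stokes perfect slip inhom} with datum $g-\alpha u$. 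For the $+$ case I would record the splitting $\Dp(u)\nu = \Dm(u)\nu + 2\,\nabla u\,\nu$ and then invoke the geometric identity $\Pi_\tau(\nabla u\,\nu) = \pm\mathcal{W}u$, valid for fields with $\nu\cdot u = 0$ on $\partial\Omega$, where $\mathcal{W}$ is the Weingarten map of $\partial\Omega$ (the sign being fixed by the orientation of $\nu$). This identity follows by differentiating $\nu\cdot u = 0$ tangentially and should be recorded as a preliminary lemma; crucially, since $\Omega$ is uniform $C^{2,1}$, $\mathcal{W}$ is built from the $\nabla'^2\omega_l$ and is a bounded operator with Lipschitz coefficients, controlled through $M$ and $\rho$ via \eqref{eq:uniformity}. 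Extending $\mathcal{W}$ and $\nu$ to a neighbourhood of $\partial\Omega$ with uniformly bounded $C^{0,1}$–norm, the $+$ case condition becomes $\Dm(u)\nu = \Pi_\tau(g - \alpha u + 2\mathcal{W}u)$. In both cases the new datum has the form $g + Bu$, with $B$ a bounded, differentially zeroth–order operator with Lipschitz coefficients, so that $Bu \in W^1_q(\Omega)^n$ and $\|(\sqrt{\lambda}\,Bu,\nabla(Bu))\|_q \le C\|(\sqrt{\lambda}\,u,u,\nabla u)\|_q$.

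Under Assumption~\thref{thm:Assumption A} I would then set up a fixed–point argument for assertion \eqref{thm:Stokes partial slip 2}. Let $T\colon(f,g)\mapsto(u,\nabla p)$ be the bounded solution operator to the inhomogeneous perfect slip problem furnished by Theorem~\ref{thm:Stokes perfect slip inhom}\eqref{thm:Stokes perfect slip inhom 2}, with range in $[W^2_q(\Omega)^n\cap L_{q,\sigma}(\Omega)]\times\modGradq$ and satisfying \eqref{eq:resolvent estimate perfect slip inhom}. By the reformulation above, $(u,\nabla p)$ solves \eqref{eq:Stokes partial slip Dpm} if and only if $(u,\nabla p) = T(f,g+Bu)$, and by linearity this is the fixed–point equation $(I-R)u = T_1(f,g)$ with $R := T_1(0,B\,\cdot\,)$, $\nabla p$ being recovered afterwards. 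Measuring $u$ in $\|u\|_* := \|(\lambda u,\sqrt{\lambda}\nabla u,\nabla^2 u)\|_q$, the estimate \eqref{eq:resolvent estimate perfect slip inhom} together with the bound on $Bu$ gives $\|Ru\|_* \le C\|(\sqrt{\lambda}\,u,u,\nabla u)\|_q \le C|\lambda|^{-1/2}\|u\|_*$, since each of $\|\sqrt{\lambda}\,u\|_q$, $\|u\|_q$, $\|\nabla u\|_q$ carries at least one negative power of $\sqrt{\lambda}$ relative to $\|u\|_*$. Hence there is $\lambda_0$ (depending on $\alpha,\Omega,\dots$) with $\|R\|_{*\to*}<1$ for $|\lambda|\ge\lambda_0$, so $I-R$ is invertible via Neumann series. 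This yields existence, uniqueness and, by tracking the norms, the estimate \eqref{eq:resolvent estimate partial slip}; the argument runs verbatim for $\eqref{eq:Stokes partial slip Dpm}_+$ and $\eqref{eq:Stokes partial slip Dpm}_-$, the sole difference residing in $B$.

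For assertion~\eqref{thm:Stokes partial slip 1}, where only Assumption~\thref{thm:Assumption B} is at hand, I would fix $\lambda$ with $|\lambda|\ge\lambda_0$ and use the (in general non–unique) bounded solution operator $T_\lambda$ to the $-$ case perfect slip problem provided by Theorem~\ref{thm:Stokes perfect slip inhom}\eqref{thm:Stokes perfect slip inhom 1}, whose construction is linear and bounded for each fixed $\lambda$. Since $B=-\alpha I$ in the $-$ case, the map $u\mapsto T_{\lambda,1}(f,g) - \alpha\,T_{\lambda,1}(0,u)$ is an affine self–map of $W^2_q(\Omega)^n\cap L_{q,\sigma}(\Omega)$ whose linear part has norm at most $|\alpha|\,\|T_\lambda\|$; for $|\alpha|<\epsilon := (2\|T_\lambda\|)^{-1}$ it is a contraction, and its fixed point is a solution in $[W^2_q(\Omega)^n\cap L_{q,\sigma}(\Omega)]\times\Gradq$. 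This explains why $\epsilon$ depends on $\lambda$ and why only existence, not uniqueness, is asserted, as no complement of $U_q(\Omega)$ is presupposed.

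The step I expect to be the main obstacle is the $+$ case. The difference $\Dp(u)\nu - \Dm(u)\nu = 2\,\nabla u\,\nu$ is a genuine \emph{first}–order boundary term, and fed naively into the datum $g$ it would contribute $\|\nabla^2 u\|_q$ to the right–hand side of \eqref{eq:resolvent estimate perfect slip inhom}, a term of the same order as the leading part of $\|u\|_*$ that cannot be absorbed by taking $|\lambda|$ large. The entire argument for $\eqref{eq:Stokes partial slip Dpm}_+$ therefore rests on the identity $\Pi_\tau(\nabla u\,\nu) = \pm\mathcal{W}u$, which trades this first–order term for the bounded zeroth–order operator $\mathcal{W}$; making it rigorous, including the uniform $C^{0,1}$–extension of $\mathcal{W}$ and $\nu$ off $\partial\Omega$ and the resulting $W^1_q$–bound with constants depending only on $M$ and $\rho$, is the delicate point on which the perturbation scheme for the $+$ case hinges.
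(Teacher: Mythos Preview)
Your approach to part~\eqref{thm:Stokes partial slip 2} is essentially the paper's: the geometric identity you call $\Pi_\tau(\nabla u\,\nu)=\pm\mathcal W u$ is exactly Lemma~\ref{thm:matrix different partial slip b.c.} (stated there as $\Pi_\tau\Dp(u)\nu=\Dm(u)\nu+\Pi_\tau Au$ with $A\in W^1_\infty(\Omega)^{n\times n}$), and the $|\lambda|^{-1/2}$-small perturbation plus Neumann series is precisely how the paper proceeds. You have also correctly isolated the crux of the $+$ case.

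For part~\eqref{thm:Stokes partial slip 1}, however, there is a gap. You invoke a ``bounded solution operator $T_\lambda$'' for the perfect slip problem from Theorem~\ref{thm:Stokes perfect slip inhom}\eqref{thm:Stokes perfect slip inhom 1}, asserting that its construction is linear and bounded for fixed $\lambda$. But the proof of that part relies twice on a decomposition $v=v_0+\nabla\pi$ furnished only by Assumption~\ref{thm:Assumption B}; without Assumption~\ref{thm:Assumption A} this decomposition is merely algebraic, and there is no reason the resulting choice of $\nabla\pi$ (hence of $u$) depends boundedly on the data. So a bounded linear $T_\lambda$ need not exist, and your contraction argument, which requires $\|\alpha\,T_{\lambda,1}(0,\cdot)\|<1$, cannot be set up as written. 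The paper sidesteps this: it regards the perfect slip solution map as a \emph{bounded surjective} operator $S:X'\to Y'$ between Banach spaces (surjectivity being the content of Theorem~\ref{thm:Stokes perfect slip inhom}\eqref{thm:Stokes perfect slip inhom 1}), and then uses that surjectivity of bounded operators is stable under small perturbation---a consequence of the open mapping theorem which requires no linear right inverse. Since $\|P_-\|_{X'\to Y'}\le|\alpha|$, this gives the $\epsilon(\lambda)$ without ever needing a bounded solution operator.
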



\begin{remark}
Theorem~\ref{thm:Stokes partial slip}\eqref{thm:Stokes partial slip 2} yields well-posedness of the Stokes resolvent problem subject to partial slip type boundary conditions for a large class of domains
including non-Helmholtz domains such as sector-like domains of Bogovski\u{\i} and Maslennikova type. 
It hence extends the results on no slip conditions obtained in~\cite{BGMST17} 
to partial slip type boundary conditions. 
Of course, Theorem~\ref{thm:Stokes partial slip}\eqref{thm:Stokes partial slip 2} 
also includes the large class of Helmholtz domains in case
Assumption~\ref{thm:Assumption C} is satisfied. 
Consequently, for that case
it also extends the main result in~\cite{GHHS12}, 
concerning the Stokes resolvent problem subject to no slip conditions,
to partial slip type boundary conditions. 
\end{remark}

\begin{remark}\label{noninjectps}
Note that Theorem~\ref{thm:Stokes partial slip}\eqref{thm:Stokes perfect slip 2} yields that solutions $(u,\nabla p)$ of \eqref{eq:Stokes partial slip Dpm} in the class $[W^2_q(\Omega)^n \cap L_{q,\sigma}(\Omega)] \times \Gradq$ are not unique in case
$[W^2_q(\Omega)^n \cap L_{q,\sigma}(\Omega)] \times \modGradq$
is a proper subspace.
In fact, if $\nabla \pi \in U_q(\Omega) = L_{q,\sigma}(\Omega) \cap \Gradq$ is a nonzero function, then
Theorem~\ref{thm:Stokes partial slip}\eqref{thm:Stokes perfect slip 2} yields a solution
$(u,\transB{\nabla p}) \in [W^2_q(\Omega)^n \cap L_{q,\sigma}(\Omega)] \times \modGradq$ of \eqref{eq:Stokes partial slip Dpm} with $f = \nabla \pi$ and $g = 0$,
so $(u,\nabla p - \nabla \pi) \in [W^2_q(\Omega)^n \cap L_{q,\sigma}(\Omega)] \times \Gradq$ is a solution of the homogeneous problem \eqref{thm:Stokes partial slip}.
This solution is non-trivial, since $\nabla p - \nabla \pi = 0$ would yield $\nabla \pi = 0$, due to the definition of $\modGradq$.
\end{remark}

\begin{remark} \label{matrixalpha}
As we will see in the proof of Theorem~\ref{thm:Stokes partial slip}, we could further add another zero order boundary term of the form $Au$ with some matrix
$A \in W^1_\infty(\Omega)^{n \times n}$ to the partial slip type boundary conditions, i.e.,
\begin{equation} \label{eq:partial slip plus perturbation}
\left\{
\begin{array}{rll}
\Pi_\tau (\alpha u + \Dpm(u)\nu + Au) & = \Pi_\tau g & \text{on } \partial \Omega \\
\nu \cdot u & = 0 & \text{on } \partial \Omega
\end{array}
\right.
\end{equation}
and the assertion is still valid
(where the quantities now may additionally depend on the matrix $A$, of course).
We could further replace \eqref{eq:partial slip plus perturbation} by
Robin type boundary conditions, as mentioned in the introduction.
Indeed, an inspection of
the proof of Lemma~\ref{thm:matrix different partial slip b.c.} shows
that Robin type boundary conditions can be regarded as 
a perturbation of perfect slip boundary conditions as well.
\end{remark}

Theorem~\ref{thm:Stokes partial slip} is based on the existence 
of decomposition \eqref{eq:decomposition modGrad a}, which results
from Lemma~\ref{thm:complemented subspaces}(\ref{thm:complemented subspaces 3}) and relies on
Assumptions~\ref{thm:Assumption A} and~\ref{thm:Assumption B}.
Note that 
without Assumptions~\ref{thm:Assumption A} and~\ref{thm:Assumption B},
Lemma~\ref{thm:complemented subspaces}(\ref{thm:complemented subspaces 3})
still implies decomposition \eqref{eq:decomposition modGrad b} to hold
in the algebraic sense with a certain space 
\begin{equation} \label{algdefmodg}
	\modGradq = (I - \QProj_q) \Gradq
\end{equation}
and where $\QProj_q$ is a (algebraic and possibly unbounded) 
projection onto $U_q(\Omega)$.
Assuming the validity of the assertion in
Theorem~\ref{thm:Stokes partial slip}(ii) for such a given $\modGradq$
even implies the necessity of 
\eqref{eq:decomposition modGrad a} as a topological decomposition
with that $\modGradq$. 
In other words, for a fixed $\modGradq$ given through
\eqref{algdefmodg},
well-posedness of the Stokes resolvent problem in the sense 
of Theorem~\ref{thm:Stokes partial slip}(ii) is equivalent to
\eqref{eq:decomposition modGrad a} (as a topological
decomposition). While sufficiency of that equivalence is proved by 
Theorem~\ref{thm:Stokes partial slip},
the necessity follows from

\begin{theorem} \label{thm:Stokes implies generalized HH}
For arbitrary $\alpha \in \mathbb{R}$, $0 < \theta < \pi$ and either
$\eqref{eq:Stokes partial slip Dpm}_+$ or $\eqref{eq:Stokes partial slip Dpm}_-$ assume that there exist
$\lambda_0 = \lambda_0(n,q,\theta,\Omega,\alpha) > 0$ and $C = C(n,q,\theta,\Omega) > 0$ such that for $\lambda \in \Sigma_\theta$, $|\lambda| \ge \lambda_0$ and every $f \in L_q(\Omega)^n$ there exists a unique solution
$
(u,\transB{\nabla p}) \in [W^2_q(\Omega)^n \cap L_{q,\sigma}(\Omega)] \times \modGradq
$
of $\eqref{eq:Stokes partial slip Dpm}$ (with $g = 0$)
that satisfies
\begin{equation} \label{eq:resolvent estimate partial slip b}
\| (\lambda u, \sqrt{\lambda} \nabla u, \nabla^2 u, \nabla p) \|_q
\le C \| f \|_q.
\end{equation}
Then $\modGradq$ is a closed subspace of $L_q(\Omega)^n$
and we have
\begin{equation} \label{eq:gen Helmholtz}
L_q(\Omega)^n = L_{q,\sigma}(\Omega) ~ \oplus ~ \modGradq
\end{equation}
as a topological decomposition.
In particular, Assumptions~\ref{thm:Assumption A} and~\ref{thm:Assumption B} are valid.
\end{theorem}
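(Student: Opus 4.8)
The plan is to extract the decomposition \eqref{eq:gen Helmholtz} directly from the solution operator, exploiting that the assumed \emph{uniqueness} turns the latter into a pseudo-resolvent. For $\lambda\in\Sigma_\theta$, $|\lambda|\ge\lambda_0$, and $f\in L_q(\Omega)^n$ write $(u,\transB{\nabla p})$ for the unique solution of \eqref{eq:Stokes partial slip Dpm} with $g=0$ and set $R_\lambda f:=u$, $P_\lambda f:=\transB{\nabla p}$; both are linear, and \eqref{eq:resolvent estimate partial slip b} gives $\|\lambda R_\lambda f\|_q+\|P_\lambda f\|_q\le C\|f\|_q$, so $R_\lambda,P_\lambda$ are bounded with $\sup_\lambda\|\lambda R_\lambda\|<\infty$. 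The closedness of $\modGradq$ is then immediate: for $f=\transB{\nabla\pi}\in\modGradq$ the pair $(0,\transB{\nabla\pi})$ solves \eqref{eq:Stokes partial slip Dpm} (with $g=0$), whence $P_\lambda\transB{\nabla\pi}=\transB{\nabla\pi}$ by uniqueness; thus $P_\lambda$ is a bounded idempotent with $\Range P_\lambda=\modGradq$, and the range of a bounded projection is closed.

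Next I would prove the resolvent identity $R_\lambda-R_\mu=(\mu-\lambda)R_\lambda R_\mu$. Denote the common operator domain by $\Def:=\{u\in W^2_q(\Omega)^n\cap L_{q,\sigma}(\Omega):\Pi_\tau(\alpha u+\Dpm(u)\nu)=0,\ \nu\cdot u=0\ \text{on}\ \partial\Omega\}$. For fixed $f$ put $u_\lambda:=R_\lambda f$, $u_\mu:=R_\mu f$ and $w:=u_\lambda-u_\mu\in\Def$. Subtracting the two resolvent equations and regrouping via $\lambda u_\lambda-\mu u_\mu=\lambda w+(\lambda-\mu)u_\mu$ shows that $\bigl(w,P_\lambda f-P_\mu f\bigr)$ solves \eqref{eq:Stokes partial slip Dpm} at parameter $\lambda$ (with $g=0$) and right-hand side $(\mu-\lambda)u_\mu$; uniqueness gives $w=(\mu-\lambda)R_\lambda R_\mu f$, i.e.\ the identity. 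Hence $\{R_\lambda\}$ is a pseudo-resolvent, so $\Kernel R_\lambda$ and $\Range R_\lambda$ are independent of $\lambda$. Here $\Kernel R_\lambda=\modGradq$ (if $u=0$ then $\transB{\nabla p}=f\in\modGradq$, and conversely), while $\Range R_\lambda=\Def$ (for $u\in\Def$ the pair $(u,0)$ solves with $f=\lambda u-\Delta u$); since $C_{c,\sigma}^\infty(\Omega)\subset\Def$ we get $\overline{\Range R_\lambda}=L_{q,\sigma}(\Omega)$. Finally, the identity together with $\|R_\lambda f\|_q\le C|\lambda|^{-1}\|f\|_q$ yields $\lambda R_\lambda\to I$ strongly on $\Range R_\lambda$, hence (by the uniform bound and density) on $L_{q,\sigma}(\Omega)$, whereas $\lambda R_\lambda\equiv0$ on $\modGradq$.

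The decisive — and, I expect, hardest — step is to promote this to strong convergence on all of $L_q(\Omega)^n$. Because $\Gradq=U_q(\Omega)\oplus\modGradq$ with $U_q(\Omega)\subset L_{q,\sigma}(\Omega)$, one has $L_{q,\sigma}(\Omega)+\modGradq=L_{q,\sigma}(\Omega)+\Gradq$, and the missing ingredient is precisely the \emph{density} of this sum in $L_q(\Omega)^n$. Granting it, the uniform bound $\sup_\lambda\|\lambda R_\lambda\|<\infty$ and convergence on the dense subspace $L_{q,\sigma}(\Omega)+\modGradq$ (to $I$ on the first summand, to $0$ on the second) force $\lambda R_\lambda$ to converge strongly on $L_q(\Omega)^n$ to a bounded projection $\Helmholtz$ with $\Range\Helmholtz=L_{q,\sigma}(\Omega)$ and $\Kernel\Helmholtz=\modGradq$; this is exactly the topological decomposition \eqref{eq:gen Helmholtz}. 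The required density is equivalent to triviality of the annihilator $(L_{q,\sigma}(\Omega)+\Gradq)^\perp=\Grad{q'}\cap L_{q',\sigma}(\Omega)=U_{q'}(\Omega)$ (using $L_{q,\sigma}(\Omega)^\perp=\Grad{q'}$ and, through an Assumption~\ref{thm:Assumption C}-type density, $\Gradq^\perp=L_{q',\sigma}(\Omega)$). By Remark~\ref{remmainassum}\eqref{remmainassum d} this holds whenever $q'\le2$, i.e.\ for $q\ge2$; it is the exact place where the (non-)Helmholtz nature of $\Omega$ enters, and it fails precisely in those regimes where \eqref{eq:Stokes partial slip Dpm} is not solvable, consistently with the hypothesis being void there.

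It remains to read off the Assumptions. The topological decomposition \eqref{eq:gen Helmholtz} gives $L_{q,\sigma}(\Omega)+\Gradq=L_q(\Omega)^n$ at once, which is Assumption~\ref{thm:Assumption B}; as this sum is all of $L_q(\Omega)^n$ it is in particular closed, giving Assumption~\ref{thm:Assumption A}\eqref{thm:Assumption A 2}. For Assumption~\ref{thm:Assumption A}\eqref{thm:Assumption A 1} I would use that, $\modGradq$ now being closed, $\Gradq=U_q(\Omega)\oplus\modGradq$ is a direct sum of closed subspaces and hence topological by the closed graph theorem, so $U_q(\Omega)$ is complemented in $\Gradq$; together with \eqref{eq:gen Helmholtz} this produces a bounded projection onto $U_q(\Omega)$ (and is anyway immediate in the relevant examples, where $\dim U_q(\Omega)<\infty$).
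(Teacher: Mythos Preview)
Your argument has a genuine gap at the step you yourself flag as ``decisive --- and, I expect, hardest'': the density of $L_{q,\sigma}(\Omega)+\modGradq$ in $L_q(\Omega)^n$. You reduce it to $U_{q'}(\Omega)=\{0\}$, establish this only for $q\ge 2$, and then dismiss the remaining range by asserting that the hypothesis is ``void there.'' That assertion is unproven and, worse, circular: the equivalence $\overline{L_{q,\sigma}(\Omega)+\Gradq}=L_q(\Omega)^n \Leftrightarrow U_{q'}(\Omega)=\{0\}$ shows that $U_{q'}(\Omega)=\{0\}$ is a \emph{consequence} of Assumption~\ref{thm:Assumption B}, which is part of the conclusion you are trying to reach. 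You cannot assume it.

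The paper's proof avoids this trap entirely and is quite different from your pseudo-resolvent scheme. It tests the resolvent equation against $\nabla\varphi$ with $\varphi\in\widehat W^1_{q'}(\Omega)$, uses $u_\lambda\in L_{q,\sigma}(\Omega)$ and \eqref{eq:L_q,sigma} to kill the $\langle\lambda u_\lambda,\nabla\varphi\rangle$ term, and then passes to weak limits along $\lambda\to\infty$: by \eqref{eq:resolvent estimate partial slip b} the family $(u_\lambda)$ is bounded in $W^2_q$ with $\|u_\lambda\|_q\to 0$, so $\Delta u_\lambda\rightharpoonup 0$, while $(\nabla p_\lambda)$ is bounded in $L_q$ and has a weak subsequential limit $\nabla\tilde p\in\Gradq$ (closed, hence weakly closed). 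This yields $\langle f-\nabla\tilde p,\nabla\varphi\rangle=0$ for all $\varphi$, i.e.\ $f-\nabla\tilde p\in L_{q,\sigma}(\Omega)$ by \eqref{eq:L_q,sigma}, which is Assumption~\ref{thm:Assumption B} directly --- no density needed as input. Your observation that $P_\lambda$ is a bounded projection onto $\modGradq$ (hence $\modGradq$ is closed) is correct and slick, but note that $\ker P_\lambda=(\lambda-\Delta)\Def(\Stokesqpm)$, which for genuine partial slip need not equal $L_{q,\sigma}(\Omega)$; so the projection $P_\lambda$ itself does not give \eqref{eq:gen Helmholtz}, and the limit $\lambda\to\infty$ really is essential. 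The weak-compactness route gets you there without the missing density ingredient.
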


\begin{proof}
We will show that in the given situation Assumption~\ref{thm:Assumption B} 
holds. For this purpose we can adapt the argument given 
in~\cite{Shi13} for the classical Helmholtz decomposition.
For $f \in L_q(\Omega)^n$ and some sufficiently large $\lambda > 0$ denote the unique solution of \eqref{eq:Stokes partial slip Dpm} by $(u_\lambda,\nabla p_\lambda)$. For arbitrary $\varphi \in \widehat{W}^1_{q'}(\Omega)$ we have
\begin{equation}
\langle \lambda u_\lambda, \nabla \varphi \rangle_{q,q'}
- \langle \Delta u_\lambda, \nabla \varphi \rangle_{q,q'}
+ \langle \nabla p_\lambda, \nabla \varphi \rangle_{q,q'}
= \langle f, \nabla \varphi \rangle_{q,q'}.
\end{equation}
Now, $u_\lambda \in L_{q,\sigma}(\Omega)$ gives $\langle \lambda
u_\lambda, \nabla \varphi \rangle_{q,q'} = 0$. Note that we have
utilized \eqref{eq:L_q,sigma} here.
Using \eqref{eq:resolvent estimate partial slip b}, we obtain that $(u_\lambda)_{\lambda \ge \lambda_0}$ is bounded in $W^2_q(\Omega)^n$ and therefore has a weak limit $\widetilde{u} \in W^2_q(\Omega)^n$ for $\lambda \to \infty$ (by considering some sequence $\lambda_n \xrightarrow{n \to \infty} \infty$ and passing to a subsequence if necessary). We further receive from \eqref{eq:resolvent estimate partial slip b} that $\widetilde{u}_\lambda \to 0$ in $L_q(\Omega)^n$ for $\lambda \to \infty$ and hence $\widetilde{u} = 0$.
Moreover, $(\nabla p_\lambda)_{\lambda \ge \lambda_0}$ is bounded (due to \eqref{eq:resolvent estimate partial slip b}) and therefore has a weak limit in $L_q(\Omega)^n$ for $\lambda \to \infty$ (again for some suitable subsequence). This weak limit must be some gradient $\nabla \widetilde{p} \in \Gradq$ since $\Gradq$ is weakly closed in $L_q(\Omega)^n$.
In total, letting $\lambda \to \infty$ in \eqref{eq:resolvent estimate partial slip b} yields
\begin{equation*}
\langle \nabla \widetilde{p},\nabla \varphi \rangle_{q,q'}
= \langle f,\nabla \varphi \rangle_{q,q'}
\end{equation*}
for all $\varphi \in \widehat{W}^1_{q'}(\Omega)$
so we have
$f = (f - \nabla \widetilde{p}) + \nabla \widetilde{p} \in L_{q,\sigma}(\Omega) + \Gradq$.
Thus Assumption~\ref{thm:Assumption B} is valid.

By this fact and thanks to Lemma~\ref{thm:complemented subspaces}
we obtain decomposition \eqref{eq:gen Helmholtz}
in the algebraic sense. 
Unique solvability of \eqref{eq:Stokes partial slip Dpm} (for some arbitrary $\lambda$) yields that the related solution operator is an isomorphism.
This implies $\modGradq$ to be closed in $L_q(\Omega)^n$.

Finally, if $\QProj_q:L_q(\Omega)\to L_q(\Omega)$ is the 
(a priori algebraic) projection onto $U_q(\Omega)$ such that
$\modGradq=(I - \QProj_q) \Gradq$ we deduce
\[
	\Gradq = \QProj_q \Gradq \oplus (I - \QProj_q) \Gradq 
	= U_q(\Omega)\oplus \modGradq.
\]
Since both, $U_q(\Omega)$ and $\modGradq$ are closed in $L_q(\Omega)^n$,
$\QProj_q$ is bounded. Consequently, 
Assumption~\ref{thm:Assumption A} is fulfilled, too.
\end{proof}

\section{Main results for the time dependent problems}
\label{sec:instationary}

Still let $\Omega \subset \mathbb{R}^n$ be a domain with uniform 
$C^{2,1}$-boundary, $n \ge 2$ and $1 < q < \infty$.
We aim to define a suitable Stokes operator, related to the Stokes equations 
\begin{equation} \label{eq:SE Dpm}
\left\{
\begin{array}{rll}
\partial_t u - \Delta u + \transB{\nabla p} & = f & \text{in } (0,T) \times \Omega \\
\Div u & = 0 & \text{in } (0,T) \times \Omega \\
\Pi_\tau (\alpha u + \Dpm(u)\nu) & = 0 & \text{on } (0,T) \times \partial \Omega \\
\nu \cdot u & = 0 & \text{on } (0,T) \times \partial \Omega \\
u|_{t=0} & = u_0 & \text{in } \Omega.
\end{array}
\right.
\end{equation}
Again by $\eqref{eq:SE Dpm}_+$ and $\eqref{eq:SE Dpm}_-$ we refer to
the Stokes equations subject to the boundary conditions related to the 
boundary operator $\Dm$ and $\Dp$, respectively.
Under Assumptions~\ref{thm:Assumption A} and~\ref{thm:Assumption B}
we can use decomposition \eqref{eq:decomposition modGrad a} to reformulate $\eqref{eq:SE Dpm}_+$ resp.\ $\eqref{eq:SE Dpm}_-$ with $f: (0,T) \rightarrow L_q(\Omega)^n$ and $u_0 \in L_{q,\sigma}(\Omega)$ as the equivalent
problems
\begin{equation} \label{eq:PSE Dpm}
\left\{
\begin{array}{rll}
\partial_t u - \Proj \Delta u & = f_0 & \text{in } (0,T) \times \Omega \\
\Div u & = 0 & \text{in } (0,T) \times \Omega \\
\Pi_\tau (\alpha u + \Dpm(u)\nu) & = 0 & \text{on } (0,T) \times \partial \Omega \\
\nu \cdot u & = 0 & \text{on } (0,T) \times \partial \Omega \\
u|_{t=0} & = u_0 & \text{in } \Omega
\end{array}
\right.
\end{equation}
with $f_0: (0,T) \rightarrow L_{q,\sigma}(\Omega)$ and $u_0 \in L_{q,\sigma}(\Omega)$,
where
$\Proj = \Proj_q: L_q(\Omega)^n \rightarrow L_q(\Omega)^n$
is the continuous linear projection onto $L_{q,\sigma}(\Omega)$ related to decomposition \eqref{eq:decomposition modGrad a}.

For $\alpha \in \mathbb{R}$ we define the Stokes operator subject to partial slip type boundary conditions as
\begin{equation} \label{eq:Stokes operator def}
\Stokespm = \Stokesqpm: \Def(\Stokesqpm) \subset L_{q,\sigma}(\Omega) \rightarrow L_{q,\sigma}(\Omega), \quad
u \longmapsto \Proj_q \Delta u
\end{equation}
on
$\Def(\Stokesqpm) := \{ u \in W^2_q(\Omega)^n : \Pi_\tau(\alpha u + \Dpm(u)\nu) = 0 \text{ and } \nu \cdot u = 0 \text{ on } \partial \Omega \} \cap L_{q,\sigma}(\Omega)$.

We obtain the reformulation~\eqref{eq:PSE Dpm} by the following equivalence of the corresponding resolvent problems which is an immediate consequence of the continuity of the projection $\Proj$.

\begin{lemma} \label{thm:Stokes operator}
Let Assumptions~\thref{thm:Assumption A} and~\thref{thm:Assumption B} be valid and let $0 < \theta < \pi$ and $\alpha \in \mathbb{R}$.
Then for any $\lambda \in \Sigma_\theta$ and $f \in L_q(\Omega)^n$ a couple
\begin{equation*}
(u,\transB{\nabla p}) \in \Def(\Stokesqpm) \times \modGradq
\end{equation*}
solves
\begin{equation} \label{eq:Stokes partial slip Dpm hom}
\left\{
\begin{array}{rll}
\lambda u - \Delta u + \transB{\nabla p} & = f & \text{in } \Omega \\
\Div u & = 0 & \text{in } \Omega \\
\Pi_\tau (\alpha u + \Dpm(u)\nu) & = 0 & \text{on } \partial \Omega \\
\nu \cdot u & = 0 & \text{on } \partial \Omega
\end{array}
\right.
\end{equation}
if and only if $u \in \Def(\Stokesqpm)$ solves
\begin{equation} \label{eq:Stokes partial slip Dpm hom Proj}
\left\{
\begin{array}{rll}
\lambda u - \Proj \Delta u & = \Proj f & \text{in } \Omega \\
\Div u & = 0 & \text{in } \Omega \\
\Pi_\tau (\alpha u + \Dpm(u)\nu) & = 0 & \text{on } \partial \Omega \\
\nu \cdot u & = 0 & \text{on } \partial \Omega
\end{array}
\right.
\end{equation}
and if $\transB{\nabla p} = (I - \Proj) (f - \lambda u + \Delta u)$.
In either case we have
\begin{equation} \label{eq:Stokes operator resolvent estimate a}
\| (\lambda u, \sqrt{\lambda} \nabla u, \nabla^2 u, \nabla p) \|_q \le C \| f \|_q
\end{equation}
for all $|\lambda| > \lambda_0$
with some $C = C(n,q,\theta,\Omega) > 0$.
\end{lemma}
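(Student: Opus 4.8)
The plan is to exploit that, under Assumptions~\ref{thm:Assumption A} and~\ref{thm:Assumption B}, Lemma~\ref{thm:complemented subspaces}\eqref{thm:complemented subspaces 3} provides the topological decomposition \eqref{eq:decomposition modGrad a}, so that $\Proj$ is a \emph{bounded} projection onto $L_{q,\sigma}(\Omega)$ with complementary bounded projection $I-\Proj$ onto $\modGradq$. The two structural facts I would use repeatedly are $\Proj w = w$ for $w \in L_{q,\sigma}(\Omega)$ and $\Proj w = 0$ (equivalently $(I-\Proj)w = w$) for $w \in \modGradq$. I would also note at the outset that the second through fourth lines of \eqref{eq:Stokes partial slip Dpm hom} and of \eqref{eq:Stokes partial slip Dpm hom Proj} coincide and are already built into membership in $\Def(\Stokesqpm)$, so throughout only the first (momentum) line requires attention.

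For the forward implication, suppose $(u,\nabla p) \in \Def(\Stokesqpm) \times \modGradq$ solves \eqref{eq:Stokes partial slip Dpm hom}. Applying $\Proj$ to $\lambda u - \Delta u + \nabla p = f$ and using $u \in L_{q,\sigma}(\Omega)$ (hence $\Proj u = u$) together with $\nabla p \in \modGradq$ (hence $\Proj \nabla p = 0$) yields $\lambda u - \Proj \Delta u = \Proj f$, i.e.\ the first line of \eqref{eq:Stokes partial slip Dpm hom Proj}. Rearranging the momentum equation gives $\nabla p = f - \lambda u + \Delta u$, and since $\nabla p \in \modGradq$ we have $(I-\Proj)\nabla p = \nabla p$; applying $I-\Proj$ therefore produces the asserted representation $\nabla p = (I-\Proj)(f - \lambda u + \Delta u)$.

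For the converse I would begin with a solution $u \in \Def(\Stokesqpm)$ of \eqref{eq:Stokes partial slip Dpm hom Proj} and \emph{define} $\nabla p := (I-\Proj)(f - \lambda u + \Delta u)$, which automatically lies in $\Range(I-\Proj) = \modGradq$. Writing $w := f - \lambda u + \Delta u$ and using $\Proj u = u$, one computes $\lambda u - \Delta u + \nabla p = \lambda u - \Delta u + w - \Proj w = f - \Proj w$, where $\Proj w = \Proj f - \lambda u + \Proj \Delta u$. The bracketed quantity vanishes precisely by the projected momentum equation $\lambda u - \Proj \Delta u = \Proj f$, so $\Proj w = 0$ and $\lambda u - \Delta u + \nabla p = f$. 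Thus $(u,\nabla p)$ solves \eqref{eq:Stokes partial slip Dpm hom}, the remaining lines holding because $u \in \Def(\Stokesqpm)$.

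Finally, for the resolvent estimate I would observe that the bound on $(\lambda u, \sqrt{\lambda}\nabla u, \nabla^2 u, \nabla p)$ is delivered directly by Theorem~\ref{thm:Stokes partial slip}\eqref{thm:Stokes partial slip 2} with $g = 0$, which applies for $|\lambda| \ge \lambda_0$ under Assumptions~\ref{thm:Assumption A} and~\ref{thm:Assumption B}; equivalently, once the bound on $(\lambda u, \sqrt{\lambda}\nabla u, \nabla^2 u)$ is in hand, the pressure bound follows from $\nabla p = (I-\Proj)(f - \lambda u + \Delta u)$ and boundedness of $I-\Proj$, giving $\| \nabla p \|_q \le C(\| f \|_q + \| \lambda u \|_q + \| \nabla^2 u \|_q) \le C \| f \|_q$. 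Since the two formulations share the same pair $(u,\nabla p)$, the estimate holds in either case. The only genuinely non-routine ingredient is the boundedness of $\Proj$, i.e.\ that \eqref{eq:decomposition modGrad a} is a \emph{topological} decomposition; this is exactly what Assumptions~\ref{thm:Assumption A} and~\ref{thm:Assumption B} secure through Lemma~\ref{thm:complemented subspaces}, and the rest is the purely algebraic bookkeeping above.
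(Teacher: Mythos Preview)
Your proof is correct and matches the paper's approach: the paper itself merely states that the equivalence is ``an immediate consequence of the continuity of the projection $\Proj$'' and does not spell out the algebraic bookkeeping you provide. Your invocation of Theorem~\ref{thm:Stokes partial slip}\eqref{thm:Stokes partial slip 2} for the resolvent estimate is likewise what the paper intends; note that this tacitly brings in Assumption~\ref{thm:Assumption C}, which is in force whenever the lemma is actually applied (e.g.\ in the proof of Theorem~\ref{thm:Stokes semigroup}).
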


\begin{theorem} \label{thm:Stokes semigroup}
Let $\alpha \in \mathbb{R}$, $1 < p \le q < \infty$ and
let Assumptions~\thref{thm:Assumption A},~\thref{thm:Assumption B} and~\thref{thm:Assumption C} be valid for $q$.
Then the Stokes operator $\Stokesqpm$ is the generator of a strongly continuous analytic semigroup
$(e^{t \Stokesqpm})_{t \ge 0}$
on $L_{q,\sigma}(\Omega)$.
For arbitrary $\omega \in (0,\frac{\pi}{2})$ we can find $d \ge 0$ such that the semigroup, generated by the shiftet Stokes operator $\Stokesqpm - d$, is bounded with angle $\omega$.
If Assumptions~\thref{thm:Assumption A},~\thref{thm:Assumption B}
and~\thref{thm:Assumption C} do hold for $p$ as well,
then for $T > 0$
there exists a constant $C = C(n,q,p,\Omega,\alpha,T) > 0$ such that for
all $t \in (0,T)$ and any $f \in L_{p,\sigma}(\Omega)$ the following
inequalities hold:
\begin{enumerate}[\upshape (i)]
\item \label{thm:estimate Stokes semigroup 1} $\| e^{t \Stokesppm} f
\|_q \le C t^{-\frac{n}{2}(\frac{1}{p} - \frac{1}{q})} \| f \|_p$ \quad
if $\frac{1}{p} - \frac{1}{q} < \frac{2}{n}$,
\item \label{thm:estimate Stokes semigroup 2} $\| \nabla e^{t \Stokesppm} f \|_q \le C t^{-\frac{1}{2} -\frac{n}{2}(\frac{1}{p} - \frac{1}{q})} \| f \|_p$ \quad if $\frac{1}{p} - \frac{1}{q} < \frac{1}{n}$.
\end{enumerate}
\end{theorem}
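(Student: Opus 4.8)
\emph{Generation and the sharp sectorial angle.} The plan is to read both the generation of the analytic semigroup and the optimal angle directly off the resolvent estimate already at hand. First I would fix $\theta\in(\pi/2,\pi)$ and invoke Lemma~\ref{thm:Stokes operator} (which rests on Theorem~\ref{thm:Stokes partial slip}\eqref{thm:Stokes partial slip 2}): for $\lambda\in\Sigma_\theta$ with $|\lambda|\ge\lambda_0$ the operator $\lambda-\Stokesqpm$ is boundedly invertible on $L_{q,\sigma}(\Omega)$ and \eqref{eq:Stokes operator resolvent estimate a} gives $\|\lambda(\lambda-\Stokesqpm)^{-1}\|_{L_{q,\sigma}(\Omega)\to L_{q,\sigma}(\Omega)}\le C$. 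Since $\theta>\pi/2$, this is exactly the hypothesis of the standard generation theorem, so $\Stokesqpm$ generates an analytic semigroup; strong continuity follows from density of $\Def(\Stokesqpm)$, which holds because $C_{c,\sigma}^\infty(\Omega)\subset\Def(\Stokesqpm)$ and $L_{q,\sigma}(\Omega)=\overline{C_{c,\sigma}^\infty(\Omega)}^{L_q(\Omega)^n}$. For the prescribed angle I would then exploit that the resolvent bound is available on $\Sigma_\theta$ for \emph{every} $\theta<\pi$. Given $\omega\in(0,\pi/2)$, set $\theta:=\pi/2+\omega<\pi$ and pick $d\ge\lambda_0$. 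A short geometric estimate shows that every $\mu\in\Sigma_{\pi/2+\omega}$ satisfies $\mu+d\in\Sigma_\theta$, $|\mu+d|\ge d\ge\lambda_0$ and $|\mu+d|\ge(\cos\omega)|\mu|$; hence $\mu\in\rho(\Stokesqpm-d)$ and
\[
\|\mu(\mu-(\Stokesqpm-d))^{-1}\|=\|\mu(\mu+d-\Stokesqpm)^{-1}\|\le\frac{C|\mu|}{|\mu+d|}\le\frac{C}{\cos\omega}
\]
uniformly on $\Sigma_{\pi/2+\omega}$, which is precisely the claimed boundedness with angle $\omega$.

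\emph{The $L_p$–$L_q$ estimates.} This part carries the actual content, and I would treat it by interpolation applied to the $p$-semigroup alone (so that no consistency between the $p$- and $q$-semigroups is needed). Assuming Assumptions~\ref{thm:Assumption A},~\ref{thm:Assumption B},~\ref{thm:Assumption C} for $p$, the first part yields, on $(0,T)$ and with $C=C(T)$ absorbing the shift factor $e^{dT}$,
\[
\|e^{t\Stokesppm}f\|_p\le C\|f\|_p,\qquad \|\Stokesppm e^{t\Stokesppm}f\|_p\le Ct^{-1}\|f\|_p.
\]
The resolvent estimate \eqref{eq:resolvent estimate partial slip} at one fixed $\lambda$ gives the domain embedding $\|u\|_{W^2_p(\Omega)}\le C(\|\Stokesppm u\|_p+\|u\|_p)$ for $u\in\Def(\Stokesppm)$ (here $\Stokesppm u=\Proj_p\Delta u$ and the associated pressure gradient is controlled by the data), so $\|e^{t\Stokesppm}f\|_{W^2_p}\le Ct^{-1}\|f\|_p$ on $(0,T)$. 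For $\theta\in(0,1)$ I would then interpolate the single bounded operator $e^{t\Stokesppm}$ between the two codomains $L_p(\Omega)^n$ and $W^2_p(\Omega)^n$, keeping the domain $L_{p,\sigma}(\Omega)$ fixed, to get
\[
\big\|e^{t\Stokesppm}f\big\|_{[L_p(\Omega)^n,W^2_p(\Omega)^n]_\theta}\le Ct^{-\theta}\|f\|_p,\qquad 0<t<T.
\]
Because $\Omega$ is a uniform $C^{2,1}$-domain it carries a bounded Sobolev extension operator, so this interpolation space is the Bessel potential space $H^{2\theta}_p(\Omega)^n$, and for $1<p\le q<\infty$ the sharp embeddings $H^{2\theta}_p(\Omega)\hookrightarrow L_q(\Omega)$ when $2\theta=n(\tfrac1p-\tfrac1q)$ and $H^{2\theta}_p(\Omega)\hookrightarrow W^1_q(\Omega)$ when $2\theta=1+n(\tfrac1p-\tfrac1q)$ hold (cf.~\cite{Tri78}). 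Choosing $\theta=\tfrac n2(\tfrac1p-\tfrac1q)$ yields \eqref{thm:estimate Stokes semigroup 1}, where admissibility $\theta<1$ reads $\tfrac1p-\tfrac1q<\tfrac2n$; choosing $\theta=\tfrac12+\tfrac n2(\tfrac1p-\tfrac1q)$ yields \eqref{thm:estimate Stokes semigroup 2}, where $\theta<1$ reads $\tfrac1p-\tfrac1q<\tfrac1n$.

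\emph{Main obstacle.} The generation and shift steps are routine. The delicate point is to run the interpolation and the Sobolev embeddings on the unbounded, merely uniform $C^{2,1}$-domain $\Omega$: one must secure the identification $[L_p(\Omega)^n,W^2_p(\Omega)^n]_\theta=H^{2\theta}_p(\Omega)^n$ together with the accompanying embeddings uniformly up to the boundary, and this is exactly where the extension property of uniform $C^{2,1}$-domains (and control of constants independent of the covering) enters. I would stress that only the \emph{embedding} of the interpolation space into $H^{2\theta}_p$ is used, never an identification of the fractional-power domain $\Def((\Stokesppm)^\theta)$, so no analysis of the boundary conditions carried by fractional powers of the Stokes operator is required.
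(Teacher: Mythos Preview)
Your proof is essentially the paper's: sectoriality from Theorem~\ref{thm:Stokes partial slip} via Lemma~\ref{thm:Stokes operator} gives the analytic semigroup, and the $L_p$--$L_q$ smoothing is obtained by interpolating $e^{t\Stokesppm}$ between $L_p$ and $W^2_p$ followed by a Sobolev embedding. The paper resolves what you call the ``main obstacle'' exactly as you suggest, by invoking the Stein extension operator (Lemma~\ref{thm:Stein-Extension}) to transfer the interpolation inequality $\|v\|_{H^\beta_p}\le\|v\|_{L_p}^{1-\beta/2}\|v\|_{H^2_p}^{\beta/2}$ from $\mathbb{R}^n$ to $\Omega$.

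One small correction to your shift argument: the claim $|\mu+d|\ge d$ for all $\mu\in\Sigma_{\pi/2+\omega}$ is false once $\omega>0$ (take $\mu$ small with argument slightly larger than $\pi/2$, so $\operatorname{Re}\mu<0$). The paper handles this by choosing the shift as $d=\lambda_0/\delta$ with $\delta=\delta(\theta)\in(0,1)$ small enough that $d+\Sigma_{\pi/2+\omega}\subset\Sigma_\theta\cap\{|\lambda|\ge\lambda_0\}$; with this adjustment your estimate $\|\mu(\mu+d-\Stokesqpm)^{-1}\|\le C|\mu|/|\mu+d|$ goes through.
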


Theorem~\ref{thm:Stokes semigroup} 
leads to the following result on mild solutions for the 
corresponding Navier-Stokes equations
\begin{equation} \label{eq:NSE Dpm}
\left\{
\begin{array}{rll}
\partial_t u - \Delta u + \transB{\nabla p} + (u \cdot \nabla)u & = 0 & \text{in } (0,T) \times \Omega \\
\Div u & = 0 & \text{in } (0,T) \times \Omega \\
\Pi_\tau( \alpha u + \Dpm(u)\nu) & = 0 & \text{on } (0,T) \times \partial \Omega \\
\nu \cdot u & = 0 & \text{on } (0,T) \times \partial \Omega \\
u|_{t=0} & = u_0 & \text{in } \Omega,
\end{array}
\right.
\end{equation}
where $(u \cdot \nabla)u = \sum_{j=1}^n u^{j} (\partial_j u)$.
Once again we use the notation $\eqref{eq:NSE Dpm}_+$ and 
$\eqref{eq:NSE Dpm}_-$ for the
system related to $\Dm$ and $\Dp$, respectively.
Also note that by $\Div u = 0$ we can write 
$(u \cdot \nabla)u = \sum_{j=1}^n \partial_j (u^{j} u)$.

\begin{theorem} \label{thm:NSE}
Let $n < q < \infty$ such that Assumptions~\thref{thm:Assumption A},~\thref{thm:Assumption B} and~\thref{thm:Assumption C} are valid for $q$ and also for $\frac{q}{2}$.
As before, we denote the projections related to decomposition
\eqref{eq:decomposition modGrad a} by $\Proj_q$ resp.\ $\Proj_{q/2}$.
Let $0 < \theta < \pi$, $\alpha \in \mathbb{R}$ and $u_0 \in L_{q,\sigma}(\Omega)$.
Then the Navier-Stokes equations $\eqref{eq:NSE Dpm}_+$ resp.\ $\eqref{eq:NSE Dpm}_-$ admit a unique local mild solution depending continuously on $u_0$, i.e., there exists $T > 0$
such that the integral equation
\begin{equation} \label{eq:Integral equation}
u(t) = e^{t \Stokesqpm} u_0 - \int_0^t e^{(t - s) \Stokes{q/2}} \Proj_{
q/2} \sum_{j=1}^n \partial_j (u^{j}(s) u(s)) ds, \quad t \in [0,T]
\end{equation}
related to the projected Navier-Stokes equations
\begin{equation} \label{eq:PNSE Dpm}
\left\{
\begin{array}{rll}
\partial_t u - \Proj_{q} \Delta u + \Proj_{q/2} (u \cdot \nabla)u & = 0 & \text{in } (0,T) \times \Omega \\
\Div u & = 0 & \text{in } (0,T) \times \Omega \\
\Pi_\tau( \alpha u + \Dpm(u)\nu) & = 0 & \text{on } (0,T) \times \partial \Omega \\
\nu \cdot u & = 0 & \text{on } (0,T) \times \partial \Omega \\
u|_{t=0} & = u_0 & \text{in } \Omega
\end{array}
\right.
\end{equation}
admits a unique solution $u$ satisfying
\begin{equation} \label{eq:mild solution condition}
\begin{split}
u &\in \mathrm{BC} \big( [0,T],L_{q,\sigma}(\Omega)
\big),\\
[t \mapsto \sqrt{t} \nabla u(t)] &\in \mathrm{BC} \big ([0,T],L_q(\Omega)^{n \times n}
\big).
\end{split}
\end{equation}
\end{theorem}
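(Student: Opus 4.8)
The plan is to prove local-in-time existence and uniqueness of mild solutions to the integral equation \eqref{eq:Integral equation} by a standard fixed-point argument in a suitable Banach space, using the semigroup estimates from Theorem~\ref{thm:Stokes semigroup} as the essential analytic input. First I would define the solution space
\[
	X_T := \Big\{ u \in \mathrm{BC}\big([0,T],L_{q,\sigma}(\Omega)\big) : [t \mapsto \sqrt{t}\,\nabla u(t)] \in \mathrm{BC}\big([0,T],L_q(\Omega)^{n \times n}\big) \Big\}
\]
equipped with the norm
\[
	\| u \|_{X_T} := \sup_{0 < t < T} \| u(t) \|_q + \sup_{0 < t < T} \sqrt{t}\, \| \nabla u(t) \|_q,
\]
and consider the map $\Phi$ sending $u$ to the right-hand side of \eqref{eq:Integral equation}, namely $\Phi(u)(t) = e^{t\Stokesqpm} u_0 - \int_0^t e^{(t-s)\Stokes{q/2}} \Proj_{q/2} \sum_{j=1}^n \partial_j(u^j(s) u(s)) \, ds$. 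The goal is to show that for $T$ small enough $\Phi$ maps a closed ball in $X_T$ into itself and is a contraction there; the unique fixed point is then the desired mild solution, and continuous dependence on $u_0$ follows from the Lipschitz estimates.

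The core of the argument is estimating the nonlinear (Duhamel) term. Since $\Div u = 0$ we write the convection term in divergence form $\sum_{j=1}^n \partial_j(u^j u)$ and exploit that one derivative can be moved onto the semigroup via Theorem~\ref{thm:Stokes semigroup}\eqref{thm:estimate Stokes semigroup 2}. The key point is that $u^j u$ lies in $L_{q/2}(\Omega)$ with $\| u^j u \|_{q/2} \le C \| u \|_q^2$ by Hölder, and that the hypotheses on $q$ (namely $n < q < \infty$, so that $\tfrac{1}{q/2} - \tfrac{1}{q} = \tfrac{1}{q} < \tfrac{1}{n}$) ensure the gradient estimate applies with the pair $(p,r) = (q/2, q)$. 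Concretely, using Theorem~\ref{thm:Stokes semigroup}\eqref{thm:estimate Stokes semigroup 2} with $p = q/2$,
\begin{align*}
	\Big\| \nabla \! \int_0^t e^{(t-s)\Stokes{q/2}} \Proj_{q/2} \sum_{j} \partial_j(u^j u) \, ds \Big\|_q
	&\le C \int_0^t (t-s)^{-\frac{1}{2} - \frac{n}{2}(\frac{1}{q/2} - \frac{1}{q})} \Big\| \sum_j \partial_j(u^j u)(s) \Big\|_{?} \, ds,
\end{align*}
but rather than differentiate the data I would instead place the full derivative $\nabla$ onto the semigroup: treating $\Proj_{q/2} \partial_j (u^j u)$ via the identity $\partial_j(u^j u) = \Div(u^j u)$ and absorbing this divergence into the gradient bound, the integrand is controlled by $(t-s)^{-\frac{1}{2} - \frac{n}{2}(\frac{2}{q} - \frac{1}{q})} \| u(s) \|_q^2 = (t-s)^{-\frac{1}{2} - \frac{n}{2q}} \| u(s) \|_q^2$. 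The exponent $\tfrac{1}{2} + \tfrac{n}{2q} < 1$ precisely when $q > n$, which is exactly the standing hypothesis, so the time integral converges and produces a positive power of $T$; an analogous computation with Theorem~\ref{thm:Stokes semigroup}\eqref{thm:estimate Stokes semigroup 1} handles the $L_q$-norm of the Duhamel term itself.

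I expect the main technical obstacle to be the careful bookkeeping of the boundedness constants and the precise scaling exponents, in particular verifying that the same $T > 0$ works simultaneously for the $L_q$-norm and the weighted gradient norm so that $\Phi$ is a genuine self-map of a ball in $X_T$. The quadratic structure of the nonlinearity gives estimates of the schematic form $\| \Phi(u) \|_{X_T} \le \| e^{\cdot \Stokesqpm} u_0 \|_{X_T} + C\, T^\delta \| u \|_{X_T}^2$ and $\| \Phi(u) - \Phi(v) \|_{X_T} \le C\, T^\delta (\| u \|_{X_T} + \| v \|_{X_T}) \| u - v \|_{X_T}$ for some $\delta = \delta(n,q) > 0$, from which a standard argument (choosing $T$ small depending on $\| u_0 \|_q$, or equivalently working on a small ball) yields the contraction. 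One subtlety worth flagging is that the singular weight $\sqrt{t}$ at $t = 0$ requires checking that $\sqrt{t}\,\nabla e^{t\Stokesqpm} u_0 \to 0$ (or stays bounded) as $t \to 0^+$ for $u_0 \in L_{q,\sigma}(\Omega)$, which follows from the analyticity and the $L_p$-$L_q$ smoothing of the semigroup established in Theorem~\ref{thm:Stokes semigroup}; granting this, membership of the fixed point in the class \eqref{eq:mild solution condition} is immediate from the fixed-point iteration.
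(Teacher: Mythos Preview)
Your overall framework is the right one and matches the paper's: set up the fixed-point map $\Phi$ on the space $X_T$ with norm $\sup_t\|u(t)\|_q + \sup_t \sqrt{t}\,\|\nabla u(t)\|_q$ and use the $L_p$--$L_q$ estimates of Theorem~\ref{thm:Stokes semigroup} with $p=q/2$. However, the way you propose to estimate the nonlinear term contains a genuine gap.

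You try to ``absorb the divergence into the gradient bound'' so that the integrand is controlled by $(t-s)^{-\frac12-\frac{n}{2q}}\|u(s)\|_q^2$, i.e.\ you want to move $\partial_j$ from the data onto the semigroup. There are two problems. First, Theorem~\ref{thm:Stokes semigroup} only provides estimates for $e^{tA}f$ and $\nabla e^{tA}f$; an estimate of the form $\|e^{tA}\Proj_{q/2}\Div F\|_q \le C t^{-1/2-n/(2q)}\|F\|_{q/2}$ is not available here (it would typically come from duality, and the remark following Theorem~\ref{thm:NSE} explicitly warns that the projections $\Proj_q$ and $\Proj_{q/2}$ need not be consistent in this generality). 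Second, and more fatally, even if you had such an estimate, applying it to the \emph{gradient} of the Duhamel term---which you must do to close the $\sqrt{t}\,\|\nabla u(t)\|_q$ part of the $X_T$-norm---would produce a kernel $(t-s)^{-1-\frac{n}{2q}}$, which is not locally integrable for any $q$. So the argument as written cannot close the fixed-point iteration in $X_T$.

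The paper does \emph{not} move $\partial_j$ onto the semigroup. Instead it keeps the derivative on the data and estimates directly
\[
\|\partial_j(u^j u)(s)\|_{q/2} \le C\,\|u(s)\|_q\,\|\nabla u(s)\|_q \le C\,s^{-1/2}\,\|u\|_{X_T}^2,
\]
using H\"older and the product rule. This is precisely why the weighted gradient norm $\sup_s\sqrt{s}\,\|\nabla u(s)\|_q$ is built into the solution space: it is needed to control the nonlinearity, not merely to record extra regularity of the fixed point. With this bound the two time integrals become Beta integrals with kernels $(t-s)^{-\frac{n}{2q}}s^{-1/2}$ (for $\|\cdot\|_q$) and $(t-s)^{-\frac12-\frac{n}{2q}}s^{-1/2}$ (for $\sqrt{t}\,\|\nabla\cdot\|_q$), both of which converge exactly when $q>n$, yielding the required factor $T^{\frac12-\frac{n}{2q}}$. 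Replace your ``absorb the divergence'' step by this estimate and the rest of your outline goes through as in the paper.
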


\begin{remark}
Observe that rewriting \eqref{eq:PNSE Dpm} as the original Navier-Stokes equations \eqref{eq:NSE Dpm} might be not possible in case the projections $\Proj_q$ and $\Proj_{q/2}$ fail to coincide on $L_q(\Omega)^n \cap L_{q/2}(\Omega)^n$.
In fact, the projection $\Proj_q$ might not be consistent with respect to $1 < q < \infty$ in general.
For the sake of well-definedness of \eqref{eq:Integral equation} and for the 
construction of mild solutions, however, 
we require $\Proj_{q/2}$ in front of the nonlinearity.
On the other hand, if $(\Proj_q)_{q_0<q<\infty}$ is a consistent scale
for some $q_0>1$, then Theorem~\ref{thm:NSE}
provides a unique mild solution of the classical 
Navier-Stokes system for that range of $q$, as usual.
Note that consistency of $(\Proj_q)_{1<q<\infty}$ is known
for a large class of Helmholtz domains
(see~\cite{HS18}, Sec. 2.2) and for $(\Proj_q)_{q_0<q<\infty}$
with $q_0=2/(1-\pi/\beta)$ on sector-like domains, 
see Remark~\ref{remmainassum}\eqref{remmainassum f} and \cite{MB86}. 
\end{remark}

\section{Preliminary tools for the proofs}\label{sec:preliminaryproofs}

We intend to make use of various versions of Gauß's theorem, Green's
formula and continuity of the trace map for the normal component.
Since for domains with uniform $C^{2,1}$-boundary
these tools are partly hard to find in the common literature, we
recall precise statements including proofs here. Some
of the basic proofs, however, are outsourced 
to Appendix~\ref{sec:Appendix Gauss}.
For a domain $\Omega \subset \mathbb{R}^n$ with uniform $C^{2,1}$-boundary, $n \ge 2$ and $1 < q < \infty$ we define, as in~\cite{Soh01},
\begin{equation*}
E_q(\Omega) := \{ f \in L_q(\Omega)^n : \Div f \in L_q(\Omega) \}
\end{equation*}
with norm $\| f \|_{E_q(\Omega)} := \| f \|_q + \| \Div f \|_q$ and
\begin{equation*}
W^{-\frac{1}{q}}_q(\partial \Omega)
:= [W^{1 - \frac{1}{q'}}_{q'}(\partial \Omega)]'.
\end{equation*}

We denote the standard trace operator by
\begin{equation*}
\Tr = \Tr_{\partial \Omega}: W^1_q(\Omega) \rightarrow W^{1-\frac{1}{q}}_q(\partial \Omega), \quad
\Tr u = u|_{\partial \Omega} ~\forall u \in C_c^\infty(\overline{\Omega})
\end{equation*}
which is continuous for $1 \le q < \infty$ (see Lemma~\ref{thm:Trace}).
We will write $u|_{\partial \Omega} = \Tr u$ also for $u \in W^1_q(\Omega)$. Furthermore, for the surface integral we will write
$\int_{\partial \Omega} u \D \sigma = \int_{\partial \Omega}
u|_{\partial \Omega} \D \sigma$ for $u \in W^1_1(\Omega)$ if no
confusion seems likely.

We will further make use of the generalized normal trace operator 
$\Tr_\nu$. For this purpose, we require density of the embedding $C_c^\infty(\overline{\Omega})^n \subset E_q(\Omega)$ for $1 < q < \infty$ (see Lemma~\ref{thm:density E_q}). Existence of this trace means that
there is a bounded linear operator
\begin{equation*}
\Tr_\nu: E_{q'}(\Omega) \longrightarrow W^{-\frac{1}{q'}}_{q'}(\partial \Omega)
\end{equation*}
such that for $v \in W^1_{q'}(\Omega)^n$ we have
$\Tr_\nu v = \nu \cdot v|_{\partial \Omega}$ in $W^{-\frac{1}{q'}}_{q'}(\partial \Omega)$, that is,
\begin{equation*}
\Tr_\nu v = \Big[ W^{1-\frac{1}{q}}_q(\partial \Omega) \ni g \mapsto \int_{\partial \Omega} g(\nu \cdot v) \D \sigma \Big]
\end{equation*}
(see Lemma~\ref{thm:Trace of the normal component}).
For $v \in E_{q'}(\Omega)$, we denote by $\langle u , \nu \cdot v \rangle_{\partial \Omega}
:= \langle \Tr u , \Tr_\nu v \rangle_{\partial \Omega}$ the application of $\Tr_\nu v$ to some $g = \Tr u \in W^{1-1/q}_q(\partial \Omega)$, $u \in W^1_q(\Omega)$.

The most general form of Gauß's theorem
\begin{equation*}
\int_\Omega \Div (u v) \D \lambda_n
= \dualb{u}{\nu \cdot v}
\end{equation*}
resp.\ of corresponding Green's formula
\begin{equation*}
\int_\Omega u (\Div v) \D \lambda_n
= \dualb{u}{\nu \cdot v} - \int_\Omega \nabla u \cdot v \D \lambda_n
\end{equation*}
that we intend to make use of, is for 
$u \in W^1_q(\Omega)$ and $v \in E_{q'}(\Omega)$, where $1 < q < \infty$ (see Lemmas~\ref{thm:Greens formula in E_q} and~\ref{thm:Extended Gauss theorem}).
Note that in this case
$\Div (u v) = \nabla u \cdot v + u (\Div v)$ is in fact a function in $L_1(\Omega)$, which can be seen via approximation (Lemma~\ref{thm:density E_q}).

We proceed with useful characterizations of the space $L_{q,\sigma}(\Omega)$ for $n \ge 2$ and $1 < q < \infty$. One well-known characterization is
\begin{equation} \label{eq:L_q,sigma}
L_{q,\sigma}(\Omega) = \{ f \in L_q(\Omega)^n : \dualq{f}{\nabla \varphi} = 0 ~\forall \varphi \in \widehat{W}^1_{q'}(\Omega) \}
\end{equation}
which is even true for arbitrary domains $\Omega \subset \mathbb{R}^n$ (see~\cite{Gal11}, Lem.\ III.1.1.).
Now let $\Omega \subset \mathbb{R}^n$ be a domain with uniform $C^{2,1}$-boundary.

\begin{lemma} \label{thm:characterization L_q,sigma}
We have
\begin{equation} \label{eq:embedding L_q,sigma}
L_{q,\sigma}(\Omega) \subset \{ f \in L_q(\Omega)^n : \Div f = 0, ~\nu \cdot f|_{\partial \Omega} = 0 \},
\end{equation}
where $\nu \cdot f|_{\partial \Omega} = \Tr_\nu f \in
W^{-1/q}_q(\partial \Omega)$.
If additionally Assumption~\thref{thm:Assumption C} is valid, then we have
equality in \eqref{eq:embedding L_q,sigma}.
\end{lemma}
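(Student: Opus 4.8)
The plan is to establish the inclusion \theqref{eq:embedding L_q,sigma} first and then upgrade it to equality under Assumption~\thref{thm:Assumption C}. For the inclusion, take $f \in L_{q,\sigma}(\Omega)$. The vanishing of the divergence is immediate: by the characterization \theqref{eq:L_q,sigma} we have $\dualq{f}{\nabla \varphi} = 0$ for every $\varphi \in \widehat{W}^1_{q'}(\Omega)$, and specializing to $\varphi \in C_c^\infty(\Omega)$ yields $\Div f = 0$ in the distributional sense. In particular $f \in E_q(\Omega)$, so the generalized normal trace $\Tr_\nu f \in W^{-1/q}_q(\partial \Omega)$ is well defined.

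To see that $\Tr_\nu f = 0$, I would invoke Green's formula for the dual pair $u \in W^1_{q'}(\Omega)$ and $v = f \in E_q(\Omega)$, namely $\int_\Omega u (\Div f) \D \lambda_n = \dualb{u}{\nu \cdot f} - \dualq{f}{\nabla u}$. Since $\Div f = 0$ and, again by \theqref{eq:L_q,sigma}, $\dualq{f}{\nabla u} = 0$ for $u \in W^1_{q'}(\Omega) \subset \widehat{W}^1_{q'}(\Omega)$, both the left-hand side and the volume term vanish, leaving $\dualb{u}{\nu \cdot f} = 0$ for all $u \in W^1_{q'}(\Omega)$. Because the trace $\Tr: W^1_{q'}(\Omega) \rightarrow W^{1-1/q'}_{q'}(\partial \Omega)$ is surjective and $W^{-1/q}_q(\partial \Omega)$ is by definition the dual of $W^{1-1/q'}_{q'}(\partial \Omega)$, this forces $\Tr_\nu f = 0$, completing the inclusion.

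For the reverse inclusion under Assumption~\thref{thm:Assumption C}, let $f \in L_q(\Omega)^n$ satisfy $\Div f = 0$ and $\Tr_\nu f = 0$. By \theqref{eq:L_q,sigma} it suffices to show $\dualq{f}{\nabla \varphi} = 0$ for all $\varphi \in \widehat{W}^1_{q'}(\Omega)$. Since $|\dualq{f}{\nabla \varphi}| \le \| f \|_q \| \nabla \varphi \|_{q'}$, the functional $\varphi \mapsto \dualq{f}{\nabla \varphi}$ is continuous with respect to the seminorm of $\widehat{W}^1_{q'}(\Omega)$, so by the density asserted in Assumption~\thref{thm:Assumption C} it is enough to treat $\varphi \in C_c^\infty(\overline{\Omega})$. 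For such $\varphi$, Green's formula with $u = \varphi$ and $v = f$ gives $0 = \int_\Omega \varphi (\Div f) \D \lambda_n = \dualb{\varphi}{\nu \cdot f} - \dualq{f}{\nabla \varphi}$, where the boundary term equals $\dualb{\Tr \varphi}{\Tr_\nu f} = 0$. Hence $\dualq{f}{\nabla \varphi} = 0$, and passing to the limit yields the claim.

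I expect the main obstacle to lie in the reverse inclusion, specifically in the essential role of Assumption~\thref{thm:Assumption C}. The Green's formula argument directly produces the orthogonality relation only for test functions in $W^1_{q'}(\Omega)$, in particular for $C_c^\infty(\overline{\Omega})$, whereas \theqref{eq:L_q,sigma} demands it for the possibly larger space $\widehat{W}^1_{q'}(\Omega)$. Bridging this gap is exactly what the density hypothesis provides, and it is precisely here that the argument would fail for domains such as aperture domains; the continuity of $\varphi \mapsto \dualq{f}{\nabla \varphi}$ in the homogeneous seminorm is what legitimizes the limiting procedure and must be invoked carefully.
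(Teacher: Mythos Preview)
Your proof is correct and follows essentially the same approach as the paper's. The only cosmetic difference is that the paper phrases the integration-by-parts step via the Extended Gau{\ss} theorem (Lemma~\ref{thm:Extended Gauss theorem}), i.e., $\int_\Omega \Div(uf)\,d\lambda_n = \dualb{u}{\nu\cdot f}$, whereas you invoke Green's formula in $E_q$ (Lemma~\ref{thm:Greens formula in E_q}); since $\Div(uf) = u\,\Div f + \nabla u \cdot f$, these are equivalent formulations of the same identity.
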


\begin{proof}
Let $f \in L_{q,\sigma}(\Omega)$. For any $\varphi \in C_c^\infty(\Omega)$ we have
$\langle \Div f , \varphi \rangle = - \langle f , \nabla \varphi \rangle = 0$, due to \eqref{eq:L_q,sigma}, and therefore $\Div f = 0$ in the sense of distributions.
We now aim to show that $\dualb{g}{\Tr_\nu f} = 0$ holds for $g \in
W^{1-1/q'}_{q'}(\partial \Omega)$. By the surjectivity of the 
trace operator, we can write $g = \Tr u$ with some $u \in
W^1_{q'}(\Omega)$.
We use Lemma~\ref{thm:Extended Gauss theorem} (note that $f \in E_q(\Omega)$) and \eqref{eq:L_q,sigma} to obtain
\begin{equation*}
\dualb{g}{\Tr_\nu f}
= \dualb{u}{\nu \cdot f}
= \int_\Omega \Div (u f) \D \lambda_n
= \int_\Omega \nabla u \cdot f \D \lambda_n
= 0.
\end{equation*}

Conversely, let $f \in L_q(\Omega)^n$ with $\Div f = 0$ and $\nu \cdot f|_{\partial \Omega} = 0$ and additionally assume that Assumption~\ref{thm:Assumption C} is valid. For $\varphi \in C_c^\infty(\overline{\Omega})$ we have, using Lemma~\ref{thm:Extended Gauss theorem},
\begin{equation*}
\dualq{f}{\nabla \varphi}
= \int_\Omega \Div (\varphi f) \D \lambda_n
= \dualb{\varphi}{\nu \cdot f}
= 0.
\end{equation*}
Since $C_c^\infty(\overline{\Omega}) \subset \widehat{W}^1_{q'}(\Omega)$ is dense, this holds for $\varphi \in \widehat{W}^1_{q'}(\Omega)$ as well. Hence, \eqref{eq:L_q,sigma} gives that $f \in L_{q,\sigma}(\Omega)$.
\end{proof}

\begin{remark} \label{thm:aperture}
Note that without Assumption~\ref{thm:Assumption C} the right-hand side of \eqref{eq:embedding L_q,sigma} can in fact be larger than $L_{q,\sigma}(\Omega)$.
An aperture domain as considered in~\cite{FS96} and~\cite{Far96} is an example of a Helmholtz domain with uniform $C^{2,1}$-boundary 
for which equality in \eqref{eq:embedding L_q,sigma} does not hold if 
$q > \frac{n}{n-1}$. In that case we have
\begin{equation*}
L_{q,\sigma}(\Omega)
= \{ f \in L_q(\Omega)^n ~|~ \Div f = 0, ~\nu \cdot f|_{\partial \Omega} = 0, ~\Phi(f) = 0 \},
\end{equation*}
where $\Phi(f) = \int_{M} \nu \cdot f ~d\sigma$ denotes the flux of a function $f$ through the aperture of the domain and $M$ is an $(n-1)$-dimensional manifold shutting the aperture.
\end{remark}

When dealing with the boundary conditions under consideration, certain
elementary calculations will appear several times. Therefore, once
and for all we state them here and make use of them in the sequel often
without any further notice.

\begin{lemma} \label{thm:Dminus}
Consider a function $\varphi: \Omega \rightarrow \mathbb{R}$ and vector fields $u,v,w: \Omega \rightarrow \mathbb{R}^n$. We have the following calculation rules (in case the product rule for derivatives is applicable):
\begin{enumerate}[\upshape (i)]
\item \label{thm:Dminus 1} $\Div (\Dm(u) v)
= (\nabla \Div u - \transB{\Delta u}) \cdot v + (\transA{\nabla u} - \transB{\nabla u}) : \transB{\nabla v}$.
\item \label{thm:Dminus 2} $\Div (\Dm(u) \transB{\nabla \varphi})
= (\nabla \Div u - \transB{\Delta u}) \cdot \nabla \varphi$.
\item \label{thm:Dminus 3} $v \cdot \Dm(u) w = - w \cdot \Dm(u) v$.
\end{enumerate}
\end{lemma}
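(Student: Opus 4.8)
The key structural fact behind all three identities is that the matrix field $\Dm(u) = \transA{\nabla u} - \transB{\nabla u}$ is antisymmetric: with the conventions of Section~\ref{sec:basics} its $(i,j)$-entry is $\partial_j u^i - \partial_i u^j$, which changes sign under the exchange $i \leftrightarrow j$. The plan is to record this at the outset, prove \eqref{thm:Dminus 1} by a direct computation in components, and then obtain \eqref{thm:Dminus 2} and \eqref{thm:Dminus 3} as near-immediate consequences of the antisymmetry.

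For \eqref{thm:Dminus 1} I would argue in coordinates. Writing $M := \Dm(u)$, the $i$-th component of $M v$ is $\sum_j (\partial_j u^i - \partial_i u^j) v^j$, so that $\Div(M v) = \sum_{i,j} \partial_i\bigl[(\partial_j u^i - \partial_i u^j) v^j\bigr]$. Applying the product rule splits the sum into a part in which the outer derivative falls on $u$ and a part in which it falls on $v$. In the first part one reads off $\sum_{i,j} (\partial_i \partial_j u^i) v^j = \nabla(\Div u) \cdot v$ and $\sum_{i,j} (\partial_i \partial_i u^j) v^j = \transB{\Delta u} \cdot v$, which together give $(\nabla \Div u - \transB{\Delta u}) \cdot v$. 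The second part is $\sum_{i,j}(\partial_j u^i - \partial_i u^j)(\partial_i v^j)$, which is exactly the Frobenius contraction $(\transA{\nabla u} - \transB{\nabla u}) : \transB{\nabla v}$, since $(\transB{\nabla v})_{ij} = \partial_i v^j$. This is the asserted identity.

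For \eqref{thm:Dminus 2} the plan is simply to specialize \eqref{thm:Dminus 1} to $v = \transB{\nabla \varphi}$. The only thing to verify is that the Frobenius term drops out: in this case $\transB{\nabla v}$ is the gradient of the vector field $\transB{\nabla \varphi}$, i.e.\ the Hessian of $\varphi$ with entries $\partial_i \partial_j \varphi$, which is symmetric (the product rule being applicable, mixed partials commute), whereas $\transA{\nabla u} - \transB{\nabla u}$ is antisymmetric; the contraction of an antisymmetric matrix with a symmetric one is zero, leaving only $(\nabla \Div u - \transB{\Delta u}) \cdot \nabla \varphi$. Finally, \eqref{thm:Dminus 3} follows directly from antisymmetry: writing the dot products as $v \cdot \Dm(u) w = \transA{v}\, \Dm(u)\, w$ and using $\transA{\Dm(u)} = -\Dm(u)$ gives $v \cdot \Dm(u) w = -\bigl(\Dm(u) v\bigr) \cdot w = - w \cdot \Dm(u) v$.

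There is no genuine obstacle here beyond careful bookkeeping. The one point demanding attention is the transpose convention fixed in Section~\ref{sec:basics}, namely that $\transB{\nabla u}$ has entries $\partial_i u^j$ while $\transA{\nabla u}$ is its transpose with entries $\partial_j u^i$; keeping these straight is precisely what makes the two summands in \eqref{thm:Dminus 1} align with $\nabla \Div u$ and $\transB{\Delta u}$, and the remaining double sum align with the Frobenius product.
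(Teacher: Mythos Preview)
Your proof is correct and matches the paper's approach exactly: the paper records \eqref{thm:Dminus 1} and \eqref{thm:Dminus 3} as ``straight forward computations'' and then derives \eqref{thm:Dminus 2} from \eqref{thm:Dminus 1} via the same symmetric/antisymmetric contraction argument you give. You have simply filled in the details the paper elides.
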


\begin{proof}
Straight forward computations yield \eqref{thm:Dminus 1} and \eqref{thm:Dminus 3}. Now \eqref{thm:Dminus 2} is a consequence of \eqref{thm:Dminus 1}, since $v := \transB{\nabla \varphi}$ implies
$(\transA{\nabla u} - \transB{\nabla u}) : \transB{\nabla v}
= \sum_{i,j=1}^n (\partial_j u^{i} - \partial_i u^{j}) \partial_i \partial_j \varphi
= 0$.
\end{proof}

In addition to \eqref{eq:uniformity},
the application of perturbation theory in our treatment of the appearing boundary conditions necessitates to take into account 
some further estimates associated with the uniform $C^{2,1}$-boundary of $\Omega$.
For this purpose, we introduce a more concrete view on the parametrization of $\partial \Omega$.
Recall the notation of the rotation $Q_l$, the translation $\tau_l$, the
balls $B_l$ as well as the functions $\omega_l$, the domains $H_l$ and
the index set $\Gamma = \Gamma_0 \cup \Gamma_1$ that we introduced in
order to describe a uniform $C^{2,1}$-boundary in Definition~\ref{thm:uniform C^2,1 domain}.
Fix some $l \in \Gamma_1$. A $C^{2,1}$-diffeomorphism between $H_{\omega_l}$ and $\mathbb{R}^n_+$ is given by
\begin{equation*}
\Phi_l : H_{\omega_l} \overset{\cong}{\longrightarrow} \mathbb{R}^n_+, \quad
x \mapsto (x',x_n - \omega_l(x'))^T.
\end{equation*}
We obtain
$\transA{\nabla \Phi_l} = I - ((\partial_j \omega_l) \delta_{in})_{i,j = 1,\dots,n}$ and
$(\transA{\nabla \Phi_l})^{-1} = 2I - \nabla \transA{\Phi_l}$.
Now $\Psi_l(x) := \Phi_l(Q_l(x - \tau_l))$ defines a $C^{2,1}$-diffeomorphism $\Psi_l: H_l \overset{\cong}{\longrightarrow} \mathbb{R}^n_+$.
Using the canonical extension of $\Phi_l$ to $\mathbb{R}^n$ and therefore of $\Psi_l$ as well, we receive functions $\Phi_l: \mathbb{R}^n \overset{\cong}{\longrightarrow} \mathbb{R}^n$ resp.\ $\Psi_l: \mathbb{R}^n \overset{\cong}{\longrightarrow} \mathbb{R}^n$.
Restriction to $B_l$ gives
\begin{equation*}
\Psi_l: B_l \overset{\cong}{\longrightarrow} V_l, \quad
x \mapsto \Phi_l(Q_l(x - \tau_l))
\end{equation*}
onto some open subset $V_l \subset \mathbb{R}^n$ and its inverse 
\begin{equation*}
\Psi_l^{-1}: V_l \overset{\cong}{\longrightarrow} B_l, \quad
x \mapsto Q_l^T \Phi_l^{-1}(x) + \tau_l.
\end{equation*}
The set of diffeomorphisms $\Psi_l$, $l \in \Gamma_1$ characterizes the $C^{2,1}$-manifold $\partial \Omega$.
The related parametrization is given by $\phi_l(\xi) := \Psi_l^{-1} \left( \begin{smallmatrix}
\xi \\ 0
\end{smallmatrix} \right)$, that is,
\begin{equation} \label{eq:Parametrization}
\phi_l: U_l \longrightarrow \partial \Omega \cap B_l, \quad
\xi \mapsto Q_l^T \begin{pmatrix} \xi \\ \omega_l(\xi) \end{pmatrix} + \tau_l,
\end{equation}
where $U_l := \{ \xi \in \mathbb{R}^{n-1} : \left( \begin{smallmatrix} \xi \\ 0 \end{smallmatrix} \right) \in V_l \}$
(see~\cite{For77}).
Using the theorem of Binet-Cauchy, we obtain
$\det \big( \transB{(\nabla \phi_l)} \transA{\nabla \phi_l} \big) = 1 + |\nabla' \omega_l|^2$,
in particular
\begin{equation} \label{eq:determinant estimate}
\| \det \big( \transB{(\nabla \phi_l)} \transA{\nabla \phi_l} \big) \|_\infty \ge 1.
\end{equation}
We further have
\begin{equation} \label{eq:Jacobi Parametrization estimate}
\| \nabla \phi_l \|_\infty \le C \quad \forall l \in \Gamma_1
\end{equation}
with a constant $C = C(n,M) > 0$ and $M > 0$ from \eqref{eq:uniformity}.
Using \eqref{eq:Jacobi Parametrization estimate} and Cramer's rule, we obtain
\begin{equation} \label{eq:estimate Gram matrix}
\| \big( \transB{(\nabla \phi_l)} \transA{\nabla \phi_l} \big)^{-1} \|_{1,\infty} \le C \quad \forall l \in \Gamma_1
\end{equation}
where $C = C(n,M) > 0$.

We choose a suitable partition of unity subordinate to the 
cover $B_l$, $l\in\Gamma$, of the uniform $C^{2,1}$-domain
$\Omega$. More precisely,
let $(\varphi_l)_{l \in \Gamma} \subset C^\infty(\mathbb{R}^n)$ so that $0 \le \varphi_l \le 1$, $\Supp(\varphi_l) \subset B_l$ and
\begin{equation} \label{eq:Partition of unity}
\sum_{l \in \Gamma} \varphi_l^2 = 1.
\end{equation}
Since the $B_l$ have a fixed radius $\rho$, we can choose $(\varphi_l)_{l \in \Gamma}$ in such a way that
\begin{equation} \label{eq:uniformity partition of unity}
\sup_{l \in \Gamma} \| \nabla \varphi_l \|_\infty < \infty
\quad \text{and} \quad
\sup_{l \in \Gamma} \| \nabla^2 \varphi_l \|_\infty < \infty.
\end{equation}

The outward unit normal vector at $\partial \Omega$ is $\nu: \partial \Omega \rightarrow \mathbb{R}^n$.
Let $\widehat{\nu}_l: \partial H_{\omega_l} \rightarrow \mathbb{R}^n$ be the outward unit normal vector at $\partial H_{\omega_l}$ for $l \in \Gamma_1$, which is given by
\begin{equation} \label{eq:unit normal of H_omega_l}
\widehat{\nu}_l = \frac{1}{\sqrt{|\nabla' \omega_l|^2 + 1}} (\partial_1 \omega_l,\dots,\partial_{n-1} \omega_l,-1)^T,
\end{equation}
and
let $\nu_l: \partial H_l \rightarrow \mathbb{R}^n$ be the outward unit
normal vector at $\partial H_l$, i.e., $\nu_l$ results from rotating and translating $\widehat{\nu}_l$. Then we have $\nu = \nu_l$ on $\partial \Omega \cap B_l = \partial H_l \cap B_l$.
The representation \eqref{eq:unit normal of H_omega_l} gives that we can extend $\widehat{\nu}_l$ constantly to a function in $W^2_\infty(H_{\omega_l})^n$ and therefore we can also extend $\nu_l$ to a function $\widebar{\nu}_l \in W^2_\infty(H_l)^n$.
This trivial extension yields a constant $C = C(n,M) > 0$ so that
\begin{equation} \label{eq:normal uniformity 1}
\| \widebar{\nu}_l \|_{2,\infty,H_l} \le C
\end{equation}
for all $l \in \Gamma_1$, where $M$ is the constant from \eqref{eq:uniformity}.
Now
\begin{equation} \label{eq:extension nu}
\widebar{\nu}
:= \sum_{l \in \Gamma_1} \varphi_l^2 \widebar{\nu}_l \in W^2_\infty(\Omega)^n
\end{equation}
is an extension of $\nu$, since we have
\begin{equation*}
\| \widebar{\nu} \|_\infty
= \sup_{m \in \Gamma_1} \| \Chi_{B_m} \sum_{l \approx m} \varphi_l^2 \widebar{\nu}_l \|_\infty
\le \sup_{m \in \Gamma_1} \sum_{l \approx m} \| \widebar{\nu}_l \|_\infty
\le \bar{N} C
\end{equation*}
and the analogous estimates for $\| \nabla \widebar{\nu} \|_\infty$ and $\| \nabla^2 \widebar{\nu} \|_\infty$.
Consequently,
\begin{equation} \label{eq:normal uniformity 2}
\| \widebar{\nu} \|_{2,\infty,\Omega} \le C
\end{equation}
for $C = C(n,M) > 0$.

\section{The resolvent problem for the heat equation}
\label{sec:heat}

An essential tool for the proof of 
the results on the Stokes equations in the previous sections 
are resolvent estimates for the 
heat equation subject to perfect slip on uniform $C^{2,1}$-domains 
$\Omega \subset \mathbb{R}^n$, where we assume $n \ge 2$ and 
$1 < q < \infty$ again.

\begin{theorem} \label{thm:HE with PS}
Let $0 < \theta < \pi$. Then there exist $\lambda_0 = \lambda_0(n,q,\theta,\Omega) > 0$ and $C = C(n,q,\theta,\Omega) > 0$ such that for $\lambda \in \Sigma_\theta$, $|\lambda| \ge \lambda_0$ the problem
\begin{equation} \label{eq:HE}
\left\{
\begin{array}{rll}
\lambda u - \Delta u & = f & \text{in } \Omega \\
\Dm(u)\nu & = \Pi_\tau g & \text{on } \partial \Omega \\
\Pi_\nu u & = \Pi_\nu h & \text{on } \partial \Omega
\end{array}
\right.
\end{equation}
has a unique solution $u \in W^2_q(\Omega)^n$ for any $f \in L_q(\Omega)^n$, $g \in W^1_q(\Omega)^n$ and $h \in W^2_q(\Omega)^n$ and this solution fulfills the resolvent estimate
\begin{equation} \label{eq:HE resolvent estimate}
\| (\lambda u, \sqrt{\lambda} \nabla u, \nabla^2 u) \|_q
\le C \| (f, \sqrt{\lambda} g, \nabla g, \lambda h, \sqrt{\lambda} \nabla h, \nabla^2 h) \|_q.
\end{equation}
In particular
\begin{equation*}
\LaplacePS = \LaplacePSq: \Def(\LaplacePSq) \subset L_q(\Omega)^n \rightarrow L_q(\Omega)^n, \quad u \mapsto \Delta u
\end{equation*}
on $\Def(\LaplacePSq) := \{ u \in W^2_q(\Omega)^n : \Dm(u)\nu = 0 \text{ and } \nu \cdot u = 0 \text{ on } \partial \Omega \}$
is the generator of a strongly continuous analytic semigroup.
\end{theorem}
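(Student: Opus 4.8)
The plan is to reduce \eqref{eq:HE} by localization and flattening of the boundary to a model resolvent problem on the half space $\mathbb{R}^n_+$, to solve that model problem by exploiting a decoupling special to perfect slip conditions, and then to recover the general uniform $C^{2,1}$-domain through a perturbation argument in which the curvature contributions are absorbed as lower-order terms for $|\lambda|$ large.

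First I would treat the model problem on $\mathbb{R}^n_+ = \{x_n > 0\}$, where the outward normal is $-e_n$. Then $\nu \cdot u = 0$ is the Dirichlet condition $u^n = 0$ on $\{x_n = 0\}$, while by \eqref{eq:Dm invariant under tangential projection} the vector $\Dm(u)\nu$ is already tangential, with components $(\Dm(u)\nu)^i = -\partial_n u^i + \partial_i u^n$ for $i = 1,\dots,n-1$. Since $u^n$ vanishes on the boundary, its tangential derivatives $\partial_i u^n$ vanish there as well, so the homogeneous perfect slip conditions reduce to the Neumann conditions $\partial_n u^i = 0$ for the tangential components $u^1,\dots,u^{n-1}$ and the Dirichlet condition $u^n = 0$ for the normal component; the inhomogeneous data $g,h$ enter as inhomogeneous Neumann data (built from $\Pi_\tau g$ and the tangential derivatives of $h^n$) respectively inhomogeneous Dirichlet data $h^n$. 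Consequently the model system \emph{decouples completely} into $n-1$ scalar Neumann heat resolvent problems and one scalar Dirichlet heat resolvent problem on $\mathbb{R}^n_+$, each classically well posed with the resolvent estimate of the form \eqref{eq:HE resolvent estimate} (e.g.\ by Fourier--Laplace methods).

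Next I would globalize using the partition of unity $(\varphi_l)_{l\in\Gamma}$ with $\sum_l \varphi_l^2 = 1$. For interior indices $l \in \Gamma_0$ the localized field $\varphi_l u$ solves, componentwise, a resolvent problem for $-\Delta$ on all of $\mathbb{R}^n$, controlled by the whole-space estimate. For boundary indices $l \in \Gamma_1$ I would transport $\varphi_l u$ by the diffeomorphism $\Psi_l$ onto $\mathbb{R}^n_+$ and compare with the model problem: flattening turns $-\Delta$ into a variable-coefficient second-order operator whose deviation from $-\Delta$ is governed by $\|\nabla'\omega_l\|_\infty$, and the transported perfect slip conditions differ from the flat ones by terms involving $\nabla'\omega_l,\nabla'^2\omega_l$ and the extended normal, all uniformly bounded through \eqref{eq:uniformity} and \eqref{eq:normal uniformity 2}. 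Choosing $\rho$ small so that \eqref{eq:littleness gradient} holds with $\kappa$ small, and $|\lambda|\ge\lambda_0$ large, the principal part remains a small perturbation of the model operator and the boundary commutator terms are absorbable; a Neumann series then yields each local solution with a uniform estimate. Summing over $l$ and invoking the finite overlap bound $\bar{N}$ produces existence together with \eqref{eq:HE resolvent estimate} on $\Omega$.

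The main obstacle is precisely the point stressed in the introduction: in contrast to the scalar localization of \cite{Kun03}, the operator $\Dm(u)\nu$ genuinely couples the components of $u$ through the variable normal $\nu$, so after flattening one faces a system, not decoupled scalar equations, whose boundary conditions mix tangential and normal trace parts. The delicate step is to split the transported trace into its tangential and normal parts consistently with the decoupling used on the flat model and to verify that the curvature-induced off-diagonal coupling is of lower order; this is exactly the careful handling of tangent and normal trace components that the uniform bounds on $\omega_l$ and on the extended normal are designed to control. Finally, uniqueness is immediate from \eqref{eq:HE resolvent estimate}, since any $W^2_q$-solution with $f=g=h=0$ vanishes; and the estimate with $g=h=0$ gives $\|\lambda(\lambda-\LaplacePSq)^{-1}\|_{L_q(\Omega)^n \to L_q(\Omega)^n} \le C$ on $\Sigma_\theta$ for $|\lambda|\ge\lambda_0$, so $\LaplacePSq$ is sectorial after a shift, and since $\Def(\LaplacePSq)$ is dense in $L_q(\Omega)^n$, it generates a strongly continuous analytic semigroup.
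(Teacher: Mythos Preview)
Your proposal is correct and follows essentially the same route as the paper: half-space decoupling into a Dirichlet problem for $u^n$ and Neumann problems for $u^1,\dots,u^{n-1}$, then bent half-space by flattening plus a perturbation (Neumann series) argument with $\|\nabla'\omega_l\|_\infty$ small, and finally localization via the partition of unity $\sum_l\varphi_l^2=1$ with finite overlap. The one technical device you do not name explicitly but the paper uses is multiplying the transported boundary condition by the matrix $\transA{\nabla\Phi}$, which maps the tangent space of $\partial H_\omega$ to that of $\partial\mathbb{R}^n_+$ and thereby guarantees that the new inhomogeneous boundary data land in the correct regularity classes $W^1_q$ and $W^2_q$; this is precisely the ``consistent splitting of tangential and normal trace parts'' you anticipate.
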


Observe that due to the
boundary conditions \eqref{eq:HE} is a system that does not 
decouple into scalar equations (except for flat boundaries).
For the proof of Theorem~\ref{thm:HE with PS}
we apply a localization procedure as it is performed, e.g., in
\cite{Kun03}. 
To this end, the proof is divided into several steps: We start with the half space $\Omega = \mathbb{R}^n_+ = \{ x = (x',x_n)^T \in \mathbb{R}^n: x_n > 0 \}$ and proceed with
bending, rotating and shifting the half space.
The bent half space $H_\omega = \{ x = (x',x_n)^T \in \mathbb{R}^n: x_n
> \omega(x') \}$ is determined by some height function $\omega \in W^3_\infty(\mathbb{R}^{n-1})$ with $\| \nabla' \omega \|_\infty$ sufficiently small so that a perturbation argument carries over the result for $\mathbb{R}^n_+$ to $H_\omega$.
Afterwards, we localize the domain $\Omega$ such that on a local level
it is reduced to either the whole space or some bent, rotated and shifted half space.

\begin{lemma} \label{thm:HE half space}
Let $n \ge 2$, $1 < q < \infty$ and $0 < \theta < \pi$. Then for $f \in L_q(\mathbb{R}^n_+)^n$, $g \in W^1_q(\mathbb{R}^n_+)^n$, $h \in W^2_q(\mathbb{R}^n_+)^n$ and any $\lambda \in \Sigma_\theta$ there exists a unique solution $u \in W^2_q(\mathbb{R}^n_+)^n$ of
\begin{equation} \label{eq:HE half space}
\left\{
\begin{array}{rll}
\lambda u - \Delta u & = f & \text{in } \mathbb{R}^n_+ \\
\Dm(u)\nu & = \Pi_\tau g & \text{on } \partial \mathbb{R}^n_+ \\
\Pi_\nu u & = \Pi_\nu h & \text{on } \partial \mathbb{R}^n_+
\end{array}
\right.
\end{equation}
such that
\begin{equation} \label{eq:HE half space resolvent estimate}
\| (\lambda u, \sqrt{\lambda} \nabla u, \nabla^2 u) \|_q
\le C \| (f, \sqrt{\lambda} g, \nabla g, \lambda h, \sqrt{\lambda}
\nabla h, \nabla^2 h) \|_q,
\end{equation}
where $C = C(n,q,\theta) > 0$.
\end{lemma}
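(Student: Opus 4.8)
The plan is to exploit the flatness of $\partial\mathbb{R}^n_+$ in order to decouple the system \eqref{eq:HE half space} into scalar resolvent problems, one per component. On the half space the outward unit normal is the constant vector $\nu = -e_n$ (the negative $n$-th standard unit vector), so $\Pi_\nu u = u^n e_n$ and $\Pi_\tau g = (g^1,\dots,g^{n-1},0)^T$. Evaluating $\Dm(u)\nu$ at $\nu=-e_n$ gives $(\Dm(u)\nu)^i = \partial_i u^n - \partial_n u^i$ for $i<n$, while its $n$-th component vanishes identically, in accordance with \eqref{eq:Dm invariant under tangential projection}. Hence the boundary conditions in \eqref{eq:HE half space} read $u^n = h^n$ and $\partial_n u^i - \partial_i u^n = -g^i$ (for $i<n$) on $\partial\mathbb{R}^n_+$. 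Since $\lambda u - \Delta u = f$ acts componentwise, the $n$-th component solves a scalar \emph{Dirichlet} resolvent problem, and, once $u^n$ is known, each of the first $n-1$ components solves a scalar \emph{Neumann} resolvent problem.

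First I would solve for the normal component. The scalar problem $\lambda u^n - \Delta u^n = f^n$ in $\mathbb{R}^n_+$ with $u^n = h^n$ on $\partial\mathbb{R}^n_+$ is the classical heat resolvent problem with inhomogeneous Dirichlet data. Since $-\Delta$ under Dirichlet conditions is nonnegative, for every $\lambda\in\Sigma_\theta$ (so that $\lambda\notin(-\infty,0]$) it has a unique solution $u^n\in W^2_q(\mathbb{R}^n_+)$ obeying $\|(\lambda u^n,\sqrt\lambda\nabla u^n,\nabla^2 u^n)\|_q \le C\|(f^n,\lambda h^n,\sqrt\lambda\nabla h^n,\nabla^2 h^n)\|_q$. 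The standard route is to subtract a lifting of the boundary datum, reducing to the homogeneous-data resolvent problem for the sectorial Dirichlet Laplacian, whose estimates follow, e.g., from a partial Fourier transform in $x'$ and solving the resulting ODE in $x_n$.

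The key point is then that, because $u^n = h^n$ on the flat boundary, its tangential derivatives there are already determined: $\partial_i u^n = \partial_i h^n$ on $\partial\mathbb{R}^n_+$ for $i<n$. Consequently the coupling term disappears and each tangential component $u^i$ ($i<n$) solves the \emph{decoupled} scalar Neumann resolvent problem $\lambda u^i - \Delta u^i = f^i$ in $\mathbb{R}^n_+$ with $\partial_n u^i = \partial_i h^n - g^i$ on $\partial\mathbb{R}^n_+$. The Neumann datum $\partial_i h^n - g^i$ lies in $W^1_q(\mathbb{R}^n_+)$ (using $h\in W^2_q$ and $g\in W^1_q$), which is precisely the regularity yielding $u^i\in W^2_q(\mathbb{R}^n_+)$ together with the standard Neumann resolvent estimate $\|(\lambda u^i,\sqrt\lambda\nabla u^i,\nabla^2 u^i)\|_q \le C\|(f^i,\sqrt\lambda(\partial_i h^n - g^i),\nabla(\partial_i h^n - g^i))\|_q$, again valid for all $\lambda\in\Sigma_\theta$. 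Uniqueness for the full system follows from uniqueness of each scalar problem.

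It remains to assemble the pieces. Summing over components and bounding $\nabla(\partial_i h^n - g^i)$ by $\nabla^2 h$ and $\nabla g$, and $\sqrt\lambda(\partial_i h^n - g^i)$ by $\sqrt\lambda\nabla h$ and $\sqrt\lambda g$, the individual right-hand sides combine exactly into $\|(f,\sqrt\lambda g,\nabla g,\lambda h,\sqrt\lambda\nabla h,\nabla^2 h)\|_q$, which is \eqref{eq:HE half space resolvent estimate}. I expect the only genuine work to be the two scalar building blocks, namely the inhomogeneous Dirichlet and Neumann heat resolvent estimates on $\mathbb{R}^n_+$ with the correct $\lambda$-weights, and in particular the careful reduction of inhomogeneous boundary data to the homogeneous case while tracking the $\lambda$-dependent norms; the decoupling itself, which is the conceptual heart of the argument, is immediate on the flat boundary and is exactly the structure that the subsequent perturbation step for bent half spaces will have to preserve.
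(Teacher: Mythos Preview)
Your proof is correct and follows essentially the same decoupling strategy as the paper: solve the scalar Dirichlet problem for $u^n$ first, then the scalar Neumann problems for $u^1,\dots,u^{n-1}$, citing standard half-space resolvent results for each. The one minor variation is that you replace $\partial_i u^n$ by $\partial_i h^n$ in the Neumann data (using that tangential derivatives of equal boundary traces agree on the flat boundary), whereas the paper keeps $g^j - \partial_j u^n$ as the Neumann datum and implicitly bounds $\partial_j u^n$ via the already-obtained Dirichlet estimate for $u^n$; both routes give \eqref{eq:HE half space resolvent estimate}.
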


\begin{proof}
In the half space the outward unit normal vector is
$\nu = (0,\dots,0,-1)^T$. The tangential and normal projections are given by
$\Pi_\tau g = (g^{1},\dots,g^{n-1},0)^T$ resp.\ $\Pi_\nu h = (0,\dots,0,h^{n})^T$.
Then \eqref{eq:HE half space} reads
\begin{equation*}
\left\{
\begin{array}{rll}
\lambda u - \Delta u & = f & \text{in } \mathbb{R}^n_+ \\
\partial_1 u^{n} - \partial_n u^{1} & = g^{1} & \text{on } \partial \mathbb{R}^n_+ \\
\partial_2 u^{n} - \partial_n u^{2} & = g^{2} & \text{on } \partial \mathbb{R}^n_+ \\
 & ~\vdots & \\
\partial_{n-1} u^{n} - \partial_n u^{n-1} & = g^{n-1} & \text{on } \partial \mathbb{R}^n_+ \\
u^{n} & = h^{n} & \text{on } \partial \mathbb{R}^n_+.
\end{array}
\right.
\end{equation*}
Hence, we can first solve the inhomogeneous Dirichlet boundary problem
\begin{equation*}
\left\{
\begin{array}{rll}
\lambda u^{n} - \Delta u^{n} & = f^{n} & \text{in } \mathbb{R}^n_+ \\
u^{n} & = h^{n} & \text{on } \partial \mathbb{R}^n_+
\end{array}
\right.
\end{equation*}
and then, after inserting the solution $u^{n} \in W^2_q(\mathbb{R}^n_+)$, solve the decoupled Neumann boundary problems
\begin{equation*}
\left\{
\begin{array}{rll}
\lambda u^{j} - \Delta u^{j} & = f^{j} & \text{in } \mathbb{R}^n_+ \\
- \partial_n u^{j} & = g^{j} - \partial_j u^{n}  & \text{on } \partial \mathbb{R}^n_+ \\
\end{array}
\right.
\end{equation*}
for $j = 1,\dots,n-1$.
See~\cite{KW04}, Thm.~7.7 and Sec.~7.18 for a detailed treatment of the problems with Dirichlet resp.\ Neumann boundary conditions.
Thus we obtain unique solvability of \eqref{eq:HE half space} as well as estimate \eqref{eq:HE half space resolvent estimate}.
\end{proof}

\begin{theorem} \label{thm:HE bent rotated shifted half space}
Let $Q^T H_\omega + \tau$ be a bent, rotated and shifted half space, i.e., $Q \in \mathbb{R}^{n \times n}$ is a rotation matrix
and $\tau \in \mathbb{R}^n$ is some shifting vector.
Let $\omega \in W^3_\infty(\mathbb{R}^{n-1})$, $n \ge 2$, $1 < q < \infty$ and $0 < \theta < \pi$.
Fix $M \ge 1$ such that
\begin{equation} \label{eq:uniformity 3}
\| \nabla' \omega \|_{\infty}, \| \nabla'^2 \omega \|_{\infty}, \|
\nabla'^3 \omega \|_{\infty} \le M.
\end{equation}
Then there exist $\kappa = \kappa(n,q,\theta) > 0$ and $\lambda_0 = \lambda_0(n,q,\kappa,M) > 0$
such that in case
$\| \nabla' \omega \|_\infty \le \kappa$,
$\lambda \in \Sigma_\theta$, $|\lambda| \ge \lambda_0$ for
$f \in L_q(Q^T H_\omega + \tau)^n$, $g \in W^1_q(Q^T H_\omega + \tau)^n$ and $h \in W^2_q(Q^T H_\omega + \tau)^n$ there exists a unique solution $u \in W^2_q(Q^T H_\omega + \tau)^n$ of
\begin{equation} \label{eq:HE bent rotated shifted half space}
\left\{
\begin{array}{rll}
\lambda u - \Delta u & = f & \text{in } Q^T H_\omega + \tau \\
\Dm(u)\nu & = \Pi_\tau g & \text{on } \partial (Q^T H_\omega + \tau) \\
\Pi_\nu u & = \Pi_\nu h & \text{on } \partial (Q^T H_\omega + \tau)
\end{array}
\right.
\end{equation}
satisfying
\begin{equation} \label{eq:HE bent rotated shifted half space resolvent estimate}
\| (\lambda u, \sqrt{\lambda} \nabla u, \nabla^2 u) \|_q
\le C \| (f, \sqrt{\lambda} g, \nabla g, \lambda h, \sqrt{\lambda}
\nabla h, \nabla^2 h) \|_q,
\end{equation}
where $C = C(n,q,\theta,M) > 0$.
\end{theorem}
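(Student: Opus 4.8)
The plan is to treat \eqref{eq:HE bent rotated shifted half space} as a perturbation of the flat half space problem solved in Lemma~\ref{thm:HE half space}. First I would use the invariance of the Laplacian, of the operator $\Dm$, and of the normal and tangential projections under rigid motions to reduce to the case $Q = I$, $\tau = 0$; that is, it suffices to solve the problem on the bent half space $H_\omega$ itself, the rotation $Q$ and translation $\tau$ merely conjugating the whole system by an isometry that preserves every norm in \eqref{eq:HE bent rotated shifted half space resolvent estimate}.

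Next I would flatten the boundary via the $C^{2,1}$-diffeomorphism $\Phi_\omega \colon H_\omega \to \mathbb{R}^n_+$, $x \mapsto (x', x_n - \omega(x'))^T$, and push the unknown forward to $v := u \circ \Phi_\omega^{-1}$. By the chain rule $\partial_j u = \partial_j v - (\partial_j \omega)\partial_n v$ for $j < n$ and $\partial_n u = \partial_n v$, so after the change of variables the interior equation becomes $\lambda v - \Delta v = \tilde f + \mathcal{B}_\Omega v$ on $\mathbb{R}^n_+$, where $\mathcal{B}_\Omega$ is a second order operator whose principal coefficients are $O(\nabla'\omega)$ and whose first order coefficients are $O(\nabla'^2\omega)$, hence bounded by $\kappa$ and by $M$ respectively thanks to \eqref{eq:uniformity 3}. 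Likewise the outward normal transforms into $\hat\nu_\omega = (1+|\nabla'\omega|^2)^{-1/2}(\nabla'\omega, -1)^T$, so the perfect slip conditions turn into $\Dm(v)\nu_0 = \Pi_\tau \tilde g + \mathcal{B}_\partial v$ and $\Pi_\nu v = \Pi_\nu \tilde h + \dots$ on $\partial \mathbb{R}^n_+$, with $\nu_0 = (0,\dots,0,-1)^T$ the flat normal, the remainder $\mathcal{B}_\partial$ being first order in $v$ with coefficients again of size $\kappa$ (from $\nabla'\omega$) and $M$ (from $\nabla'^2\omega$). The crucial point is that the genuinely flat perfect slip operator is extracted \emph{exactly}, so that everything else is a true perturbation; here a careful separation of tangential and normal components is needed, since the bending couples the components of $v$ precisely through the tilt of $\hat\nu_\omega$ away from $\nu_0$.

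I would then solve the transformed system by a Neumann series built on the flat solution operator $S_0$ from Lemma~\ref{thm:HE half space}, realizing the boundary remainders as traces of interior $W^1_q(\mathbb{R}^n_+)$-functions (the perturbation terms $\nabla'\omega \cdot \nabla v$ and the like are already defined on all of $\mathbb{R}^n_+$). Writing $v = S_0\big(\tilde f + \mathcal{B}_\Omega v,\, \tilde g + \mathcal{B}_\partial v,\, \tilde h\big)$ defines a map $T$ on $W^2_q(\mathbb{R}^n_+)^n$, and I would show it is a contraction in the $\lambda$-weighted norm $\|(\lambda v, \sqrt{\lambda}\nabla v, \nabla^2 v)\|_q$ appearing in \eqref{eq:HE half space resolvent estimate}. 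The principal-order contributions carry a factor $\kappa$ and are absorbed by choosing $\kappa = \kappa(n,q,\theta)$ small; the lower-order contributions carry coefficients of size $M$ multiplying $\nabla v$ or $v$, and since $\|\nabla v\|_q \le |\lambda|^{-1/2}\|\sqrt{\lambda}\,\nabla v\|_q$ and $\|v\|_q \le |\lambda|^{-1}\|\lambda v\|_q$, they are absorbed by choosing $\lambda_0 = \lambda_0(n,q,\kappa,M)$ large. This yields a unique fixed point $v$, hence a unique $u = v \circ \Phi_\omega$ solving \eqref{eq:HE bent rotated shifted half space}, and transforming the weighted estimate back through $\Phi_\omega$ (whose derivatives up to order two are controlled by $M$) gives \eqref{eq:HE bent rotated shifted half space resolvent estimate} with $C = C(n,q,\theta,M)$.

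The main obstacle I expect is the boundary term: unlike the scalar localizations in \cite{Kun03}, the perfect slip conditions form a coupled system, and the perturbation $\mathcal{B}_\partial$ mixes tangential and normal parts of $v$ through the tilted normal $\hat\nu_\omega$. One must verify that $\mathcal{B}_\partial v$ is the trace of an interior field $w$ whose weighted norm $\|(\sqrt{\lambda}\,w, \nabla w)\|_q$ is bounded by $C\big(\kappa\|\nabla^2 v\|_q + M|\lambda|^{-1/2}\|\sqrt{\lambda}\,\nabla v\|_q\big)$, so that it is genuinely small; establishing this representation while keeping the flat operator $\Dm(\cdot)\nu_0$ intact is the delicate part of the argument.
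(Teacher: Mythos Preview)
Your proposal is correct and follows essentially the same route as the paper: reduction by rigid motion to $H_\omega$, flattening via $\Phi_\omega$, and a Neumann-series perturbation of the flat half space solution operator from Lemma~\ref{thm:HE half space}, with the principal perturbation absorbed by $\kappa$ and the lower-order one by $\lambda_0$. The ``delicate part'' you flag is handled in the paper by left-multiplying the transformed boundary condition by the matrix $\transA{\nabla\Phi}$, which maps the tilted tangent space to the flat one and thereby produces new data $\ti G\in W^1_q$, $\ti H\in W^2_q$ while keeping the flat operator $\Dm(\cdot)\e + \e\e^T(\cdot)$ intact; this is precisely the representation you anticipate needing.
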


In order to prove Theorem~\ref{thm:HE bent rotated shifted half space},
we begin by observing that without loss of generality we may assume
$\tau = 0$. In fact, it is obvious that the shift $x^\tau := x - \tau$ 
leads to an equivalent system on $Q^T H_\omega$.

Next, writing $u^Q(x) := u(Q^T x)$ we obtain that the transformation
$u \mapsto Q u^Q$ is an isomorphism $W^k_q(Q^T H_\omega)^n \overset{\cong}{\longrightarrow} W^k_q(H_\omega)^n$ for $k =0,1,2$.
Furthermore, the behavior of the Laplacian and the boundary terms under this transformation yields that \eqref{eq:HE bent rotated shifted half space} is equivalent to the problem
\begin{equation*}
\left\{
\begin{array}{rll}
\lambda (Q u^Q) - \Delta (Q u^Q) & = Q f^Q & \text{in } H_\omega \\
\Pi_\tau^Q \big( \transA{\nabla (Q u^Q)} - \transB{\nabla (Q u^Q)} \big) Q \nu^Q & = \Pi_\tau^Q Q g^Q & \text{on } \partial H_\omega \\
\Pi_\nu^Q Q u^Q & = \Pi_\nu^Q Q h^Q & \text{on } \partial H_\omega.
\end{array}
\right.
\end{equation*}
Hence, it remains to treat the bent half space problem
\begin{equation} \label{eq:HE bent half space}
\left\{
\begin{array}{rll}
\lambda u - \Delta u & = f & \text{in } H_\omega \\
\Dm(u)\nu & = \Pi_\tau g & \text{on } \partial H_\omega \\
\Pi_\nu u & = \Pi_\nu h & \text{on } \partial H_\omega.
\end{array}
\right.
\end{equation}

To this end, we apply the change of coordinates
$\Phi : H_\omega \overset{\cong}{\longrightarrow} \mathbb{R}^n_+$, $x \mapsto \ti{x}$, given by $\ti{x} = (x',x_n - \omega(x'))^T$
and we write $u \circ \Phi^{-1} =: J_\omega^{-1} u =: \ti{u}$ for a function $u$ on $H_\omega$.
For the partial derivatives we have the following behavior under the
change of coordinates, which yields that
\begin{equation} \label{eq:Isom change of coord}
J_\omega: W^k_q(\mathbb{R}^n_+) \overset{\cong}{\longrightarrow} W^k_q(H_\omega), \quad \ti{u} \mapsto u
\end{equation}
is an isomorphism for $k = 0,1,2$ such that the continuity constants of
$J_\omega$ and $J_\omega^{-1}$ depend on $M$ from \eqref{eq:uniformity 3} and 
on $n$, only:
\begin{itemize}
\item $\ti{\partial_i u} = \partial_i \ti{u} - (\partial_i \omega) \partial_n \ti{u}$ ~for $i = 1,\dots,n-1$.
\item $\ti{\partial_n u} = \partial_n \ti{u}$.
\item $\ti{\partial_j \partial_i u}
= \partial_j \partial_i \ti{u}
- (\partial_j \omega) \partial_i \partial_n \ti{u}
- (\partial_j \partial_i \omega) \partial_n \ti{u}
- (\partial_i \omega) \partial_j \partial_n \ti{u}
+ (\partial_i \omega) (\partial_j \omega) \partial_n^2 \ti{u}$
~for $i,j = 1,\dots,n-1$.
\item $\ti{\partial_n \partial_i u}
= \partial_i \partial_n \ti{u}
+ (\partial_i \omega) \partial_n^2 \ti{u}$
~for $i = 1,\dots,n-1$.
\item $\ti{\partial_n^2 u} = \partial_n^2 \ti{u}$.
\item $\ti{\Delta u} = \Delta \ti{u} - 2(\nabla' \transA{\omega},0) \cdot \nabla \partial_n \ti{u} - (\Delta' \omega) \partial_n \ti{u} + |\nabla' \omega|^2 \partial_n^2 \ti{u}$ when $u$ is a scalar function.
\item $\ti{\transA{\nabla u}} = \nabla \transA{\ti{u}} -
(\nabla' \transA{\omega},0) \partial_n \ti{u}$
when $u$ is a scalar function.
\item $\ti{\transA{\nabla u}} = \transA{\nabla \ti{u}} - E(\ti{u})$
with $E(\ti{u}) = \left( (\partial_j \omega) 
\partial_n \ti{u}^{i} \right)_{i,j = 1,\dots,n}$ when $u$ is a vector field,
where we set $\partial_n \omega := 0$.
\end{itemize}
Hence, we can write
\begin{equation} \label{eq:HE change of coordinates 1}
\ti{(\lambda - \Delta)u} = (\lambda - \Delta)\ti{u} + B \ti{u},
\end{equation}
where $B \ti{u} := 2(\nabla' \transA{\omega},0) \cdot \nabla \partial_n \ti{u} + (\Delta' \omega) \partial_n \ti{u} - |\nabla' \omega|^2 \partial_n^2 \ti{u}$ for a scalar function $\ti{u}$ and we define $B \ti{u}$ componentwise if $\ti{u}$ is a vector field.
For the boundary condition operator we further have
\begin{equation} \label{eq:HE change of coordinates 2}
\ti{\Dm(u)\nu}
= \Dm(\ti{u}) \ti{\nu} + (E(\ti{u})^T - E(\ti{u})) \ti{\nu}.
\end{equation}
From this representation it is easily read off that \eqref{eq:Dm
invariant under tangential projection} still holds for $\ti{\Dm(u)\nu}$, 
i.e., we have $(I - \ti{\nu} \ti{\nu}^T) \ti{\Dm(u)\nu} = \ti{\Dm(u)\nu}$.
Also note that mapping the normal vector $\nu: \partial H_\omega
\rightarrow \mathbb{R}^n$ of the bent half space via the introduced 
change of coordinates to
$\ti{\nu}: \partial \mathbb{R}^n_+ \rightarrow \mathbb{R}^n$ does not
yield the normal vector of the half space. In fact, since $\nu(x)$ does
not depend on the last component $x_n$, which can be seen from 
the concrete representation
\begin{equation} \label{eq:normal representation}
\ti{\nu} = \frac{1}{\sqrt{|\nabla' \omega|^2 + 1}} (\partial_1 \omega,\dots,\partial_{n-1} \omega,-1)^T,
\end{equation}
we can identify $\nu = \ti{\nu}$ and even consider it as a function on the whole space, i.e.,
$\nu = \ti{\nu}: \mathbb{R}^n \rightarrow \mathbb{R}^n$. In this case \eqref{eq:normal representation} gives that
\begin{equation} \label{eq:normal estimate}
\| \nu \|_{2,\infty} \le C_n \| (\nabla' \omega,\nabla'^2 \omega,\nabla'^3 \omega) \|_\infty
\end{equation}
holds with a constant $C_n > 0$ depending only on the space dimension $n$.
We denote the outward unit normal of the half space by $\e := (0,\dots,0,-1)^T$.

The boundary condition
\begin{equation} \label{eq:HE bent half space BC}
\left\{
\begin{array}{rll}
\Dm(u)\nu & = \Pi_\tau g & \text{on } \partial H_\omega \\
\Pi_\nu u & = \Pi_\nu h & \text{on } \partial H_\omega
\end{array}
\right.
\end{equation}
can be written equivalently as
\begin{equation*}
\Dm(u)\nu + \Pi_\nu u = \Pi_\tau g + \Pi_\nu h \quad \text{on } \partial H_\omega
\end{equation*}
due to separation of the tangential and the normal part in \eqref{eq:HE
bent half space BC} and by using \eqref{eq:Dm invariant under tangential projection}.
Now, \eqref{eq:HE change of coordinates 1} and \eqref{eq:HE change of coordinates 2} give that a change of coordinates in \eqref{eq:HE bent half space} yields the equivalent
problem
\begin{equation} \label{eq:HE half space perturbed}
\left\{
\begin{array}{rll}
\lambda \ti{u} - \Delta \ti{u} + B \ti{u} & = \ti{f} & \text{in } \mathbb{R}^n_+ \\
\Dm(\ti{u}) \ti{\nu} + (E(\ti{u})^T - E(\ti{u})) \ti{\nu} + \Pi_\nu \ti{u} & = \Pi_\tau \ti{g} + \Pi_\nu \ti{h} & \text{on } \partial \mathbb{R}^n_+
\end{array}
\right.
\end{equation}
with $\ti{f} \in L_q(\mathbb{R}^n_+)^n$, $\ti{g} \in W^1_q(\mathbb{R}^n_+)^n$ and $\ti{h} \in W^2_q(\mathbb{R}^n_+)^n$.

We apply the matrix
$\transA{\nabla \Phi} = I - ((\partial_j \omega) \delta_{in})_{i,j = 1,\dots,n}$
to the boundary condition of \eqref{eq:HE half space perturbed}. The matrix $\transA{\nabla \Phi}$ satisfies $\det \transA{\nabla \Phi} = 1$, $(\transA{\nabla \Phi})^{-1} = 2 I - \transA{\nabla \Phi}$ and it maps the tangent space of any point $x \in \partial H_\omega$ into the tangent space of $\partial \mathbb{R}^n_+$.
Therefore we have:
\begin{itemize}
\item $(\transA{\nabla \Phi}) \Dm(\ti{u}) \nu
= (I - \e \e^T) (\transA{\nabla \Phi}) \Dm(\ti{u}) \nu \vspace{2pt} \\
= (I - \e \e^T) \Dm(\ti{u}) \e + (I - \e \e^T) (\transA{\nabla \Phi} - I) \Dm(\ti{u}) \e + \vspace{2pt} \\
(I - \e \e^T) (\transA{\nabla \Phi}) \Dm(\ti{u}) (\nu - \e) \vspace{2pt} \\
= \Dm(\ti{u}) \e + (I - \e \e^T) (\transA{\nabla \Phi} - I) \Dm(\ti{u}) \e + \vspace{2pt} \\
(I - \e \e^T) (\transA{\nabla \Phi}) \Dm(\ti{u}) (\nu - \e)$.
\item $(\transA{\nabla \Phi}) \Pi_\nu \ti{u} \vspace{2pt} \\
= \e \e^T \ti{u} + (I - \e \e^T)((\transA{\nabla \Phi}) \nu \nu^T - \e \e^T) \ti{u} + \e \e^T ((\transA{\nabla \Phi}) \nu \nu^T - \e \e^T) \ti{u}$.
\item $(\transA{\nabla \Phi}) (E(\ti{u})^T - E(\ti{u})) \nu
= (I - \e \e^T) (\transA{\nabla \Phi}) (E(\ti{u})^T - E(\ti{u})) \nu$.
\item $(\transA{\nabla \Phi}) \Pi_\tau \ti{g} = (I - \e \e^T) (\transA{\nabla \Phi}) (I - \nu \nu^T) \ti{g}$.
\item $(\transA{\nabla \Phi}) \Pi_\nu \ti{h} = \e \e^T (\transA{\nabla \Phi}) \nu \nu^T \ti{h} + (I - \e \e^T) (\transA{\nabla \Phi}) \nu \nu^T \ti{h}$.
\end{itemize}
Hence, \eqref{eq:HE half space perturbed} becomes
\begin{equation} \label{eq:HE half space perturbed 2}
\left\{
\begin{array}{rll}
\lambda \ti{u} - \Delta \ti{u} + B \ti{u} & = \ti{f} & \text{in } \mathbb{R}^n_+ \\
\Dm(\ti{u}) \e + \e \e^T \ti{u} + B_\gamma \ti{u} & = (I - \e \e^T) \ti{G} + \e \e^T \ti{H} & \text{on } \partial \mathbb{R}^n_+
\end{array}
\right.
\end{equation}
where
\begin{equation*}
\begin{split}
B_\gamma \ti{u} :=
& (I - \e \e^T) [ (\transA{\nabla \Phi} - I) \Dm(\ti{u}) \e + (\transA{\nabla \Phi}) \Dm(\ti{u}) (\nu - \e) \\
& + ((\transA{\nabla \Phi}) \nu \nu^T - \e \e^T) \ti{u} + (\transA{\nabla \Phi}) (E(\ti{u})^T - E(\ti{u})) \nu] \\
& + \e \e^T ((\transA{\nabla \Phi}) \nu \nu^T - \e \e^T) \ti{u}
\end{split}
\end{equation*}
and
\begin{equation} \label{eq:G and H}
\begin{split}
& \ti{G} := (\transA{\nabla \Phi}) (I - \nu \nu^T) \ti{g} + (\transA{\nabla \Phi}) \nu \nu^T \ti{h}
\in W^1_q(\mathbb{R}^n_+)^n , \\
& \ti{H} := (\transA{\nabla \Phi}) \nu \nu^T \ti{h}
\in W^2_q(\mathbb{R}^n_+)^n.
\end{split}
\end{equation}
We see that $\ti{G}$ and $\ti{H}$ are the new right-hand side 
functions in the boundary condition.
\begin{remark}
Note that, thanks to the presence of $\transA{\nabla \Phi}$, 
the functions $\ti{G}$ and $\ti{H}$ belong to the desired
regularity classes. 
Also note that, concerning the Stokes equations, the 
matrix $\transA{\nabla \Phi}$ is already utilized in~\cite{Sol77}
(see also \cite{NS03}).
\end{remark}

Summarizing, the proof of Theorem~\ref{thm:HE bent rotated shifted half space} is reduced to the following perturbed version of Lemma~\ref{thm:HE half space}.

\begin{lemma} \label{thm:HE half space perturbed}
Let $n \ge 2$, $1 < q < \infty$, $0 < \theta < \pi$, $\omega \in W^3_\infty(\mathbb{R}^{n-1})$ and let $M > 0$ such that \eqref{eq:uniformity 3} holds.
Then there exist $\kappa = \kappa(n,q,\theta) > 0$ and $\lambda_0 = \lambda_0(n,q,\kappa,M) > 0$ such that in case $\| \nabla' \omega \|_\infty \le \kappa$, $\lambda \in \Sigma_\theta$, $|\lambda| \ge \lambda_0$
for $\ti{f} \in L_q(\mathbb{R}^n_+)^n$, $\ti{G} \in W^1_q(\mathbb{R}^n_+)^n$ and $\ti{H} \in W^2_q(\mathbb{R}^n_+)^n$
there exists a unique solution $\ti{u} \in W^2_q(\mathbb{R}^n_+)^n$ of \eqref{eq:HE half space perturbed 2}
satisfying
\begin{equation} \label{eq:HE half space perturbed resolvent estimate}
\| (\lambda \ti{u}, \sqrt{\lambda} \nabla \ti{u}, \nabla^2 \ti{u}) \|_q
\le C \| (\ti{f}, \sqrt{\lambda} \ti{G}, \nabla \ti{G}, \lambda \ti{H}, \sqrt{\lambda} \nabla \ti{H}, \nabla^2 \ti{H}) \|_q,
\end{equation}
where $C = C(n,q,\theta,M) > 0$.
\end{lemma}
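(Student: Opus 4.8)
The plan is to read \eqref{eq:HE half space perturbed 2} as a perturbation of the unperturbed half-space system of Lemma~\ref{thm:HE half space}, whose boundary condition, in the combined form $\Dm(u)\e + \e\e^T u = (I-\e\e^T)g + \e\e^T h$ (legitimate because $\Dm(u)\e$ is purely tangential by \eqref{eq:Dm invariant under tangential projection}), is exactly the left- and right-hand sides of \eqref{eq:HE half space perturbed 2} with $B$ and $B_\gamma$ deleted. Let $\mathcal{L}_\lambda^0$ denote the (boundedly invertible, by Lemma~\ref{thm:HE half space}) operator of this unperturbed problem, sending the data $(\ti f,\ti G,\ti H)$ to the unique solution $\ti u$ and obeying \eqref{eq:HE half space resolvent estimate}. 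First I would move the perturbations to the right-hand side, splitting $B_\gamma\ti u = (I-\e\e^T)B_\gamma\ti u + \e\e^T B_\gamma\ti u$ into tangential and normal parts, so that \eqref{eq:HE half space perturbed 2} becomes $\ti u = (\mathcal{L}_\lambda^0)^{-1}\big(\ti f - B\ti u,\ \ti G - (I-\e\e^T)B_\gamma\ti u,\ \ti H - \e\e^T B_\gamma\ti u\big)$. Existence, uniqueness, and the estimate \eqref{eq:HE half space perturbed resolvent estimate} will then all follow at once from a Neumann series, provided the linear map $\ti u \mapsto (\mathcal{L}_\lambda^0)^{-1}\big(B\ti u,\ (I-\e\e^T)B_\gamma\ti u,\ \e\e^T B_\gamma\ti u\big)$ has operator norm strictly below one on $W^2_q(\mathbb{R}^n_+)^n$ equipped with the weighted norm $\|(\lambda\,\cdot,\sqrt{\lambda}\nabla\,\cdot,\nabla^2\,\cdot)\|_q$.

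The core estimate is that, through \eqref{eq:HE half space resolvent estimate}, this map is controlled by $(C\kappa + C(M)|\lambda|^{-1/2})\,\|(\lambda\ti u,\sqrt{\lambda}\nabla\ti u,\nabla^2\ti u)\|_q$. The mechanism is the standard one: every summand of $B\ti u$ and $B_\gamma\ti u$ carries a coefficient assembled from $\omega$ and its derivatives, and each summand is either multiplied by a factor of size $\|\nabla'\omega\|_\infty \le \kappa$ or else involves a derivative of $\ti u$ of order strictly below the corresponding norm, which then absorbs a negative power of $\lambda$ from the resolvent estimate. For $B\ti u = 2(\nabla'\transA{\omega},0)\cdot\nabla\partial_n\ti u + (\Delta'\omega)\partial_n\ti u - |\nabla'\omega|^2\partial_n^2\ti u$, for instance, the first and third terms are bounded by $C\kappa\|\nabla^2\ti u\|_q$, while $(\Delta'\omega)\partial_n\ti u$ is bounded by $CM\|\nabla\ti u\|_q = CM|\lambda|^{-1/2}\|\sqrt{\lambda}\nabla\ti u\|_q$. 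In $B_\gamma$ every coefficient — namely $\transA{\nabla\Phi}-I$, $\nu-\e$, $(\transA{\nabla\Phi})\nu\nu^T - \e\e^T$, and $E(\ti u)$ — is itself of order $\nabla'\omega$ (using \eqref{eq:normal representation}) and hence bounded by $C\kappa$.

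The delicate part, and the reason the third derivative of $\omega$ (equivalently, the $C^{2,1}$-regularity, cf.~\eqref{eq:uniformity 3}) is indispensable, is the normal component $\e\e^T B_\gamma\ti u$: it enters as inhomogeneous Dirichlet data and must therefore be estimated in the full weighted $W^2_q$-norm $\|(\lambda\,\cdot,\sqrt{\lambda}\nabla\,\cdot,\nabla^2\,\cdot)\|_q$. Since this term has the shape $c(\omega)\ti u$ with $\|c(\omega)\|_\infty \le C\kappa$, the Leibniz rule forces derivatives of the coefficient up to second order, so that $\nabla'^3\omega$ appears; it is bounded by $M$ through \eqref{eq:uniformity 3} and \eqref{eq:normal estimate}, and the resulting terms, carrying only $\ti u$ or $\nabla\ti u$, are absorbed by negative powers of $\lambda$, e.g.\ $\|(\nabla^2 c)\ti u\|_q \le CM|\lambda|^{-1}\|\lambda\ti u\|_q$, whereas the top-order term $c(\omega)\nabla^2\ti u$ retains the small factor $C\kappa$. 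The tangential component $(I-\e\e^T)B_\gamma\ti u$ is handled in the same spirit in the weighted $W^1_q$-norm $\|(\sqrt{\lambda}\,\cdot,\nabla\,\cdot)\|_q$. Keeping track of which summands are small because of $\kappa$ and which because of $\lambda$ is the only genuine labor here.

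Collecting these bounds, the perturbation map has operator norm at most $C_0\big(\kappa + C(M)\lambda_0^{-1/2}\big)$. I would then fix $\kappa = \kappa(n,q,\theta)$ so small that $C_0\kappa \le \tfrac14$ and afterwards $\lambda_0 = \lambda_0(n,q,\kappa,M)$ so large that $C_0 C(M)\lambda_0^{-1/2} \le \tfrac14$; the Neumann series converges and yields the unique solution together with \eqref{eq:HE half space perturbed resolvent estimate}. The mapping properties underpinning the fixed-point setup — that $B\ti u \in L_q$, that $(I-\e\e^T)B_\gamma\ti u \in W^1_q$, and that $\e\e^T B_\gamma\ti u \in W^2_q$ — hold because all the coefficients lie in $W^2_\infty$ by \eqref{eq:normal estimate}, multiplied against $\ti u \in W^2_q$ or its first derivatives.
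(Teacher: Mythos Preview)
Your proposal is correct and follows essentially the same perturbation/Neumann-series strategy as the paper's proof. The only cosmetic difference is bookkeeping: the paper enforces $\lambda_0 \ge M^2/\kappa^2$ up front so that every lower-order contribution $CM|\lambda|^{-1/2}$ is absorbed into $C'\kappa$, yielding a single smallness bound $\|P\ti u\|_Y \le C'\kappa\|\ti u\|_X$, whereas you keep the two sources of smallness separate and choose $\kappa$ and $\lambda_0$ in two steps---but the mechanism and the estimates are identical.
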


\begin{proof}
We prove the statement using a perturbation argument via the Neumann series, where the version we make use of is~\cite{KW04}, Lem.\ 7.10.
Therefore we define the spaces
\begin{equation*}
\begin{split}
X & := W^2_q(\mathbb{R}^n_+)^n, \\
Y & := L_q(\mathbb{R}^n_+)^n \times
\big\{ (I - \e \e^T) \ti{G} + \e \e^T \ti{H} : \ti{G} \in W^1_q(\mathbb{R}^n_+)^n, \ti{H} \in W^2_q(\mathbb{R}^n_+)^n \big\} \\
& ~= L_q(\mathbb{R}^n_+)^n \times
\big\{ (\ti{G}^1,\dots,\ti{G}^{n-1},\ti{H}^n)^T : \ti{G} \in W^1_q(\mathbb{R}^n_+)^n, \ti{H} \in W^2_q(\mathbb{R}^n_+)^n \big\}, \\
Z & := L_q(\mathbb{R}^n_+)^n \times L_q(\partial \mathbb{R}^n_+)^n
\end{split}
\end{equation*}
with norms (depending on $\lambda \in \Sigma_\theta$)
\begin{equation*}
\begin{split}
\| \ti{u} \|_X
&:= \| (\lambda \ti{u},\sqrt{\lambda} \nabla \ti{u},\nabla^2 \ti{u}) \|_q, \\
\| (\ti{f},(I - \e \e^T) \ti{G} + \e \e^T \ti{H}) \|_Y
& := \| (\ti{f}, \sqrt{\lambda} \ti{G}^1, \dots, \sqrt{\lambda} \ti{G}^{n-1}, \nabla \ti{G}^1, \dots \\
& \qquad \dots, \nabla \ti{G}^{n-1}, \lambda \ti{H}^n, \sqrt{\lambda} \nabla \ti{H}^n, \nabla^2 \ti{H}^n) \|_q, \\
\| \cdot \|_Z
&:= \| \cdot \|_{L_q(\mathbb{R}^n_+)^n \times L_q(\partial \mathbb{R}^n_+)^n}
\end{split}
\end{equation*}
as well as the continuous linear operators
\begin{equation*}
\begin{split}
S: X & \longrightarrow Y, \quad
\ti{u} \mapsto ((\lambda - \Delta) \ti{u},\Dm(\ti{u}) \e + \e \e^T \ti{u}), \\
P: X & \longrightarrow Y, \quad
\ti{u} \mapsto (B \ti{u}, B_\gamma \ti{u}), \\
Q: Y & \longrightarrow Z, \quad
(\ti{f},\ti{k}) \mapsto (\ti{f},\Tr_{\partial \mathbb{R}^n_+} \ti{k}).
\end{split}
\end{equation*}
By standard arguments we obtain that the space $Y$ is complete so $X$, $Y$ and $Z$ are Banach spaces.
Due to Lemma~\ref{thm:HE half space}, for any $(\ti{f},\ti{k}) \in Y$ there exists a unique $\ti{u} \in X$ satisfying
$QS \ti{u} = Q (\ti{f},\ti{k})$
and there exists $C = C(n,q,\theta) > 0$ such that
\begin{equation}
\| \ti{u} \|_X \le C \| (\ti{f},\ti{k}) \|_Y.
\end{equation}
We now aim to show that we can choose $\lambda_0 = \lambda_0(n,q,\kappa,M) > 0$ and a constant $C' = C'(n,M) > 0$ such that for $\lambda \in \Sigma_\theta$, $|\lambda| \ge \lambda_0$ and $\| \nabla' \omega \|_\infty \le \kappa < 1$ we have
\begin{equation} \label{eq:HE perturbation estimate}
\| P \ti{u} \|_Y \le C' \kappa \| \ti{u} \|_X
\end{equation}
for all $\ti{u} \in X$.
Then, prescribing $\kappa < \frac{1}{2 C C'}$, 
we deduce
$\| P \|_{X \rightarrow Y} \le \frac{1}{2C}$
and as a consequence (see~\cite{KW04}, Lem.\ 7.10) we receive:
For any $(\ti{f},\ti{k}) \in Y$ there exists a unique $\ti{u} \in X$ satisfying
\begin{equation*}
Q(S + P) \ti{u} = Q (\ti{f},\ti{k})
\end{equation*}
and we have
\begin{equation*}
\| \ti{u} \|_X \le 2C \| (\ti{f},\ti{k}) \|_Y.
\end{equation*}
This is exactly the claim of the lemma.

It remains to prove \eqref{eq:HE perturbation estimate}.
For this purpose, we assume $M \ge 1$, $\kappa < 1$ and $\lambda_0 \ge \frac{M^2}{\kappa^2}$.
Let $\lambda \in \Sigma_\theta$, $|\lambda| \ge \lambda_0$, $\| \nabla' \omega \|_\infty \le \kappa$ and $\ti{u} \in X$.
Then, for the operator $B$, we have
\begin{equation*}
\begin{split}
\| B \ti{u} \|_q
& = \Big\| \left( 2(\nabla' \transA{\omega},0) \cdot \nabla \partial_n \ti{u}^{j} + (\Delta' \omega) \partial_n \ti{u}^{j} - |\nabla' \omega|^2 \partial_n^2 \ti{u}^{j} \right)_{j=1,\dots,n} \Big\|_q \\
& \le C \Big( \kappa \| \nabla^2 \ti{u} \|_q + \frac{M}{\sqrt{|\lambda|}} \| \sqrt{\lambda} \nabla \ti{u} \|_q + \kappa^2 \| \nabla^2 \ti{u} \|_q \Big) \\
& \le C' \kappa \| \ti{u} \|_X
\end{split}
\end{equation*}
with some constants $C = C(n) > 0$ and $C' = C'(n,M) > 0$.
For the operator $B_\gamma$, we have
\begin{equation} \label{eq:HE perturbation boundary}
\begin{split}
\| B_\gamma \ti{u} \|_{Y_2}
\le & \Big\| \sqrt{\lambda} \big[ (\transA{\nabla \Phi} - I) \Dm(\ti{u}) \e + (\transA{\nabla \Phi}) \Dm(\ti{u}) (\nu - \e) \\
& + ((\transA{\nabla \Phi}) \nu \nu^T - \e \e^T) \ti{u} + (\transA{\nabla \Phi}) (E(\ti{u})^T - E(\ti{u})) \nu \big] \Big\|_q \\
+ & \Big\| \nabla \big[ (\transA{\nabla \Phi} - I) \Dm(\ti{u}) \e + (\transA{\nabla \Phi}) \Dm(\ti{u}) (\nu - \e) \\
& + ((\transA{\nabla \Phi}) \nu \nu^T - \e \e^T) \ti{u} + (\transA{\nabla \Phi}) (E(\ti{u})^T - E(\ti{u})) \nu \big] \Big\|_q \\
+ & \Big\| \lambda \big[ ((\transA{\nabla \Phi}) \nu \nu^T - \e \e^T) \ti{u} \big] \Big\|_q \\
+ & \Big\| \sqrt{\lambda} \nabla \big[ ((\transA{\nabla \Phi}) \nu \nu^T - \e \e^T) \ti{u} \big] \Big\|_q \\
+ & \Big\| \nabla^2 \big[ ((\transA{\nabla \Phi}) \nu \nu^T - \e \e^T) \ti{u} \big] \Big\|_q.
\end{split}
\end{equation}
Now each of the summands in \eqref{eq:HE perturbation boundary} can be estimated by $C' \kappa \| \ti{u} \|_X$ with a constant $C' = C'(n,M) > 0$,
where all of the estimates can be done in a similar way.
One only has to keep in mind that $\| \nu - \e \|_\infty$ and $\|
\transA{\nabla \Phi} - I \|_\infty$ can be estimated by $\kappa$ up to a constant depending only on $n$, as well as \eqref{eq:uniformity 3} and the condition $\lambda_0 \ge \frac{M^2}{\kappa^2}$.
We exemplarily treat two of the terms in \eqref{eq:HE perturbation boundary}:
{\allowdisplaybreaks
\begin{align*}
& \Big\| \nabla \big[ (\transA{\nabla \Phi}) (E(\ti{u})^T - E(\ti{u})) \nu \big] \Big\|_q \\
& \le C'(n) \Big( \| (\nabla^2 \Phi) (E(\ti{u})^T - E(\ti{u})) \nu \|_q + \sum_{k=1}^n \| (\transA{\nabla \Phi}) [\partial_k (E(\ti{u})^T - E(\ti{u}))] \nu \|_q \\
& \qquad \qquad + \| (\transA{\nabla \Phi}) (E(\ti{u})^T - E(\ti{u})) \nabla \nu \|_q \Big) \\
& \le C'(n,M) \Big( \kappa \| \nabla \ti{u} \|_q
+ \kappa \| \nabla^2 \ti{u} \|_q + \| \nabla \ti{u} \|_q + \kappa \| \nabla \ti{u} \|_q \Big) \\
& \le C'(n,M) \bigg( \frac{1}{\sqrt{|\lambda|}} \| \sqrt{\lambda} \nabla \ti{u} \|_q + \kappa \| \nabla^2 \ti{u} \|_q \bigg) \\
& \le C'(n,M) \kappa \Big( \| \sqrt{\lambda} \nabla \ti{u} \|_q + \| \nabla^2 \ti{u} \|_q \Big) \\
& \le C'(n,M) \kappa \| \ti{u} \|_X,
\end{align*}
and
\begin{align*}
& \Big\| \nabla^2 \big[ ((\transA{\nabla \Phi}) \nu \nu^T - \e \e^T) \ti{u} \big] \Big\|_q \\
& \le C'(n) \Big( \frac{1}{|\lambda|} \| \nabla^2 ((\nabla \Phi^T) \nu \nu^T - \e \e^T) \|_\infty \| \lambda \ti{u} \|_q \qquad \qquad \qquad \qquad \\
& \qquad \qquad + \frac{1}{\sqrt{|\lambda|}} \| \nabla ((\nabla \Phi^T) \nu \nu^T - \e \e^T) \|_\infty \| \sqrt{\lambda} \nabla \ti{u} \|_q \\
& \qquad \qquad + \| (\nabla \Phi^T) (\nu \nu^T - \e \e^T) \|_\infty \| \nabla^2 \ti{u} \|_q \\
& \qquad \qquad + \| (\nabla \Phi^T - I) \e \e^T \|_\infty \| \nabla^2 \ti{u} \|_q \Big) \\
& \le C'(n,M) \kappa \| \ti{u} \|_X.
\end{align*}
}%
Hence, \eqref{eq:HE perturbation estimate} is verified.
\end{proof}

\begin{proof}[Proof of Theorem~\thref{thm:HE bent rotated shifted half space}]
For $f \in L_q(H_\omega)^n$, $g \in W^1_q(H_\omega)^n$ and $h \in W^2_q(H_\omega)^n$ we have
$\ti{f} \in L_q(\mathbb{R}^n_+)^n$, $\ti{g} \in W^1_q(\mathbb{R}^n_+)^n$ and $\ti{h} \in W^2_q(\mathbb{R}^n_+)^n$ and we define
$\ti{G} \in W^1_q(\mathbb{R}^n_+)^n$ and
$\ti{H} \in W^2_q(\mathbb{R}^n_+)^n$
as in \eqref{eq:G and H}.
We choose $\kappa$ and $\lambda_0$ 
as in Lemma~\ref{thm:HE half space perturbed}.
Then for $\lambda \in \Sigma_\theta$, $|\lambda| \ge \lambda_0$ and $\| \nabla' \omega \|_\infty \le \kappa$ there exists a unique solution
$\ti{u} \in W^2_q(\mathbb{R}^n_+)^n$ of \eqref{eq:HE half space perturbed 2},
satisfying \eqref{eq:HE half space perturbed resolvent estimate}.
The calculations above give that $u = J_\omega \ti{u}$ is the unique solution of \eqref{eq:HE bent half space}.

Now, assuming $|\lambda| \ge 1$, the isomorphism \eqref{eq:Isom change of coord} gives that $u = J_\omega \ti{u}$ fulfills
\begin{equation} \label{eq:res bent half space 1}
\| (\lambda u, \sqrt{\lambda} \nabla u, \nabla^2 u) \|_{q,H_\omega}
\le C \| (\lambda \ti{u}, \sqrt{\lambda} \nabla \ti{u}, \nabla^2 \ti{u}) \|_{q,\mathbb{R}^n_+},
\end{equation}
where $C = C(n,M) > 0$ and on the other hand
\begin{equation} \label{eq:res bent half space 2}
\| (\ti{f}, \sqrt{\lambda} \ti{g}, \nabla \ti{g}, \lambda \ti{h}, \sqrt{\lambda} \nabla \ti{h}, \nabla^2 \ti{h}) \|_{q,\mathbb{R}^n_+}
\le C \| (f, \sqrt{\lambda} g, \nabla g, \lambda h, \sqrt{\lambda} \nabla h, \nabla^2 h) \|_{q,H_\omega}.
\end{equation}
Based on representation \eqref{eq:G and H}, by taking into account
\eqref{eq:normal estimate}, \eqref{eq:Isom change of coord} and by 
assuming $|\lambda| \ge 1$ again, we further obtain
\begin{equation} \label{eq:res bent half space 3}
\begin{split}
\| & (\ti{f}, \sqrt{\lambda} \ti{G}, \nabla \ti{G}, \lambda \ti{H}, \sqrt{\lambda} \nabla \ti{H}, \nabla^2 \ti{H}) \|_{q,\mathbb{R}^n_+} \\
& \le C \| (\ti{f}, \sqrt{\lambda} \ti{g}, \nabla \ti{g}, \lambda \ti{h}, \sqrt{\lambda} \nabla \ti{h}, \nabla^2 \ti{h}) \|_{q,\mathbb{R}^n_+},
\end{split}
\end{equation}
where $C = C(n,M) > 0$.
Now \eqref{eq:HE half space perturbed resolvent estimate}, \eqref{eq:res bent half space 1}, \eqref{eq:res bent half space 2} and \eqref{eq:res bent half space 3} yield
\eqref{eq:HE bent rotated shifted half space resolvent estimate}.
\end{proof}

We turn to the proof of Theorem~\ref{thm:HE with PS}.
To this end, we follow a localization procedure that applies to non-compact
boundaries and which, for instance, is also utilized in \cite{Kun03}.
For the (countably many) parameters $l \in \Gamma$ we multiply
\eqref{eq:HE} by the smooth cut-off functions $\varphi_l$ as introduced
in \eqref{eq:Partition of unity} and \eqref{eq:uniformity partition of
unity}. 
This leads to a system of local equations (one equation for each $l \in
\Gamma$) with a sequence $(u_l)_{l \in \Gamma}$ 
of the form $u_l = \varphi_l u$ as the potential solution.
In order to receive such a system of local equations, we make use of the matrix identity
\begin{equation} \label{eq:Gradient matrix identity}
\transA{\nabla (\varphi u)} = u \transA{\nabla \varphi} + \varphi \transA{\nabla u}
\end{equation}
and the vector identity
\begin{equation} \label{eq:Laplace vector identity}
\Delta(\varphi u)
= (\Delta \varphi)u + 2 (\transA{\nabla u}) \transB{\nabla \varphi} + \varphi \Delta u
\end{equation}
for scalar functions $\varphi$ and vector fields $u$.
Introducing a suitable Banach space $X$ for the sequence $(u_l)_{l \in
\Gamma}$ as well as a Banach space $Y$ related to the right-hand sides 
of the local equations, the purpose is to obtain unique solvability on a
local level. Finally, the well-posedness shall be carried over 
to the original problem \eqref{eq:HE}.
Compared to \cite{Kun03}, where Dirichlet boundary conditions are
considered, the localization of the boundary conditions here is a bit
more intricate. 

In the sequel the space of $q$-summable sequences in a Banach space 
$X$ we denote by $l_q(X)$.
In case each element of the sequence shall be allowed to belong to a
different Banach space $X_i$, we write $l_q(\bigoplus_{i \in I} X_i)$,
where $I$ is a countable index set. 
Furthermore, in case $X_i$ is a function space $F(\Omega_i)$ or $F(\partial \Omega_i)$ of functions on some domain $\Omega_i$ or on its boundary $\partial \Omega_i$ (e.g., $F = W^k_q$ for $k \in \mathbb{N}_0$ and $1 \le q \le \infty$), we often write $\| \cdot \|_{l_q(F)}$ for the norm in $l_q(\bigoplus_{i \in I} X_i)$.

\begin{proof}[Proof of Theorem~\thref{thm:HE with PS}]
Due to \eqref{eq:Dm invariant under tangential projection}
we can rewrite \eqref{eq:HE} as
\begin{equation} \label{eq:HE 2}
\left\{
\begin{array}{rll}
\lambda u - \Delta u & = f & \text{in } \Omega \\
\Dm(u)\nu + \Pi_\nu u & = \Pi_\tau g + \Pi_\nu h & \text{on } \partial \Omega.
\end{array}
\right.
\end{equation}
The Banach space for the boundary functions in \eqref{eq:HE 2} is
defined as
\begin{equation*}
\begin{split}
& \BFq = \BF_{q,\lambda}(\partial \Omega) \\
& := \big\{ a \in L_q(\partial \Omega)^n : a = \Pi_\tau \Tr g + \Pi_\nu \Tr h, ~g \in W^1_q(\Omega)^n, ~h \in W^2_q(\Omega)^n \big\},
\end{split}
\end{equation*}
with norm
\begin{equation*}
\begin{split}
\| a \|_{\BF_{q,\lambda}(\partial \Omega)}
:= \inf_{g,h} \| (\sqrt{\lambda} g, \nabla g, \lambda h, \sqrt{\lambda} \nabla h, \nabla^2 h) \|_q,
\end{split}
\end{equation*}
where the infimum runs over all
$g \in W^1_q(\Omega)^n$, $h \in W^2_q(\Omega)^n$ such that $a = \Pi_\tau \Tr g + \Pi_\nu \Tr h$.
For $\lambda = 1$ the space $\BFq$ is therefore equipped with the natural norm for the range of the continuous linear operator $T: W^1_q(\Omega)^n \times W^2_q(\Omega)^n \rightarrow L_q(\partial \Omega)^n$, $(g,h) \mapsto \Pi_\tau \Tr g + \Pi_\nu \Tr h$.
We allow arbitrary $\lambda \in \Sigma_\theta$ in the definition of $\| \cdot \|_{\BF_{q,\lambda}(\partial \Omega)}$, since we will need this for a perturbation argument later on.

\emph{Step 1: Local coordinates.}
For the sake of consistent notation we put
\begin{equation*}
\Omega_l :=
\begin{cases}
H_l, & l \in \Gamma_1 \\
\mathbb{R}^n, & l \in \Gamma_0.
\end{cases}
\end{equation*}
Thus, by the space $\BF_q(\partial \Omega_l)$ we mean $\BF_q(\partial \Omega_l) = \BF_q(\partial H_l)$ for $l \in \Gamma_1$ and $\BF_q(\partial \Omega_l) := \{ 0 \}$ for $l \in \Gamma_0$.
We introduce the Banach spaces
\begin{equation*}
\begin{split}
X & := l_q\biggl(\bigoplus_{l \in \Gamma} W^2_q(\Omega_l)^n\biggr), \\
Y & := l_q\biggl(\bigoplus_{l \in \Gamma} L_q(\Omega_l)^n\biggr) 
\times l_q\biggl(\bigoplus_{l \in \Gamma} \BF_q(\partial \Omega_l)\biggr)
\end{split}
\end{equation*}
with ($\lambda$-dependend) norms 
\begin{equation*}
\begin{split}
\| (u_l)_{l \in \Gamma} \|_X
& := \| (\lambda u_l, \sqrt{\lambda} \nabla u_l, \nabla^2 u_l)_{l \in \Gamma} \|_{l_q(L_q)}, \\
\| (f_l,a_l)_{l \in \Gamma} \|_Y
& := \| (f_l)_{l \in \Gamma} \|_{l_q(L_q)} + \| (a_l)_{l \in \Gamma}
\|_{l_q(\BF_{q,\lambda})}.
\end{split}
\end{equation*}
Furthermore, we define the linear and continuous operator
\begin{equation*}
S: X \longmapsto Y, \quad
(u_l)_{l \in \Gamma}
\longmapsto \big( (\lambda - \Delta) u_l, \Tr_{\partial \Omega_l} \Dm(u_l) \nu_l + \nu_l \nu_l^T \Tr_{\partial \Omega_l} u_l \big)_{l \in \Gamma},
\end{equation*}
where we set $\Tr_{\partial \Omega_l} \Dm(u_l) \nu_l + \nu_l \nu_l^T \Tr_{\partial \Omega_l} u_l := 0$ in case $l \in \Gamma_0$.

For the bent, rotated and shifted half space $H_l = Q_l^T H_{\omega_l} + \tau_l$, $l \in \Gamma_1$ and the related constant $M \ge 1$ from \eqref{eq:uniformity},
let initially $\kappa = \kappa(n,q,\theta) > 0$ and $\lambda_0 = \lambda_0(n,q,\kappa,M) > 0$ such that the conditions of Theorem~\ref{thm:HE bent rotated shifted half space} are satisfied.
We further assume $\kappa < 1$ and $\lambda_0 \ge \frac{M^2}{\kappa^2}$.
Let $\lambda \in \Sigma_\theta$, $|\lambda| \ge \lambda_0$ and note that \eqref{eq:littleness gradient} gives $\| \nabla' \omega_l \|_\infty \le \kappa$ for all $l \in \Gamma_1$.
Theorem~\ref{thm:HE bent rotated shifted half space} then implies that 
\begin{equation} \label{eq:S isomorphism}
S: X \overset{\cong}{\longrightarrow} Y
\end{equation}
is an isomorphism and that the continuity constants 
of $S$ and $S^{-1}$ depend on $q,n,\theta$ and $M$ only.

To see this, pick $(f_l,a_l)_{l \in \Gamma} \in Y$. Then, for all $l \in \Gamma_1$ Theorem~\ref{thm:HE bent rotated shifted half space} yields a unique $u_l \in W^2_q(H_l)^n$ such that $(\lambda - \Delta) u_l = f_l$ and $\Tr_{\partial H_l} \Dm(u_l) \nu_l + \nu_l \nu_l^T \Tr_{\partial H_l} u_l = a_l$.
For $l \in \Gamma_0$, existence and uniqueness of the solution $u_l \in
W^2_q(\mathbb{R}^n)^n$ to $(\lambda - \Delta) u_l = f_l$ is clear.
In addition, there is a constant $C = C(n,q,\theta,M) > 0$ such that
\begin{equation} \label{eq:resolvent estimate local}
\| (\lambda u_l, \sqrt{\lambda} \nabla u_l, \nabla^2 u_l) \|_{q,H_l}
\le C \| (f_l, \sqrt{\lambda} g_l, \nabla g_l, \lambda h_l, \sqrt{\lambda} \nabla h_l, \nabla^2 h_l) \|_{q,H_l}
\end{equation}
for all $l \in \Gamma$ and $g_l \in W^1_q(H_l)^n$, $h_l \in W^2_q(H_l)^n$ such
that $a_l = \Pi_\tau \Tr_{\partial H_l} g_l + \Pi_\nu \Tr_{\partial H_l}
h_l$, where we put $g_l = h_l = 0$ for all $l \in \Gamma_0$.
Consequently, for all $l \in \Gamma$ we have
\begin{equation} \label{eq:resolvent estimate local 2}
\| (\lambda u_l, \sqrt{\lambda} \nabla u_l, \nabla^2 u_l) \|_{q,\Omega_l}
\le C \big( \| f_l \|_{q,\Omega_l} + \| a_l \|_{\BF_{q,\lambda}(\partial \Omega_l)} \big).
\end{equation}
This gives
\begin{equation} \label{eq:Localization resolvent estimate}
\begin{split}
\| (u_l)_{l \in \Gamma} \|_X^q
& = \sum_{l \in \Gamma} \| (\lambda u_l, \sqrt{\lambda} \nabla u_l, \nabla^2 u_l) \|_{q,\Omega_l}^q \\
& \le C^q \sum_{l \in \Gamma} \big( \| f_l \|_{q,\Omega_l} + \| a_l \|_{\BF_{q,\lambda}(\partial \Omega_l)} \big)^q \\
& \le C_S^q \| (f_l,a_l)_{l \in \Gamma} \|_Y^q,
\end{split}
\end{equation}
where $C_S = C_S(n,q,\theta,M) > 0$.
Conversely, it is not hard to see that we have
\begin{equation*}
\| S (u_l)_{l \in \Gamma} \|_Y \le C' \| (u_l)_{l \in \Gamma} \|_X
\end{equation*}
for every $(u_l)_{l \in \Gamma} \in X$ with $C' = C'(n,q) > 0$. 
Hence, \eqref{eq:S isomorphism} is verified.

\emph{Step 2: Localizing \eqref{eq:HE}.}
We now multiply \eqref{eq:HE 2} by the functions $\varphi_l$, $l \in \Gamma$ in order to receive corresponding local equations.
Writing $u_m = \varphi_m u$ and using \eqref{eq:Partition of unity}, we have
\begin{equation*}
\begin{split}
& \varphi_l (\lambda - \Delta) u \\
& = (\lambda - \Delta)(\varphi_l u) + 2 (\transA{\nabla u}) \transB{\nabla \varphi_l} + (\Delta \varphi_l) u \\
& = (\lambda - \Delta)(\varphi_l u) + 2 \transA{ \Big( \nabla \sum_{m \in \Gamma} \varphi_m^2 u \Big) } \transB{\nabla \varphi_l} + (\Delta \varphi_l) \sum_{m \in \Gamma} \varphi_m^2 u \\
& = (\lambda - \Delta) u_l + \sum_{m \sim l} \big[ 2 u_m (\transA{\nabla \varphi_m}) \transB{\nabla \varphi_l} + 2 \varphi_m (\transA{\nabla u_m}) \transB{\nabla \varphi_l} + (\Delta \varphi_l) \varphi_m u_m \big].
\end{split}
\end{equation*}
For the tangential boundary condition in \eqref{eq:HE 2} we obtain (note that $\nu = \nu_l$ on $\Supp(\varphi_l)$ for $l \in \Gamma_1$), using \eqref{eq:Partition of unity}, \eqref{eq:Gradient matrix identity} and writing $u_m = \varphi_m u$ again,
\begin{equation*}
\begin{split}
\varphi_l \Dm(u) \nu
& = (\varphi_l \transA{\nabla u} - \varphi_l \transB{\nabla u}) \nu_l \\
& = (\transA{\nabla u_l} - \transB{\nabla u_l}) \nu_l - u (\transA{\nabla \varphi_l}) \nu_l + (\transB{\nabla \varphi_l}) u^T \nu_l \\
& = \Dm(u_l) \nu_l - \sum_{m \in \Gamma} \varphi_m^2 u (\transA{\nabla \varphi_l}) \nu_l + \sum_{m \in \Gamma} \varphi_m^2 (\transB{\nabla \varphi_l}) u^T \nu_l \\
& = \Dm(u_l) \nu_l - \sum_{m \sim l} \varphi_m \big[ u_m \transA{\nabla
\varphi_l} - (u_m \transA{\nabla \varphi_l})^T \big] \nu_l. \\
\end{split}
\end{equation*}
For the normal boundary condition we have
\begin{equation*}
\varphi_l \Pi_\nu u
= \varphi_l \nu_l \nu_l^T u
= \nu_l \nu_l^T u_l
\end{equation*}
for $l \in \Gamma_1$.
Summarizing, multiplying \eqref{eq:HE 2} by $\varphi_l$ for $l \in \Gamma$ 
yields the local equations
\begin{equation} \label{eq:HE local}
\begin{cases}
\lambda u_l - \Delta u_l + \sum_{m \sim l} \big[ 2 u_m (\transA{\nabla \varphi_m}) \transB{\nabla \varphi_l} + \! 2 \varphi_m (\transA{\nabla u_m}) \transB{\nabla \varphi_l} + \! (\Delta \varphi_l) \varphi_m u_m \big] \\
= f_l \quad
\text{in } \Omega_l \text{ for all } l \in \Gamma, \\
\Dm(u_l) \nu_l + \nu_l \nu_l^T u_l - \sum_{m \approx l} \varphi_m \big[ u_m \transA{\nabla \varphi_l} - (u_m \transA{\nabla \varphi_l})^T \big] \nu_l \\
= (I - \nu_l \nu_l^T) g_l + \nu_l \nu_l^T h_l \quad
\text{on } \partial \Omega_l \text{ for all } l \in \Gamma_1.
\end{cases}
\end{equation}
Therefore, we define the perturbation operator
$P: X \longrightarrow Y$ by
\begin{equation*}
\begin{split}
(u_l)_{l \in \Gamma}
\longmapsto \Big( &
\sum_{m \sim l} \big[ 2 u_m (\transA{\nabla \varphi_m}) \transB{\nabla \varphi_l} + 2 \varphi_m (\transA{\nabla u_m}) \nabla \varphi_l + (\Delta \varphi_l) \varphi_m u_m \big], \\
& - \Tr_{\partial \Omega_l} \sum_{m \approx l} \varphi_m \big[ u_m \transA{\nabla \varphi_l} - (u_m \transA{\nabla \varphi_l})^T \big] \nu_l
\Big)_{l \in \Gamma},
\end{split}
\end{equation*}
where in case $l \in \Gamma_0$ we set $\Tr_{\partial \Omega_l} \sum_{m \approx l} \varphi_m \big[ u_m \transA{\nabla \varphi_l} - (u_m \transA{\nabla \varphi_l})^T \big] \nu_l := 0$.

\emph{Step 3: Well-posedness of local equations.}
We now aim to verify that there exists $C_P = C_P(n,q,\Omega) > 0$ such that
\begin{equation} \label{eq:Localization perturbation estimate}
\| P (u_l)_{l \in \Gamma} \|_Y
\le \frac{C_P}{\sqrt{|\lambda|}} \| (u_l)_{l \in \Gamma} \|_X
\end{equation}
for all $(u_l)_{l \in \Gamma} \in X$ and for $\lambda \in \Sigma_\theta$, $|\lambda| \ge \lambda_0$.
For this purpose, let $(u_l)_{l \in \Gamma} \in X$. Then for all $l \in \Gamma$ we have, using \eqref{eq:uniformity partition of unity},
\begin{equation} \label{eq:Perturbation estimate exemplarily}
\begin{split}
\Big\| \sum_{m \sim l} 2 u_m (\transA{\nabla \varphi_m}) \transB{\nabla \varphi_l} \Big\|_{q,\Omega_l}^q
& \le C \sum_{m \sim l} \int_{\Omega_l \cap B_l \cap B_m} \Big| u_m (\transA{\nabla \varphi_m}) \transB{\nabla \varphi_l} \Big|^q \D \lambda_n \\
& = C \sum_{m \sim l} \int_{\Omega_m \cap B_l \cap B_m} \Big| u_m (\transA{\nabla \varphi_m}) \transB{\nabla \varphi_l} \Big|^q \D \lambda_n \\
& \le C' \sum_{m \sim l} \| u_m \|_{q,\Omega_m \cap B_m}^q
\end{split}
\end{equation}
with constants $C = C(n,q) > 0$ and $C' = C'(n,q,\Omega) > 0$,
where we also used that the support of the function $u_m (\transA{\nabla \varphi_m}) \transB{\nabla \varphi_l}$ is contained in $B_m \cap B_l$.
Since at most $\bar{N}$ of the balls $B_l$ have nonempty intersection, we deduce
\begin{equation*}
\begin{split}
\Big\| \Big( \sum_{m \sim l} 2 u_m (\transA{\nabla \varphi_m}) \transB{\nabla \varphi_l} \Big)_{l \in \Gamma} \Big\|_{l_q(L_q)}^q
& \le C' \sum_{l \in \Gamma} \sum_{m \sim l} \| u_m \|_{q,\Omega_m \cap B_m}^q \\
& \le C'' \sum_{l \in \Gamma} \| u_l \|_{q,\Omega_l \cap B_l}^q \\
& \le C'' \sum_{l \in \Gamma} \| u_l \|_{q,\Omega_l}^q,
\end{split}
\end{equation*}
where $C'' = C''(n,q,\Omega) > 0$.
In the same way we obtain
\begin{equation*}
\Big\| \Big( \sum_{m \sim l} (\Delta \varphi_l) \varphi_m u_m \Big)_{l \in \Gamma} \Big\|_{l_q(L_q)}^q
\le C'' \sum_{l \in \Gamma} \| u_l \|_{q,\Omega_l}^q
\end{equation*}
and 
\begin{equation*}
\Big\| \Big( \sum_{m \sim l} 2 \varphi_m (\transA{\nabla u_m}) \transB{\nabla \varphi_l} \Big)_{l \in \Gamma} \Big\|_{l_q(L_q)}^q
\le C'' \sum_{l \in \Gamma} \| \nabla u_l \|_{q,\Omega_l}^q.
\end{equation*}
Altogether, by the definition of the norm in $X$, there is a constant $C_P = C_P(n,q,\Omega) > 0$ such that
\begin{equation*}
\begin{split}
& \Big\| \Big( \sum_{m \sim l} \big[ 2 u_m (\transA{\nabla \varphi_m}) \transB{\nabla \varphi_l} + 2 \varphi_m (\transA{\nabla u_m}) \nabla \varphi_l + (\Delta \varphi_l) \varphi_m u_m \big] \Big)_{l \in \Gamma} \Big\|_{l_q(L_q)} \\
& \le C_P \| (u_l, \nabla u_l)_{l \in \Gamma} \|_{l_q(L_q)} \\
& \le \frac{C_P}{\sqrt{|\lambda|}} \| (u_l)_{l \in \Gamma} \|_X.
\end{split}
\end{equation*}
In order to treat the boundary part of $P$, we make use of the extension $\widebar{\nu}_l \in W^2_\infty(H_l)^n$ of the outward unit normal vector $\nu_l$ for $H_l$, which satisfies \eqref{eq:normal uniformity 1}:
For $l \in \Gamma_1$, a function $g_l \in W^1_q(H_l)^n$ satisfying
\begin{equation*}
\Tr_{\partial H_l} g_l = \Tr_{\partial H_l} \sum_{m \approx l} \varphi_m \big[ u_m \transA{\nabla \varphi_l} - (u_m \transA{\nabla \varphi_l})^T \big] \nu_l
\end{equation*}
is given by
\begin{equation*}
g_l := \sum_{m \approx l} \varphi_m \big[ u_m \transA{\nabla \varphi_l} - (u_m \transA{\nabla \varphi_l})^T \big] \widebar{\nu}_l.
\end{equation*}
Note that $\Tr_{\partial H_l} g_l$ is contained in the tangent space of $\partial H_l$, since we have $\nu_l \nu_l^T \Tr_{\partial H_l} g_l = 0$.
Similar to \eqref{eq:Perturbation estimate
exemplarily} by additionally using \eqref{eq:normal uniformity 1} 
we obtain that
\begin{equation*}
\| (\sqrt{\lambda} g_l, \nabla g_l) \|_{q,H_l}^q
\le C \sum_{m \approx l} \int_{H_m \cap B_m} (|\sqrt{\lambda} u_m|^q + |\nabla u_m|^q) \D\lambda_n,
\end{equation*}
where again $C = C(n,q,\Omega) > 0$.
Consequently, due to the definition of $BF_{q,\lambda}(\partial\Omega_l)$ 
we can estimate
\begin{equation} \label{eq:Localization boundary term estimate}
\begin{split}
& \Big\| \Big( \Tr_{\partial \Omega_l} \sum_{m \approx l} \varphi_m \big[ u_m \transA{\nabla \varphi_l} - (u_m \transA{\nabla \varphi_l})^T \big] \nu_l \Big)_{l \in \Gamma} \Big\|_{l_q(BF_{q,\lambda})}^q \\
& \le \sum_{l \in \Gamma_1} \| (\sqrt{\lambda} g_l, \nabla g_l) \|_{q,H_l}^q \\
& \le C \sum_{l \in \Gamma_1} \sum_{m \approx l} \int_{H_m \cap B_m} (|\sqrt{\lambda} u_m|^q + |\nabla u_m|^q) \D\lambda_n \\
& \le C' \sum_{l \in \Gamma_1} \int_{H_l \cap B_l} (|\sqrt{\lambda} u_l|^q + |\nabla u_l|^q) \D\lambda_n
\end{split}
\end{equation}
with some constant $C' = C'(n,q,\Omega) > 0$. This results in
\begin{equation*}
\Big\| \Big( \Tr_{\partial \Omega_l} \sum_{m \approx l} \varphi_m \big[ u_m \transA{\nabla \varphi_l} - (u_m \transA{\nabla \varphi_l})^T \big] \nu_l \Big)_{l \in \Gamma} \Big\|_{l_q(BF_{q,\lambda})}
\le \frac{C_P}{\sqrt{|\lambda|}} \| (u_l)_{l \in \Gamma} \|_X
\end{equation*}
for a $C_P = C_P(n,q,\Omega) > 0$ and
\eqref{eq:Localization perturbation estimate} is proved.

We now increase $\lambda_0 = \lambda_0(n,q,\theta,\Omega)$ such that $\lambda_0 \ge (2 C_S C_P)^2$, where $C_P$ is the constant from \eqref{eq:Localization perturbation estimate} and $C_S$ is the constant from \eqref{eq:Localization resolvent estimate}.
This implies
\begin{equation} \label{eq:P estimate}
\| P \|_{X \rightarrow Y} \le \frac{1}{2 C_S}.
\end{equation}
Then a Neumann series argument gives that
\begin{equation} \label{eq:S+P isom}
S + P: X \overset{\cong}{\longrightarrow} Y
\end{equation}
is isomorphic with
\begin{equation} \label{eq:S+P estimate}
\| (S + P)^{-1} \|_{Y \rightarrow X}
\le C_S \frac{1}{1 - C_S \| P \|_{X \rightarrow Y}}
\le 2 C_S.
\end{equation}

Now, \eqref{eq:S+P isom} implies that \eqref{eq:HE local} is uniquely
solvable for any right-hand sides $f_l \in L_q(\Omega_l)^n$, $g_l \in
W^1_q(\Omega)^n$ and $h_l \in W^2_q(\Omega)^n$ satisfying
$(f_l)_{l \in \Gamma} \in l_q(\bigoplus_{l \in \Gamma} L_q(\Omega_l)^n)$ and
$(a_l)_{l \in \Gamma} \in l_q(\bigoplus_{l \in \Gamma} \BF_q(\partial \Omega_l))$ where we put
$a_l := (I -\nu_l \nu_l^T) \Tr_{\partial H_l} g_l + \nu_l \nu_l^T \Tr_{\partial H_l} h_l$ ($l \in \Gamma_1$) resp.\ $a_l := 0$ ($l \in \Gamma_0$).
Furthermore, \eqref{eq:S+P estimate} is a corresponding resolvent estimate for the local equations.

\emph{Step 4: Uniqueness and resolvent estimate.}
We convince ourselves that we have proved uniqueness for \eqref{eq:HE} as well as the related resolvent estimate \eqref{eq:HE resolvent estimate}.
For any solution $u \in W^2_q(\Omega)$ of \eqref{eq:HE} we have seen that $(u_l)_{l \in \Gamma} := (\varphi_l u)_{l \in \Gamma}$ solves the local equations \eqref{eq:HE local} with right-hand sides $(f_l)_{l \in \Gamma} := (\varphi_l f)_{l \in \Gamma}$, $(g_l)_{l \in \Gamma_1} := (\varphi_l g)_{l \in \Gamma_1}$ and $(h_l)_{l \in \Gamma_1} := (\varphi_l h)_{l \in \Gamma_1}$.
Since \eqref{eq:HE local} is uniquely solvable, so is \eqref{eq:HE}.
For $a_l := (I - \nu_l \nu_l^T) \Tr_{\partial H_l} g_l + \nu_l \nu_l^T \Tr_{\partial H_l} h_l$ if $l \in \Gamma_1$ and $a_l := 0$ if $l \in \Gamma_0$, we have
$(S + P)(u_l)_{l \in \Gamma} = (f_l,a_l)_{l \in \Gamma}$.
Estimate \eqref{eq:S+P estimate} therefore implies
\begin{equation} \label{eq:local estimate 0}
\begin{split}
\| (u_l)_{l \in \Gamma} \|_X
& \le 2 C_S \| (f_l,a_l)_{l \in \Gamma} \|_Y \\
& \le 2 C_S \| (f_l, \sqrt{\lambda} g_l, \nabla g_l, \lambda h_l, \sqrt{\lambda} \nabla h_l, \nabla^2 h_l)_{l \in \Gamma} \|_{l_q(L_q)}.
\end{split}
\end{equation}
It remains to prove existence of some constant $C = C(n,q,\Omega) > 0$ so that
\begin{equation} \label{eq:local estimate 1}
\| (\lambda u,\sqrt{\lambda} \nabla u, \nabla^2 u) \|_{q,\Omega}
\le C \| (u_l)_{l \in \Gamma} \|_X
\end{equation}
and
\begin{equation} \label{eq:local estimate 2}
\begin{split}
\| & (f_l, \sqrt{\lambda} g_l, \nabla g_l, \lambda h_l, \sqrt{\lambda} \nabla h_l, \nabla^2 h_l)_{l \in \Gamma} \|_{l_q(L_q)} \\
& \le C \| (f, \sqrt{\lambda} g, \nabla g, \lambda h, \sqrt{\lambda} \nabla h, \nabla^2 h) \|_{q,\Omega}.
\end{split}
\end{equation}

For $u \in W^2_q(\Omega)^n$ and $u_m := \varphi_m u$ we have
\begin{equation} \label{eq:local estimate exemplarily}
\begin{split}
\| \lambda u \|_{q,\Omega}^q
& = |\lambda|^q \int_\Omega \sum_{l \in \Gamma} \varphi_l^2 \Big| \sum_{m \sim l} \varphi_m u_m \Big|^q \D \lambda_n \\
& \le C |\lambda|^q \int_\Omega \sum_{l \in \Gamma} \sum_{m \sim l} |\varphi_m u_m|^q \D \lambda_n \\
& \le C' |\lambda|^q \int_\Omega \sum_{l \in \Gamma} |\varphi_l u_l|^q \D \lambda_n \\
& \le C' \| (\lambda u_l)_{l \in \Gamma} \|_{l_q(L_q)}^q,
\end{split}
\end{equation}
where $C = C(n,q,\Omega) > 0$ and $C' = C'(n,q,\Omega) > 0$.
Similarly, using \eqref{eq:uniformity partition of unity}, we obtain
\begin{equation*}
\| \sqrt{\lambda} \nabla u \|_{q,\Omega}^q
\le C \| (\sqrt{\lambda} u_l, \sqrt{\lambda} \nabla u_l)_{l \in \Gamma} \|_{l_q(L_q)}^q
\end{equation*}
and
\begin{equation*}
\| \nabla^2 u \|_{q,\Omega}^q
\le C \| (u_l, \nabla u_l, \nabla^2 u_l)_{l \in \Gamma} \|_{l_q(L_q)}^q
\end{equation*}
with some constant $C = C(n,q,\Omega) > 0$.
Taking into account $|\lambda| \ge 1$, \eqref{eq:local estimate 1} follows.

For $f \in L_q(\Omega)^n$ and $f_l := \varphi_l f$ we have
\begin{equation*}
\begin{split}
\| (f_l)_{l \in \Gamma} \|_{l_q(L_q)}^q
& \le \sum_{l \in \Gamma} \int_{\Omega_l \cap B_l} |f|^q \D \lambda_n \\
& = \sum_{l \in \Gamma} \int_{\Omega \cap B_l} |f|^q \D \lambda_n \\
& \le C\| f \|_{q,\Omega}^q,
\end{split}
\end{equation*}
where $C = C(n,q,\Omega) > 0$.
Using $|\lambda| \ge 1$ and \eqref{eq:uniformity partition of unity} again, we obtain similarly
\begin{equation*}
\| (\sqrt{\lambda} g_l, \nabla g_l, \lambda h_l, \sqrt{\lambda} \nabla h_l, \nabla^2 h_l)_{l \in \Gamma} \|_{l_q(L_q)}^q
\le C \| (\sqrt{\lambda} g, \nabla g, \lambda h, \sqrt{\lambda} \nabla h, \nabla^2 h) \|_{q,\Omega}^q
\end{equation*}
with some constant $C = C(n,q,\Omega) > 0$.
Hence, \eqref{eq:local estimate 2} is proved.
Gathering \eqref{eq:local estimate 0}, \eqref{eq:local estimate 1} and
\eqref{eq:local estimate 2} implies \eqref{eq:HE resolvent estimate}.

\emph{Step 5: Existence.}
In the last step we prove existence of a solution to \eqref{eq:HE}.
For this purpose we introduce the notation $\widebar{D} v := (\varphi_l v)_{l \in \Gamma}$ for functions $v$ on $\Omega$ and $\widebar{C} (v_l)_{l \in \Gamma} := \sum_{l \in \Gamma} \varphi_l v_l$ for sequences $(v_l)_{l \in \Gamma}$ of functions $v_l$ on $\Omega_l$.
If $v$ is a function on $\partial \Omega$, then we still write $\varphi_l v$ for the restriction $(\varphi_l|_{\partial \Omega}) v$ so that $\widebar{D} v$ is a sequence of functions on $\partial \Omega$ and similarly, if $v_l$, $l \in \Gamma$ are functions on $\partial \Omega_l$ (in particular $v_l = 0$ for $l \in \Gamma_0$), then $\widebar{C} (v_l)_{l \in \Gamma}$ is a function on $\partial \Omega$.
We further put $R_\Omega u := \Tr_{\partial \Omega} \Dm(u)\nu + \Pi_\nu \Tr_{\partial \Omega} u$.

In order to prove existence, we construct a perturbation 
$\tilde P: X \rightarrow Y$ such that
\begin{equation} \label{eq:HE solution identity 2}
u = \widebar{C} (S + \tilde P)^{-1} \widebar{D} (f,\Pi_\tau \Tr g + \Pi_\nu
\Tr h).
\end{equation}
To this end, for the moment assume that the (unknown) 
operator $S + \tilde P: X \rightarrow Y$ was an isomorphism.
Then in view of \eqref{eq:HE solution identity 2} we had
\begin{equation*}
\begin{split}
& (\lambda - \Delta, R_\Omega) \widebar{C} (S + \tilde P)^{-1} \widebar{D} (f,\Pi_\tau \Tr g + \Pi_\nu \Tr h) \\
& = (f,\Pi_\tau \Tr g + \Pi_\nu \Tr h) \\
& = \widebar{C} (S + \tilde P)(S + \tilde P)^{-1} \widebar{D} (f,\Pi_\tau \Tr g +
\Pi_\nu \Tr h),
\end{split}
\end{equation*}
which is valid, if
\begin{equation} \label{eq:Commutator}
(\lambda - \Delta, R_\Omega) \widebar{C}
= \widebar{C} (S + \tilde P)
\end{equation}
is satisfied.
Thus, to identify $\tilde P$, we 
compute $\widebar{C} S - ((\lambda - \Delta),R_\Omega)
\widebar{C}$: For $(u_l)_{l \in \Gamma} \in X$ by \eqref{eq:Laplace
vector identity} we have
\begin{equation*}
\begin{split}
& \sum_{l \in \Gamma} \varphi_l (\lambda - \Delta)u_l - (\lambda - \Delta) \sum_{l \in \Gamma} \varphi_l u_l \\
& = \sum_{l \in \Gamma} \big[ (\Delta \varphi_l) u_l + 2 (\transA{\nabla u_l}) \transB{\nabla \varphi_l} \big] \\
& = \sum_{m \in \Gamma} \varphi_m^2 \sum_{l \sim m} \big[ (\Delta \varphi_l) u_l + 2 (\transA{\nabla u_l}) \transB{\nabla \varphi_l} \big] \\
& = \widebar{C} \Big( \varphi_l \sum_{m \sim l} \big[ (\Delta \varphi_m)
u_m + 2 (\transA{\nabla u_m}) \transB{\nabla \varphi_m} \big] \Big)_{l
\in \Gamma}.
\end{split}
\end{equation*}
For the boundary part the identity $\nu = \nu_l$ on $\partial \Omega \cap B_l$ as well as \eqref{eq:Gradient matrix identity} yield
\begin{equation*}
\begin{split}
& \sum_{l \in \Gamma} \big[ \Tr_{\partial \Omega_l} \varphi_l \Dm(u_l) \nu_l + \nu_l \nu_l^T \Tr_{\partial \Omega_l} \varphi_l u_l \big] \\
& \quad - \big[ \Tr_{\partial \Omega} \Dm \Big( \sum_{l \in \Gamma} \varphi_l u_l \Big) \nu + \nu \nu^T \Tr_{\partial \Omega} \sum_{l \in \Gamma} \varphi_l u_l \big] \\
& = \sum_{l \in \Gamma_1} \big[ \Tr_{\partial \Omega_l} \varphi_l (\transA{\nabla u_l} - \transB{\nabla u_l}) \nu_l
- \Tr_{\partial \Omega} (\transA{\nabla (\varphi_l u_l)} - \transB{\nabla (\varphi_l u_l)}) \nu \big] \\
& = - \sum_{l \in \Gamma_1} \Tr_{\partial \Omega_l} (u_l \transA{\nabla \varphi_l} - (u_l \transA{\nabla \varphi_l})^T) \nu_l \\
& = - \sum_{m \in \Gamma} \sum_{l \approx m} \Tr_{\partial \Omega_l} \varphi_m^2 (u_l \transA{\nabla \varphi_l} - (u_l \transA{\nabla \varphi_l})^T) \nu_l \\
& = \widebar{C} \Big( - \Tr_{\partial \Omega_l} \sum_{m \approx l} \varphi_l (u_m \transA{\nabla \varphi_m} - (u_m \transA{\nabla \varphi_m})^T) \nu_m \Big)_{l \in \Gamma}.
\end{split}
\end{equation*}
Consequently, we define $\tilde P: X \longrightarrow Y$ by
\begin{equation*}
\begin{split}
(u_l)_{l \in \Gamma} \longmapsto
\Big(
& - \varphi_l \sum_{m \sim l} \big[ (\Delta \varphi_m) u_m + 2 (\transA{\nabla u_m}) \transB{\nabla \varphi_m} \big], \\
& \Tr_{\partial \Omega_l} \sum_{m \approx l} \varphi_l (u_m \transA{\nabla \varphi_m} - (u_m \transA{\nabla \varphi_m})^T) \nu_m
\Big)_{l \in \Gamma}.
\end{split}
\end{equation*}
Then \eqref{eq:Commutator} is true and as a conclusion \eqref{eq:HE
solution identity 2} is the solution of \eqref{eq:HE}, provided 
$(S + \tilde P)^{-1}$ exists. Hence, it remains to verify that $\tilde
P$ is a perturbation of $S$ so that $S + \tilde P: X \longrightarrow Y$ is an isomorphism.

In fact, for some $C_{\tilde P} = C_{\tilde P}(n,q,\Omega) > 0$ 
it can be shown that
\begin{equation} \label{eq:Localization perturbation estimate P'}
\| \tilde P (u_l)_{l \in \Gamma} \|_Y
\le \frac{C_{\tilde P}}{\sqrt{|\lambda|}} \| (u_l)_{l \in \Gamma} \|_X
\end{equation}
for $(u_l)_{l \in \Gamma} \in X$ and 
$\lambda \in \Sigma_\theta$, $|\lambda| \ge \lambda_0$.
We will not repeat the single steps, since this is very similar
to \eqref{eq:Localization perturbation estimate}.
(Note that again
$\Tr_{\partial H_l} \sum_{m \approx l} \varphi_l (u_m \transA{\nabla
\varphi_m} - (u_m \transA{\nabla \varphi_m})^T) \nu_m$ is an element 
of the tangent space of $\partial H_l$ for every $l \in \Gamma_1$.)
Thus, we choose $\lambda_0 = \lambda_0(n,q,\theta,\Omega)$ such that
$\lambda_0 \ge (2 C_S C_{\tilde P})^2$ with $C_S$ and $C_{\tilde P}$ the constants 
from \eqref{eq:Localization resolvent estimate} and 
\eqref{eq:Localization perturbation estimate P'}, respectively.
As for \eqref{eq:P estimate} this results in 
\begin{equation*}
\| \tilde P \|_{X \rightarrow Y} \le \frac{1}{2 C_S}.
\end{equation*}
A standard Neumann series argument then implies 
\begin{equation*}
S + \tilde P: X \overset{\cong}{\longrightarrow} Y.
\end{equation*}
to be an isomorphism and the proof of Theorem~\thref{thm:HE with PS} 
is completed.
\end{proof}

\section{Invariance of $L_{q,\sigma}$ for the Laplace resolvent}
\label{invariance}

In order to reduce the well-posedness of the Stokes resolvent problem
subject to perfect slip, as explained in the introduction, 
we will utilize the fact that the Laplace resolvent leaves 
$L_{q,\sigma}$ invariant. For this purpose, we start with a lemma on
vanishing divergence.
Let $\Omega \subset \mathbb{R}^n$ be a uniform $C^{2,1}$-domain, $n \ge 2$ and $1 < q < \infty$.

\begin{lemma} \label{thm:Neumann Laplace weak}
Let $0 < \theta < \pi$.
Then there exists $\lambda_0 = \lambda_0(n,q,\theta,\Omega) > 0$ such that for $\lambda \in \Sigma_\theta$, $|\lambda| \ge \lambda_0$ we have: Any $w \in W^2_q(\Omega)^n$ solving
\begin{equation} \label{eq:Neumann div}
\left\{
\begin{array}{rll}
(\lambda - \Delta) \Div w & = 0 & \text{in } \Omega \\
\partial_\nu \Div w & = 0 & \text{on } \partial \Omega
\end{array}
\right.
\end{equation}
(i.e., $(\lambda - \Delta) \Div w = 0$ in the sense of distributions and
$\Tr_\nu \nabla \Div w = 0$ in $W^{-1/q}_q(\partial \Omega)$)
satisfies $\Div w = 0$.
\end{lemma}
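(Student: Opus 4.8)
The plan is to prove the statement by a duality argument against the scalar Neumann Laplace resolvent problem, rather than by a direct energy estimate (which is delicate for complex $\lambda$ and general $q$). First I would set $p := \Div w$, which lies in $W^1_q(\Omega)$ since $w \in W^2_q(\Omega)^n$. The distributional identity $(\lambda - \Delta)\Div w = 0$ reads $\Delta p = \lambda p$, and as $p \in L_q(\Omega)$ this shows $\Delta p \in L_q(\Omega)$; hence $\nabla p \in E_q(\Omega)$ and the generalized normal trace $\Tr_\nu \nabla p \in W^{-1/q}_q(\partial\Omega)$ is well defined, so the boundary condition $\partial_\nu \Div w = \Tr_\nu \nabla p = 0$ indeed makes sense. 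It then suffices to show $\dualq{p}{\psi} = 0$ for every $\psi \in L_{q'}(\Omega)$, since this forces $p = 0$.

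To test against an arbitrary $\psi \in L_{q'}(\Omega)$, the key auxiliary input I would use is well-posedness of the scalar Neumann Laplace resolvent problem
\begin{equation*}
(\lambda - \Delta)\phi = \psi \text{ in } \Omega, \qquad \partial_\nu \phi = 0 \text{ on } \partial\Omega,
\end{equation*}
for $\lambda \in \Sigma_\theta$, $|\lambda| \ge \lambda_0$, providing a unique $\phi \in W^2_{q'}(\Omega)$ with $\nabla\phi \in E_{q'}(\Omega)$ and $\Tr_\nu\nabla\phi = 0$. This can be obtained by exactly the localization and perturbation scheme already carried out for Theorem~\ref{thm:HE with PS}, now applied to the (scalar, hence simpler) Neumann problem, the half-space model case being available from \cite{KW04}. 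I would enlarge $\lambda_0$ if necessary so that this well-posedness holds for the dual exponent $q'$ on the range $|\lambda| \ge \lambda_0$.

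With $\phi$ in hand I would compute $\dualq{p}{\psi} = \dualq{p}{(\lambda-\Delta)\phi}$ and apply Green's formula (Lemma~\ref{thm:Greens formula in E_q}) twice. Writing $\Delta\phi = \Div\nabla\phi$ and pairing $u = p \in W^1_q(\Omega)$ with $v = \nabla\phi \in E_{q'}(\Omega)$ gives
\begin{equation*}
\dualq{p}{\Delta\phi} = \dualb{p}{\nu\cdot\nabla\phi} - \int_\Omega \nabla p \cdot \nabla\phi \D\lambda_n = -\int_\Omega \nabla p \cdot \nabla\phi \D\lambda_n,
\end{equation*}
the boundary term dropping out because $\Tr_\nu\nabla\phi = 0$. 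Applying the same formula with the roles of $q$ and $q'$ interchanged, i.e.\ $u = \phi \in W^1_{q'}(\Omega)$ and $v = \nabla p \in E_q(\Omega)$, and using $\Tr_\nu\nabla p = 0$, yields $\dualq{\Delta p}{\phi} = -\int_\Omega \nabla p \cdot \nabla\phi \D\lambda_n$, so the two integrated-by-parts expressions coincide. Consequently
\begin{equation*}
\dualq{p}{\psi} = \lambda\dualq{p}{\phi} - \dualq{p}{\Delta\phi} = \dualq{\lambda p - \Delta p}{\phi} = 0,
\end{equation*}
since $\lambda p - \Delta p = 0$ in $L_q(\Omega)$. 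As $\psi$ was arbitrary, $p = \Div w = 0$.

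I expect the main obstacle to be the auxiliary well-posedness (and implicit $W^2$-regularity) of the scalar Neumann resolvent problem on a general uniform $C^{2,1}$-domain for the dual exponent $q'$: once that is secured, the remainder is a routine double application of the generalized Green's formula, whose validity at the level of $E_q$--$W^1_{q'}$ pairings is precisely what the trace and Gauß-theorem lemmas of Section~\ref{sec:preliminaryproofs} are designed to guarantee. A secondary point requiring care is the bookkeeping of the generalized normal traces, ensuring that both boundary pairings $\dualb{p}{\nu\cdot\nabla\phi}$ and $\dualb{\phi}{\nu\cdot\nabla p}$ are interpreted in the correct duality $W^{1-1/q}_q(\partial\Omega) \times W^{-1/q}_q(\partial\Omega)$ (resp.\ its $q'$-counterpart), so that the two Neumann conditions genuinely annihilate them.
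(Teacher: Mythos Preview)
Your proposal is correct and is essentially the same argument as the paper's proof: both set $p=\Div w$, invoke well-posedness of the scalar Neumann Laplace resolvent on $L_{q'}$ (the paper packages this as Lemma~\ref{thm:Neumann Laplace} and, like you, enlarges $\lambda_0$ to cover both $q$ and $q'$), and then apply Green's formula in $E_q$ twice so that the two Neumann conditions kill the boundary terms. The only cosmetic difference is that the paper phrases the conclusion as injectivity of the dual operator $\lambda - \LaplaceN{q'}^*$ acting on $\Div w$, whereas you spell out the same duality by solving $(\lambda-\Delta)\phi=\psi$ for each test function $\psi$; these are two ways of saying the same thing.
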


\begin{proof}
Let $\LaplaceN{q}: \Def(\LaplaceN{q}) \subset L_q(\Omega) \rightarrow
L_q(\Omega), ~u \mapsto \Delta u$ be the Neumann-Laplace operator, that
is, $\Def(\LaplaceN{q}) = \{ u \in W^2_q(\Omega) : \partial_\nu u = 0$
on $\partial \Omega \}$. Further, let
$\LaplaceN{q}^*: L_{q'}(\Omega) \rightarrow \Def(\LaplaceN{q})'$ be the
continuous dual operator (here we equip $\Def(\LaplaceN{q})$ with the
graph norm). Observe that we can regard $L_{q'}(\Omega)$ as a subspace of $\Def(\LaplaceN{q})'$, since $\Def(\LaplaceN{q}) \subset L_q(\Omega)$ is dense.

We aim to prove that $(\lambda - \LaplaceN{q'}^*) \Div w = 0$ for 
$\lambda$ as asserted. For this purpose, fix some $\varphi \in \Def(\LaplaceN{q'})$. Then the Neumann boundary conditions $\nu \cdot \nabla \varphi = 0$ and $\nu \cdot \nabla \Div w = 0$ on $\partial \Omega$ yield
\begin{equation*}
\begin{split}
& \dual{(\lambda - \LaplaceN{q'}^*) \Div w}{\varphi}_{\Def(\LaplaceN{q'})',\Def(\LaplaceN{q'})} \\
& = \dualq{\Div w}{(\lambda - \Delta) \varphi} \\
& = \dualq{\Div w}{\lambda \varphi} - \dualq{\Div w}{\Div \nabla \varphi} \\
& = \dualq{\Div w}{\lambda \varphi} + \int_\Omega \nabla \Div w \cdot \nabla \varphi \D \lambda_n - \dualb{\Div w}{\nu \cdot \nabla \varphi} \\
& = \dualq{\Div w}{\lambda \varphi} - \int_\Omega (\Delta \Div w) \varphi \D \lambda_n + \dualb{\varphi}{\nu \cdot \nabla \Div w} \\
& = \dualq{(\lambda - \Delta) \Div w}{\varphi}\\ 
& = 0.
\end{split}
\end{equation*}
Note that here we applied Lemma~\ref{thm:Greens formula in E_q}, 
once for $\nabla \varphi \in E_{q'}(\Omega)$ and 
$\Div w \in W^1_q(\Omega)$ and second for 
$\nabla \Div w \in E_q(\Omega)$ and $\varphi \in W^1_{q'}(\Omega)$.
Also note that $(\lambda - \Delta) \Div w = 0$ in the sense
of distributions implies this to be valid also as an equality in $L_q$,
due to $\Delta \Div w = -\lambda \Div w \in L_q(\Omega)$. 
Thus, we conclude $(\lambda - \LaplaceN{q'}^*) \Div w = 0$.

Now $\lambda - \LaplaceN{q'}: \Def(\LaplaceN{q'}) \overset{\cong}{\longrightarrow} L_{q'}(\Omega)$ is an isomorphism when $\lambda \in \Sigma_\theta$ and $|\lambda| \ge \lambda_0$ for some $\lambda_0 = \lambda_0(n,q,\theta,\Omega) > 0$
(see Lemma~\ref{thm:Neumann Laplace}; simply choose $\lambda_0$ such that the conditions of Lemma~\ref{thm:Neumann Laplace} are satisfied for $q$ and $q'$), so its continuous dual operator, $\lambda - \LaplaceN{q'}^*$, is injective.
Hence, $\Div w = 0$.
\end{proof}

Now we can prove the desired invariance.

\begin{lemma} \label{thm:resolvent}
Let Assumption~\thref{thm:Assumption C} be valid.
Let $0 < \theta < \pi$,
choose $\lambda_0 = \lambda_0(n,q,\theta,\Omega) > 0$ so that the
conditions of Theorem~\thref{thm:HE with PS} and Lemma~\thref{thm:Neumann
Laplace weak} are satisfied and let $\lambda \in \Sigma_\theta$,
$|\lambda| \ge \lambda_0$. Then we have the following implications:
\begin{enumerate}[\upshape (i)]
\item \label{thm:resolvent 1} $u \in \Def(\LaplacePS) \cap L_{q,\sigma}(\Omega)
\thickspace \Rightarrow \thickspace
\Delta u \in L_{q,\sigma}(\Omega)$.
\item \label{thm:resolvent 2} $f \in L_{q,\sigma}(\Omega)
\thickspace \Rightarrow \thickspace
(\lambda - \LaplacePSq)^{-1} f \in L_{q,\sigma}(\Omega)$.
\end{enumerate}
\end{lemma}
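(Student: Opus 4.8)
The plan is to base both implications on a single boundary computation: for every $u \in W^2_q(\Omega)^n$ satisfying the perfect slip condition $\Dm(u)\nu = 0$ on $\partial \Omega$ and every $\varphi \in C_c^\infty(\overline{\Omega})$, I claim that $\int_\Omega (\nabla \Div u - \Delta u) \cdot \nabla \varphi \D \lambda_n = 0$. To prove this, I would invoke Lemma~\ref{thm:Dminus}\eqref{thm:Dminus 2}, giving $\Div(\Dm(u)\nabla\varphi) = (\nabla \Div u - \Delta u) \cdot \nabla \varphi$, so that the Gau\ss{} theorem (Lemma~\ref{thm:Extended Gauss theorem}) turns the volume integral into $\int_{\partial \Omega} \nu \cdot \Dm(u) \nabla \varphi \D \sigma$; this is legitimate since $\Dm(u)\nabla\varphi \in W^1_q(\Omega)^n$ has compact support. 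Finally, the antisymmetry of $\Dm(u)$ (Lemma~\ref{thm:Dminus}\eqref{thm:Dminus 3}) yields $\nu \cdot \Dm(u) \nabla \varphi = -(\Dm(u)\nu)\cdot \nabla \varphi$, which vanishes on $\partial \Omega$ by perfect slip. This is the place where the boundary condition enters, and it is the step I expect to be the crux.

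For \eqref{thm:resolvent 1}, let $u \in \Def(\LaplacePS) \cap L_{q,\sigma}(\Omega)$. Then $\Div u = 0$ by Lemma~\ref{thm:characterization L_q,sigma}, so the identity above collapses to $\dualq{\Delta u}{\nabla \varphi} = 0$ for all $\varphi \in C_c^\infty(\overline{\Omega})$. By Assumption~\thref{thm:Assumption C} this extends by density to all $\varphi \in \widehat{W}^1_{q'}(\Omega)$, and the characterization \eqref{eq:L_q,sigma} of $L_{q,\sigma}(\Omega)$ then gives $\Delta u \in L_{q,\sigma}(\Omega)$.

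For \eqref{thm:resolvent 2}, put $u := (\lambda - \LaplacePSq)^{-1} f$ with $f \in L_{q,\sigma}(\Omega)$, so $\lambda u - \Delta u = f$ and $u$ satisfies the perfect slip conditions. Taking the divergence and using $\Div f = 0$ gives $(\lambda - \Delta) \Div u = 0$ in $\Omega$, whence $\Delta \Div u = \lambda \Div u \in L_q(\Omega)$; in particular both $\nabla \Div u$ and $\Delta u$ lie in $E_q(\Omega)$ and possess normal traces. Applying Green's formula (Lemma~\ref{thm:Greens formula in E_q}) to both halves of the key identity (now retaining $\nabla \Div u$, as $\Div u$ is not yet known to vanish) converts it into $\dualb{\varphi}{\Tr_\nu \nabla \Div u} = \dualb{\varphi}{\Tr_\nu \Delta u}$ for all $\varphi \in C_c^\infty(\overline{\Omega})$, the two volume contributions $\int_\Omega \varphi \, \Delta \Div u$ and $\int_\Omega \varphi \, \Div \Delta u$ cancelling because $\Div \Delta u = \Delta \Div u$. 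Since $\Delta u = \lambda u - f$ in $E_q(\Omega)$ and $\Tr_\nu$ is linear, $\Tr_\nu \Delta u = \lambda \Tr_\nu u - \Tr_\nu f = 0$, using $\nu \cdot u = 0$ and $\nu \cdot f = 0$ (Lemma~\ref{thm:characterization L_q,sigma}). Hence $\dualb{\varphi}{\Tr_\nu \nabla \Div u} = 0$ for all such $\varphi$, and by density of these traces in $W^{1-1/q'}_{q'}(\partial \Omega)$ we obtain $\partial_\nu \Div u = 0$. Lemma~\ref{thm:Neumann Laplace weak} then forces $\Div u = 0$, and together with $\nu \cdot u = 0$ the characterization \eqref{eq:characterization L_q,sigma 0} yields $u \in L_{q,\sigma}(\Omega)$.

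The main obstacle is the first step: producing the perfect slip term $\Dm(u)\nu$ in the boundary integral requires the identity $\Div \Dm(u) = \Delta u - \nabla \Div u$ together with the antisymmetry of $\Dm(u)$, and in \eqref{thm:resolvent 2} one must ensure the normal traces are well defined before using them, which is exactly why the preliminary regularity $\Delta \Div u \in L_q(\Omega)$ and the weak Neumann uniqueness statement of Lemma~\ref{thm:Neumann Laplace weak} are indispensable. The remaining density and trace-compatibility points are routine.
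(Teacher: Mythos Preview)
Your proof is correct and follows essentially the same route as the paper. Both parts rest on the same core identity $\int_\Omega (\nabla\Div u - \Delta u)\cdot\nabla\varphi = 0$ obtained from $\Div(\Dm(u)\nabla\varphi) = (\nabla\Div u - \Delta u)\cdot\nabla\varphi$, the Gau\ss{} theorem, and the antisymmetry $\nu\cdot\Dm(u)\nabla\varphi = -\nabla\varphi\cdot\Dm(u)\nu$; the only organizational difference is that in part~\eqref{thm:resolvent 2} you split the identity via Green's formula applied separately to $\nabla\Div u$ and $\Delta u$ (cancelling the two volume terms $\int\varphi\,\Delta\Div u$ and $\int\varphi\,\Div\Delta u$), whereas the paper first combines $\nabla\Div u - \Delta u$ into a single divergence-free field and applies Gau\ss{} once. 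One cosmetic point: for the integral $\int_\Omega \Div(\Dm(u)\nabla\varphi)$ the appropriate reference is Lemma~\ref{thm:Gauss in W^1_1} (since $\Dm(u)\nabla\varphi \in W^1_q$ with compact support, hence $W^1_1$), not Lemma~\ref{thm:Extended Gauss theorem}.
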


\begin{proof}
We will make use of both, the $L_{q,\sigma}(\Omega)$-representation in \eqref{eq:L_q,sigma} and Lemma~\ref{thm:characterization L_q,sigma}.
Let $u \in \Def(\LaplacePS) \cap L_{q,\sigma}(\Omega)$ and $\varphi \in C_c^\infty(\overline{\Omega})$. 
Based on Lemma~\ref{thm:Gauss in W^1_1} and Lemma
\ref{thm:Dminus}\eqref{thm:Dminus 2},\eqref{thm:Dminus 3} we obtain
\begin{equation*}
\begin{split}
\dualq{\Delta u}{\nabla \varphi}
& = - \int_\Omega (\nabla \Div u - \transB{\Delta u}) \cdot \nabla \varphi \D \lambda_n \\
& = - \int_\Omega \Div (\Dm(u) \transB{\nabla \varphi}) \D \lambda_n \\
& = - \int_{\partial \Omega} \nu \cdot \Dm(u) \transB{\nabla \varphi} \D \sigma \\
& = \int_{\partial \Omega} \nabla \varphi \cdot \Dm(u) \nu \D \sigma \\
& = 0.
\end{split}
\end{equation*}
This holds for all $\varphi \in \widehat{W}^1_{q'}(\Omega)$ as well,
since $C_c^\infty(\overline{\Omega}) \subset \widehat{W}^1_{q'}(\Omega)$
is dense. Hence, \eqref{thm:resolvent 1} is proved.

In order to see \eqref{thm:resolvent 2}, pick
$f \in L_{q,\sigma}(\Omega)$.
The function $u := (\lambda - \LaplacePSq)^{-1} f \in W^2_q(\Omega)^n$ is the solution of
\begin{equation} \label{eq:HE homogeneous}
\left\{
\begin{array}{rll}
\lambda u - \Delta u & = f & \text{in } \Omega \\
\Dm(u)\nu & = 0 & \text{on } \partial \Omega \\
\nu \cdot u & = 0 & \text{on } \partial \Omega.
\end{array}
\right.
\end{equation}
So, applying $\Tr_\nu$ to the first line of \eqref{eq:HE homogeneous}, we obtain
\begin{equation} \label{eq:Trace Laplace}
\Tr_\nu \Delta u = 0 \quad \text{in } W^{-\frac{1}{q}}_q(\partial \Omega).
\end{equation}
Applying $\Div$ to the first line of \eqref{eq:HE homogeneous}, 
we also see that $(\lambda - \Delta) \Div u = 0$ in the sense of distributions.
Next, we show that $\partial_\nu \Div u =
0$ on $\partial \Omega$:

Let $k \in W^{1-1/q'}_{q'}(\partial \Omega)$ and choose $w \in W^1_{q'}(\Omega)$ so that $\Tr w = k$.
First note that $\nabla \Div u \in E_q(\Omega)$, so $\Tr_\nu \nabla \Div u$ is well defined. We have
\begin{equation} \label{eq:applying Gauss A}
\begin{split}
\dualGb{k}{\Tr_\nu \nabla \Div u}
&= \dualGb{w}{\nu \cdot (\nabla \Div u - \transB{\Delta u})} \\
&= \int_\Omega \Div (w(\nabla \Div u - \transB{\Delta u})) \D \lambda_n \\
&= \int_\Omega \nabla w \cdot (\nabla \Div u - \transB{\Delta u}) \D \lambda_n,
\end{split}
\end{equation}
using \eqref{eq:Trace Laplace}, Lemma~\ref{thm:Extended Gauss theorem} and $\Div (\nabla \Div u - \transB{\Delta u}) = 0$.
In case $w \in C_c^\infty(\overline{\Omega})$, we obtain for the last
term in \eqref{eq:applying Gauss A} that
\begin{equation} \label{eq:applying Gauss B}
\begin{split}
\int_\Omega \nabla w \cdot (\nabla \Div u - \transB{\Delta u}) \D \lambda_n
&= \int_\Omega \Div(\Dm(u) \transB{\nabla w}) \D \lambda_n \\
&= \int_{\partial \Omega} \nu \cdot \Dm(u) \transB{\nabla w} \D \sigma \\
&= - \int_{\partial \Omega} \nabla w \cdot \Dm(u) \nu \D \sigma \\
&= 0,
\end{split}
\end{equation}
using Lemma~\ref{thm:Dminus}\eqref{thm:Dminus 2},\eqref{thm:Dminus 3} and Lemma~\ref{thm:Gauss in W^1_1}.
The density of $C_c^\infty(\overline{\Omega}) \subset W^1_{q'}(\Omega)$ gives that \eqref{eq:applying Gauss B} holds for $w \in W^1_{q'}(\Omega)$ as well.
Therefore, \eqref{eq:applying Gauss A} and \eqref{eq:applying Gauss B}
yield $\partial_\nu \Div u = 0$ on $\partial \Omega$. In other words,
we have
\begin{equation*}
\left\{
\begin{array}{rll}
(\lambda - \Delta) \Div u & = 0 & \text{in } \Omega \\
\partial_\nu \Div u & = 0 & \text{on } \partial \Omega.
\end{array}
\right.
\end{equation*}
Consequently, $\Div u = 0$ due to Lemma~\ref{thm:Neumann Laplace weak}. Lemma~\ref{thm:characterization L_q,sigma} yields $u \in L_{q,\sigma}(\Omega)$.
\end{proof}

\section{Perfect slip boundary conditions: Proof of Theorems~\ref{thm:Stokes perfect slip} and~\ref{thm:Stokes perfect slip inhom}}
\label{sec:proofsofstokes}

\begin{proof}[Proof of Theorem~\thref{thm:Stokes perfect slip}]
Choose $\lambda_0 = \lambda_0(n,q,\theta,\Omega)$ and $C = C(n,q,\theta,\Omega)$ such that the conditions of Theorem~\ref{thm:HE with PS} and Lemma~\ref{thm:Neumann Laplace weak} are satisfied and let $\lambda \in \Sigma_\theta$, $|\lambda| \ge \lambda_0$.

In order to prove \eqref{thm:Stokes perfect slip 1}, we decompose a given function $f \in L_{q,\sigma}(\Omega) + \Gradq$ into $f_0 \in L_{q,\sigma}(\Omega)$ and $\transB{\nabla \pi} \in \Gradq$. Setting
\begin{equation*}
(u,\transB{\nabla p}) := ((\lambda - \LaplacePSq)^{-1} f_0,\transB{\nabla \pi}),
\end{equation*}
we obtain a solution of \eqref{eq:Stokes perfect slip}, due to Lemma~\ref{thm:resolvent}\eqref{thm:resolvent 2}.
Conversely, if there exists a solution $(u,\transB{\nabla p})$ of \eqref{eq:Stokes perfect slip} with right-hand side $f \in L_q(\Omega)^n$, then Lemma~\ref{thm:resolvent}\eqref{thm:resolvent 1} gives that $f \in L_{q,\sigma}(\Omega) + \Gradq$.

Thanks to Lemma~\ref{thm:resolvent}\eqref{thm:resolvent 2}
a solution of the homogeneous problem \eqref{eq:Stokes perfect slip} is given by
$\big( (\lambda - \LaplacePSq)^{-1} \transB{\nabla \pi}, -\transB{\nabla
\pi} \big)$ for every $\transB{\nabla \pi} \in U_q(\Omega)$.
If, conversely, $(u,\transB{\nabla p}) \in [W^2_q(\Omega)^n \cap
L_{q,\sigma}(\Omega)] \times \Gradq$ solves \eqref{eq:Stokes perfect
slip} with $f = 0$, then we have $(\lambda - \Delta)u = -\transB{\nabla p} \in \Gradq$.
On the other hand, Lemma~\ref{thm:resolvent}\eqref{thm:resolvent 1}
yields $(\lambda - \Delta)u \in L_{q,\sigma}(\Omega)$,
hence $\transB{\nabla p} = - (\lambda - \Delta)u \in U_q(\Omega)$.
This yields
\begin{equation*}
(u,\transB{\nabla p}) = ((\lambda - \LaplacePSq)^{-1} \transB{\nabla
\pi},-\transB{\nabla \pi})
\end{equation*}
if we set $\nabla \pi := - \nabla p \in U_q(\Omega)$,
and \eqref{thm:Stokes perfect slip 2} is proved.

Now, let Assumption~\ref{thm:Assumption A}\eqref{thm:Assumption A 1} 
be satisfied and let $f \in L_{q,\sigma}(\Omega) + \Gradq$.
Using the direct decomposition \eqref{eq:decomposition modGrad b}, we
can decompose $f = f_0 + \transB{\nabla p}$ with $f_0 \in L_{q,\sigma}(\Omega)$ and $\transB{\nabla p} \in \modGradq$.
The solution
\begin{equation*}
(u,\transB{\nabla p}) := \big( (\lambda - \LaplacePSq)^{-1}f_0,\transB{\nabla p} \big)
\end{equation*}
of \eqref{eq:Stokes perfect slip} is contained in $[W^2_q(\Omega)^n \cap
L_{q,\sigma}(\Omega)] \times \modGradq$. So, we only have to prove that there is at most one solution in this space to obtain uniqueness.
To this end, let $(v,\transB{\nabla \pi}) \in [W^2_q(\Omega)^n \cap L_{q,\sigma}(\Omega)] \times \modGradq$ be a solution of the homogeneous problem \eqref{eq:Stokes perfect slip}.
Lemma~\ref{thm:resolvent}\eqref{thm:resolvent 1} then yields $(\lambda -
\Delta)v \in L_{q,\sigma}(\Omega)$. On the other hand, we also have
\begin{equation*}
(\lambda - \Delta)v = - \transB{\nabla \pi} \in \modGradq.
\end{equation*}
The fact that $\modGradq \cap L_{q,\sigma}(\Omega) = \{ 0 \}$ implies $\transB{\nabla \pi} = 0$ and $v = -(\lambda - \LaplacePSq)^{-1} \transB{\nabla \pi} = 0$.
Hence, solutions of \eqref{eq:Stokes perfect slip} in $[W^2_q(\Omega)^n \cap L_{q,\sigma}(\Omega)] \times \modGradq$ are unique
and sufficiency in \eqref{thm:Stokes perfect slip 3} is proved.

Conversely, for any right-hand side function $f \in L_q(\Omega)^n$ the
condition $f \in L_{q,\sigma}(\Omega) + \Gradq$ is also necessary to
obtain existence of the solution in \eqref{thm:Stokes perfect slip 3}.
This follows by the fact that
$[W^2_q(\Omega)^n \cap L_{q,\sigma}(\Omega)] \times \modGradq$ is a
subspace of $[W^2_q(\Omega)^n \cap L_{q,\sigma}(\Omega)] \times \Gradq$
and since for the latter space we have seen necessity in
\eqref{thm:Stokes perfect slip 1} already. Altogether, 
we proved \eqref{thm:Stokes perfect slip 3}.

To see \eqref{thm:Stokes perfect slip 4}, let Assumption~\ref{thm:Assumption A} be valid.
Then, due to to Lemma~\ref{thm:complemented subspaces}\eqref{thm:complemented subspaces 3} there is 
a constant $C' = C'(n,q,\Omega) > 0$ so that for the decomposition $f = f_0 + \transB{\nabla p}$ we have
\begin{equation} \label{eq:decomposition modGrad estimate a}
\| (f_0,\nabla p) \|_q \le C' \| f \|_q.
\end{equation}
Thanks to this estimate and Theorem~\ref{thm:HE with PS}
the solution $(u,\nabla p) = \bigl((\lambda - \LaplacePSq)^{-1}
f_0,\nabla p\bigr)$ of \eqref{eq:Stokes perfect slip} satisfies
the resolvent estimate \eqref{eq:resolvent estimate perfect slip}.
\end{proof}

\begin{proof}[Proof of Theorem~\thref{thm:Stokes perfect slip inhom}]
Fix $\lambda_0 = \lambda_0(n,q,\theta,\Omega)$ such that the conditions of Theorem~\ref{thm:HE with PS} and Lemma~\ref{thm:Neumann Laplace weak} are satisfied and let $\lambda \in \Sigma_\theta$, $|\lambda| \ge \lambda_0$.

Let $g \in W^1_q(\Omega)^n$ and assume initially $f \in L_{q,\sigma}(\Omega)$.
Denote by $\tilde{u} \in W^2_q(\Omega)^n$ the unique solution of
\begin{equation*}
\left\{
\begin{array}{rll}
\lambda \tilde{u} - \Delta \tilde{u} & = 0 & \text{in } \Omega \\
\Dm(\tilde{u})\nu & = \Pi_\tau g & \text{on } \partial \Omega \\
\nu \cdot \tilde{u} & = 0 & \text{on } \partial \Omega
\end{array}
\right.
\end{equation*}
(see Theorem~\ref{thm:HE with PS}).
By Assumption~\ref{thm:Assumption B} we can decompose
\begin{equation} \label{eq:weak Neumann pre}
	\nabla \Div \tilde{u} - \transB{\Delta \tilde{u}}
	=h-\nabla p
\end{equation}
with $h\in L_{q,\sigma}(\Omega)$ and $\nabla p\in G_q(\Omega)$.
With regard to \eqref{eq:L_q,sigma} this yields 
\begin{equation} \label{eq:weak Neumann}
\dualq{-\nabla p}{\nabla \varphi}
= \dualq{\nabla \Div \tilde{u} - \transB{\Delta \tilde{u}}}{\nabla
\varphi} \quad \forall \varphi \in \widehat{W}^1_{q'}(\Omega).
\end{equation}
Utilizing Theorem~\ref{thm:HE with PS} again, we define $u \in W^2_q(\Omega)^n$ as the unique solution of
\begin{equation} \label{eq:HE with pressure}
\left\{
\begin{array}{rll}
\lambda u - \Delta u & = f - \transB{\nabla p} & \text{in } \Omega \\
\Dm(u)\nu & = \Pi_\tau g & \text{on } \partial \Omega \\
\nu \cdot u & = 0 & \text{on } \partial \Omega.
\end{array}
\right.
\end{equation}

With the help of the representation of $L_{q,\sigma}(\Omega)$ 
from Lemma~\ref{thm:characterization L_q,sigma}
we now aim to prove that $u \in L_{q,\sigma}(\Omega)$.
First, applying $\Tr_\nu$ to the first line of \eqref{eq:HE with pressure} gives
\begin{equation} \label{eq:Laplace and pressure}
\Tr_\nu \Delta u = \Tr_\nu \transB{\nabla p}.
\end{equation}
Note that $\Tr_\nu \nabla p$ is well-defined, since $\Div \nabla p =
-\Div(\nabla \Div \tilde{u} - \transB{\Delta \tilde{u}}) = 0$ in the
sense of distributions by \eqref{eq:weak Neumann pre}.
Also $\Tr_\nu \nabla \Div u$ is well defined, since \eqref{eq:HE with pressure} yields
$\Div \nabla \Div u = \Div \Delta u = \lambda \Div u \in L_q(\Omega)^n$.
In order to see that $\Tr_\nu \nabla \Div u = 0$,
let $k \in W^{1-1/q'}_{q'}(\partial \Omega)$ and fix any $w \in
W^1_{q'}(\Omega)$ so that $\Tr w = k$ (Lemma~\ref{thm:Trace}). Based on
\eqref{eq:Laplace and pressure}, Lemma~\ref{thm:Extended Gauss theorem}
and $\Div \nabla p = 0$, as in \eqref{eq:applying Gauss A} we obtain
\begin{equation} \label{eq:normal trace zero}
\begin{split}
\dualb{k}{\Tr_\nu \nabla \Div u}
&= \dualb{w}{\nu \cdot (\nabla p + \nabla \Div u - \transB{\Delta u})} \\
&= \int_\Omega \Div (w (\nabla p + \nabla \Div u - \transB{\Delta u})) \D \lambda_n \\
&= \int_\Omega \nabla w \cdot (\nabla p + \nabla \Div u - \transB{\Delta
u}) \D \lambda_n.
\end{split}
\end{equation}
Now, in the last term of \eqref{eq:normal trace zero} we can replace
$\nabla \Div u - \transB{\Delta u}$ by $\nabla \Div \tilde{u} -
\transB{\Delta \tilde{u}}$. In fact, using Lemma~\ref{thm:Dminus}\eqref{thm:Dminus 2} and~\eqref{thm:Dminus 3} and Lemma~\ref{thm:Gauss in
W^1_1}, we can calculate for $w \in C_c^\infty(\overline{\Omega})$,
\begin{equation*}
\begin{split}
\int_\Omega \nabla w \cdot (\nabla \Div u - \transB{\Delta u}) \D \lambda_n
&= \int_\Omega \Div (\Dm(u) \transB{\nabla w}) \D \lambda_n \\
&= \int_{\partial \Omega} \nu \cdot (\Dm(u) \transB{\nabla w}) \D \sigma \\
&= - \int_{\partial \Omega} \nabla w \cdot (\Dm(u) \nu) \D \sigma \\
&= - \int_{\partial \Omega} \nabla w \cdot (\Pi_\tau g) \D \sigma.
\end{split}
\end{equation*}
The same calculation holds true with $u$ replaced by $\tilde{u}$
which then implies
\begin{equation} \label{eq:replace u by tilde(u)}
\int_\Omega \nabla w \cdot (\nabla \Div u - \transB{\Delta u}) \D \lambda_n
= \int_\Omega \nabla w \cdot (\nabla \Div \tilde{u} - \transB{\Delta \tilde{u}}) \D \lambda_n
\end{equation}
for $w \in C_c^\infty(\overline{\Omega})$. The density of $C_c^\infty(\overline{\Omega}) \subset W^1_{q'}(\Omega)$ yields that \eqref{eq:replace u by tilde(u)} holds for $w \in W^1_{q'}(\Omega)$ as well and therefore \eqref{eq:weak Neumann} gives that the right-hand side of \eqref{eq:normal trace zero} vanishes.
Consequently, $\Tr_\nu \nabla \Div u = 0$.

Next, applying $\Div$ to the first line of \eqref{eq:HE with pressure}
results in $(\lambda - \Delta) \Div u = 0$. Thus, $\Div u$ satisfies
\begin{equation*}
\left\{
\begin{array}{rll}
(\lambda - \Delta) \Div u & = 0 & \text{in } \Omega \\
\partial_\nu \Div u & = 0 & \text{on } \partial \Omega.
\end{array}
\right.
\end{equation*}
Lemma~\ref{thm:Neumann Laplace weak} yields $\Div u = 0$.
Summarizing, we conclude that $u \in L_{q,\sigma}(\Omega)$
and that $(u,\transB{\nabla p}) \in [W^2_q(\Omega)^n \cap L_{q,\sigma}(\Omega)] \times \Gradq$ is a solution of \eqref{eq:Stokes perfect slip inhom}.

For the general case $f \in L_q(\Omega)^n$, by 
Assumption~\ref{thm:Assumption B} we can decompose $f = f_0 +
\transB{\nabla \pi}$ with $f_0 \in L_{q,\sigma}(\Omega)$ and $\transB{\nabla \pi} \in \Gradq$.
By what we have shown already, there exists a solution
$(u,\transB{\nabla p}) \in [W^2_q(\Omega)^n \cap L_{q,\sigma}(\Omega)]
\times \Gradq$ of \eqref{eq:Stokes perfect slip inhom} for the 
right-hand side $(f_0,g)$. Then, obviously $(u,\transB{\nabla p} +
\transB{\nabla \pi})$ solves \eqref{eq:Stokes perfect slip inhom} for
the right-hand side $(f,g)$.
Thus, \eqref{thm:Stokes perfect slip inhom 1} is proved.

Let now Assumptions~\ref{thm:Assumption A} and~\ref{thm:Assumption B} be valid.
Again, let initially $f \in L_{q,\sigma}(\Omega)$.
As in the proof of \eqref{thm:Stokes perfect slip inhom 1} let
$\tilde{u} \in W^2_q(\Omega)^n$ be the unique solution of
\begin{equation*}
\left\{
\begin{array}{rll}
\lambda \tilde{u} - \Delta \tilde{u} & = 0 & \text{in } \Omega \\
\Dm(\tilde{u})\nu & = \Pi_\tau g & \text{on } \partial \Omega \\
\nu \cdot \tilde{u} & = 0 & \text{on } \partial \Omega.
\end{array}
\right.
\end{equation*}
Theorem~\ref{thm:HE with PS} then yields
\begin{equation} \label{eq:resolvent estimate preparation a}
\| (\lambda \tilde{u},\sqrt{\lambda} \nabla \tilde{u},\nabla^2 \tilde{u}) \|_q \le C \| (\sqrt{\lambda} g,\nabla g) \|_q
\end{equation}
with a constant $C = C(n,q,\theta,\Omega) > 0$.
The direct decomposition \eqref{eq:decomposition modGrad a} gives that
\[
\transB{\nabla \Div \tilde{u}} - \Delta \tilde{u} = v_0 - \transB{\nabla
p},
\]
this time with unique $v_0 \in L_{q,\sigma}(\Omega)$ and 
$-\transB{\nabla p} \in \modGradq$.
Relation \eqref{eq:L_q,sigma} also here implies that
\begin{equation*}
\dualq{-\nabla p}{\nabla \varphi}
= \dualq{\nabla \Div \tilde{u} - \transB{\Delta \tilde{u}}}{\nabla
\varphi} \quad \forall \varphi \in \widehat{W}^1_{q'}(\Omega).
\end{equation*}
Also note that $-\transB{\nabla p} \in \modGradq$ in this case 
is the unique solution of this weak Neumann problem.

Next, by virtue of \eqref{eq:decomposition modGrad a} and 
Lemma~\ref{thm:complemented subspaces}\eqref{thm:complemented subspaces 3} there exists a constant 
$C' = C'(n,q,\Omega) > 0$ so that
\begin{equation} \label{eq:resolvent estimate preparation b}
\| \nabla p \|_q \le C' \| \nabla \Div \tilde{u} - \transB{\Delta \tilde{u}} \|_q.
\end{equation}
Again, we define $u \in W^2_q(\Omega)^n$ as the unique solution of
\begin{equation*}
\left\{
\begin{array}{rll}
\lambda u - \Delta u & = f - \transB{\nabla p} & \text{in } \Omega \\
\Dm(u)\nu & = \Pi_\tau g & \text{on } \partial \Omega \\
\nu \cdot u & = 0 & \text{on } \partial \Omega
\end{array}
\right.
\end{equation*}
and obtain $u \in L_{q,\sigma}(\Omega)$ as in the proof of \eqref{thm:Stokes perfect slip inhom 1}.
Hence, $(u,\transB{\nabla p}) \in [W^2_q(\Omega)^n \cap L_{q,\sigma}(\Omega)] \times \modGradq$ is the unique solution of \eqref{eq:Stokes perfect slip inhom}.
In addition, Theorem~\ref{thm:HE with PS} yields
\begin{equation} \label{eq:resolvent estimate preparation c}
\| (\lambda u,\sqrt{\lambda} \nabla u,\nabla^2 u) \|_q
\le C \| (f - \transB{\nabla p}, \sqrt{\lambda}g,\nabla g) \|_q
\end{equation}
with a constant $C = C(n,q,\theta,\Omega) > 0$.
The estimates \eqref{eq:resolvent estimate preparation a}, \eqref{eq:resolvent estimate preparation b} and \eqref{eq:resolvent estimate preparation c} imply \eqref{eq:resolvent estimate perfect slip inhom}.

Finally, let $f \in L_q(\Omega)^n$. Decomposition \eqref{eq:decomposition modGrad a} gives
$f = f_0 + \transB{\nabla \pi}$ with two unique functions $f_0 \in
L_{q,\sigma}(\Omega)$ and $\transB{\nabla \pi} \in \modGradq$ as well as
a constant $C' = C'(n,q,\Omega) > 0$ such that
\begin{equation} \label{eq:resolvent estimate preparation d}
\| (f_0,\nabla \pi) \|_q \le C' \| f \|_q.
\end{equation}
We have proved that \eqref{eq:Stokes perfect slip inhom} with right-hand side 
$(f_0,g)$ admits a unique solution $(u,\transB{\nabla p}) \in [W^2_q(\Omega)^n \cap L_{q,\sigma}(\Omega)] \times \modGradq$ satisfying \eqref{eq:resolvent estimate perfect slip inhom} with $f_0$ instead of $f$.
Thus,
$(u,\transB{\nabla p} + \transB{\nabla \pi}) \in [W^2_q(\Omega)^n \cap
L_{q,\sigma}(\Omega)] \times \modGradq$ is the unique solution of
\eqref{eq:Stokes perfect slip inhom} with right-hand side $(f,g)$.
Furthermore, \eqref{eq:resolvent estimate preparation d} yields the
corresponding resolvent estimate \eqref{eq:resolvent estimate perfect slip inhom} with $\transB{\nabla p} + \transB{\nabla \pi}$ instead of $\transB{\nabla p}$.
The proof of \eqref{thm:Stokes perfect slip inhom 2} is now completed.
\end{proof}

\section{Partial slip type boundary conditions: Proof of Theorem~\ref{thm:Stokes partial slip}}
\label{sec:proofpartial}

We first show that partial slip type boundary conditions as considered
in Theorem~\ref{thm:Stokes partial slip} can be obtained by 
perturbing perfect slip boundary conditions.
As before, the underlying domain $\Omega \subset \mathbb{R}^n$ has 
a uniform $C^{2,1}$-boundary, and we assume $n \ge 2$ as well as 
$1 < q < \infty$.

\begin{lemma} \label{thm:matrix different partial slip b.c.}
There exists a matrix $A \in W^1_\infty(\Omega)^{n \times n}$
such that for all $u \in W^2_q(\Omega)^n$ satisfying 
$\nu \cdot u = 0$ on $\partial \Omega$ we have
\begin{equation*}
\Pi_\tau \Dp(u) \nu = \Dm(u) \nu + \Pi_\tau A u
\quad \text{on } \partial \Omega.
\end{equation*}
\end{lemma}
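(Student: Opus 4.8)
The plan is to turn the asserted boundary identity into an exact pointwise identity on $\partial\Omega$ in which the interior extension $\widebar{\nu}\in W^2_\infty(\Omega)^n$ of the outward normal, constructed in \eqref{eq:extension nu}, provides the matrix $A$. The reason to pass from $\nu$ to $\widebar{\nu}$ is precisely that derivatives $\partial_i\widebar{\nu}^j$ then have classical meaning in $\Omega$, and by \eqref{eq:normal uniformity 2} one has $\nabla\widebar{\nu}\in W^1_\infty(\Omega)^{n\times n}$, which will be the regularity required of $A$. First I would record the elementary component computation. Since $(\Dp(u)\nu)_i-(\Dm(u)\nu)_i=2\sum_j \nu_j\,\partial_i u^j$, applying the product rule to $u\cdot\widebar{\nu}=\sum_j u^j\widebar{\nu}^j$ and using $\nu=\widebar{\nu}$ on $\partial\Omega$ yields
\begin{equation*}
\Dp(u)\nu-\Dm(u)\nu = 2\nabla(u\cdot\widebar{\nu}) - 2(\nabla\widebar{\nu})u \quad\text{on }\partial\Omega ,
\end{equation*}
where $(\nabla\widebar{\nu})_{ij}=\partial_i\widebar{\nu}^j$ in the column convention of the paper.

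Next I would apply $\Pi_\tau$ to this identity and invoke the hypothesis $\nu\cdot u=0$ on $\partial\Omega$. By \eqref{eq:Dm invariant under tangential projection} the tangential projection leaves the $\Dm$-term unchanged, $\Pi_\tau\Dm(u)\nu=\Dm(u)\nu$. For the gradient term I would use that $u\cdot\widebar{\nu}=u\cdot\nu=0$ on $\partial\Omega$, so the boundary trace of $u\cdot\widebar{\nu}$ vanishes identically; consequently the tangential part of its gradient vanishes, i.e.\ $\Pi_\tau\nabla(u\cdot\widebar{\nu})=0$ on $\partial\Omega$, since the surface gradient of a function vanishing on $\partial\Omega$ is zero. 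Combining these two observations collapses the identity to
\begin{equation*}
\Pi_\tau\Dp(u)\nu = \Dm(u)\nu - 2\,\Pi_\tau(\nabla\widebar{\nu})u \quad\text{on }\partial\Omega ,
\end{equation*}
so that $A:=-2\nabla\widebar{\nu}\in W^1_\infty(\Omega)^{n\times n}$ is the required matrix, independent of $u$.

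The only genuinely non-formal point, and hence the main obstacle, is justifying $\Pi_\tau\nabla(u\cdot\widebar{\nu})=0$ at the correct regularity level rather than pointwise. Since $u\in W^2_q(\Omega)^n$ and $\widebar{\nu}\in W^2_\infty(\Omega)^n$, the scalar $f:=u\cdot\widebar{\nu}$ lies in $W^2_q(\Omega)$, the product rule for $\nabla f$ is valid there, and $\nabla f\in W^1_q(\Omega)^n$ has a well-defined trace in $W^{1-\frac{1}{q}}_q(\partial\Omega)^n$; I would then show that $\Tr f=0$ forces $\Pi_\tau\Tr\nabla f=0$. This can be done by approximating $u$ in $W^2_q(\Omega)^n$ by fields in $C_c^\infty(\overline{\Omega})^n$ and passing to the limit in the continuous trace maps, using that the tangential (surface) derivative of a smooth function depends only on its boundary values; alternatively one localizes and flattens the boundary via the charts of Definition~\ref{thm:uniform C^2,1 domain}, reducing the claim to the elementary flat-boundary case. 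The algebraic manipulations of the first two paragraphs, and the verification that all the product-rule and trace identities hold in $W^2_q$, are then routine.
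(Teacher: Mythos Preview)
Your proof is correct and takes a genuinely different route from the paper's. The paper works locally: it fixes a chart, picks a basis $\tau_1,\dots,\tau_{n-1}$ of the tangent space, introduces the change-of-basis matrix $S$ and its dual, and computes the covariant components $[\Dpm(u)\nu]_i=[\partial_\nu u]_i\mp(\partial_{\tau_i}\nu)\cdot u$ via the identity $\partial_{\tau_i}(\nu\cdot u)=0$. This yields $\Pi_\tau\Dp(u)\nu=\Dm(u)\nu-2\Pi_\tau S^{-1}Ru$ on each chart, after which a separate argument (involving the Gram matrix and the estimates \eqref{eq:Jacobi Parametrization estimate}--\eqref{eq:estimate Gram matrix}) is needed to patch the local matrices $-2S^{-1}R$ into a global $A\in W^1_\infty(\Omega)^{n\times n}$. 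Your argument is global from the outset: the identity $2(\nabla u)\widebar{\nu}=2\nabla(u\cdot\widebar{\nu})-2(\nabla\widebar{\nu})u$ holds throughout $\Omega$, and restricting to $\partial\Omega$ together with $\Pi_\tau\nabla(u\cdot\widebar{\nu})=0$ (which is exactly the vanishing of the surface gradient of a function with zero trace) gives the result with $A=-2\nabla\widebar{\nu}$ directly. Your approach is shorter and avoids local coordinates, dual bases, and the patching step entirely; the paper's approach, on the other hand, makes the geometric content explicit (the matrix $S^{-1}R$ is essentially the Weingarten map written in the chosen tangent frame) and gives formulas tied to the parametrization, which can be useful if one wants finer quantitative control in terms of the local chart data.
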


\begin{proof}
Let $T_x \partial \Omega \subset \mathbb{R}^{n-1}$ be the tangent space in some fixed point $x \in \partial \Omega$. Let $\tau_1,\dots,\tau_{n-1}$ be a basis of $T_x \partial \Omega$. Then, with the outer unit normal $\tau_n := \nu = \nu(x)$, let $\tau^1,\dots,\tau^n$ be the dual basis of $\tau_1,\dots,\tau_n$ in $\mathbb{R}^n$ (i.e., $\tau_i \cdot \tau^j = \delta_{ij}$ for $i,j = 1,\dots,n$).
Note that then $\tau^n = \nu$.

We first observe for the tangential projection $\Pi_\tau u = (I - \nu \nu^T) u$, the change of basis matrix $S := (\tau_1,\dots,\tau_{n-1},\nu)^T$ and the vector $[u]_{1,\dots,n}$ of covariant components
$[u]_i := u \cdot \tau_i = (Su)_i$ that
\begin{enumerate}[\upshape (a)]
\item \label{eq:boundary a} $\Pi_\tau u = \sum_{k=1}^{n-1} (u \cdot \tau_k) \tau^k$,
\item \label{eq:boundary b} $S^{-1} = (\tau^1,\dots,\tau^{n-1},\nu)$ and
\item \label{eq:boundary c} $\Pi_\tau S^{-1} [u]_{1,\dots,n} = \Pi_\tau S^{-1} ([u]_1,\dots,[u]_{n-1},0)^T$.
\end{enumerate}
It is obvious that $(\tau^1,\dots,\tau^{n-1},\nu)$ is a right inverse of
$S$ and since $S$ has full rank, it must be the left inverse, too. 
Thus, \eqref{eq:boundary b} is true.
From \eqref{eq:boundary b} we infer, using the representation $u = S^{-1} [u]_{1,\dots,n}$, that
\begin{equation*}
\begin{split}
\Pi_\tau u
& = (I - \nu \nu^T) (\tau^1,\dots,\tau^{n-1},\nu) [u]_{1,\dots,n} \\
& = (\tau^1,\dots,\tau^{n-1},0) [u]_{1,\dots,n} \\
& = \sum_{k=1}^{n-1} (u \cdot \tau_k) \tau^k.
\end{split}
\end{equation*}
Hence, \eqref{eq:boundary a} is true.
Based on \eqref{eq:boundary a} and \eqref{eq:boundary b} we obtain
\eqref{eq:boundary c} by computing
\begin{equation*}
\begin{split}
\Pi_\tau S^{-1} ([u]_1,\dots,[u]_{n-1},0)^T
&= (\tau^1,\dots,\tau^{n-1},0) ([u]_1,\dots,[u]_{n-1},0)^T \\
&= \sum_{k=1}^{n-1} (u \cdot \tau_k) \tau^k = \Pi_\tau u \\
&= \Pi_\tau S^{-1} [u]_{1,\dots,n}.
\end{split}
\end{equation*}

Next, we choose a concrete basis of $T_x \partial \Omega$ in an arbitrary point $x \in \partial \Omega$. For this purpose, let $\phi_l$, $l \in \Gamma_1$ be the parametrization of the boundary $\partial \Omega$ chosen in \eqref{eq:Parametrization}.
If, for some $l \in \Gamma_1$, the point $x \in \partial \Omega$ is contained in the part $\partial \Omega \cap B_l$ of the boundary, the functions $\partial_i \phi_l$, $i = 1,\dots,n-1$ form a basis of $T_x \partial \Omega$. More precisely, we can define $\tau_i = \tau_i(x) := \partial_i \phi_l(\phi_l^{-1}(x))$ for $i = 1,\dots,n-1$. Let $l \in \Gamma_1$ be fixed now.
For a function $v: \partial \Omega \cap B_l \rightarrow \mathbb{R}$ and $i = 1,\dots,n-1$ we define the $i$-th tangential derivative as
$\partial_{\tau_i} v := \partial_i (v \circ \phi_l) \circ \phi_l^{-1}$
and, if $v$ is a vector field, 
then $\partial_{\tau_i} v$ is defined componentwise.
Note that for $v \in W^1_q(\Omega \cap B_l)$ by the chain rule we have 
$\partial_{\tau_i} v = \nabla v \cdot \tau_i$, as usual.
In case $v \in W^1_q(\Omega \cap B_l)^n$, we have $\partial_{\tau_i} v = (\transA{\nabla v}) \tau_i$. Therefore, for $u \in W^2_q(\Omega)^n$ we have
\begin{equation} \label{eq:tangential derivative a}
[(\transB{\nabla u}) \nu]_i
= \tau_i \cdot (\transB{\nabla u}) \nu
= \nu \cdot (\transA{\nabla u}) \tau_i
= \nu \cdot \partial_{\tau_i} u
\quad \text{on } \partial \Omega \cap B_l
\end{equation}
for $i = 1,\dots,n-1$.
For $u \in W^2_q(\Omega)^n$ satisfying $\nu \cdot u = 0$ on $\partial \Omega$ we obtain
\begin{equation} \label{eq:tangential derivative b}
0
= \partial_{\tau_i} (\nu \cdot u)
= u \cdot \partial_{\tau_i} \nu + \nu \cdot \partial_{\tau_i} u
\quad \text{on } \partial \Omega \cap B_l.
\end{equation}
Utilizing \eqref{eq:tangential derivative a} and \eqref{eq:tangential
derivative b} and writing $(\transA{\nabla u}) \nu = \partial_\nu u$, we
deduce
\begin{equation} \label{eq:representation Dpm}
[\Dpm(u) \nu]_i = [\partial_\nu u]_i \mp (\partial_{\tau_i} \nu) \cdot u
\quad \text{on } \partial \Omega \cap B_l \text{ for } i = 1,\dots,n-1.
\end{equation}

Now, \eqref{eq:boundary c} and \eqref{eq:representation Dpm} yield
\begin{equation*}
\begin{split}
& \Pi_\tau \Dpm(u) \nu \\
& = \Pi_\tau S^{-1} [\Dpm(u) \nu]_{1,\dots,n} \\
& = \Pi_\tau S^{-1} \big( [\Dpm(u) \nu]_1,\dots,[\Dpm(u) \nu]_{n-1},0 \big)^T \\
& = \Pi_\tau S^{-1} \big( \big( [\partial_\nu u]_1,\dots,[\partial_\nu u]_{n-1},0 \big)^T
\mp \big( (\partial_{\tau_1} \nu) \cdot u,\dots,(\partial_{\tau_{n-1}} \nu) \cdot u,0 \big)^T \big) \\
& = \Pi_\tau (\partial_\nu u \mp S^{-1} R u)
\quad \text{on } \partial \Omega \cap B_l,
\end{split}
\end{equation*}
where $R := (\partial_{\tau_1} \nu,\dots,\partial_{\tau_{n-1}} \nu,0)$.
Applying again $\nu \cdot u = 0$ on $\partial \Omega$, in combination
with \eqref{eq:Dm invariant under tangential projection} this gives
\begin{equation*}
\Pi_\tau \Dp(u) \nu
= \Dm(u) \nu - 2 \Pi_\tau S^{-1} R u
\quad \text{on } \partial \Omega.
\end{equation*}

It remains to prove that there exists an extension $A \in W^1_\infty(\Omega)^{n \times n}$ of $-2 S^{-1} R$.
Therefore, we first consider the entries of $S^{-1}$. We have shown in \eqref{eq:extension nu} that there exists an extension
$\widebar{\nu} \in W^2_\infty(\Omega)^n$ of $\nu$. In the same way we can establish an extension $\widebar{\tau}_i \in W^2_\infty(\Omega)^n$ of $\tau_i$ for $i = 1,\dots,n-1$
and the corresponding extension $\partial_{\widebar{\tau}_i}$ of the tangential derivative operator $\partial_{\tau_i}$.

A representation of $\tau^i$ is given by $\tau^i = \sum_{k=1}^{n-1} g^{ik} \tau_k$, where for the Gram matrix
$G := (\tau_j \cdot \tau_k)_{j,k=1,\dots,n-1}$
we define
$(g^{jk})_{j,k=1,\dots,n-1} := G^{-1}$
as its inverse (cf.~\cite{PS16}).
In \eqref{eq:estimate Gram matrix} we have established a uniform upper bound for $\| G^{-1} \|_{1,\infty}$, so we also have an extension $\widebar{\tau}^i \in W^2_\infty(\Omega)^n$ of $\tau^i$ for $i = 1,\dots,n-1$.
Now, considering the entries of $R$, we obtain that $\partial_{\tau_i} \nu$ for $i=1,\dots,n-1$ can be written as the directional derivative of the extension $\widebar{\nu}$ in direction of $\tau_i$.
Since $\widebar{\nu} \in W^2_\infty(\Omega)^n$, we conclude $\partial_{\widebar{\tau}_i} \widebar{\nu} \in W^1_\infty(\Omega)^n$.
Summarizing, we have extensions of $S^{-1}$ and $R$, hence also of $-2 S^{-1} R$, in $W^1_\infty(\Omega)^{n \times n}$.
\end{proof}

\begin{proof}[Proof of Theorem~\thref{thm:Stokes partial slip}]
We start with proving \eqref{thm:Stokes partial slip 2}.
Let initially $f \in L_{q,\sigma}(\Omega)$, choose $\lambda_0 = \lambda_0(n,q,\theta,\Omega)$ and $C = C(n,q,\theta,\Omega)$ such that the conditions of Theorem~\ref{thm:Stokes perfect slip inhom}\eqref{thm:Stokes perfect slip inhom 2} are satisfied and let $\lambda \in \Sigma_\theta$, $|\lambda| \ge \lambda_0$.
Let $A \in W^1_\infty(\Omega)^{n \times n}$ be the matrix 
constructed in Lemma~\ref{thm:matrix different partial slip b.c.}.

We define the Banach spaces
\begin{equation*}
\begin{split}
X &:= \big\{ (u,\transB{\nabla p}) \in [W^2_q(\Omega)^n \cap L_{q,\sigma}(\Omega)] \times \modGradq : (\lambda - \Delta)u + \transB{\nabla p} \in L_{q,\sigma}(\Omega) \big\}, \\
Y &:= L_{q,\sigma}(\Omega) \times \big\{ \Pi_\tau \Tr g : g \in W^1_q(\Omega)^n \big\} \\
\end{split}
\end{equation*}
with $\lambda$-dependent norms 
\begin{equation*}
\begin{split}
\| (u,\transB{\nabla p}) \|_X &:= \| (\lambda u,\sqrt{\lambda} \nabla u,\nabla^2 u,\nabla p) \|_q, \\
\| (f,a) \|_Y &:= \| f \|_q + \inf \{ \| (\sqrt{\lambda} g,\nabla g) \|_q : g \in W^1_q(\Omega)^n, a = \Pi_\tau \Tr g \}.
\end{split}
\end{equation*}
We further define the operators
\begin{equation*}
\begin{split}
S &: X \longrightarrow Y, \quad (u,\transB{\nabla p}) \longmapsto ((\lambda - \Delta)u + \transB{\nabla p},\Tr \Dm(u) \nu),
\\
P_- &: X \longrightarrow Y, \quad (u,\transB{\nabla p}) \longmapsto
(0,\Pi_\tau \Tr \alpha u),
\\
P_+ &: X \longrightarrow Y, \quad (u,\transB{\nabla p}) \longmapsto
(0,\Pi_\tau \Tr (Au + \alpha u)).
\end{split}
\end{equation*}
The statement of Theorem~\ref{thm:Stokes partial slip} for $f \in
L_{q,\sigma}(\Omega)$, $g \in W^1_q(\Omega)^n$ then means that
\begin{equation} \label{eq:operator S+P}
S + P_\pm: X \longrightarrow Y
\end{equation}
is bijective such that $(S + P_\pm)^{-1}$ is bounded, uniformly in $\lambda$.
More precisely, the related continuity constant of $(S + P_\pm)^{-1}$ is
only allowed to depend on $n,q,\theta,\Omega$, but not on 
$|\lambda| \ge \lambda_0$.
Besides, in \eqref{eq:operator S+P} the operator $S + P_-$ relates to $\eqref{eq:Stokes partial slip Dpm}_-$ while $S + P_+$ corresponds to $\eqref{eq:Stokes partial slip Dpm}_+$.

Theorem~\ref{thm:Stokes perfect slip inhom}\eqref{thm:Stokes perfect slip inhom 2} gives that $S$ is bijective and for $(f,a) \in Y$, $(u,\transB{\nabla p}) := S^{-1} (f,a)$ and any $g \in W^1_q(\Omega)^n$ satisfying $a = \Pi_\tau \Tr g$ we have
\begin{equation*}
\| (\lambda u, \sqrt{\lambda} \nabla u, \nabla^2 u, \nabla p) \|_q
\le C \| (f,\sqrt{\lambda} g,\nabla g) \|_q.
\end{equation*}
Consequently,
\begin{equation} \label{eq:partial slip resolvent estimate}
\| (u,\transB{\nabla p}) \|_X
\le C \| (f,a) \|_Y.
\end{equation}
Next, we prove that the two operators $P_\pm$ are continuous with
\begin{equation} \label{eq:Perturbation estimate}
\| P_\pm \|_{X \rightarrow Y} \le \frac{C' + |\alpha|}{\sqrt{|\lambda|}},
\end{equation}
where $C' = C'(n,q,\Omega) > 0$.
The definition of the norm in $Y$ directly gives
\begin{equation} \label{eq:Perturbation estimate B}
\| P_-(u,\transB{\nabla p}) \|_Y \le |\alpha| \| (\sqrt{\lambda}u,\nabla u) \|_q,
\end{equation}
for all $(u,\transB{\nabla p}) \in X$.
We obtain the same for the second part of $P_+$, i.e.,
\begin{equation*}
\| (0,\Pi_\tau \Tr \alpha u) \|_Y \le | \alpha | \| (\sqrt{\lambda} u,\nabla u) \|_q.
\end{equation*}
Lemma~\ref{thm:matrix different partial slip b.c.} yields for the first part
\begin{equation*}
\| (0,\Pi_\tau \Tr A u) \|_Y
\le \| A \|_{1,\infty} \| (\sqrt{\lambda} u,\nabla u) \|_q
\end{equation*}
if $|\lambda| \ge 1$.
In total we obtain \eqref{eq:Perturbation estimate}.

We now increase $\lambda_0 = \lambda_0(n,q,\theta,\Omega)$
to some $\lambda_0 = \lambda_0(n,q,\theta,\Omega,\alpha)$ so that we have
$\lambda_0 \ge \max\{1,(2 C)^2 (C' + |\alpha|)^2\}$, where $C$ and $C'$
are the constants from \eqref{eq:partial slip resolvent estimate} and
\eqref{eq:Perturbation estimate} respectively.
Then \eqref{eq:Perturbation estimate} yields for $|\lambda| \ge \lambda_0$
\begin{equation*}
\| P_\pm \|_{X \rightarrow Y} \le \frac{1}{2C}.
\end{equation*}
By a standard Neumann series argument $S + P_\pm$ is an isomorphism and we have
\begin{equation} \label{eq:XY estimate}
\| (u,\transB{\nabla p}) \|_X \le C \frac{1}{1 - C \| P_\pm \|_{X \rightarrow Y}} \| (f,a) \|_Y
\le 2 C \| (f,a) \|_Y
\end{equation}
for all $(f,a) \in Y$ and $(u,\transB{\nabla p}) = (S + P_\pm)^{-1}
(f,a)$. For any $g \in W^1_q(\Omega)^n$ we have $(f,a) \in Y$, where 
$a := \Pi_\tau \Tr g$. In other words, \eqref{eq:XY estimate} implies 
\eqref{eq:resolvent estimate partial slip}
for the special case $f \in L_{q,\sigma}(\Omega)$.

The general case $f \in L_q(\Omega)^n$ is completely 
analogous to the last part of the proof of 
Theorem~\ref{thm:Stokes perfect slip inhom}.

In order to prove \eqref{thm:Stokes partial slip 1}, similar to the proof of \eqref{thm:Stokes partial slip 2}, consider the Banach spaces
\begin{equation}
\begin{split}
& X' := [W^2_q(\Omega)^n \cap L_{q,\sigma}(\Omega)] \times \Gradq, \\
& Y' := L_q(\Omega)^n \times \{ \Pi_\tau \Tr g : g \in W^1_q(\Omega)^n \}
\end{split}
\end{equation}
with the same norms as for $X$ and $Y$ and the operators $S: X' \rightarrow Y'$ and $P_-: X' \rightarrow Y'$ as defined above.
Then estimate \eqref{eq:Perturbation estimate B} is still valid so we have
\begin{equation} \label{eq:Perturbation estimate C}
\| P_- \|_{X' \rightarrow Y'} \le |\alpha|
\end{equation}
if $\lambda_0 \ge 1$.
Theorem~\ref{thm:Stokes perfect slip inhom}\eqref{thm:Stokes perfect
slip inhom 1} yields that $S: X' \rightarrow Y'$ is surjective. Hence,
there exists $\epsilon = \epsilon(n,q,\Omega,\lambda) > 0$ so that in
case $\| P_- \|_{X' \rightarrow Y'} < \epsilon$ the operator $S + P_-:
X' \rightarrow Y'$ is surjective as well. Estimate \eqref{eq:Perturbation
estimate C} then yields the assertion.
\end{proof}

\section{The Stokes semigroup: Proof of Theorem~\ref{thm:Stokes semigroup}}
\label{sec:instat}

Let $\Stokesqpm$ be the Stokes operator as defined in \eqref{eq:Stokes operator def}.

\begin{proof}[Proof of Theorem~\thref{thm:Stokes semigroup}]
For $\theta := \omega + \frac{\pi}{2}$ Theorem~\ref{thm:Stokes partial slip} yields some constants $\lambda_0 = \lambda_0(n,q,\theta,\Omega,\alpha)$ and $C = C(n,q,\theta,\Omega)$ so that for $\lambda \in \Sigma_\theta$, $|\lambda| \ge \lambda_0$ the resolvent $(\lambda - \Stokesqpm)^{-1}: L_{q,\sigma}(\Omega) \rightarrow L_{q,\sigma}(\Omega)$ exists and fulfills the resolvent estimate
\begin{equation*}
\| \lambda (\lambda - \Stokesqpm)^{-1} f \|_q
\le C \| f \|_q \quad \forall f \in L_{q,\sigma}(\Omega).
\end{equation*}
Choosing $\delta = \delta(\theta) \in (0,1)$ small enough, we obtain that
$- \frac{1}{\delta} \lambda_0 + \Stokesqpm$ is the generator of a strongly continuous bounded analytic semigroup with angle $\omega$.

Now, we prove estimates \eqref{thm:estimate Stokes semigroup 1} and \eqref{thm:estimate Stokes semigroup 2} of Theorem~\ref{thm:Stokes semigroup}.
Let $\beta := n(\frac{1}{p} - \frac{1}{q}) \in [0,2)$. Then for the Bessel-potential space
\begin{equation} \label{eq:Sobolev embedding}
H^\beta_p(\mathbb{R}^n) = [L_p(\mathbb{R}^n),W^2_p(\mathbb{R}^n)]_\frac{\beta}{2}
\end{equation}
we have the Sobolev embedding
$H^\beta_p(\mathbb{R}^n) \subset L_q(\mathbb{R}^n)$
(see, e.g.,~\cite{Tri78})
with some embedding constant $C_e = C_e(n,q,p) > 0$,
since the condition $p \le q$, $\frac{n}{p} - \beta \le \frac{n}{q}$ is satisfied.
Let $t \in (0,T)$ and $f \in L_{p,\sigma}(\Omega)$ and denote by $E$ the extension operator from Lemma~\ref{thm:Stein-Extension}.
For vector-valued functions $v$, by $Ev$ we mean componentwise application of the extension operator $E$.
Then we conclude
\begin{equation} \label{eq:Stokes semigroup estimate A}
\begin{split}
\| e^{t \Stokesppm} f \|_{L_q(\Omega)^n}
& \le \| E e^{t \Stokesppm} f \|_{L_q(\mathbb{R}^n)^n} \\
& \le C_e \| E e^{t \Stokesppm} f \|_{H^\beta_p(\mathbb{R}^n)^n} \\
& \le C_e \| E e^{t \Stokesppm} f \|_{L_p(\mathbb{R}^n)^n}^{1 - \frac{\beta}{2}}
        \| E e^{t \Stokesppm} f \|_{H^2_p(\mathbb{R}^n)^n}^{\frac{\beta}{2}} \\
& \le C_e \| E \| \| e^{t \Stokesppm} f \|_{L_p(\Omega)^n}^{1 - \frac{\beta}{2}}
        \| e^{t \Stokesppm} f \|_{H^2_p(\Omega)^n}^{\frac{\beta}{2}},
\end{split}
\end{equation}
where $\| E \|$ denotes the maximum of the operator norms of \eqref{eq:Stein-Extension} for $k \in \{ 0,2 \}$.
Fix any $0 < \theta < \pi$ and choose $\lambda_0 = \lambda_0(n,p,\theta,\Omega,\alpha) \ge 1$ and $C = C(n,p,\theta,\Omega) > 0$ such that the conditions of Theorem~\ref{thm:Stokes partial slip} are satisfied.
Theorem~\ref{thm:Stokes partial slip} and Lemma~\ref{thm:Stokes operator} 
then yield
\begin{equation*}
\Big\| \Big( \frac{1}{\delta} \lambda_0 - \Stokesppm \Big)^{\!\! -1} \Big\|_{L_p(\Omega)^n \rightarrow H^2_p(\Omega)^n} \le C
\end{equation*}
(since $\frac{1}{\delta} \lambda_0 \ge \lambda_0 \ge 1$).
By the fact that $\bigl(e^{t (\Stokesppm - \frac{1}{\delta}
\lambda_0)}\bigr)_{t \ge 0}$
is a bounded analytic strongly continuous semigroup we deduce
\begin{equation} \label{eq:Stokes semigroup estimate C}
\begin{split}
\| e^{t \Stokesppm} f \|_{H^2_p(\Omega)^n}
& = \Big\| \Big( \frac{1}{\delta} \lambda_0 - \Stokesppm \Big)^{\!\! -1} \Big( \frac{1}{\delta} \lambda_0 - \Stokesppm \Big) e^{t \Stokesppm} f \Big\|_{H^2_p(\Omega)^n} \\
& \le C e^{\frac{1}{\delta} \lambda_0 T} \Big\| \Big( \frac{1}{\delta} \lambda_0 - \Stokesppm \Big) e^{t (\Stokesppm - \frac{1}{\delta}\lambda_0)} f \Big\|_{L_p(\Omega)^n} \\
& \le C C' e^{\frac{1}{\delta} \lambda_0 T} \frac{1}{t} \| f \|_{L_p(\Omega)^n}
\end{split}
\end{equation}
with $C' = C'(n,p,\theta,\Omega,\alpha)>0$.
Now, \eqref{eq:Stokes semigroup estimate A} 
and \eqref{eq:Stokes semigroup estimate C} imply
\begin{equation*}
\| e^{t \Stokesppm} f \|_{L_q(\Omega)^n}
\le C e^{\frac{1}{\delta} \lambda_0 T} t^{-\frac{\beta}{2}} \| f \|_{L_p(\Omega)^n}
\end{equation*}
for $t\in(0,T)$ with $C=C(n,p,q,\theta,\Omega,\alpha)>0$.
Hence, \eqref{thm:estimate Stokes semigroup 1} is proved.

In order to see \eqref{thm:estimate Stokes semigroup 2}, let $t \in (0,T)$ and $f \in L_{p,\sigma}(\Omega)$ again, where we have
$\beta = n(\frac{1}{p} - \frac{1}{q}) \in [0,1)$ this time.
The condition $p \le q$, $\frac{n}{p} - \beta \le \frac{n}{q}$ for
Sobolev's embedding is still satisfied. So, we have
$H^{\beta + 1}_p(\mathbb{R}^n) \subset H^1_q(\mathbb{R}^n)$
with some embedding constant $C_e = C_e(n,q,p) > 0$ as above.
Furthermore, the condition $\beta < 1$ gives that \eqref{eq:Sobolev
embedding} is fulfilled with $\beta + 1$ instead of $\beta$. This yields
\begin{equation*}
\begin{split}
\| \nabla e^{t \Stokesppm} f \|_{L_q(\Omega)^{n^2}}
& \le \| e^{t \Stokesppm} f \|_{H^1_q(\Omega)^n} \\
& \le \| E e^{t \Stokesppm} f \|_{H^1_q(\mathbb{R}^n)^n} \\
& \le C_e \| E e^{t \Stokesppm} f \|_{H^{\beta + 1}_p(\mathbb{R}^n)^n} \\
& \le C_e \| E e^{t \Stokesppm} f \|_{L_p(\mathbb{R}^n)^n}^{1 - \frac{\beta + 1}{2}}
        \| E e^{t \Stokesppm} f \|_{H^2_p(\mathbb{R}^n)^n}^{\frac{\beta + 1}{2}} \\
& \le C_e \| E \| \| e^{t \Stokesppm} f \|_{L_p(\Omega)^n}^{1 - \frac{\beta + 1}{2}}
        \| e^{t \Stokesppm} f \|_{H^2_p(\Omega)^n}^{\frac{\beta + 1}{2}}.
\end{split}
\end{equation*}
Analogously as above by applying \eqref{eq:Stokes semigroup estimate C} 
we conclude
\begin{equation*}
\begin{split}
\| \nabla e^{t \Stokesppm} f \|_{L_q(\Omega)^{n^2}}
& \le C e^{\frac{1}{\delta} \lambda_0 T}t^{-\frac{\beta + 1}{2}} 
\| f \|_{L_p(\Omega)^n}.
\end{split}
\end{equation*}
for $t\in(0,T)$ with $C=C(n,p,q,\theta,\Omega,\alpha)>0$.
\end{proof}

\section{The Navier-Stokes equations: Proof of Theorem~\ref{thm:NSE}}
\label{sec:ns}

Based on the $L_p$-$L_q$ estimates derived in 
Theorem~\thref{thm:Stokes semigroup},
we can apply a standard fixed point argument.

\begin{proof}[Proof of Theorem~\thref{thm:NSE}]
Let $u_0 \in L_{q,\sigma}(\Omega)$.
For $M > 0$ and $T > 0$ we define
$X_{M,T}$ as the space of functions $u$ satisfying \eqref{eq:mild solution condition} and
$\| u \|_T \le M \| u_0 \|_q$,
where $\| u \|_T
:= \sup_{t \in [0,T]} \| u(t) \|_q + \sup_{t \in [0,T]} \sqrt{t} \| \nabla u(t) \|_q$
and
\begin{equation*}
H(u)(t)
:= e^{t \Stokesqpm} u_0 - \int_0^t e^{(t - s) \Stokes{q/2}} \Proj_{q/2} \sum_{j=1}^n \partial_j (u^{j}(s) u(s)) \D s
\end{equation*}
for $u \in X_{M,T}$ and $t \in [0,T]$.
To prove that $H$ is a contraction is standard, so we will be brief in details.
Assuming $T < 1$, we can apply Theorem~\ref{thm:Stokes
semigroup}\eqref{thm:estimate Stokes semigroup 1} with $T = 1$ and $p =
\frac{q}{2}$ (resp.\ with $p = q$ for the first term) to obtain a constant $C = C(n,q,\Omega,\alpha) > 0$ so that
\begin{equation} \label{eq:Contraction A}
\| H(u)(t) \|_q
\le C \Big( \| u_0 \|_q + \sum_{j=1}^n \int_0^t (t - s)^{-\frac{n}{2q}} \| \Proj_{q/2} \partial_j (u^{j}(s) u(s)) \|_\frac{q}{2} \D s \Big).
\end{equation}
The continuity of $\Proj_{q/2}$ on $L_{q/2}(\Omega)^n$, the fact that $\frac{n}{2q} < 1$ and Hölder's estimate yield a constant $C' = C'(n,q,\Omega) > 0$ so that
{\allowdisplaybreaks
\begin{align}
\sum_{j=1}^n & \int_0^t (t - s)^{-\frac{n}{2q}} \| \Proj_{q/2} \partial_j (u^{j}(s) u(s)) \|_\frac{q}{2} \D s \\
& \le C' \left( \sup_{\tau \in [0,T]} \| u(\tau) \|_q \right) \left( \sup_{\tau \in [0,T]} \sqrt{\tau} \| \nabla u(\tau) \|_q \right)
\int_0^t \frac{ (t - s)^{-\frac{n}{2q}} }{\sqrt{s}} \D s \\
& \le C' \left( \sup_{\tau \in [0,T]} \| u(\tau) \|_q + \sup_{\tau \in [0,T]} \sqrt{\tau} \| \nabla u(\tau) \|_q \right)^2
\int_0^t \frac{ (t - s)^{-\frac{n}{2q}} }{\sqrt{s}} \D s \\
& = C' \| u \|_T^2 \int_0^t \frac{ (t - s)^{-\frac{n}{2q}} }{\sqrt{s}} \D s \\
& \le C'' M^2 \| u_0 \|_q^2 T^{\frac{1}{2} - \frac{n}{2q}}.
\end{align}
}
Therefore
\begin{equation} \label{eq:Fixpoint estimate 1}
\| H(u)(t) \|_q
\le C \| u_0 \|_q + C C'' M^2 T^{\frac{1}{2} - \frac{n}{2q}} \| u_0 \|_q^2
\end{equation}
for $M > 0$, $0 < T < 1$, $u \in X_{M,T}$ and $t \in [0,T]$.

Next, we derive a similar estimate for
\begin{equation*}
\nabla H(u)(t)
= \nabla e^{t \Stokesqpm} u_0 - \int_0^t \nabla e^{(t - s) \Stokes{q/2}} \Proj_{q/2} \sum_{j=1}^n \partial_j (u^{j}(s) u(s)) \D s
\end{equation*}
for $u \in X_{M,T}$ and $t \in [0,T]$.
We assume $T < 1$ again and apply Theorem~\ref{thm:Stokes semigroup}\eqref{thm:estimate Stokes semigroup 2} with $T = 1$ and $p = \frac{q}{2}$ (resp.\ with $p = q$ for the first term) to receive a constant $C = C(n,q,\Omega,\alpha) > 0$ so that
\begin{equation} \label{eq:Fixpoint estimate 2}
\begin{split}
\sqrt{t} \| \nabla H(u)(t) \|_q
& \le C \Big( \| u_0 \|_q + \sum_{j=1}^n \int_0^t \sqrt{t} (t - s)^{-\frac{n}{2q} - \frac{1}{2}} \| \Proj_{q/2} \partial_j (u^{j}(s) u(s)) \|_\frac{q}{2} \D s \Big) \\
& \le C \| u_0 \|_q + C C''' M^2 T^{\frac{1}{2} - \frac{n}{2q}} \| u_0 \|_q^2
\end{split}
\end{equation}
for $M > 0$, $u \in X_{M,T}$ and $t \in [0,T]$.
The constant $C''' = C'''(n,q,\Omega) > 0$ results from the 
continuity of $\Proj_{q/2}$, Hölder's estimate and the 
fact that $\frac{n}{2q} < \frac{1}{2}$.

Let $0 < T < T'$ with
$T' := \min \left\{ 1 , \left( \frac{1}{4 C^2 C'' \| u_0 \|_q} \right)^\frac{2}{1 - \frac{n}{q}} , \left( \frac{1}{4 C^2 C''' \| u_0 \|_q} \right)^\frac{2}{1 - \frac{n}{q}} \right\}$
and let $M \ge 2C$.
Then \eqref{eq:Fixpoint estimate 1} and \eqref{eq:Fixpoint estimate 2} yield
\begin{equation} \label{eq:map into itself}
H: X_{M,T} \rightarrow X_{M,T},
\end{equation}
i.e., $H$ maps $X_{M,T}$ into intself.

We proceed to prove that $H: X_{M,T} \rightarrow X_{M,T}$ satisfies a contraction estimate for $T > 0$ small enough.
Let $u,v \in X_{M,T}$ and $t \in [0,T]$.
For $0 \le s \le t$ we can estimate
\begin{equation*}
\| \partial_j \big( u^{j}(s) u(s) - v^{j}(s) v(s) \big) \|_\frac{q}{2}
\le \frac{4 M}{\sqrt{s}} \| u_0 \|_q \| u - v \|_T.
\end{equation*}
Similar to \eqref{eq:Fixpoint estimate 1} and \eqref{eq:Fixpoint estimate 2} this gives
\begin{equation*}
\| H(u)(t) - H(v)(t) \|_q
\le C C'''' M T^{\frac{1}{2} - \frac{n}{2q}} \| u_0 \|_q \| u - v \|_T
\end{equation*}
and
\begin{equation*}
\| \sqrt{t} \nabla \big( H(u)(t) - H(v)(t) \big) \|_q
\le C C'''' M T^{\frac{1}{2} - \frac{n}{2q}} \| u_0 \|_q \| u - v \|_T
\end{equation*}
for $t \in [0,T]$,
where $C'''' = C''''(n,q,\Omega) > 0$.

Thus, we have
\begin{equation} \label{eq:Contraction estimate}
\| H(u) - H(v) \|_T
\le \frac{1}{2} \| u - v \|_T
\end{equation}
if $0 < T < T''$, where
$T'' := \min \left\{ 1 , \left( \frac{1}{4 C^2 C'''' \| u_0 \|_q} \right)^\frac{2}{1 - \frac{n}{q}} \right\}$.

Summarizing, for $M \ge 2C$ and $0 < T < T_0$ with $T_0 := \min \{
	T',T'' \}$, \eqref{eq:map into itself} and \eqref{eq:Contraction
	estimate} yield that $H: X_{M,T} \rightarrow X_{M,T}$ is a
	contraction. The contraction mapping principle implies
	the assertion.
\end{proof}

\section{Appendix A}

Since we could not find an appropriate result on the resolvent problem
for the heat equation subject to Neumann boundary conditions in the case
of general noncompact boundaries for the whole scale $1 < q < \infty$ in 
existing literature, we state the result here.
The proof is very similar to the proof of Theorem~\ref{thm:HE with PS}.
In fact, it is somewhat easier, since the Neumann boundary condition is a condition for scalar functions $u: \Omega \longrightarrow \mathbb{R}$ instead of vector fields.
In particular, there is no distinction between boundary conditions in tangential and normal direction.
As a consequence, the multiplication of the matrix $\transA{\nabla
\Phi}$ to the boundary terms in the proof of Theorem~\ref{thm:HE bent
rotated shifted half space} is not required.
By this fact, similar to the localization technique for Dirichlet boundary conditions (see~\cite{Kun03}), uniform $C^{1,1}$-boundary regularity is sufficient.

\begin{lemma} \label{thm:Neumann Laplace}
Let $\Omega \subset \mathbb{R}^n$ be a uniform $C^{1,1}$-domain,
$n \ge 2$, $1 < q < \infty$ and $0 < \theta < \pi$.
Then there exist $\lambda_0 = \lambda_0(n,q,\theta,\Omega) > 0$ and $C = C(n,q,\theta,\Omega) > 0$ such that for $\lambda \in \Sigma_\theta$, $|\lambda| \ge \lambda_0$ the problem
\begin{equation} \label{eq:HE Neumann}
\left\{
\begin{array}{rll}
\lambda u - \Delta u & = f & \text{in } \Omega \\
\partial_\nu u & = g & \text{on } \partial \Omega,
\end{array}
\right.
\end{equation}
for all $f \in L_q(\Omega)$ and $g \in W^1_q(\Omega)$, has a unique solution $u \in W^2_q(\Omega)$ and this solution fulfills the resolvent estimate
\begin{equation} \label{eq:HE Neumann resolvent estimate}
\| (\lambda u, \sqrt{\lambda} \nabla u, \nabla^2 u) \|_q
\le C \| (f, \sqrt{\lambda} g, \nabla g) \|_q.
\end{equation}
\end{lemma}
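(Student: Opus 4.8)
The plan is to mirror the localization argument used for Theorem~\ref{thm:HE with PS}, which carries over with simplifications because the Neumann condition $\partial_\nu u = g$ is scalar and first order; in particular the splitting into tangential and normal parts and the multiplication by $\transA{\nabla \Phi}$ that complicated the perfect slip case are no longer needed. First I would settle the half space $\Omega = \mathbb{R}^n_+$, where $\partial_\nu = -\partial_n$ and \eqref{eq:HE Neumann} is a scalar resolvent problem for the Laplacian with inhomogeneous Neumann data; unique solvability in $W^2_q(\mathbb{R}^n_+)$ together with the estimate \eqref{eq:HE Neumann resolvent estimate} (which now contains only the $\sqrt{\lambda} g$, $\nabla g$ boundary terms, with no $\lambda h$, $\nabla^2 h$ contributions) is classical and follows from \cite{KW04}, Sec.~7.18.

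Next I would treat a bent, rotated and shifted half space $Q^T H_\omega + \tau$. As in the proof of Theorem~\ref{thm:HE bent rotated shifted half space}, the shift and rotation reduce matters to $H_\omega$, and the change of coordinates $\Phi: H_\omega \to \mathbb{R}^n_+$, $x \mapsto (x', x_n - \omega(x'))^T$ turns \eqref{eq:HE Neumann} into a perturbed half space problem $\lambda \ti u - \Delta \ti u + B \ti u = \ti f$ in $\mathbb{R}^n_+$ with a transformed Neumann condition, where $B \ti u = 2(\nabla' \transA{\omega},0)\cdot\nabla\partial_n\ti u + (\Delta'\omega)\partial_n\ti u - |\nabla'\omega|^2\partial_n^2\ti u$ exactly as in \eqref{eq:HE change of coordinates 1}. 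The boundary operator becomes $\partial_\nu = \ti\nu\cdot\nabla$ with the transformed normal $\ti\nu$ from \eqref{eq:normal representation}, which I would regard as a perturbation of $-\partial_n = \e\cdot\nabla$. The decisive point, and the reason $C^{1,1}$ regularity suffices, is that here only $\| \ti\nu \|_{1,\infty}$ enters the boundary perturbation (the boundary term is never differentiated twice, since there is no $\nabla^2 h$ datum), and by \eqref{eq:normal representation} this quantity is controlled by $\|(\nabla'\omega,\nabla'^2\omega)\|_\infty$, i.e.\ by $\omega \in W^2_\infty$. Choosing $\|\nabla'\omega\|_\infty \le \kappa$ small and $|\lambda| \ge \lambda_0$ large, a Neumann series argument (\cite{KW04}, Lem.~7.10), just as in Lemma~\ref{thm:HE half space perturbed}, yields unique solvability and the estimate \eqref{eq:HE Neumann resolvent estimate} on $Q^T H_\omega + \tau$ with a constant depending only on $n,q,\theta,M$.

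Finally I would globalize by the localization procedure of Steps~1--5 in the proof of Theorem~\ref{thm:HE with PS}. Using the partition of unity $(\varphi_l)_{l\in\Gamma}$ from \eqref{eq:Partition of unity} and \eqref{eq:uniformity partition of unity}, I would set up the sequence spaces $X = l_q(\bigoplus_l W^2_q(\Omega_l))$ and $Y = l_q(\bigoplus_l L_q(\Omega_l)) \times l_q(\bigoplus_l \operatorname{BF}_q(\partial\Omega_l))$, the boundary function space now being the scalar one built from $g \in W^1_q$ alone, together with the local solution operator $S$, which the bent half space result renders an isomorphism with constants depending only on $n,q,\theta,M$. Multiplying \eqref{eq:HE Neumann} by $\varphi_l$ produces local equations whose commutator terms define a perturbation $P: X\to Y$ with $\|P\|_{X\to Y} \le C_P/\sqrt{|\lambda|}$; the boundary commutator is scalar and estimated exactly as in \eqref{eq:Localization boundary term estimate}, but without the tangential/normal bookkeeping. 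A Neumann series then gives $S+P: X \xrightarrow{\cong} Y$ for $|\lambda|$ large, which yields unique solvability of the local system and, after transferring through the operators $\widebar D$ and $\widebar C$ as in Step~4, uniqueness and the resolvent estimate \eqref{eq:HE Neumann resolvent estimate} for \eqref{eq:HE Neumann}. Existence I would obtain via the dual commutator construction of Step~5, defining an auxiliary perturbation $\tilde P$ so that $(\lambda-\Delta,\partial_\nu)\widebar C = \widebar C(S+\tilde P)$, whence $u = \widebar C (S+\tilde P)^{-1}\widebar D(f,g)$ solves the problem once $S+\tilde P$ is invertible. I expect no genuinely new obstacle here: the only care required is to re-inspect each perturbation estimate and confirm that it uses at most $\nabla'^2\omega$ (equivalently $\ti\nu \in W^1_\infty$), so that the whole argument indeed runs under mere $C^{1,1}$ regularity. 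This verification is bookkeeping rather than a new difficulty, and it is precisely the point that distinguishes this lemma from the $C^{2,1}$ vector case.
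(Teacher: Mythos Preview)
Your proposal is correct and follows precisely the approach the paper indicates: the paper does not give a detailed proof but states that the argument is ``very similar to the proof of Theorem~\ref{thm:HE with PS}'' with the simplifications that the scalar Neumann condition needs no tangential/normal splitting and no multiplication by $\transA{\nabla\Phi}$, whence $C^{1,1}$ regularity suffices as in \cite{Kun03}. You have identified exactly these points and filled in the outline more fully than the paper itself does.
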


\begin{lemma} \label{thm:Stein-Extension}
Let $\Omega \subset \mathbb{R}^n$ be a uniform $C^{0,1}$-domain (i.e., a uniform Lipschitz domain) and $n \ge 2$. Then there exists a linear operator $E$ mapping real-valued functions on $\Omega$ to real-valued functions on $\mathbb{R}^n$ such that $Ef|_\Omega = f$ holds for any function $f$ on $\Omega$ (i.e., $E$ is an extension operator) and such that
\begin{equation} \label{eq:Stein-Extension}
E: W^k_q(\Omega) \longrightarrow W^k_q(\mathbb{R}^n)
\end{equation}
is continuous for all $1 \le q \le \infty$ and all $k \in \mathbb{N}_0$.
\end{lemma}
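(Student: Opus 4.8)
The plan is to establish this as the (uniform) Stein extension theorem. The crucial feature demanded by the statement is that a \emph{single} operator $E$ be bounded on $W^k_q(\Omega)$ simultaneously for every $k \in \mathbb{N}_0$ and every $1 \le q \le \infty$; this rules out the elementary order-$k$ reflection extensions (whose construction depends on $k$) and forces Stein's construction with a universal averaging kernel. I would first localize: using the uniform cover $B_l$, $l \in \Gamma$, the subordinate partition of unity $(\varphi_l)$ from \eqref{eq:Partition of unity}--\eqref{eq:uniformity partition of unity}, and the local graph representations $\omega_l$, the problem reduces to constructing, for each $l \in \Gamma_1$, an extension operator for the special Lipschitz domain $H_{\omega_l} = \{ x_n > \omega_l(x') \}$ whose operator norms depend only on $n$ and on the Lipschitz bound $M$ from \eqref{eq:uniformity}, and not on $l$. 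The interior indices $l \in \Gamma_0$ require no extension at all.

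For a special Lipschitz domain $D = \{ x_n > \varphi(x') \}$ with $\varphi$ Lipschitz, I would build the extension in two analytic steps. First, construct a regularized distance $\delta^* \in C^\infty(\mathbb{R}^n \setminus \overline{D})$ comparable to $\dist(\cdot,D)$ and satisfying the scale-invariant derivative bounds $|\partial^\alpha \delta^*(x)| \le C_\alpha \, \dist(x,D)^{1-|\alpha|}$, with $C_\alpha$ depending only on $n$, $|\alpha|$ and the Lipschitz constant. Second, fix once and for all a rapidly decreasing function $\psi$ on $[1,\infty)$ satisfying the moment conditions
\[
\int_1^\infty \psi(\lambda)\, d\lambda = 1, \qquad \int_1^\infty \lambda^j \psi(\lambda)\, d\lambda = 0 \quad (j = 1,2,3,\dots).
\]
The extension is then defined, for $x \notin \overline{D}$, by the averaging formula
\[
Ef(x) := \int_1^\infty f\big(x', x_n + \lambda\, \delta^*(x)\big)\, \psi(\lambda)\, d\lambda,
\]
and $Ef := f$ on $D$, the shifted argument $(x', x_n + \lambda\delta^*(x))$ lying in $D$ for $\lambda \ge 1$.

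To verify the bounds I would differentiate under the integral sign: each derivative $\partial^\alpha(Ef)$ produces terms in which derivatives of $\delta^*$ (controlled by the scale-invariant bounds) multiply derivatives of $f$ evaluated at the shifted point. The zeroth moment condition forces the boundary values to match across $\partial D$, while the vanishing higher moments cancel the singular factors $\delta^*(x)^{-j}$ that would otherwise destroy integrability, and the rapid decay of $\psi$ makes all $\lambda$-integrals converge. A Minkowski/Hölder estimate together with a change of variables returning the shifted points into $D$ then yields $\| Ef \|_{W^k_q(\mathbb{R}^n)} \le C(n,k,M)\, \| f \|_{W^k_q(D)}$ uniformly for all $k$ and all $q \in [1,\infty]$. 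Finally I would glue the local pieces, transporting each local operator $E_l$ back via the rotations and translations $Q_l, \tau_l$ and setting $E = \sum_{l} \varphi_l\, E_l(\varphi_l \cdot)$; the uniform bound $M$, the uniform partition-of-unity estimates \eqref{eq:uniformity partition of unity}, and the finite-overlap constant $\bar{N}$ guarantee that the global operator norms remain bounded independently of the (possibly infinite) index set $\Gamma$.

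The main obstacle is the analytic heart of Stein's construction, not the localization bookkeeping: one must (a) produce the universal kernel $\psi$ with all higher moments vanishing and rapid decay, whose existence is itself a nontrivial lemma, and (b) carry out the derivative estimates showing that the vanishing moments exactly cancel the $\delta^*(x)^{-j}$ singularities for \emph{every} order $k$ at once, which is precisely what makes $E$ simultaneously bounded on the whole Sobolev scale. The uniform case adds the requirement of tracking all constants so that they depend only on $n$ and $M$; this is routine once the model estimate is scale-invariant, but it is essential for passing from the local graphs to the global uniform $C^{0,1}$-domain.
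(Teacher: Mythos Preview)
Your outline is correct and is precisely Stein's construction; the paper, however, does not reproduce this argument at all but simply cites \cite{Ste70}, Thm.~VI.3.1/5, noting that a uniform $C^{0,1}$-domain coincides with what Stein calls a domain with \emph{minimally smooth} boundary. So you have sketched the content of the cited reference rather than given an alternative proof.
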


\begin{proof}
See~\cite{Ste70}, Thm.~VI.3.1/5. The condition for $\Omega$ to be a uniform $C^{0,1}$-domain is exactly the condition in~\cite{Ste70} for $\partial \Omega$ to be minimally smooth.
\end{proof}

\begin{definition} \label{perturbed cones}
For $n \ge 2$ we call a domain $\Omega \subset \mathbb{R}^n$, satisfying the segment property
(cf.~\cite{Ada75}), a perturbed cone if
there exists a (convex or concave) cone $\Omega_C \subset \mathbb{R}^n$ (where we assume the apex to be at the origin, w.l.o.g.) and $R > 0$ so that $\Omega \setminus B_R(0) = \Omega_C \setminus B_R(0)$, where the maximal cone $\Omega_C = \mathbb{R}^n$ and the minimal cone $\Omega_C = \emptyset$ are admitted.
\end{definition}

\begin{lemma} \label{thm:perturbed cones}
Let $n \ge 2$ and let $\Omega \subset \mathbb{R}^n$ be a perturbed cone.
Then
$C_c^\infty(\overline{\Omega}) \subset \widehat{W}^1_q(\Omega)$ is dense
for all $1 \le q < \infty$.
Hence, Assumption~\thref{thm:Assumption C} is valid for $\Omega$ and for all $1 < q < \infty$.
\end{lemma}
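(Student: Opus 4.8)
The plan is to prove the density statement directly in the homogeneous seminorm $|p|_{\widehat{W}^1_q(\Omega)} = \|\nabla p\|_q$, by approximating an arbitrary $p \in \widehat{W}^1_q(\Omega)$ first by compactly supported functions and then by smooth ones. I would establish it in the full range $1 \le q < \infty$ as stated; the conclusion for Assumption~\ref{thm:Assumption C} then follows by applying the result with $q$ replaced by $q'$. The two genuine difficulties are the behaviour at infinity on the conical end and the boundary regularity, and I would treat them in separate steps.

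First I would carry out the \emph{truncation at infinity}. Fix a radial cut-off family $\eta_k \in C_c^\infty(\mathbb{R}^n)$ with $\eta_k \equiv 1$ on $B_k(0)$, $\Supp \eta_k \subset B_{2k}(0)$ and $|\nabla\eta_k| \le C/k$. Since the seminorm is blind to additive constants, I would not truncate $p$ itself but rather $p - c_k$, where $c_k$ is the mean of $p$ (well defined, as $p \in L_{q,\mathrm{loc}}$) over the annular piece $A_k := \Omega \cap (B_{2k}(0)\setminus B_k(0))$. Writing
\[
\nabla\bigl(\eta_k (p - c_k)\bigr) = \eta_k\,\nabla p + (p - c_k)\,\nabla\eta_k,
\]
the first term converges to $\nabla p$ in $L_q(\Omega)$ because $\eta_k \to 1$ and $\nabla p \in L_q$, while the second is supported in $A_k$ and bounded by $(C/k)\,\|p - c_k\|_{L_q(A_k)}$. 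For $k > R$ we have $A_k = \Omega_C \cap (B_{2k}(0)\setminus B_k(0))$, which under the dilation $x \mapsto x/k$ is an exact rescaling of the fixed bounded domain $A_1 = \Omega_C \cap (B_2(0)\setminus B_1(0))$; hence a Poincaré--Wirtinger inequality on $A_1$ rescales to $\|p - c_k\|_{L_q(A_k)} \le C_P\,k\,\|\nabla p\|_{L_q(A_k)}$. This cancels the factor $1/k$ and leaves $\|(p-c_k)\nabla\eta_k\|_q \le C\,\|\nabla p\|_{L_q(A_k)} \to 0$ as $k\to\infty$, since the annuli escape to infinity and $\nabla p \in L_q(\Omega)$. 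Thus $\eta_k(p - c_k) \to p$ in the seminorm, and each $\eta_k(p-c_k)$ has compact support in $\overline{\Omega}$ and lies in $W^1_q(\Omega)$.

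Second I would \emph{smooth} each compactly supported approximant. A function $v \in W^1_q(\Omega)$ with $\Supp v \subset \overline{\Omega}$ compact can be approximated in $W^1_q(\Omega)$, hence in the seminorm, by functions in $C_c^\infty(\overline{\Omega})$: one covers $\Supp v$ by finitely many balls, uses the segment property to translate $v$ slightly into $\Omega$ on each patch, mollifies, and glues by a partition of unity; this is the standard Friedrichs/Meyers--Serrin argument for segment-property domains (cf.~\cite{Ada75}), and compact support is preserved. A diagonal choice of $k$ and of the mollification parameter then yields a sequence in $C_c^\infty(\overline{\Omega})$ converging to $p$ in $|\cdot|_{\widehat{W}^1_q(\Omega)}$, which is the claim. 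The degenerate cones need no extra work: if $\Omega_C = \emptyset$ then $\Omega$ is bounded and the truncation step is vacuous (take $k > R$), while $\Omega_C = \mathbb{R}^n$ gives full annuli $A_k$, to which the same Poincaré scaling applies.

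The step I expect to be the main obstacle is the \emph{scale-invariant Poincaré inequality}, i.e.\ verifying that the cross-sectional domain $A_1 = \Omega_C \cap (B_2(0)\setminus B_1(0))$ admits a Poincaré--Wirtinger inequality with a finite constant. Away from the apex the portion of $\partial\Omega_C$ inside the annulus inherits the segment property of $\Omega$, and the two spherical caps $\{|x| = 1\}$, $\{|x| = 2\}$ are smooth, so $A_1$ is a bounded domain with the segment property; together with connectedness of the conical link $\Omega_C \cap S^{n-1}$ (implicit in $\Omega_C$ being a single cone) this delivers the required inequality. Once $A_1$ is fixed, the exact dilation invariance of the cone furnishes the crucial linear-in-$k$ scaling of the constant, which is precisely what defeats the $1/k$ loss from the cut-off and closes the argument for every $1 \le q < \infty$.
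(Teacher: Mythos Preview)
Your proposal is correct and follows essentially the same route as the paper: truncation by a radial cut-off with the mean over the annular region subtracted, a scale-invariant Poincar\'e--Wirtinger inequality obtained by dilating the fixed annulus $\Omega_C \cap (B_2\setminus B_1)$, and smoothing via the segment property. The paper's proof is organized identically (with the cosmetic difference of using annuli $B_k\setminus \widebar{B}_{k/2}$ and citing \cite{Gal11}, Thm.~II.5.4 for the Poincar\'e constant), and it handles the smoothing step slightly more economically by noting directly that a compactly supported $\widehat{W}^1_q$-function lies in $W^1_q(\Omega)$ and then invoking density of $C_c^\infty(\overline{\Omega})$ in $W^1_q(\Omega)$ without tracking support.
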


\begin{proof}
We first convince ourselves that it is sufficient to prove that
$\widehat{W}_{c,q}^1(\Omega)$, consisting of those functions in $\widehat{W}_q^1(\Omega)$ having compact support in $\overline{\Omega}$, is a dense subspace of $\widehat{W}_q^1(\Omega)$. In fact, the (algebraic) inclusion $\widehat{W}_{c,q}^1(\Omega) \subset W^1_q(\Omega)$ and the density of $C_c^\infty(\overline{\Omega}) \subset W^1_q(\Omega)$ (see~\cite{Ada75}, Thm.\ 3.18; 
note that $\Omega$ is assumed to have the segment property) yield that
$C_c^\infty(\overline{\Omega}) \subset \widehat{W}_{c,q}^1(\Omega)$ is dense.
Hence, for some given function $p \in \widehat{W}_q^1(\Omega)$ it remains to find a sequence $(\psi_k)_{k \in \mathbb{N}}$ in $\widehat{W}_{c,q}^1(\Omega)$ such that
$\| \nabla \psi_k - \nabla p \|_q \xrightarrow{k \to \infty} 0$.

Let $\Chi \in C^\infty(\mathbb{R}^n)$ so that $\Chi = 1$ in $\widebar{B}_{1/2}(0)$, $\Chi = 0$ in $\mathbb{R}^n \setminus B_1(0)$ and $0 \le \Chi \le 1$.
Let $\Chi_k(x) := \Chi(\frac{x}{k})$ for $x \in \mathbb{R}^n$ and $k \in \mathbb{N}$.
Then we have $\Chi_k = 1$ in $\widebar{B}_{k/2}(0)$, $\Chi = 0$ in $\mathbb{R}^n \setminus B_k(0)$ and $0 \le \Chi \le 1$.
Setting $M := \| \nabla \Chi \|_\infty$, we further have
\begin{equation} \label{eq:estimate cutoff function}
\| \nabla \Chi_k \|_\infty \le \frac{M}{k}.
\end{equation}
Let $R_k := B_k(0) \setminus \widebar{B}_{k/2}(0)$ be the $k$-th annulus.
Due to the assumption on $\Omega$ there exists $N \in \mathbb{N}$ so that for the scaling
$\phi_k: \Omega \cap R_N \rightarrow \Omega \cap R_{k N}$, $x \mapsto k x$
we have
\begin{equation} \label{eq:scaling domain}
\phi_k(\Omega \cap R_N) = \Omega \cap R_{k N}
\end{equation}
for all $k \in \mathbb{N}$.

Now for $p \in \widehat{W}_q^1(\Omega)$ we define
$\psi_k := \Chi_{k N} \big( p - \frac{1}{\lambda_n(\Omega \cap R_{k N})} \int_{\Omega \cap R_{k N}} p \D \lambda_n \big)$. Then $\psi_k$ is a function in $\widehat{W}_{c,q}^1(\Omega)$ for all $k \in \mathbb{N}$ and we have
\begin{equation*}
\begin{split}
\| \nabla \psi_k - \nabla p \|_q
& \le \| \nabla \Chi_{k N} \|_\infty \Big\| p - \frac{1}{\lambda_n(\Omega \cap R_{k N})} \int_{\Omega \cap R_{k N}} p \D \lambda_n \Big\|_{q,\Omega \cap R_{k N}} \\
& + \| 1 - \Chi_{k N} \|_\infty \| \nabla p \|_{q,\Omega \setminus B_{k N /2}(0)}.
\end{split}
\end{equation*}
Now, using \eqref{eq:estimate cutoff function}, we can estimate
$\| 1 - \Chi_{k N} \|_\infty \le 1$ and $\| \nabla \Chi_{k N} \|_\infty \le \frac{M}{k N}$ as well as
{\allowdisplaybreaks
\begin{align*}
& \Big\| p - \frac{1}{\lambda_n(\Omega \cap R_{k N})} \int_{\Omega \cap R_{k N}} p \D \lambda_n \Big\|_{q,\Omega \cap R_{k N}}^q \\
& = k^n \int_{\Omega \cap R_N} \Big| p \circ \phi_k - \frac{k^n}{\lambda_n(\Omega \cap R_{k N})} \int_{\Omega \cap R_N} p \circ \phi_k \D \lambda_n \Big|^q \D \lambda_n \\
& = k^n \Big\| p \circ \phi_k - \frac{k^n}{\lambda_n(\Omega \cap R_{k N})} \int_{\Omega \cap R_N} p \circ \phi_k \D \lambda_n \Big\|_{q,\Omega \cap R_N}^q \\
& \le k^n C^q \| \nabla (p \circ \phi_k) \|_{q,\Omega \cap R_N}^q \\
& = k^n C^q \int_{\Omega \cap R_N} |k(\nabla p \circ \phi_k)|^q \D \lambda_n \\
& = k^n k^q \frac{1}{k^n} C^q \int_{\Omega \cap R_{k N}} |\nabla p|^q \D \lambda_n \\
& = k^q C^q \| \nabla p \|_{q,\Omega \cap R_{k N}}^q,
\end{align*}
}%
using \eqref{eq:scaling domain},
where $C = C(n,q,\Omega \cap R_N) > 0$ is the constant from the Poincaré inequality (see~\cite{Gal11}, Thm. II.5.4).
This results in
\begin{equation*}
\| \nabla \psi_k - \nabla p \|_q
\le \frac{M C}{N} \| \nabla p \|_{q,\Omega \cap R_{k N}} + \| \nabla p \|_{q,\Omega \setminus B_{k N /2}} \xrightarrow{k \to \infty} 0,
\end{equation*}
since $\nabla p \in L_q(\Omega)^n$.
\end{proof}

\begin{lemma} \label{thm:(epsilon,infty)-domains}
Let $n \ge 2$, $1 \le q < \infty$ and let $\Omega \subset \mathbb{R}^n$ be an $(\epsilon,\infty)$-domain for some $\epsilon > 0$, i.e., for all $x,y \in \Omega$ there exists a rectifiable curve $\gamma$ in $\Omega$ with length $l(\gamma)$, connecting $x$ and $y$, such that
\begin{equation*}
l(\gamma) < \frac{|x - y|}{\epsilon}
\end{equation*}
and
\begin{equation} \label{eq:Tube condition}
\mathrm{dist}(z,\partial \Omega) > \frac{\epsilon |x - z| |y - z|}{|x - y|} \quad \forall z \in \gamma.
\end{equation}
Condition \eqref{eq:Tube condition} says that there is a tube around
$\gamma$, lying in $\Omega$, such that in some point $z \in \gamma$ the
tube's width is of order $\min \{ |x - z|,|y - z| \}$ (cf.~\thcite{Chu92} and~\thcite{Jon81}).
Then $C_c^\infty(\overline{\Omega}) \subset \widehat{W}^1_q(\Omega)$ is dense.
\end{lemma}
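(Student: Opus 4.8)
The plan is to imitate the proof of Lemma~\thref{thm:perturbed cones}, replacing the self-similar scaling structure of a cone by a global Sobolev extension operator, which is precisely what the $(\epsilon,\infty)$-condition provides. As in that proof, the first reduction is to show that $\widehat{W}_{c,q}^1(\Omega)$, the subspace of functions in $\widehat{W}^1_q(\Omega)$ having compact support in $\overline{\Omega}$, is dense in $\widehat{W}^1_q(\Omega)$ with respect to the seminorm $\| \nabla \cdot \|_q$. Indeed, any $(\epsilon,\infty)$-domain is a Sobolev extension domain (see~\cite{Jon81} and~\cite{Chu92}), and from this one deduces in the usual way (extend, mollify, truncate, restrict) that $C_c^\infty(\overline{\Omega})$ is dense in $W^1_q(\Omega)$; since every $\psi \in \widehat{W}_{c,q}^1(\Omega)$ already lies in $W^1_q(\Omega)$, this yields density of $C_c^\infty(\overline{\Omega})$ in $\widehat{W}_{c,q}^1(\Omega)$ in the $W^1_q$-norm, hence a fortiori in the seminorm. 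Thus it remains to approximate an arbitrary $p \in \widehat{W}^1_q(\Omega)$ by compactly supported functions in the seminorm.

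Second, I would transport the problem to the whole space. Let $E$ denote the extension operator for $\Omega$; the key point is that, because $\delta = \infty$, the underlying construction is globally scale invariant and local near $\partial \Omega$, so it furnishes a bounded \emph{homogeneous} extension $E: \widehat{W}^1_q(\Omega) \to \widehat{W}^1_q(\mathbb{R}^n)$ satisfying $\| \nabla Ep \|_{q,\mathbb{R}^n} \le C \| \nabla p \|_{q,\Omega}$ with $C = C(n,q,\epsilon)$. Writing $P := Ep$, I would then run the cutoff argument from the proof of Lemma~\thref{thm:perturbed cones} directly on $\mathbb{R}^n$: with $\Chi_k(x) := \Chi(x/k)$ as there and with $c_k$ the average of $P$ over the annulus $R_k = B_k(0) \setminus \overline{B_{k/2}}(0)$, set $\psi_k := \Chi_k (P - c_k)$. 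Then $\psi_k \in \widehat{W}^1_q(\mathbb{R}^n)$ has compact support, and
\begin{equation*}
\| \nabla \psi_k - \nabla P \|_{q,\mathbb{R}^n}
\le \| \nabla \Chi_k \|_\infty \, \| P - c_k \|_{q,R_k}
+ \| 1 - \Chi_k \|_\infty \, \| \nabla P \|_{q,\mathbb{R}^n \setminus B_{k/2}}.
\end{equation*}
The crucial simplification over a general domain is that on $\mathbb{R}^n$ the annuli $R_k$ are exactly the $k$-fold dilations of the fixed connected annulus $R_1$, so the Poincaré inequality on $R_k$ holds with constant $Ck$ for a single $C = C(n,q)$ (obtained by scaling from $R_1$, cf.~\cite{Gal11}, Thm.~II.5.4); combined with $\| \nabla \Chi_k \|_\infty \le M/k$ this makes the first term $\le CM \| \nabla P \|_{q,R_k} \to 0$, while the second term tends to $0$ since $\nabla P \in L_q(\mathbb{R}^n)$.

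Finally I would restrict back: $\phi_k := \psi_k|_\Omega$ lies in $\widehat{W}_{c,q}^1(\Omega)$ and satisfies $\| \nabla \phi_k - \nabla p \|_{q,\Omega} \le \| \nabla \psi_k - \nabla P \|_{q,\mathbb{R}^n} \to 0$, so the first reduction completes the proof. The step I expect to be the genuine obstacle is the homogeneous extension bound for $E$: Jones' theorem is stated for the inhomogeneous norm, and upgrading it to a seminorm estimate $\| \nabla Ep \|_{q,\mathbb{R}^n} \le C \| \nabla p \|_{q,\Omega}$ that is uniform across all scales requires exploiting the locality of the Whitney-type reflection together with the scale invariance of the $(\epsilon,\infty)$-condition (and a careful treatment of the constant-like part on the boundary layer). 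Everything else, namely the two reductions and the whole-space cutoff, is routine once this extension is in hand.
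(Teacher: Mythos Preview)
Your approach is essentially the same as the paper's: extend from $\Omega$ to $\mathbb{R}^n$ via a bounded operator on the homogeneous space, use density of compactly supported smooth functions in $\widehat{W}^1_q(\mathbb{R}^n)$, and restrict back. The paper's proof is two lines because the step you flag as the ``genuine obstacle'' --- a bounded extension $\widehat{W}^1_q(\Omega) \to \widehat{W}^1_q(\mathbb{R}^n)$ with seminorm control --- is simply quoted from Chua~\cite{Chu92}, Thm.~1.2 (weighted Sobolev extension with weight $w=1$), which already gives the homogeneous bound directly; your explicit cutoff-plus-Poincar\'e argument on annuli is then just a proof of the well-known density $C_c^\infty(\mathbb{R}^n) \subset \widehat{W}^1_q(\mathbb{R}^n)$, which the paper takes for granted.
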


\begin{proof}
Due to~\cite{Chu92}, Thm.\ 1.2, the conditions on $\Omega$ yield a continuous extension operator
$\Lambda: \widehat{W}^1_q(\Omega) \longrightarrow \widehat{W}^1_q(\mathbb{R}^n)$,
where we choose the weight $w = 1$.
Now, using the density of $C_c^\infty(\mathbb{R}^n) \subset
\widehat{W}^1_q(\mathbb{R}^n)$, the assertion is proved.
\end{proof}

\section{Appendix B: Traces and Gauß's theorem} \label{sec:Appendix Gauss}

\begin{lemma} \label{thm:density E_q}
Let $\Omega \subset \mathbb{R}^n$ be a domain satisfying the segment property (cf.~\thcite{Ada75}), $n \ge 2$ and $1 < q < \infty$.
Then $C_c^\infty(\overline{\Omega})^n \subset E_q(\Omega)$ is dense.
\end{lemma}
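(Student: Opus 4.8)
The plan is to approximate a given $f \in E_q(\Omega)$ by two reduction steps followed by a translation-and-mollification argument localized at the boundary. First I would reduce to the case of bounded support: choosing cut-off functions $\chi_k \in C_c^\infty(\mathbb{R}^n)$ with $\chi_k \equiv 1$ on $B_k(0)$, $0 \le \chi_k \le 1$ and $\|\nabla \chi_k\|_\infty \le M/k$ (as for the cut-offs $\Chi_k$ in the proof of Lemma~\ref{thm:perturbed cones}), one has $\chi_k f \to f$ in $L_q(\Omega)^n$ and, since $\Div(\chi_k f) = \chi_k \Div f + \nabla \chi_k \cdot f$ with $\chi_k \Div f \to \Div f$ in $L_q(\Omega)$ by dominated convergence and $\|\nabla \chi_k \cdot f\|_q \le (M/k)\|f\|_q \to 0$, also $\Div(\chi_k f) \to \Div f$ in $L_q(\Omega)$. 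Hence $\chi_k f \to f$ in $E_q(\Omega)$, and it suffices to approximate an element $f \in E_q(\Omega)$ whose support $K := \operatorname{supp} f$ is compact.

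Next I would localize. The compact set $K \subset \overline{\Omega}$ can be covered by an interior open set $U_0 \Subset \Omega$ together with finitely many boundary neighborhoods $U_1,\dots,U_m$ on each of which the segment property furnishes a direction $0 \ne y_j \in \mathbb{R}^n$ such that $z + t y_j \in \Omega$ for all $z \in \overline{\Omega} \cap U_j$ and $0 < t < 1$. Picking a subordinate partition of unity $\phi_0,\dots,\phi_m$, I write $f = \sum_{j=0}^m \phi_j f$. Each $\phi_j f$ again lies in $E_q(\Omega)$ (same product rule) and is supported in $U_j$, so by linearity it is enough to approximate a single $g := \phi_j f$. The interior piece $\phi_0 f$ has support compactly contained in $\Omega$ and is treated by plain mollification.

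For a boundary piece $g = \phi_j f$ the key idea is to shift the support strictly \emph{into} $\Omega$ before mollifying. Writing $\bar g \in L_q(\mathbb{R}^n)^n$ for the extension of $g$ by zero and $\tau_a h(x) := h(x-a)$ for translation, I set $v_t := \tau_{t y_j} \bar g$, i.e.\ $v_t(x) = \bar g(x - t y_j)$. By the segment property $\operatorname{supp} v_t = \operatorname{supp} g + t y_j$ is a compact subset of $\Omega$ at distance $\delta_t > 0$ from $\partial \Omega$; consequently $v_t$ carries no boundary jump and its distributional divergence on $\mathbb{R}^n$ is just $\tau_{t y_j} \widetilde{\Div g}$, the zero-extension of the translated $L_q$-function $\Div g$. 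Mollifying with a standard mollifier $J_\epsilon$ for $\epsilon < \delta_t$ then yields $J_\epsilon v_t \in C_c^\infty(\Omega)^n \subset C_c^\infty(\overline{\Omega})^n$ with $\Div(J_\epsilon v_t) = J_\epsilon(\tau_{t y_j} \widetilde{\Div g})$. As $\epsilon \to 0$ this converges in $E_q(\Omega)$ to the restriction of $v_t$, and as $t \to 0$ the continuity of translation in $L_q(\mathbb{R}^n)$ gives $v_t \to \bar g$ and $\tau_{t y_j} \widetilde{\Div g} \to \widetilde{\Div g}$ in $L_q(\mathbb{R}^n)$, whence $v_t|_\Omega \to g$ in $E_q(\Omega)$. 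A diagonal choice $t_k \to 0$, $\epsilon_k \to 0$ produces the desired approximating sequence in $C_c^\infty(\overline{\Omega})^n$.

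The routine parts are the two reductions; the step I expect to be the main obstacle — and the only place where the segment property is used — is the boundary argument, namely verifying that the shift along $y_j$ really moves $\operatorname{supp} g$ a positive distance into $\Omega$ uniformly over its compact support, so that the translated field is an honest $E_q$-function with \emph{no} surface contribution to its divergence. Translating in the opposite direction, or mollifying before translating, would reintroduce a mollified normal-jump term $\sim (\nu \cdot g)\,\delta_{\partial \Omega}$ that fails to vanish on $\Omega$, which is precisely the pitfall the chosen order of operations avoids.
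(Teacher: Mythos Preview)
Your boundary step contains a genuine error in the direction of translation. Translating the zero extension $\bar g$ so that its support moves into $\Omega$ also translates the jump of $\bar g$ across $\partial\Omega$ into $\Omega$. Concretely, the distributional divergence of $\bar g$ on $\mathbb{R}^n$ is not $\widetilde{\Div g}$ but $\widetilde{\Div g} - (\nu\cdot g)\,\delta_{\partial\Omega}$ (the surface term understood through the normal trace $\Tr_\nu g$); since translation commutes with the distributional divergence, $\Div v_t$ carries a singular part supported on $(\partial\Omega\cap U_j)+ty_j$, which by the very segment property lies \emph{inside} $\Omega$. A half-space check makes this concrete: for $\Omega=\mathbb{R}^n_+$, $g=\chi e_n$ with $\chi\in C_c^\infty(\mathbb{R}^n)$, one has $\Div\bar g = (\partial_n\chi)\mathbf{1}_{x_n>0} + \chi(\cdot,0)\,\delta_{x_n=0}$, and after translation $\Div v_t$ is singular on $\{x_n=t\}\subset\Omega$. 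Hence $v_t|_\Omega\notin E_q(\Omega)$, and mollification does not help: for $\epsilon<t$ the mollified surface layer has $L_q$-norm of order $\epsilon^{-(1-1/q)}$, so $J_\epsilon v_t$ cannot converge in $E_q(\Omega)$ and no diagonal choice rescues it. Your instinct that ``the other order of operations reintroduces the jump'' is right, but so does the order you chose---it simply puts the jump in a different place.

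The repair is to translate in the opposite sense \emph{without} first extending by zero: set $g_t(x):=g(x+ty_j)$ on the open set $\Omega-ty_j$, which by the segment property contains $\overline\Omega\cap U_j$. This is the pull-back of $g$ under a rigid motion, so $g_t\in E_q(\Omega-ty_j)$ with $\Div g_t=(\Div g)(\cdot+ty_j)$ and no surface term anywhere. Now mollify (legitimate since $\overline\Omega\cap U_j\Subset\Omega-ty_j$) and restrict to $\Omega$; the result lies in $C_c^\infty(\overline\Omega)^n$ and converges to $g$ in $E_q(\Omega)$ as $t,\epsilon\to 0$. This is precisely the Adams segment-property argument, and your reductions to compact support and to a finite boundary cover are correct and can be kept unchanged. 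The paper reaches the same conclusion by a different route: it first proves an $E_q$-analogue of Meyers--Serrin via an interior exhaustion (so $C^\infty(\Omega)^n\cap E_q(\Omega)$ is dense) and then invokes \cite{Ada75}, Thm.~3.18 as a black box for the final step.
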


\begin{proof}
\emph{Step 1.}
Let $J_\epsilon \in C_c^\infty(\mathbb{R}^n)$ be the mollifier
from~\cite{Ada75}, Sec.~2.17, that is, $J_\epsilon(x) := \frac{1}{\epsilon^n} J(\frac{x}{\epsilon})$ for $\epsilon > 0$ and a function $J \in C_c^\infty(\mathbb{R}^n)$ satisfying $J(x) \ge 0$ for all $x \in \mathbb{R}^n$, $J(x) = 0$ for $|x| \ge 1$ and $\int_{\mathbb{R}^n} J(x) dx = 1$.
Following the arguments in the proof of~\cite{Ada75}, Lem.~3.15 (in particular, using~\cite{Ada75}, Lem.\ 2.18(c)), we obtain that for $u \in E_q(\Omega)$ and any subdomain $\Omega' \subset \subset \Omega$ (i.e., $\overline{\Omega'}$ is compact and $\overline{\Omega'} \subset \Omega$)
\begin{equation} \label{eq:mollifier property Eq}
J_\epsilon * u \xrightarrow{\epsilon \searrow 0} u \quad \text{in } E_q(\Omega')
\end{equation}
holds, where $J_\epsilon * u$ means convolution of $J_\epsilon$ with the trivial extension of $u$ to $\mathbb{R}^n$.

\emph{Step 2.}
Following the arguments in the proof of~\cite{Ada75}, Thm.~3.16, we establish density and continuity of the embedding
\begin{equation} \label{eq:density Eq}
W^1_q(\Omega)^n \cap C^\infty(\Omega)^n \subset E_q(\Omega)
\end{equation}
by using \eqref{eq:mollifier property Eq}.
Continuity of \eqref{eq:density Eq} is obvious.
Now let $u \in E_q(\Omega)$ and $\delta > 0$. Set
$\Omega_k := \{ x \in \Omega : |x| < k, ~\dist(x,\partial \Omega) > \frac{1}{k} \}$ for $k \in \mathbb{N}$ as well as $\Omega_0 = \Omega_{-1} = \emptyset$ and
$U_k := \Omega_{k+1} \cap (\overline{\Omega}_{k-1})^c$. Then the $U_k$, $k \in \mathbb{N}$ form an open cover of $\Omega$.
Let $(\psi_k)_{k \in \mathbb{N}}$ be a subordinated partition of unity, i.e., $\psi_k \in C_c^\infty(U_k)$, $0 \le \psi_k \le 1$ and $\sum_{k=1}^\infty \psi_k = 1$ on $\Omega$.
For $0 < \epsilon < \frac{1}{(k+1)(k+2)}$ we have
\begin{equation*}
\Supp(J_\epsilon * (\psi_k u)) \subset \Omega_{k+2} \cap (\Omega_{k-2})^c
=: V_k \subset \subset \Omega.
\end{equation*}
Now, we apply \eqref{eq:mollifier property Eq} to $\Omega' = V_k$:
Starting with some $k \in \mathbb{N}$, let $0 < \epsilon_k < \frac{1}{(k+1)(k+2)}$ such that
\begin{equation*}
\| J_{\epsilon_k} * (\psi_k u) - \psi_k u \|_{E_q(\Omega)}
= \| J_{\epsilon_k} * (\psi_k u) - \psi_k u \|_{E_q(V_k)}
< \frac{\delta}{2^k}.
\end{equation*}
Set $\Phi := \sum_{k=1}^\infty J_{\epsilon_k} * (\psi_k u)$ and
note that on any $\Omega' \subset \subset \Omega$ there is only a finite number of nonzero summands. For $x \in \Omega_k$ we have
\begin{equation*}
u(x) = \sum_{j=1}^{k+2} \psi_j(x) u(x)
\quad \text{and} \quad
\Phi(x) = \sum_{j=1}^{k+2} J_{\epsilon_j} * (\psi_j u)(x).
\end{equation*}
Hence $\Phi \in C^\infty(\Omega)$ and
\begin{equation*}
\| u - \Phi \|_{E_q(\Omega_k)}
\le \sum_{j=1}^{k+2} \| J_{\epsilon_j} * (\psi_j u) - \psi_j u \|_{E_q(\Omega)}
\le \delta.
\end{equation*}
By use of the monotone convergence theorem we conclude
\begin{equation*}
\| u - \Phi \|_{E_q(\Omega)} = \lim_{k \to \infty} \| u - \Phi \|_{E_q(\Omega_k)} \le \delta,
\end{equation*}
so embedding \eqref{eq:density Eq} is dense.

\emph{Step 3.}
The embedding $C_c^\infty(\overline{\Omega}) \subset W^1_q(\Omega) \cap
C^\infty(\Omega)$ is dense, due to~\cite{Ada75}, Thm.\ 3.18. Combining
this with the density and continuity of 
\eqref{eq:density Eq} yields the result.
\end{proof}

\begin{lemma}[Trace] \label{thm:Trace}
Let $\Omega \subset \mathbb{R}^n$ be a domain with uniform $C^{2,1}$-boundary, $n \ge 2$ and $1 \le q < \infty$.
Then the trace
\begin{equation*}
\Tr = \Tr_{\partial \Omega}: W^1_q(\Omega) \rightarrow W^{1-\frac{1}{q}}_q(\partial \Omega), \quad
\Tr u = u|_{\partial \Omega} ~\forall u \in C_c^\infty(\overline{\Omega})
\end{equation*}
is continuous. For $q > 1$ the trace is surjective with a continuous linear right inverse $\R: W^{1-1/q}_q(\partial \Omega) \rightarrow W^1_q(\Omega)$.
\end{lemma}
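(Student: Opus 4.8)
The plan is to prove both assertions by a localization argument that reduces everything to the half-space model, where the statements are classical, and then to reassemble the local pieces with constants that are uniform in the chart index thanks to the bounded overlap of the covering. First I would fix the partition of unity $(\varphi_l)_{l \in \Gamma}$ from \eqref{eq:Partition of unity} together with the flattening diffeomorphisms $\Psi_l$ and the boundary parametrizations $\phi_l$ from \eqref{eq:Parametrization}. For a boundary chart $l \in \Gamma_1$ the pulled-back function $(\varphi_l u) \circ \Psi_l^{-1}$ lives on $\mathbb{R}^n_+$, so its trace lives on $\partial \mathbb{R}^n_+ \cong \mathbb{R}^{n-1}$, and the intrinsic Sobolev–Slobodecki\u{\i} norm of $\Tr(\varphi_l u)$ on $\partial \Omega \cap B_l$ is comparable, via $\phi_l$, to the $W^{1-1/q}_q(\mathbb{R}^{n-1})$-norm of that pulled-back trace.

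For the continuity I would invoke the classical half-space trace theorem, i.e.\ continuity of $\Tr: W^1_q(\mathbb{R}^n_+) \to W^{1-1/q}_q(\mathbb{R}^{n-1})$ for all $1 \le q < \infty$ (cf.~\cite{Tri78}), apply it in each chart, and control the pullback constants uniformly in $l$ using the uniformity bounds \eqref{eq:uniformity}, \eqref{eq:Jacobi Parametrization estimate} and \eqref{eq:estimate Gram matrix} together with \eqref{eq:uniformity partition of unity}. Summing the resulting local estimates over $l \in \Gamma_1$ and using that at most $\bar{N}$ of the balls $B_l$ overlap, the $\ell_q$-sum of the localized norms is comparable to $\| u \|_{W^1_q(\Omega)}$; this yields $\| \Tr u \|_{W^{1-1/q}_q(\partial \Omega)} \le C \| u \|_{W^1_q(\Omega)}$ first for $u \in C_c^\infty(\overline{\Omega})$ and then, by density of $C_c^\infty(\overline{\Omega})$ in $W^1_q(\Omega)$, for all $u \in W^1_q(\Omega)$.

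For the right inverse in the case $q > 1$, given $g \in W^{1-1/q}_q(\partial \Omega)$ I would localize $g$ by $\varphi_l g$, pull each piece back by $\phi_l$ to a function on $\mathbb{R}^{n-1}$, apply the continuous right inverse of the half-space trace to obtain $v_l \in W^1_q(\mathbb{R}^n_+)$ whose trace matches, and then push forward and glue by $\R g := \sum_{l \in \Gamma_1} \varphi_l (v_l \circ \Psi_l)$ (the interior charts $l \in \Gamma_0$ carrying no boundary data). The identity $\Tr \R g = g$ then follows from $\sum_l \varphi_l^2 = 1$, since each summand contributes $\varphi_l \cdot (\varphi_l g)$ on $\partial \Omega$, while the uniform bounds and bounded overlap give $\| \R g \|_{W^1_q(\Omega)} \le C \| g \|_{W^{1-1/q}_q(\partial \Omega)}$ and the linearity of $\R$ is built into the construction.

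The main obstacle will be the uniform control of constants under the infinitely many coordinate changes: I must check that the continuity constant of the half-space trace operator, composed with the pullback and pushforward estimates, stays bounded independently of $l$, which is precisely where the uniformity of the $C^{2,1}$-boundary (the bounds on $\nabla'^k \omega_l$) and the finite overlap $\bar{N}$ are used. Equally one must verify that the localized $\ell_q$-assembly of boundary norms is genuinely equivalent to the intrinsic Sobolev–Slobodecki\u{\i} norm on $\partial \Omega$ in the sense of~\cite{Mar87}; this is a bookkeeping step resting again on the bounded overlap and the uniform Jacobian bounds \eqref{eq:Jacobi Parametrization estimate}, \eqref{eq:estimate Gram matrix}, which I would establish once and for all rather than chart by chart.
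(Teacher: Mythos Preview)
Your localization-to-the-half-space argument is correct and, in particular, the construction of the right inverse via $\R g = \sum_{l \in \Gamma_1} \varphi_l\, (v_l \circ \Psi_l)$ with $v_l$ the half-space extension of the pulled-back datum $\varphi_l g$ does give $\Tr \R g = \sum_l \varphi_l^2\, g = g$ as you say; the uniform control of constants is exactly the right concern and is indeed furnished by \eqref{eq:uniformity}, \eqref{eq:Jacobi Parametrization estimate}, \eqref{eq:estimate Gram matrix}, \eqref{eq:uniformity partition of unity} together with the bounded overlap $\bar N$. The paper, however, takes a much shorter route: for $1<q<\infty$ it simply cites Marschall~\cite{Mar87}, Thm.~2, which already covers uniform Lipschitz domains (hence uniform $C^{2,1}$-domains) and delivers both continuity and the continuous right inverse in one stroke; only for $q=1$ does the paper perform a localization, and there merely to bounded $C^1$-subdomains via the trace from~\cite{Eva10}, with the target space being $L_1(\partial\Omega)=W^0_1(\partial\Omega)$ so that no fractional boundary norm needs to be assembled. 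Your approach is more self-contained and gives an explicit right inverse, at the cost of having to verify the $\ell_q$-equivalence of the localized Sobolev--Slobodecki\u{\i} norms on $\partial\Omega$, which the paper avoids entirely by outsourcing the $q>1$ case to~\cite{Mar87}.
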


\begin{proof}
In case $1 < q < \infty$ we refer to~\cite{Mar87}, Thm.~2. Note that there 
$\Omega$ is merely assumed to be a uniform Lipschitz domain.
In case $q = 1$ we make use of the trace for bounded $C^1$-domains, constructed in~\cite{Eva10}, Thm.\ 5.5/1.
Choosing for all parts of the boundary $\partial \Omega \cap B_l$, $l \in \Gamma_1$ a bounded $C^1$-domain $U_l$ such that $\partial U_l \cap B_l = \partial \Omega \cap B_l$ and denoting by $\Tr_l$ the trace operator for $U_l$, we can define the trace of $u \in W^1_q(\Omega)$ as
\begin{equation*}
\Tr u := \sum_{l \in \Gamma_1} \Tr_l (\varphi_l^2 u).
\end{equation*}
Looking at the construction of $\Tr_l$ in the proof of~\cite{Eva10}, Thm.\ 5.5/1, we observe that the uniformity of the boundary $\partial \Omega$ yields that the continuity of $\Tr_l$ is uniform in $l \in \Gamma_1$.
Therefore we obtain a uniform estimate of the operators $\Tr_l$ in their operator
norm. This leads to
\begin{equation*}
\| \Tr u \|_{L_1(\partial \Omega)}
= \int_{\partial \Omega} \Big| \sum_{l \in \Gamma_1} \Tr_l(\varphi_l^2 u) \Big| \D \sigma \\
\le C \| u \|_{W^1_1(\Omega)}
\end{equation*}
with a constant $C = C(n,\Omega) > 0$, using \eqref{eq:uniformity} and
the condition that at most $\bar{N}$ of the balls $B_l$ have nonempty 
intersection.
\end{proof}

\begin{lemma}[Gauß's theorem in $W^1_1$] \label{thm:Gauss in W^1_1}
Let $\Omega \subset \mathbb{R}^n$ be a domain with uniform $C^{2,1}$-boundary, $n \ge 2$ and
let $u \in W^1_1(\Omega)^n$. Then we have
\begin{equation} \label{eq:Gauss in W^1_1}
\int_\Omega \Div u \D \lambda_n
= \int_{\partial \Omega} \nu \cdot u \D \sigma.
\end{equation}
\end{lemma}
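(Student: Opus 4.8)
The plan is to reduce the identity to a direct one-dimensional computation over a single bent half-space, via density and a partition of unity. First I would observe that both sides of \eqref{eq:Gauss in W^1_1} are continuous functionals of $u$ with respect to the $W^1_1(\Omega)^n$-norm: the left-hand side trivially, and the right-hand side because $\int_{\partial \Omega} \nu \cdot u \D \sigma = \int_{\partial \Omega} \nu \cdot \Tr u \D \sigma$ with $|\nu| = 1$ and $\Tr\colon W^1_1(\Omega) \to L_1(\partial \Omega)$ continuous by Lemma~\ref{thm:Trace}. Since $C_c^\infty(\overline{\Omega})^n \subset W^1_1(\Omega)^n$ is dense (see \cite{Ada75}, Thm.~3.18, using that $\Omega$ has the segment property), it suffices to prove \eqref{eq:Gauss in W^1_1} for $u \in C_c^\infty(\overline{\Omega})^n$, which I regard as the restriction of a field in $C_c^\infty(\mathbb{R}^n)^n$.

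Next I would localize. Writing $u = \sum_{l \in \Gamma} \varphi_l^2 u$ by \eqref{eq:Partition of unity}, only finitely many summands are nonzero since $u$ has compact support and at most $\bar{N}$ of the balls $B_l$ overlap. Because $\sum_l \varphi_l^2 \equiv 1$ has vanishing gradient, summing the identity over $l$ recovers \eqref{eq:Gauss in W^1_1}, so it is enough to verify it for each $v := \varphi_l^2 u$, which is smooth and supported in $B_l$. For $l \in \Gamma_0$ the field $v$ has compact support in $\Omega$, hence $\int_\Omega \Div v \D \lambda_n = 0$ by the classical divergence theorem on $\mathbb{R}^n$, while the boundary term vanishes since $\varphi_l = 0$ on $\partial \Omega$. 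For $l \in \Gamma_1$, using $\Omega \cap B_l = H_l \cap B_l$ and $\Supp v \subset B_l$, both integrals may be computed over $H_l$ in place of $\Omega$. Applying the rigid motion $x \mapsto Q_l(x - \tau_l)$---under which the divergence, the normal flux $\nu \cdot v$, and the Lebesgue and surface measures are all invariant---I would reduce to proving the identity for a field $w \in C_c^\infty(\overline{H_{\omega_l}})^n$ on the model bent half-space $H_{\omega_l} = \{ x_n > \omega_l(x') \}$.

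The remaining core is the explicit computation on $H_{\omega_l}$. By Fubini,
\begin{equation*}
\int_{H_{\omega_l}} \Div w \D \lambda_n
= \int_{\mathbb{R}^{n-1}} \int_{\omega_l(x')}^\infty \Big( \sum_{i=1}^{n} \partial_i w^{i} \Big) \D x_n \D x'.
\end{equation*}
For the term $i = n$ the inner integral equals $-w^{n}(x',\omega_l(x'))$, while for $i < n$ the Leibniz rule $\partial_i \int_{\omega_l(x')}^\infty w^{i} \D x_n = \int_{\omega_l(x')}^\infty \partial_i w^{i} \D x_n - (\partial_i \omega_l)\, w^{i}(x',\omega_l(x'))$ together with $\int_{\mathbb{R}^{n-1}} \partial_i(\cdots) \D x' = 0$ gives $\int_{\mathbb{R}^{n-1}} \int_{\omega_l(x')}^\infty \partial_i w^{i} \D x_n \D x' = \int_{\mathbb{R}^{n-1}} (\partial_i \omega_l)\, w^{i}(x',\omega_l(x')) \D x'$. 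Hence the volume integral equals $\int_{\mathbb{R}^{n-1}} \big[ \sum_{i<n} (\partial_i \omega_l) w^{i} - w^{n} \big](x',\omega_l(x')) \D x'$. On the other side, parametrizing $\partial H_{\omega_l}$ by $x' \mapsto (x',\omega_l(x'))$ gives $\D \sigma = \sqrt{1 + |\nabla' \omega_l|^2} \D x'$, and with the normal $\widehat{\nu}_l$ from \eqref{eq:unit normal of H_omega_l} the factor $\sqrt{1 + |\nabla' \omega_l|^2}$ cancels, yielding exactly the same expression.

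I expect the main obstacle to be the careful bookkeeping of the reduction steps rather than any deep analytic difficulty: ensuring that the density and continuity argument is legitimate for $q = 1$ (which is precisely why the $q = 1$ case of Lemma~\ref{thm:Trace} is needed), that passing from $\Omega$ to $H_l$ leaves both integrals unchanged thanks to $\Supp v \subset B_l$, and that the rigid motion transforms $\Div$ and $\nu \cdot v$ covariantly so that the model computation transfers back. The integrability of all boundary terms is guaranteed by the uniform bounds \eqref{eq:uniformity} on $\nabla' \omega_l$.
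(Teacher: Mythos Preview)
Your proof is correct and follows the same overall strategy as the paper: reduce to $u \in C_c^\infty(\overline{\Omega})^n$ via density (\cite{Ada75}, Thm.~3.18) together with the $W^1_1 \to L_1(\partial\Omega)$ continuity of the trace from Lemma~\ref{thm:Trace}. The only difference is that the paper disposes of the smooth case by a single citation (\cite{AE09}), whereas you supply a self-contained proof via the partition of unity, passage to the model bent half-space, and the explicit Fubini/Leibniz computation; your extra work is sound and indeed reproduces the standard argument behind that citation.
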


\begin{proof}
In case $u \in C_c^\infty(\overline{\Omega})^n$ see, e.g.,~\cite{AE09}.
Since $C_c^\infty(\overline{\Omega}) \subset W^1_1(\Omega)$ is dense (see~\cite{Ada75}, Thm.\ 3.18), starting with some $u \in W^1_1(\Omega)^n$, we can find a sequence $(u_k)_{k \in \mathbb{N}} \subset C_c^\infty(\overline{\Omega})^n$ converging to $u$ in $W^1_1(\Omega)^n$.
Now, replacing $u$ in \eqref{eq:Gauss in W^1_1} by $u_k$, we see that the left-hand side converges to $\int_\Omega \Div u \D \lambda_n$ and, thanks to Lemma~\ref{thm:Trace}, the right-hand side converges to $\int_{\partial \Omega} \nu \cdot u \D \sigma$.
\end{proof}

\begin{lemma}[Green's formula in $W^1_q$] \label{thm:Greens formula in W^1_q}
Let $\Omega \subset \mathbb{R}^n$ be a domain with uniform $C^{2,1}$-boundary, $n \ge 2$ and $1 < q < \infty$.
For $u \in W^1_q(\Omega)$ and $v \in W^1_{q'}(\Omega)^n$ we have
\begin{equation*}
\int_\Omega u (\Div v) \D \lambda_n
= \int_{\partial \Omega} u (\nu \cdot v) \D \sigma - \int_\Omega \nabla u \cdot v \D \lambda_n.
\end{equation*}
\end{lemma}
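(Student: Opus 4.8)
The plan is to reduce the statement to the already established Gauß theorem in $W^1_1$ (Lemma~\ref{thm:Gauss in W^1_1}) applied to the product $uv$. First I would verify that for $u \in W^1_q(\Omega)$ and $v \in W^1_{q'}(\Omega)^n$ the product $uv$, i.e.\ the vector field with components $u v^{j}$, belongs to $W^1_1(\Omega)^n$. This rests on the product rule $\nabla(u v^{j}) = v^{j} \nabla u + u \nabla v^{j}$ together with Hölder's inequality: since $u \in L_q(\Omega)$, $\nabla u \in L_q(\Omega)^n$, $v^{j} \in L_{q'}(\Omega)$ and $\nabla v^{j} \in L_{q'}(\Omega)^n$, both summands lie in $L_1(\Omega)$, and likewise $u v^{j} \in L_1(\Omega)$. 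Consequently $\Div(uv) = u \Div v + \nabla u \cdot v \in L_1(\Omega)$ as well.

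Once $uv \in W^1_1(\Omega)^n$ is secured, applying Lemma~\ref{thm:Gauss in W^1_1} to $uv$ gives
\[
\int_\Omega \Div(uv) \D \lambda_n = \int_{\partial \Omega} \nu \cdot (uv) \D \sigma.
\]
On the left I would insert the divergence product rule to obtain $\int_\Omega u \Div v \D \lambda_n + \int_\Omega \nabla u \cdot v \D \lambda_n$. On the right, the pointwise identity $\nu \cdot (uv) = u (\nu \cdot v)$ together with the compatibility of traces with products yields $\int_{\partial \Omega} u (\nu \cdot v) \D \sigma$. Rearranging these two identities produces exactly the claimed Green's formula.

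The main technical point to secure is the product rule and, relatedly, that the boundary value of $uv$ factors as $(\Tr u)(\nu \cdot \Tr v)$ on $\partial \Omega$. I would justify both by approximation: by density of $C_c^\infty(\overline{\Omega})$ in $W^1_q(\Omega)$ and in $W^1_{q'}(\Omega)$ (Lemma~\ref{thm:Trace}, cf.~\cite{Ada75}, Thm.~3.18) choose sequences of smooth functions $u_k \to u$ in $W^1_q(\Omega)$ and $v_k \to v$ in $W^1_{q'}(\Omega)^n$. For smooth factors the product rule and the factorization of boundary values are classical, and Hölder's inequality guarantees $u_k v_k \to uv$ in $W^1_1(\Omega)^n$, while the continuity of the trace operator (Lemma~\ref{thm:Trace}, including the case $q = 1$) transports the boundary convergence. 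Passing to the limit in the smooth Green's formula then yields the general case. No step presents a genuine obstacle; the only care needed is the uniform control of the cross terms $u_k \nabla v_k$ and $v_k \nabla u_k$ in $L_1(\Omega)$, which follows directly from Hölder once the approximations converge in the respective Sobolev norms.
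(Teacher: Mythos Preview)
Your proposal is correct and follows exactly the paper's approach: apply Lemma~\ref{thm:Gauss in W^1_1} to the product $uv \in W^1_1(\Omega)^n$ and expand $\Div(uv) = \nabla u \cdot v + u\,\Div v$. The paper's proof is two lines and simply takes the product rule and the membership $uv \in W^1_1(\Omega)^n$ for granted; your added justifications via H\"older and density are the natural way to fill in those tacit steps.
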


\begin{proof}
Lemma~\ref{thm:Gauss in W^1_1} yields
$\int_\Omega \Div (u v) \D \lambda_n
= \int_{\partial \Omega} \nu \cdot (u v) \D \sigma$.
Using the representation
$\Div (u v) = \nabla u \cdot v + u(\Div v)$,
we obtain the statement.
\end{proof}

\begin{lemma}[Trace of the normal component] \label{thm:Trace of the normal component}
Let $\Omega \subset \mathbb{R}^n$ be a domain with uniform $C^{2,1}$-boundary, $n \ge 2$ and $1 < q < \infty$.
There exists a bounded linear operator
\begin{equation*}
\Tr_\nu: E_{q'}(\Omega) \longrightarrow W^{-\frac{1}{q'}}_{q'}(\partial \Omega)
\end{equation*}
such that for any $v \in W^1_{q'}(\Omega)^n$ we have
$\Tr_\nu v = \nu \cdot v|_{\partial \Omega}$ in
$W^{-\frac{1}{q'}}_{q'}(\partial \Omega)$ in the sense that
\begin{equation*}
\Tr_\nu v = \Big[ W^{1-\frac{1}{q}}_q(\partial \Omega) \ni g \mapsto \int_{\partial \Omega} g(\nu \cdot v) \D \sigma \Big].
\end{equation*}
For $v \in E_{q'}(\Omega)$, we denote by $\langle u , \nu \cdot v \rangle_{\partial \Omega}
:= \langle \Tr u , \Tr_\nu v \rangle_{\partial \Omega}$ the application of $\Tr_\nu v$ to some $g = \Tr u \in W^{1-1/q}_q(\partial \Omega)$, $u \in W^1_q(\Omega)$.
\end{lemma}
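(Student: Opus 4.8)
The plan is to construct $\Tr_\nu$ by duality, using the continuous right inverse $\R$ of the trace from Lemma~\thref{thm:Trace} to fix a canonical lift of boundary data; this choice makes boundedness immediate and avoids any well-definedness discussion. Concretely, for $v \in E_{q'}(\Omega)$ I would define $\Tr_\nu v$ as the linear functional
\[
	W^{1-\frac1q}_q(\partial\Omega) \ni g
	\longmapsto
	\int_\Omega (\R g)(\Div v) \D \lambda_n
	+ \int_\Omega \nabla(\R g) \cdot v \D \lambda_n,
\]
which is manifestly linear in both $g$ and $v$.

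For boundedness, Hölder's inequality together with $\| v \|_{E_{q'}(\Omega)} = \| v \|_{q'} + \| \Div v \|_{q'}$ gives
\[
	\big| \dualb{g}{\Tr_\nu v} \big|
	\le \| \R g \|_q \| \Div v \|_{q'} + \| \nabla \R g \|_q \| v \|_{q'}
	\le \| \R g \|_{W^1_q(\Omega)} \| v \|_{E_{q'}(\Omega)},
\]
and continuity of $\R$ (Lemma~\thref{thm:Trace}) yields $\| \R g \|_{W^1_q(\Omega)} \le \| \R \| \, \| g \|_{W^{1-1/q}_q(\partial\Omega)}$. Hence $\Tr_\nu v$ defines an element of $[W^{1-1/q}_q(\partial\Omega)]' = W^{-1/q'}_{q'}(\partial\Omega)$ with $\| \Tr_\nu v \| \le \| \R \| \, \| v \|_{E_{q'}(\Omega)}$, so $\Tr_\nu : E_{q'}(\Omega) \to W^{-1/q'}_{q'}(\partial\Omega)$ is bounded and linear.

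It then remains to identify this functional with the classical normal trace on $W^1_{q'}$-fields. For $v \in W^1_{q'}(\Omega)^n$, Green's formula (Lemma~\thref{thm:Greens formula in W^1_q}) applied with $u = \R g$ reads
\[
	\int_\Omega (\R g)(\Div v) \D \lambda_n
	= \int_{\partial\Omega} (\R g)(\nu \cdot v) \D \sigma
	- \int_\Omega \nabla(\R g) \cdot v \D \lambda_n,
\]
so the two volume integrals in the definition of $\Tr_\nu v$ cancel and, using $\Tr \R g = g$,
\[
	\dualb{g}{\Tr_\nu v}
	= \int_{\partial\Omega} (\Tr \R g)(\nu \cdot v) \D \sigma
	= \int_{\partial\Omega} g (\nu \cdot v) \D \sigma,
\]
which is exactly the asserted identification $\Tr_\nu v = \nu \cdot v|_{\partial\Omega}$ for $v \in W^1_{q'}(\Omega)^n$. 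Finally, since $C_c^\infty(\overline{\Omega})^n \subset E_{q'}(\Omega)$ is dense (Lemma~\thref{thm:density E_q}) and such fields lie in $W^1_{q'}(\Omega)^n$, the classical normal trace is densely defined and $E_{q'}$-bounded, and the operator above is its unique continuous extension; in particular it is independent of the chosen right inverse $\R$.

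Honestly, there is no deep obstacle here: this is a routine duality construction once the continuous right inverse of the trace and Green's formula in $W^1_q$ are available. The only point deserving care is to build $\Tr_\nu$ through $\R g$ rather than through an arbitrary $W^1_q$-extension of $g$, so that the defining integral is automatically independent of the representative and the norm estimate is transparent.
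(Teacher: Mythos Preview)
Your proof is correct and follows essentially the same route as the paper: both use the right inverse $\R$ of the trace together with Green's formula in $W^1_q$ (Lemma~\ref{thm:Greens formula in W^1_q}) to get the bound $|\langle g,\nu\cdot v\rangle_{\partial\Omega}|\le C\|g\|_{W^{1-1/q}_q}\|v\|_{E_{q'}}$, and both invoke density of $C_c^\infty(\overline{\Omega})^n$ in $E_{q'}(\Omega)$ (Lemma~\ref{thm:density E_q}). The only cosmetic difference is the order: the paper first establishes the estimate on $W^1_{q'}$-fields and then extends by density, whereas you directly define $\Tr_\nu$ on all of $E_{q'}(\Omega)$ via the volume integrals $\int(\R g)\Div v+\int\nabla(\R g)\cdot v$ and afterwards identify it with the classical trace on the dense subspace; your ordering has the small advantage that the definition is immediately well-posed without an extension step.
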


\begin{proof}
We can simply follow the arguments in~\cite{Soh01}, II.1.2, where the domain $\Omega$ is assumed to be bounded, to construct the trace of the normal component.
Let $g \in W^{1-1/q}_q(\partial \Omega)$ and $v \in W^1_{q'}(\Omega)^n$.
Then we have $\R g \in W^1_q(\Omega)$, so, using Lemma~\ref{thm:Greens formula in W^1_q}, we obtain
\begin{equation*}
\dualq{\R g}{\Div v}
= \dualb{g}{\nu \cdot v}
  - \dualq{\nabla \R g}{v}.
\end{equation*}
Therefore, we can estimate
\begin{equation*}
\begin{split}
| \dualb{g}{\nu \cdot v} |
&\le | \dualq{\nabla \R g}{v} |
+ | \dualq{\R g}{\Div v} | \\
&\le \| \nabla \R g \|_q \| v \|_{q'}
+ \| \R g \|_q \| \Div v \|_{q'}\\
&\le \| \R g \|_{W^1_q(\Omega)} \| v \|_{E_{q'}(\Omega)} \\
&\le C \| g \|_{W^{1-1/q}_q(\partial \Omega)} \| v \|_{E_{q'}(\Omega)},
\end{split}
\end{equation*}
where $C = C(n,q,\Omega) > 0$. We obtain
\begin{equation*}
\Tr_\nu v := \Big[ W^{1-\frac{1}{q}}_q(\partial \Omega) \ni g \mapsto \dualb{g}{\nu \cdot v} \Big]
\in W^{-\frac{1}{q'}}_{q'}(\partial \Omega)
\end{equation*}
with
$\| \Tr_\nu v \|_{W^{-1/q'}_{q'}(\partial \Omega)} \le C \| v \|_{E_q(\Omega)}$.
Now, Lemma~\ref{thm:density E_q} gives the assertion.
\end{proof}

\begin{lemma}[Green's formula in $E_q$] \label{thm:Greens formula in E_q}
Let $\Omega \subset \mathbb{R}^n$ be a domain with uniform $C^{2,1}$-boundary, $n \ge 2$ and $1 < q < \infty$.
We have for $u \in W^1_q(\Omega)$ and $v \in E_{q'}(\Omega)$
\begin{equation} \label{eq:Green's formula in E_q}
\int_\Omega u (\Div v) \D \lambda_n
= \dualb{u}{\nu \cdot v} - \int_\Omega \nabla u \cdot v \D \lambda_n.
\end{equation}
\end{lemma}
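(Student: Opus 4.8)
The plan is to deduce the identity from the already established Green's formula in $W^1_q$ (Lemma~\ref{thm:Greens formula in W^1_q}) by a density argument based on Lemma~\ref{thm:density E_q} (a uniform $C^{2,1}$-domain satisfies the segment property, so that density result applies). Fix $u \in W^1_q(\Omega)$ throughout and regard the three terms of \eqref{eq:Green's formula in E_q} as functionals of $v$. First I would record that the identity is already known for $v \in C_c^\infty(\overline{\Omega})^n \subset W^1_{q'}(\Omega)^n$: indeed, Lemma~\ref{thm:Greens formula in W^1_q} gives
\[
\int_\Omega u (\Div v) \D \lambda_n = \int_{\partial \Omega} u (\nu \cdot v) \D \sigma - \int_\Omega \nabla u \cdot v \D \lambda_n,
\]
and for such (in particular $W^1_{q'}$) vector fields the surface integral $\int_{\partial \Omega} u (\nu \cdot v) \D \sigma$ coincides with the pairing $\dualb{u}{\nu \cdot v} = \dualb{\Tr u}{\Tr_\nu v}$ by the defining property of the normal trace in Lemma~\ref{thm:Trace of the normal component}. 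Thus \eqref{eq:Green's formula in E_q} holds verbatim on the dense subspace $C_c^\infty(\overline{\Omega})^n$ of $E_{q'}(\Omega)$.

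Next I would verify that each of the three terms in \eqref{eq:Green's formula in E_q} depends continuously on $v \in E_{q'}(\Omega)$ with respect to the $E_{q'}$-norm (with $u$ fixed). For the two volume integrals this is immediate from Hölder's inequality:
\[
\Big| \int_\Omega u (\Div v) \D \lambda_n \Big| \le \| u \|_q \| \Div v \|_{q'} \le \| u \|_q \| v \|_{E_{q'}(\Omega)}
\]
and
\[
\Big| \int_\Omega \nabla u \cdot v \D \lambda_n \Big| \le \| \nabla u \|_q \| v \|_{q'} \le \| \nabla u \|_q \| v \|_{E_{q'}(\Omega)}.
\]
For the boundary term I would use the boundedness of $\Tr_\nu: E_{q'}(\Omega) \to W^{-\frac{1}{q'}}_{q'}(\partial \Omega)$ together with $\Tr u \in W^{1-\frac{1}{q}}_q(\partial \Omega)$ from Lemma~\ref{thm:Trace}, giving
\[
| \dualb{u}{\nu \cdot v} | = | \dualb{\Tr u}{\Tr_\nu v} | \le \| \Tr u \|_{W^{1-\frac{1}{q}}_q(\partial \Omega)} \| \Tr_\nu v \|_{W^{-\frac{1}{q'}}_{q'}(\partial \Omega)} \le C \| u \|_{W^1_q(\Omega)} \| v \|_{E_{q'}(\Omega)}.
\]
Hence all three terms are $E_{q'}$-continuous linear functionals of $v$.

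Finally, given an arbitrary $v \in E_{q'}(\Omega)$, Lemma~\ref{thm:density E_q} provides a sequence $(v_k)_{k \in \mathbb{N}} \subset C_c^\infty(\overline{\Omega})^n$ with $v_k \to v$ in $E_{q'}(\Omega)$. Since \eqref{eq:Green's formula in E_q} holds for each $v_k$ by the first step, passing to the limit $k \to \infty$ — legitimate by the continuity established in the second step — yields the identity for $v$. I expect no genuine obstacle here; the only point requiring care is that on the smooth subspace the concrete surface integral must be matched with the abstract duality pairing $\dualb{u}{\nu \cdot v}$, which is exactly what the construction of $\Tr_\nu$ in Lemma~\ref{thm:Trace of the normal component} guarantees, so that the limiting boundary term is correctly read as $\dualb{u}{\nu \cdot v}$ and not as a literal integral.
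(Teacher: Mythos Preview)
Your proof is correct and follows essentially the same approach as the paper: approximate $v \in E_{q'}(\Omega)$ by a sequence from a dense subspace on which Lemma~\ref{thm:Greens formula in W^1_q} applies, and pass to the limit using the $E_{q'}$-continuity of the three terms (with the boundary term handled via the boundedness of $\Tr_\nu$). Your write-up is in fact a bit more explicit than the paper's about the continuity estimates and the identification of the surface integral with the duality pairing on the smooth subspace.
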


\begin{proof}
Due to Lemma~\ref{thm:density E_q} we can choose a sequence $(v_k)_{k \in \mathbb{N}} \subset W^1_{q'}(\Omega)^n$ converging to $v$ in $E_{q'}(\Omega)$. Now Lemma~\ref{thm:Greens formula in W^1_q} gives that \eqref{eq:Green's formula in E_q} is true for $v_k$ instead of $v$.
It is not hard to see that, for $k \to \infty$, the two terms $\int_\Omega u
(\Div v_k) \D \lambda_n$ and $\int_\Omega \nabla u \cdot v_k \D
\lambda_n$ converge to $\int_\Omega u (\Div v) \D \lambda_n$ and
$\int_\Omega \nabla u \cdot v \D \lambda_n$ respectively.
Using the continuity of
$\Tr: W^1_q(\Omega) \rightarrow W^{1-1/q}_q(\partial \Omega)$ and
$\Tr_\nu: E_{q'}(\Omega) \rightarrow W^{-1/q'}_{q'}(\partial \Omega)$, we obtain the third term $\dualb{u}{\nu \cdot v_k}$ converging to $\dualb{u}{\nu \cdot v}$ as well, for $k \to \infty$.
\end{proof}

\begin{lemma}[Extended Gauß theorem] \label{thm:Extended Gauss theorem}
Let $\Omega \subset \mathbb{R}^n$ be a domain with uniform $C^{2,1}$-boundary, $n \ge 2$ and $1 < q < \infty$.
For $u \in W^1_q(\Omega)$ and $v \in E_{q'}(\Omega)$ we have
\begin{equation*}
\int_\Omega \Div (u v) \D \lambda_n
= \dualb{u}{\nu \cdot v}.
\end{equation*}
\end{lemma}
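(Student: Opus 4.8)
The plan is to deduce this identity directly from Green's formula in $E_q$ (Lemma~\ref{thm:Greens formula in E_q}) together with the pointwise product rule $\Div(uv) = \nabla u \cdot v + u\,(\Div v)$. First I would check that the right-hand side of this product rule is a genuine $L_1(\Omega)$-function: since $\nabla u \in L_q(\Omega)^n$ and $v \in L_{q'}(\Omega)^n$, Hölder's inequality gives $\nabla u \cdot v \in L_1(\Omega)$, and likewise $u \in L_q(\Omega)$ together with $\Div v \in L_{q'}(\Omega)$ yields $u\,(\Div v) \in L_1(\Omega)$. Hence $\Div(uv)$, a priori only a distribution, is represented by an $L_1$-function, so that the left-hand integral in the assertion is meaningful.

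The one point requiring care is that the product rule genuinely holds in this weak setting, i.e.\ that the distributional divergence of $uv$ equals $\nabla u \cdot v + u\,(\Div v)$ for $u \in W^1_q(\Omega)$ and $v \in E_{q'}(\Omega)$. I would obtain this by approximation, invoking Lemma~\ref{thm:density E_q}: choose $(v_k) \subset C_c^\infty(\overline{\Omega})^n$ with $v_k \to v$ in $E_{q'}(\Omega)$. For each smooth $v_k$ the identity $\Div(uv_k) = \nabla u \cdot v_k + u\,(\Div v_k)$ holds in $\mathscr{D}'(\Omega)$ by the classical product rule (testing against $\varphi \in C_c^\infty(\Omega)$ and integrating by parts, which is legitimate since $u \in W^1_q(\Omega)$). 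Letting $k \to \infty$, the two terms on the right converge in $L_1(\Omega)$ — using $v_k \to v$ and $\Div v_k \to \Div v$ in $L_{q'}(\Omega)$ against the fixed $L_q$-factors $\nabla u$ resp.\ $u$ — so the identity survives in the limit.

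With the product rule available as an $L_1$-identity, integrating it over $\Omega$ gives
\[
\int_\Omega \Div(uv)\D\lambda_n
= \int_\Omega \nabla u \cdot v \D\lambda_n
+ \int_\Omega u\,(\Div v)\D\lambda_n .
\]
Green's formula in $E_q$ (Lemma~\ref{thm:Greens formula in E_q}) rewrites the last integral as $\dualb{u}{\nu\cdot v} - \int_\Omega \nabla u \cdot v \D\lambda_n$; substituting this and cancelling the two copies of $\int_\Omega \nabla u \cdot v \D\lambda_n$ yields exactly $\int_\Omega \Div(uv)\D\lambda_n = \dualb{u}{\nu\cdot v}$. I do not expect a genuine obstacle here, since the whole analytic content is already packaged into Lemmas~\ref{thm:density E_q} and~\ref{thm:Greens formula in E_q}; the only mild subtlety is the justification, via density in $E_{q'}(\Omega)$, that the product rule is valid at the stated regularity, after which the statement is a one-line cancellation.
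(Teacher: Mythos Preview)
Your proposal is correct and follows essentially the same approach as the paper: establish the product rule $\Div(uv) = \nabla u \cdot v + u\,(\Div v)$ as an $L_1$-identity via the density result of Lemma~\ref{thm:density E_q}, then apply Green's formula in $E_q$ (Lemma~\ref{thm:Greens formula in E_q}) and cancel. The paper's proof is simply a terser version of yours, compressing the approximation argument for the product rule into a single sentence.
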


\begin{proof}
Via approximation (Lemma~\ref{thm:density E_q}) we obtain that
$\Div (u v) = \nabla u \cdot v + u (\Div v)$ is a function in $L_1(\Omega)$, so the left-hand side of the formula is well-defined.
Lemma~\ref{thm:Greens formula in E_q} yields the assertion.
\end{proof}



\begin{thebibliography}{10}

\bibitem{DissA03}
H.~Abels.
\newblock {\em Stokes equations in asymptotically flat domains and the motion
  of a free surface}.
\newblock dissertation, Technische Universit\"at Darmstadt, 2003.

\bibitem{Ada75}
R.~A. Adams.
\newblock {\em Sobolev spaces}.
\newblock Academic Press, New York - London, 1975.
\newblock Pure and Applied Mathematics, Vol. 65.

\bibitem{baam2019}
H.~Al~Baba and C.~Amrouche.
\newblock Stokes and {N}avier-{S}tokes problems with {N}avier type boundary
  conditions in {$L^p$}-spaces.
\newblock {\em Differ. Equ. Appl.}, 11(2):203--226, 2019.

\bibitem{AE09}
H.~Amann and J.~Escher.
\newblock {\em Analysis. {III}}.
\newblock Birkh\"{a}user, Basel, 2009.

\bibitem{BGMST17}
M.~Bolkart, Y.~Giga, T.~Miura, T.~Suzuki, and Y.~Tsutsui.
\newblock On analyticity of the {$L^p$}-{S}tokes semigroup for some
  non-{H}elmholtz domains.
\newblock {\em Math. Nachr.}, 290(16):2524--2546, 2017.

\bibitem{Chu92}
S.-K. Chua.
\newblock Extension theorems on weighted {S}obolev spaces.
\newblock {\em Indiana Univ. Math. J.}, 41(4):1027--1076, 1992.

\bibitem{Eva10}
L.~C. Evans.
\newblock {\em Partial differential equations}, volume~19 of {\em Graduate
  Studies in Mathematics}.
\newblock American Mathematical Society, Providence, RI, second edition, 2010.

\bibitem{Far96}
R.~Farwig.
\newblock Note on the flux condition and pressure drop in the resolvent problem
  of the {S}tokes system.
\newblock {\em Manuscripta Math.}, 89(2):139--158, 1996.

\bibitem{FKS05}
R.~Farwig, H.~Kozono, and H.~Sohr.
\newblock An {$L^q$}-approach to {S}tokes and {N}avier-{S}tokes equations in
  general domains.
\newblock {\em Acta Math.}, 195:21--53, 2005.

\bibitem{FKS07}
R.~Farwig, H.~Kozono, and H.~Sohr.
\newblock On the {H}elmholtz decomposition in general unbounded domains.
\newblock {\em Arch. Math. (Basel)}, 88(3):239--248, 2007.

\bibitem{farros2016}
R.~Farwig and V.~Rosteck.
\newblock Resolvent estimates for the {S}tokes system with {N}avier boundary
  conditions in general unbounded domains.
\newblock {\em Adv. Differ. Equ.}, 21(5-6):401--428, 2016.

\bibitem{FSSV16}
R.~Farwig, C.~Simader, H.~Sohr, and W.~Varnhorn.
\newblock General properties of the {H}elmholtz decomposition in spaces of
  {$L^q$}-type.
\newblock In {\em Recent advances in partial differential equations and
  applications}, volume 666 of {\em Contemp. Math.}, pages 163--177. Amer.
  Math. Soc., Providence, RI, 2016.

\bibitem{FS94}
R.~Farwig and H.~Sohr.
\newblock Generalized resolvent estimates for the {S}tokes system in bounded
  and unbounded domains.
\newblock {\em J. Math. Soc. Japan}, 46(4):607--643, 1994.

\bibitem{FS96}
R.~Farwig and H.~Sohr.
\newblock Helmholtz decomposition and {S}tokes resolvent system for aperture
  domains in {$L^q$}-spaces.
\newblock {\em Analysis}, 16(1):1--26, 1996.

\bibitem{For77}
O.~Forster.
\newblock {\em Analysis {$2$}. {D}ifferentialrechnung im
  {${\textbf{R}\sp{n}}$}}.
\newblock Vieweg, Braunschweig, 1977.

\bibitem{Gal11}
G.~P. Galdi.
\newblock {\em An introduction to the mathematical theory of the
  {N}avier-{S}tokes equations}.
\newblock Springer, New York, second edition, 2011.

\bibitem{GHHS12}
M.~Geissert, H.~Heck, M.~Hieber, and O.~Sawada.
\newblock Weak {N}eumann implies {S}tokes.
\newblock {\em J. Reine Angew. Math.}, 669:75--100, 2012.

\bibitem{HS18}
M.~Hieber and J.~Saal.
\newblock The {S}tokes equation in the {$L^p$}-setting: well-posedness and
  regularity properties.
\newblock In {\em Handbook of mathematical analysis in mechanics of viscous
  fluids}, pages 117--206. Springer, 2018.

\bibitem{Jon81}
P.~W. Jones.
\newblock Quasiconformal mappings and extendability of functions in {S}obolev
  spaces.
\newblock {\em Acta Math.}, 147(1-2):71--88, 1981.

\bibitem{wesa2018}
M.~K\"ohne, J.~Saal, and L.~Westermann.
\newblock Optimal {S}obolev regularity for the {S}tokes equations on a 2{D}
  wedge domain.
\newblock {\em Math. Ann.}, 2019.
\newblock https://doi.org/10.1007/s00208-019-01928-y.

\bibitem{Kun03}
P.~C. Kunstmann.
\newblock Maximal {$L_p$}-regularity for second order elliptic operators with
  uniformly continuous coefficients on domains.
\newblock In {\em Evolution equations: applications to physics, industry, life
  sciences and economics ({L}evico {T}erme, 2000)}, volume~55 of {\em Progr.
  Nonlinear Differential Equations Appl.}, pages 293--305. Birkh\"{a}user,
  Basel, 2003.

\bibitem{KW04}
P.~C. Kunstmann and L.~Weis.
\newblock Maximal {$L_p$}-regularity for parabolic equations, {F}ourier
  multiplier theorems and {$H^\infty$}-functional calculus.
\newblock In {\em Functional analytic methods for evolution equations}, volume
  1855 of {\em Lecture Notes in Math.}, pages 65--311. Springer, Berlin, 2004.

\bibitem{Mar87}
J.~Marschall.
\newblock The trace of {S}obolev-{S}lobodeckij spaces on {L}ipschitz domains.
\newblock {\em Manuscripta Math.}, 58(1-2):47--65, 1987.

\bibitem{MB86}
V.~N. Maslennikova and M.~E. Bogovski\u{\i}.
\newblock Elliptic boundary value problems in unbounded domains with noncompact
  and nonsmooth boundaries.
\newblock {\em Rend. Sem. Mat. Fis. Milano}, 56:125--138 (1988), 1986.

\bibitem{mitmon2009}
M.~Mitrea and S.~Monniaux.
\newblock On the analyticity of the semigroup generated by the {S}tokes
  operator with {N}eumann-type boundary conditions on {L}ipschitz subdomains of
  {R}iemannian manifolds.
\newblock {\em Trans. Am. Math. Soc.}, 361(6):3125--3157, 2009.

\bibitem{NS03}
A.~Noll and J.~Saal.
\newblock {$H^\infty$}-calculus for the {S}tokes operator on {$L_q$}-spaces.
\newblock {\em Math. Z.}, 244(3):651--688, 2003.

\bibitem{PS16}
J.~Pr\"{u}ss and G.~Simonett.
\newblock {\em Moving interfaces and quasilinear parabolic evolution
  equations}, volume 105 of {\em Monographs in Mathematics}.
\newblock Birkh\"{a}user/Springer, 2016.

\bibitem{Shi13}
Y.~Shibata.
\newblock Generalized resolvent estimates of the {S}tokes equations with first
  order boundary condition in a general domain.
\newblock {\em J. Math. Fluid Mech.}, 15(1):1--40, 2013.

\bibitem{Soh01}
H.~Sohr.
\newblock {\em The {N}avier-{S}tokes equations}.
\newblock Birkh\"{a}user/Springer Basel AG, Basel, 2001.

\bibitem{Sol77}
V.~A. Solonnikov.
\newblock Estimates for solutions of nonstationary {N}avier-{S}tokes equations.
\newblock {\em J. Sov. Math.}, 8:467--529, 1977.

\bibitem{Ste70}
E.~M. Stein.
\newblock {\em Singular integrals and differentiability properties of
  functions}.
\newblock Princeton University Press, Princeton, N.J., 1970.

\bibitem{Tri78}
H.~Triebel.
\newblock {\em Interpolation theory, function spaces, differential operators},
  volume~18.
\newblock North-Holland Publishing Co., Amsterdam - New York, 1978.

\end{thebibliography}

\end{document}